\newcommand{\nc}{\newcommand}
\nc{\xrightiso}[1]{ \xrightarrow[{\ \raisebox{0.5ex}[0ex][0ex]{$\sim$}\ }]{#1} }
\renewcommand{\AA}{{\mathbb{A}}}
\nc{\CC}{{\mathbb{C}}}
\nc{\DD}{{\mathbb{D}}}
\nc{\LL}{{\mathbb{L}}}
\nc{\RR}{{\mathbb{R}}}
\renewcommand{\P}{{\mathbb{P}}}
\nc{\OO}{{\mathbb{O}}}
\nc{\QQ}{{\mathbb{Q}}}
\nc{\ZZ}{{\mathbb{Z}}}
\nc{\Z}{{\mathbb{Z}}}
\nc{\cA}{{\mathcal{A}}}
\nc{\cB}{{\mathcal{B}}}
\nc{\cC}{{\mathcal{C}}}
\nc{\cD}{{\mathcal{D}}}
\nc{\cE}{{\mathcal{E}}}
\nc{\cF}{{\mathcal{F}}}
\nc{\cG}{{\mathcal{G}}}
\nc{\cH}{{\mathcal{H}}}
\nc{\cI}{{\mathcal{I}}}
\nc{\cJ}{{\mathcal{J}}}
\nc{\cK}{{\mathcal{K}}}
\nc{\cL}{{\mathcal{L}}}
\nc{\cM}{{\mathcal{M}}}
\nc{\cN}{{\mathcal{N}}}
\nc{\cO}{{\mathcal{O}}}
\nc{\cP}{{\mathcal{P}}}
\nc{\cQ}{{\mathcal{Q}}}
\nc{\cR}{{\mathcal{R}}}
\nc{\cS}{{\mathcal{S}}}
\nc{\cT}{{\mathcal{T}}}
\nc{\cU}{{\mathcal{U}}}
\nc{\cV}{{\mathcal{V}}}
\nc{\cW}{{\mathcal{W}}}
\nc{\cX}{{\mathcal{X}}}
\nc{\cY}{{\mathcal{Y}}}
\nc{\cZ}{{\mathcal{Z}}}
\nc{\rb}{{\mathbf{b}}}
\nc{\rc}{{\mathrm{c}}}
\nc{\rd}{{\mathrm{d}}}
\nc{\rr}{{\mathrm{r}}}
\nc{\rf}{F}
\nc{\rh}{{\mathrm{h}}}
\nc{\rch}{{\mathrm{ch}}}
\nc{\rtd}{{\mathrm{td}}}
\nc{\rA}{{\mathrm{A}}}
\nc{\rB}{{\mathrm{B}}}
\nc{\rC}{{\mathrm{C}}}
\nc{\rD}{{\mathrm{D}}}
\nc{\rE}{{\mathrm{E}}}
\nc{\rF}{{\mathrm{F}}}
\nc{\rG}{{\mathrm{G}}}
\nc{\rH}{{\mathrm{H}}}
\nc{\rI}{{\mathrm{I}}}
\nc{\rJ}{{\mathrm{J}}}
\nc{\rK}{{\mathrm{K}}}
\nc{\rL}{{\mathrm{L}}}
\nc{\rM}{{\mathrm{M}}}
\nc{\rN}{{\mathrm{N}}}
\nc{\rO}{{\mathrm{O}}}
\nc{\rP}{{\mathrm{P}}}
\nc{\rQ}{{\mathrm{Q}}}
\nc{\rR}{{\mathrm{R}}}
\nc{\rS}{{\mathrm{S}}}
\nc{\rT}{{\mathrm{T}}}
\nc{\rU}{{\mathrm{U}}}
\nc{\rV}{{\mathrm{V}}}
\nc{\rW}{{\mathrm{W}}}
\nc{\rX}{{\mathrm{X}}}
\nc{\rY}{{\mathrm{Y}}}
\nc{\rZ}{{\mathrm{Z}}}
\nc{\bA}{{\mathbf{A}}}
\nc{\bB}{{\mathbf{B}}}
\nc{\bC}{{\mathbf{C}}}
\nc{\bD}{{\mathbf{D}}}
\nc{\bE}{{\mathbf{E}}}
\nc{\bF}{{\mathbf{F}}}
\nc{\bG}{{\mathbf{G}}}
\nc{\bH}{{\mathbf{H}}}
\nc{\bI}{{\mathbf{I}}}
\nc{\bJ}{{\mathbf{J}}}
\nc{\bK}{{\mathbf{K}}}
\nc{\bL}{{\mathbf{L}}}
\nc{\bM}{{\mathbf{M}}}
\nc{\bN}{{\mathbf{N}}}
\nc{\bO}{{\mathbf{O}}}
\nc{\bP}{{\mathbf{P}}}
\nc{\bQ}{{\mathbf{Q}}}
\nc{\bR}{{\mathbf{R}}}
\nc{\bS}{{\mathbf{S}}}
\nc{\bT}{{\mathbf{T}}}
\nc{\bU}{{\mathbf{U}}}
\nc{\bV}{{\mathbf{V}}}
\nc{\bW}{{\mathbf{W}}}
\nc{\bX}{{\mathbf{X}}}
\nc{\bY}{{\mathbf{Y}}}
\nc{\bZ}{{\mathbf{Z}}}
\nc{\ba}{{\mathbf{a}}}
\nc{\bb}{{\mathbf{b}}}
\nc{\bc}{{\mathbf{c}}}
\nc{\bd}{{\mathbf{d}}}
\nc{\be}{{\mathbf{e}}}
\nc{\bg}{{\mathbf{g}}}
\nc{\bh}{{\mathbf{h}}}
\nc{\bi}{{\mathbf{i}}}
\nc{\bj}{{\mathbf{j}}}
\nc{\bk}{{\mathbf{k}}}
\nc{\bl}{{\mathbf{l}}}
\nc{\bm}{{\mathbf{m}}}
\nc{\bn}{{\mathbf{n}}}
\nc{\bo}{{\mathbf{o}}}
\nc{\bp}{{\mathbf{p}}}
\nc{\bq}{{\mathbf{q}}}
\nc{\br}{{\mathbf{r}}}
\nc{\bs}{{\mathbf{s}}}
\nc{\bt}{{\mathbf{t}}}
\nc{\bu}{{\mathbf{u}}}
\nc{\bv}{{\mathbf{v}}}
\nc{\bw}{{\mathbf{w}}}
\nc{\bx}{{\mathbf{x}}}
\nc{\by}{{\mathbf{y}}}
\nc{\bz}{{\mathbf{z}}}
\nc{\fA}{{\mathfrak{A}}}
\nc{\fB}{{\mathfrak{B}}}
\nc{\fC}{{\mathfrak{C}}}
\nc{\fD}{{\mathfrak{D}}}
\nc{\fE}{{\mathfrak{E}}}
\nc{\fF}{{\mathfrak{F}}}
\nc{\fG}{{\mathfrak{G}}}
\nc{\fH}{{\mathfrak{H}}}
\nc{\fI}{{\mathfrak{I}}}
\nc{\fJ}{{\mathfrak{J}}}
\nc{\fK}{{\mathfrak{K}}}
\nc{\fL}{{\mathfrak{L}}}
\nc{\fM}{{\mathfrak{M}}}
\nc{\fN}{{\mathfrak{N}}}
\nc{\fO}{{\mathfrak{O}}}
\nc{\fP}{{\mathfrak{P}}}
\nc{\fQ}{{\mathfrak{Q}}}
\nc{\fR}{{\mathfrak{R}}}
\nc{\fS}{{\mathfrak{S}}}
\nc{\fT}{{\mathfrak{T}}}
\nc{\fU}{{\mathfrak{U}}}
\nc{\fV}{{\mathfrak{V}}}
\nc{\fW}{{\mathfrak{W}}}
\nc{\fX}{{\mathfrak{X}}}
\nc{\fY}{{\mathfrak{Y}}}
\nc{\fZ}{{\mathfrak{Z}}}
\nc{\fa}{{\mathfrak{a}}}
\nc{\fb}{{\mathfrak{b}}}
\nc{\fc}{{\mathfrak{c}}}
\nc{\fd}{{\mathfrak{d}}}
\nc{\fe}{{\mathfrak{e}}}
\nc{\ff}{{\mathfrak{f}}}
\nc{\fg}{{\mathfrak{g}}}
\nc{\fh}{{\mathfrak{h}}}
\nc{\fj}{{\mathfrak{j}}}
\nc{\fk}{{\mathfrak{k}}}
\nc{\fl}{{\mathfrak{l}}}
\nc{\fm}{{\mathfrak{m}}}
\nc{\fn}{{\mathfrak{n}}}
\nc{\fo}{{\mathfrak{o}}}
\nc{\fp}{{\mathfrak{p}}}
\nc{\fq}{{\mathfrak{q}}}
\nc{\fr}{{\mathfrak{r}}}
\nc{\fs}{{\mathfrak{s}}}
\nc{\ft}{{\mathfrak{t}}}
\nc{\fu}{{\mathfrak{u}}}
\nc{\fv}{{\mathfrak{v}}}
\nc{\fw}{{\mathfrak{w}}}
\nc{\fx}{{\mathfrak{x}}}
\nc{\fy}{{\mathfrak{y}}}
\nc{\fz}{{\mathfrak{z}}}
\nc{\bfa}{{\bar{\mathfrak{a}}}}
\nc{\tfb}{{\tilde{\mathfrak{b}}}}
\nc{\bfc}{{\bar{\mathfrak{c}}}}
\nc{\pfb}{{\partial{\mathfrak{b}}}}
\nc{\sA}{{\mathsf{A}}}
\nc{\sB}{{\mathsf{B}}}
\nc{\sC}{{\mathsf{C}}}
\nc{\sD}{{\mathsf{D}}}
\nc{\sE}{{\mathsf{E}}}
\nc{\sF}{{\mathsf{F}}}
\nc{\sG}{{\mathsf{G}}}
\nc{\sH}{{\mathsf{H}}}
\nc{\sI}{{\mathsf{I}}}
\nc{\sJ}{{\mathsf{J}}}
\nc{\sK}{{\mathsf{K}}}
\nc{\sL}{{\mathsf{L}}}
\nc{\sM}{{\mathsf{M}}}
\nc{\sN}{{\mathsf{N}}}
\nc{\sO}{{\mathsf{O}}}
\nc{\sP}{{\mathsf{P}}}
\nc{\sQ}{{\mathsf{Q}}}
\nc{\sR}{{\mathsf{R}}}
\nc{\sS}{{\mathsf{S}}}
\nc{\sT}{{\mathsf{T}}}
\nc{\sU}{{\mathsf{U}}}
\nc{\sV}{{\mathsf{V}}}
\nc{\sW}{{\mathsf{W}}}
\nc{\sX}{{\mathsf{X}}}
\nc{\sY}{{\mathsf{Y}}}
\nc{\sZ}{{\mathsf{Z}}}
\nc{\sa}{{\mathsf{a}}}
\nc{\sd}{{\mathsf{d}}}
\nc{\se}{{\mathsf{e}}}
\nc{\sg}{{\mathsf{g}}}
\nc{\sh}{{\mathsf{h}}}
\nc{\si}{{\mathsf{i}}}
\nc{\sj}{{\mathsf{j}}}
\nc{\sk}{{\mathsf{k}}}
\nc{\sm}{{\mathsf{m}}}
\nc{\sn}{{\mathsf{n}}}
\nc{\so}{{\mathsf{o}}}
\nc{\sq}{{\mathsf{q}}}
\nc{\sr}{{\mathsf{r}}}
\nc{\st}{{\mathsf{t}}}
\nc{\su}{{\mathsf{u}}}
\nc{\sv}{{\mathsf{v}}}
\nc{\sw}{{\mathsf{w}}}
\nc{\sx}{{\mathsf{x}}}
\nc{\sy}{{\mathsf{y}}}
\nc{\sz}{{\mathsf{z}}}
\nc{\oA}{{\overline{A}}}
\nc{\oB}{{\overline{B}}}
\nc{\oC}{{\overline{C}}}
\nc{\oD}{{\overline{D}}}
\nc{\oE}{{\overline{E}}}
\nc{\oF}{{\overline{F}}}
\nc{\oG}{{\overline{G}}}
\nc{\oH}{{\overline{H}}}
\nc{\oI}{{\overline{I}}}
\nc{\oJ}{{\overline{J}}}
\nc{\oK}{{\overline{K}}}
\nc{\oL}{{\overline{L}}}
\nc{\oM}{{\overline{M}}}
\nc{\oN}{{\overline{N}}}
\nc{\oO}{{\overline{O}}}
\nc{\oP}{{\overline{P}}}
\nc{\oQ}{{\overline{Q}}}
\nc{\oR}{{\overline{R}}}
\nc{\oS}{{\overline{S}}}
\nc{\oT}{{\overline{T}}}
\nc{\oU}{{\overline{U}}}
\nc{\oV}{{\overline{V}}}
\nc{\oW}{{\overline{W}}}
\nc{\oX}{{\overline{X}}}
\nc{\oY}{{\overline{Y}}}
\nc{\oZ}{{\overline{Z}}}
\nc{\oa}{{\overline{a}}}
\nc{\ob}{{\overline{b}}}
\nc{\oc}{{\overline{c}}}
\nc{\od}{{\overline{d}}}
\nc{\of}{{\overline{f}}}
\nc{\og}{{\overline{g}}}
\nc{\oh}{{\overline{h}}}
\nc{\oi}{{\overline{i}}}
\nc{\oj}{{\overline{j}}}
\nc{\ok}{{\overline{k}}}
\nc{\ol}{{\overline{l}}}
\nc{\om}{{\overline{m}}}
\nc{\on}{{\overline{n}}}
\nc{\oo}{{\overline{o}}}
\nc{\op}{{\overline{p}}}
\nc{\oq}{{\overline{q}}}
\nc{\os}{{\overline{s}}}
\nc{\ot}{{\overline{t}}}
\nc{\ou}{{\overline{u}}}
\nc{\ov}{{\overline{v}}}
\nc{\ow}{{\overline{w}}}
\nc{\ox}{{\overline{x}}}
\nc{\oy}{{\overline{y}}}
\nc{\oz}{{\overline{z}}}
\nc{\tA}{{\tilde{A}}}
\nc{\tB}{{\tilde{B}}}
\nc{\tC}{{\tilde{C}}}
\nc{\tD}{{\tilde{D}}}
\nc{\tE}{{\tilde{E}}}
\nc{\tF}{{\tilde{F}}}
\nc{\tG}{{\tilde{G}}}
\nc{\tH}{{\tilde{H}}}
\nc{\tI}{{\tilde{I}}}
\nc{\tJ}{{\tilde{J}}}
\nc{\tK}{{\tilde{K}}}
\nc{\tL}{{\tilde{L}}}
\nc{\tM}{{\tilde{M}}}
\nc{\tN}{{\tilde{N}}}
\nc{\tO}{{\tilde{O}}}
\nc{\tP}{{\tilde{P}}}
\nc{\tQ}{{\tilde{Q}}}
\nc{\tR}{{\tilde{R}}}
\nc{\tS}{{\tilde{S}}}
\nc{\tT}{{\tilde{T}}}
\nc{\tU}{{\tilde{U}}}
\nc{\tV}{{\tilde{V}}}
\nc{\tW}{{\tilde{W}}}
\nc{\tX}{{\tilde{X}}}
\nc{\tY}{{\tilde{Y}}}
\nc{\tZ}{{\tilde{Z}}}
\nc{\tfD}{{\tilde{\fD}}}
\nc{\tcA}{{\tilde{\cA}}}
\nc{\tcB}{{\tilde{\cB}}}
\nc{\tcC}{{\tilde{\cC}}}
\nc{\tcD}{{\tilde{\cD}}}
\nc{\tcF}{{\tilde{\cF}}}
\nc{\tcM}{{\tilde{\cM}}}
\nc{\tcT}{{\tilde{\cT}}}
\nc{\tcL}{{\tilde{\cL}}}
\nc{\tcX}{{\tilde{\cX}}}
\nc{\hcX}{{\hat{\cX}}}
\nc{\tcY}{{\tilde{\cY}}}
\nc{\hrU}{{\widehat{\rU}}}
\nc{\hrS}{{\widehat{\rS}}}
\nc{\bcA}{{\bar{\cA}}}
\nc{\ta}{{\tilde{a}}}
\nc{\tb}{{\tilde{b}}}
\nc{\tc}{{\tilde{c}}}
\nc{\td}{{\tilde{d}}}
\nc{\te}{{\tilde{e}}}
\nc{\tf}{{\tilde{f}}}
\nc{\tg}{{\tilde{g}}}
\nc{\ti}{{\tilde{\imath}}}
\nc{\tj}{{\tilde{j}}}
\nc{\tk}{{\tilde{k}}}
\nc{\tl}{{\tilde{l}}}
\nc{\tm}{{\tilde{m}}}
\nc{\tn}{{\tilde{n}}}
\nc{\tp}{{\tilde{p}}}
\nc{\tq}{{\tilde{q}}}
\nc{\tr}{{\tilde{r}}}
\nc{\ts}{{\tilde{s}}}
\nc{\tu}{{\tilde{u}}}
\nc{\tv}{{\tilde{v}}}
\nc{\tw}{{\tilde{w}}}
\nc{\tx}{{\tilde{x}}}
\nc{\ty}{{\tilde{y}}}
\nc{\tz}{{\tilde{z}}}
\nc{\tell}{{\tilde{\ell}}}
\nc{\hell}{{\hat{\ell}}}
\nc{\hA}{{\hat{A}}}
\nc{\hB}{{\hat{B}}}
\nc{\hC}{{\hat{C}}}
\nc{\hD}{{\hat{D}}}
\nc{\hE}{{\hat{E}}}
\nc{\hF}{{\hat{F}}}
\nc{\hG}{{\hat{G}}}
\nc{\hH}{{\hat{H}}}
\nc{\hI}{{\hat{I}}}
\nc{\hJ}{{\hat{J}}}
\nc{\hK}{{\hat{K}}}
\nc{\hL}{{\hat{L}}}
\nc{\hM}{{\hat{M}}}
\nc{\hN}{{\hat{N}}}
\nc{\hO}{{\hat{O}}}
\nc{\hP}{{\hat{P}}}
\nc{\hQ}{{\hat{Q}}}
\nc{\hR}{{\hat{R}}}
\nc{\hS}{{\widehat{S}}}
\nc{\hT}{{\hat{T}}}
\nc{\hU}{{\hat{U}}}
\nc{\hV}{{\hat{V}}}
\nc{\hW}{{\hat{W}}}
\nc{\hX}{{\widehat{X}}}
\nc{\hY}{{\hat{Y}}}
\nc{\hZ}{{\widehat{Z}}}
\nc{\ha}{{\hat{a}}}
\nc{\hb}{{\hat{b}}}
\nc{\hc}{{\hat{c}}}
\nc{\hd}{{\hat{d}}}
\nc{\he}{{\hat{e}}}
\nc{\hg}{{\hat{g}}}
\nc{\hh}{{\hat{h}}}
\nc{\hi}{{\hat{i}}}
\nc{\hj}{{\hat{j}}}
\nc{\hk}{{\hat{k}}}
\nc{\hl}{{\hat{l}}}
\nc{\hm}{{\hat{m}}}
\nc{\hn}{{\hat{n}}}
\nc{\ho}{{\hat{o}}}
\nc{\hp}{{\hat{p}}}
\nc{\hq}{{\hat{q}}}
\nc{\hr}{{\hat{r}}}
\nc{\hs}{{\hat{s}}}
\nc{\hu}{{\hat{u}}}
\nc{\hv}{{\hat{v}}}
\nc{\hw}{{\hat{w}}}
\nc{\hx}{{\hat{x}}}
\nc{\hy}{{\hat{y}}}
\nc{\hz}{{\hat{z}}}
\nc{\hcC}{{\widehat{\cC}}}
\nc{\hcT}{{\widehat{\cT}}}
\nc{\eps}{\varepsilon}
\nc{\lan}{\big\langle}
\nc{\ran}{\big\rangle}
\nc{\kk}{{\Bbbk}}
\nc{\io}{\upiota}
\nc{\Kr}{\mathsf{Kr}}
\nc{\cKr}{\mathcal{K}\!\mathit{r}}
\nc{\Dm}{\bD^{-}}
\nc{\Db}{\bD^{\mathrm{b}}}
\nc{\Dp}{\bD^{\mathrm{perf}}}
\nc{\Dperf}{\bD^{\mathrm{perf}}}
\nc{\Dqc}{\bD_{\mathrm{qc}}}
\nc{\Du}{\bD}
\nc{\Dsing}{\bD^{\mathrm{sg}}}
\nc{\Dg}{\bD^{\mathrm{sg}}}
\newcommand{\can}{\mathrm{can}}
\nc{\Rn}{\rR_{\mathrm{node}}}
\nc{\Cn}{\cC_{\mathrm{node}}}
\nc{\prU}{\partial\rU}
\def\bw#1#2{\textstyle{\bigwedge\hskip-0.9mm^{#1}}\hskip0.2mm{#2}}
\DeclareMathOperator{\Hom}{\mathrm{Hom}}
\DeclareMathOperator{\Ext}{\mathrm{Ext}}
\DeclareMathOperator{\Spec}{\mathrm{Spec}}
\DeclareMathOperator{\Coh}{\mathrm{Coh}}
\DeclareMathOperator{\Bl}{\mathrm{Bl}}
\DeclareMathOperator{\Sing}{\mathrm{Sing}}
\DeclareMathOperator{\VB}{\mathrm{VB}}
\DeclareMathOperator{\Pic}{\mathrm{Pic}}
\DeclareMathOperator{\rKt}{\mathrm{K}_1^{\mathrm{top}}}
\DeclareMathOperator{\rKn}{\mathrm{K}_0^{\mathrm{num}}}
\DeclareMathOperator{\bPic}{\mathbf{Pic}}
\DeclareMathOperator{\Alb}{\mathrm{Alb}}
\DeclareMathOperator{\Jac}{\mathrm{Jac}}
\DeclareMathOperator{\Cl}{\mathrm{Cl}}
\DeclareMathOperator{\CH}{\mathrm{CH}}
\DeclareMathOperator{\Bs}{\mathrm{Bs}}
\nc{\Dfd}[1]{\bD_{\mathrm{fd}}(#1)}
\DeclareMathOperator{\Sym}{\mathrm{Sym}}
\DeclareMathOperator{\Cone}{\mathrm{Cone}}
\DeclareMathOperator{\Gr}{\mathrm{Gr}}
\DeclareMathOperator{\CGr}{\mathrm{CGr}}
\DeclareMathOperator{\PGL}{\mathrm{PGL}}
\DeclareMathOperator{\spe}{\mathbf{sp}}
\DeclareMathOperator{\rank}{\mathrm{rk}}
\DeclareMathOperator{\Br}{\mathrm{Br}}
\DeclareMathOperator{\Qu}{\mathsf{Qu}}
\DeclareMathOperator{\Sch}{\mathrm{Sch}}
\DeclareMathOperator{\cMuk}{\mathcal{M}\!\mathit{u}}
\DeclareMathOperator{\MF}{\mathrm{MF}}
\DeclareMathOperator{\tMF}{\mathrm{MFC}}
\DeclareMathOperator{\MT}{\mathrm{MTrCat}}
\DeclareMathOperator{\brM}{\overline{\mathrm{M}}}
\DeclareMathOperator{\bMF}{\overline{\mathrm{MF}}}
\DeclareMathOperator{\bMFM}{\overline{\mathrm{MFM}}}
\DeclareMathOperator{\MFM}{\mathrm{MFM}}
\DeclareMathOperator{\MQ}{\mathrm{MQ}}
\DeclareMathOperator{\bMQ}{\overline{\mathrm{MQ}}}
\DeclareMathOperator{\MQS}{\mathrm{MQS}}
\newcommand{\pmu}{\mathrm{p}_{\mathrm{Mu}}}
\nc{\tor}{{\mathrm{tor}}}
\def\Pinfty#1{\P^{\infty,{#1}}}
\def\d{\mathrm{d}}
\def\g{\mathrm{g}}
\theoremstyle{plain}
\newtheorem{theorem}{Theorem}[section]
\newtheorem{conjecture}[theorem]{Conjecture}
\newtheorem{problem}[theorem]{Problem}
\newtheorem{lemma}[theorem]{Lemma}
\newtheorem{proposition}[theorem]{Proposition}
\newtheorem{corollary}[theorem]{Corollary}
\theoremstyle{definition}
\newtheorem{definition}[theorem]{Definition}
\theoremstyle{remark}
\newtheorem{remark}[theorem]{Remark}
\newtheorem{setup}[theorem]{Setup}
\newenvironment{renumerate}{\begin{enumerate}[label={\textup{(\roman*)}}]}{\end{enumerate}}
\newenvironment{alenumerate}{\begin{enumerate}[label={\textup{(\alph*)}}]}{\end{enumerate}}
\title{Derived categories of Fano threefolds and degenerations}
\author{Alexander Kuznetsov}
\address{{\sloppy
\parbox{0.9\textwidth}{
Algebraic Geometry Section, Steklov Mathematical Institute of Russian Academy of Sciences,\\
8 Gubkin str., Moscow 119991 Russia
\hfill\\[5pt]
Laboratory of Algebraic Geometry, National Research University Higher School of Economics, Russian Federation
}\bigskip}}
\email{akuznet@mi-ras.ru}
\date{}
\author{Evgeny Shinder}
\address{{\sloppy
\parbox{0.9\textwidth}{
School of Mathematics and Statistics, University of Sheffield,
Hounsfield Road, S3 7RH, UK, and
Hausdorff Center for Mathematics
at the University of Bonn, Endenicher Allee 60, 53115.
}\bigskip}}
\email{eugene.shinder@gmail.com}
\thanks{A.K. was partially supported by the HSE University Basic Research Program.
E.S. was partially supported by the EPSRC grant
EP/T019379/1 ``Derived categories and algebraic K-theory of singularities'', and by the
ERC Synergy grant ``Modern Aspects of Geometry: Categories, Cycles and Cohomology of Hyperk\"ahler Varieties".}
\begin{document}

\begin{abstract}
Using the technique of categorical absorption of singularities 
we prove that the nontrivial components of the derived categories of del Pezzo threefolds of degree~$d \in \{2,3,4,5\}$
and crepant categorical resolutions of the nontrivial components 
of the derived categories of nodal del Pezzo threefolds of degree~\mbox{$d = 1$}
can be smoothly deformed to the nontrivial components 
of the derived categories of prime Fano threefolds of genus~\mbox{$g = 2d + 2 \in \{4,6,8,10,12\}$}.
This corrects and proves the Fano threefolds conjecture of the first author from~\cite{K09},
and opens a way to interesting geometric applications, 
including a relation between the intermediate Jacobians and Hilbert schemes of curves of the above threefolds.
We also describe a compactification of the moduli stack of prime Fano threefolds endowed with an appropriate exceptional bundle
and its boundary component that corresponds to degenerations associated with del Pezzo threefolds.
\end{abstract}

\maketitle

\setcounter{tocdepth}{1}

\tableofcontents

%%%%%%%%%%%%%%%%%%%%%%%%%%%%%%%%%%%%%%%%%%%%%%%%%%%%%%%%%%%%%%%%%%%%%%%%%%%%%%%%%
%%%%%%%%%%%%%%%%%%%%%%%%%%%%%%%%%%%%%%%%%%%%%%%%%%%%%%%%%%%%%%%%%%%%%%%%%%%%%%%%%
%%%%%%%%%%%%%%%%%%%%%%%%%%%%%%%%%%%%%%%%%%%%%%%%%%%%%%%%%%%%%%%%%%%%%%%%%%%%%%%%%

\section{Introduction}

\subsection{Fano threefolds and their derived categories}

The main characters of this paper are smooth or nodal Fano threefolds with Picard number~$1$ and index~$1$ or~$2$, i.e.,
\begin{itemize}
\item 
$5$ families of \emph{del Pezzo} threefolds,
i.e., threefolds~$Y$ with the Picard group~$\Pic(Y)$ generated 
by the half of the anticanonical class~$H \coloneqq -\frac12K_Y$, 
classified by the degree
\begin{equation*}
\d(Y) \coloneqq H^3 = \tfrac18(-K_Y)^3 \in \{1,2,3,4,5\}
\end{equation*}
(see~\S\ref{sec:bridge} for a more detailed description), 
and
\item 
$10$ families of \emph{prime} Fano threefolds,
i.e., threefolds~$X$ with the Picard group~$\Pic(X)$ generated by the anticanonical class~$-K_X$,
classified by the genus
\begin{equation*}
\g(X) \coloneqq \tfrac12(-K_X)^3 + 1 \in \{2,3,4,5,6,7,8,9,10,12\}
\end{equation*}
(see~\S\ref{sec:stacks} for a more detailed description for~$\g(X) \in \{4,6,8,10,12\}$), 
so that~$(-K_X)^3 = 2\g(X) - 2$.
\end{itemize}

Since any terminal Gorenstein Fano threefold is \emph{smoothable} by~\cite{Na97}, 
we can consider singular varieties of this type as degenerations of smooth varieties.
The main advance of this paper is the discovery of a relation between derived categories 
associated with some of these degenerations.

A systematic study of derived categories of smooth Fano threefolds was initiated in~\cite{K09}, 
where it was shown that
\begin{itemize}
\item 
If~$X_{g}$ is a prime Fano threefold with~$\g(X_g) = g \in \{4,6,7,8,9,10,12\}$
(for~$g = 4$ the corresponding threefold should be general, see Proposition~\ref{prop:mukai-bundles}), 
there is a semiorthogonal decomposition
\begin{equation}
\label{eq:dbx-i1}
\Db(X_g) = \langle \cA_{X_{g}}, \cO_{X_{g}}, \cU^\vee_{X_{g}} \rangle,
\end{equation}
where~$\cU_{X_{g}}$ is the \emph{Mukai bundle}, see Definition~\ref{def:mukai} below (if~$g = 4$ and~$X_4$ is general 
there are two different Mukai bundles, hence two different decompositions of the form~\eqref{eq:dbx-i1}).
\item 
If~$X_g$ is a prime Fano threefold with~$g \in \{2,3,5\}$ (or a special threefold with~$g = 4$)
there is only a more coarse semiorthogonal decomposition
\begin{equation*}
\Db(X_g) = \langle \cA_{X_{g}}, \cO_{X_{g}} \rangle.
\end{equation*}
\item 
If~$Y_d$ is a del Pezzo threefold with~$\d(Y_d) = d$, $1 \le d \le 5$, there is a semiorthogonal decomposition
\begin{equation}
\label{eq:dby-i2}
\Db(Y_d) = \langle \cB_{Y_d}, \cO_{Y_d}, \cO_{Y_d}(H) \rangle.
\end{equation}
\end{itemize}

The components~$\cA_{X_g}$ and~$\cB_{Y_d}$ of these decompositions 
encode the most important geometric properties of the corresponding varieties~$X_g$ and~$Y_d$.
For instance, one can detect rationality of the varieties~$X_g$ or~$Y_d$ 
from the properties of their components~$\cA_{X_g}$ and~$\cB_{Y_d}$, see~\cite{K16}.

In some cases, the components~$\cA_{X_g}$ have an explicit description: 
\begin{equation*}
\cA_{X_7} \simeq \Db(C_7(X)),
\qquad 
\cA_{X_9} \simeq \Db(C_3(X)),
\qquad 
\cA_{X_{10}} \simeq \Db(C_2(X)),
\qquad 
\cA_{X_{12}} \simeq \Db(\Qu_3),
\end{equation*}
where~$C_g(X)$ is a (smooth proper) curve of genus~$g$ depending on~$X$
and~$\Qu_3$ is the quiver with~$2$ vertices and~$3$ arrows (which can be thought of as a \emph{noncommutative curve}),
see~\cite[\S\S6.2--6.4]{K06} for the first three equivalences 
and~\cite[Theorem~4.1]{K09} (summarizing the results from~\cite[Theorem~3]{K96} and~\cite[Theorem~2]{K97}) for the fourth.
Similarly, for the categories~$\cB_{Y_d}$ we have
\begin{equation*}
\cB_{Y_4} \simeq \Db(C_2(Y)),
\qquad 
\cB_{Y_5} \simeq \Db(\Qu_3),
\end{equation*}
see~\cite[Theorem~2.9]{BO95} or~\cite[\S6.5]{K06}, or~\cite[Corollary~5.7]{K08} for the first,
and~\cite{O91} or~\cite[\S6.1]{K06} for the second.
In particular, we have equivalences
\begin{equation}
\label{eq:cax1012-cby45}
\cA_{X_{10}} \simeq \cB_{Y_4}
\qquad\text{and}\qquad
\cA_{X_{12}} \simeq \cB_{Y_5}
\end{equation}
for pairs~$(X_{10},Y_4)$ such that~$C_2(X) \cong C_2(Y)$ and all pairs~$(X_{12},Y_5)$.
In the other cases no explicit description of the categories~$\cA_{X_g}$ or~$\cB_{Y_d}$ is available, 
but still one can prove yet another equivalence
\begin{equation}
\label{eq:cax8-cby3}
\cA_{X_8} \simeq \cB_{Y_3}
\end{equation}
for many pairs~$(X_8,Y_3)$, see~\cite[Theorem~3.17]{K04} or~\cite[Theorem~4.7]{K09}.

\subsection{Fano threefolds conjecture}

Motivated by the equivalences~\eqref{eq:cax1012-cby45} and~\eqref{eq:cax8-cby3}
and by some numerical coincidences, 
the first author suggested in~\cite[Conjecture~3.7]{K09}
so-called ``Fano threefolds conjecture'', saying
that for any~$1 \le d \le 5$ there is an equivalence
\begin{equation*}
\cA_{X_{2d+2}} \simeq \cB_{Y_d}
\end{equation*}
for ``many'' pairs~$(X_{2d+2},Y_d)$.
To explain what ``many'' means here, we need to define appropriate moduli spaces.
We introduce the necessary definitions in a slightly more general form, 
allowing for singularities and dealing with the stack structure of the moduli spaces;
this will be useful later.

\begin{definition}
\label{def:mf}
The moduli stacks~$\bMF_{\sX_g}$ and~$\bMF_{\sY_d}$ of {\sf nodal prime Fano threefolds of genus~$g$} or 
{\sf nodal del Pezzo threefolds of degree~$d$} are the fibered categories over~$(\Sch/\kk)$ 
with fiber over a scheme~$S$ the groupoid of flat projective morphisms of schemes~$f \colon \cX \to S$
such that for every geometric point~$s \in S$ the scheme~$\cX_s$ 
is a prime Fano threefold of genus~$g$ or a del Pezzo threefold of degree~$d$, respectively, 
with only (isolated) ordinary double points as singularities.
A morphism from~$f \colon \cX \to S$ to~$f' \colon \cX' \to S'$ is a fiber product diagram
\begin{equation*}
\xymatrix{
\cX \ar[r]^{\phi_\cX} \ar[d]_{f} &
\cX' \ar[d]^{f'} 
\\
S \ar[r]^{\phi} &
S'.
}
\end{equation*}
The moduli stacks~$\MF_{\sX_g} \subset \bMF_{\sX_g}$ and~$\MF_{\sY_d} \subset \bMF_{\sY_d}$ 
of {\sf smooth prime Fano threefolds} or {\sf smooth del Pezzo threefolds}
are defined as the substacks of morphisms~$f \colon \cX \to S$ such that~$f$ is smooth.
\end{definition}

In the case of prime Fano threefolds of even genus we will need an upgrade of the stack~$\bMF_{\sX_g}$;
to define it we need the following notion:

\begin{definition}
\label{def:mukai}
Let~$X$ be a nodal Fano threefold of even genus~$g$ over a field~$\kk$.
A {\sf Mukai bundle} on~$X$ is a $(-K_X)$-stable vector bundle~$\cU$
such that
\begin{equation}
\label{eq:mukai}
\rank(\cU) = 2,
\qquad
\rc_1(\cU) = K_{X},
\qquad
\rH^\bullet(X,\cU) = 0,
\qquad\text{and}\qquad
\Ext^\bullet(\cU,\cU) = \kk.
\end{equation}
Note that if~$\cU$ is a Mukai bundle on~$X$ then the pair~$(\cO_X,\cU^\vee)$ is exceptional.
\end{definition}

Given a morphism~$\cX \to S$ we define the {\sf \'etale sheaf~$\VB_{\cX/S}$ of vector bundles}
as the \'etale sheafification of the presheaf~on $S$ that takes an \'etale morphism~$S' \to S$ 
to the set of isomorphism classes of vector bundles on~$\cX \times_S S'$;
this is analogous to the widely used \'etale sheaf~$\Pic_{\cX/S}$ of line bundles (\cite[\S9.2]{Kleiman}).
A global section~$\cE$ of~$\VB_{\cX/S}$ over~$S$ is, by definition, the data of an \'etale covering~$\{S_i\} \to S$
and a collection of vector bundles~$\cE_i$ on~$\cX \times_S S_i$ whose pullbacks to~$\cX \times_S (S_i \times_S S_j)$ are isomorphic.
For a global section~\mbox{$\cE = (S_i,\cE_i) \in \VB_{\cX/S}(S)$} and a geometric point~$s \in S$
we denote by~$\cE_{\cX_s}$ the vector bundle on~$\cX_s$ defined as the restriction 
of the local section~$\cE_i$ of~$\cE$ from any open~$\cX \times_S S_i$ over~$s$.

\begin{definition}
\label{def:mfm}
Let~$g \in \{4,6,8,10,12\}$.
The {\sf moduli stack~$\bMFM_{\sX_g}$ of Fano--Mukai pairs} 
is the fibered category over~$(\Sch/\kk)$ with fiber over a scheme~$S$ 
the groupoid of pairs~$(f \colon \cX \to S, \cU)$,
where
\begin{itemize}
\item 
$f \colon \cX \to S$ is a flat projective
morphism of schemes and
\item 
$\cU \in \VB_{\cX/S}(S)$ is a global section of the \'etale sheaf of vector bundles,
\end{itemize}
such that~$f \colon \cX \to S$ is an $S$-point of~$\bMF_{\sX_g}$
and~$\cU_{\cX_s}$ is a Mukai bundle on~$\cX_s$ for every geometric point~$s \in S$.
Morphisms are defined as fiber product diagrams as in Definition~\ref{def:mf}
such that~$\phi_\cX^*(\cU') = \cU$ as sections of~$\VB_{\cX/S}(S)$.
The stack~$\MFM_{\sX_g} \subset \bMFM_{\sX_g}$ of {\sf smooth Fano--Mukai pairs}
is defined as the substack of pairs~$(f \colon \cX \to S, \cU)$ such that~$f$ is smooth.
\end{definition}

The Fano threefolds conjecture~\cite[Conjecture~3.7]{K09} claimed that for~$1 \le d \le 5$ there is a substack
\begin{equation}
\label{eq:def-zd}
\rZ_d \subset \MFM_{\sX_{2d+2}} \times \MF_{\sY_d}
\end{equation}
such that for each geometric point~$(X_{2d+2},\cU_X,Y_d)$ of~$\rZ_d$ 
there is an equivalence of categories~$\cA_{X_{2d+2}} \simeq \cB_{Y_d}$ 
defined in~\eqref{eq:dbx-i1} and~\eqref{eq:dby-i2}, respectively,
and that~$\rZ_d$ is \emph{dominant} over both factors.

The equivalences~\eqref{eq:cax1012-cby45} and~\eqref{eq:cax8-cby3} proved the cases~$3 \le d \le 5$ of the conjecture.
So only the cases~$d = 2$ and~$d = 1$ were left open.
However, for~$d = 1$ the dimensions of the intermediate Jacobians of~$X_4$ and~$Y_1$ differ by~$1$,
so the components~$\cA_{X_4}$ and~$\cB_{Y_1}$ have no chance to be equivalent on the nose.
Furthermore, for~$d = 2$ the conjecture also turned out to be wrong:
it was recently disproved in~\cite{Zhang,BP22};
in fact, it was shown that the categories~$\cA_{X_6}$ and~$\cB_{Y_2}$ are never equivalent.

\subsection{Modified Fano threefolds conjecture}
\label{ss:intro-modified-conjecture}

The goal of this paper is to explain a modification of the Fano threefolds conjecture and prove it.
The key idea is to include into consideration singular Fano threefolds
(this is why we extended Definitions~\ref{def:mf} and~\ref{def:mfm} to include singular threefolds),
and to rely on the technique of categorical absorption of singularities developed
in~\cite{KS22:abs}.

Our main theorem, stated below, uses the notion of base change for $B$-linear semiorthogonal decompositions for which we refer to~\cite{K11}. 
It also uses a natural crepant categorical resolution~$\tcB_Y$
(see Lemma~\ref{lem:tcb-y} for its construction)
of the category~$\cB_Y$ for a $1$-nodal del Pezzo threefold of degree~$1$ 
(that is defined by~\eqref{eq:dby-i2}, like in the smooth case).

\begin{theorem}[{= Theorem~\textup{\ref{thm:cax-family}}}]
\label{thm:intro-simple}
Let~$Y$ be a smooth del Pezzo threefold of degree~$2 \le d \le 5$ or a $1$-nodal del Pezzo threefold of degree~$d = 1$.
There is a $B$-point~$(f \colon \cX \to B, \cU_\cX)$ of~$\bMFM_{\sX_{2d + 2}}$ for a smooth pointed curve~$(B,o)$ 
and a $B$-linear semiorthogonal decomposition
\begin{equation*}
\Db(\cX) = \langle \Db(o), \bcA_\cX, f^*\Db(B), f^*\Db(B) \otimes \cU_\cX^\vee \rangle
\end{equation*}
such that 
\begin{alenumerate}
\item 
\label{it:thm-simple-open}
for each point~$b \ne o$ the fiber~$\cX_b$ is a smooth prime Fano threefold of genus~$2d + 2$
and~$(\bcA_\cX)_b \simeq \cA_{\cX_b}$\textup;
\item 
\label{it:thm-simple-closed}
the fiber~$\cX_o$ is $1$-nodal and~$(\bcA_\cX)_o \simeq \cB_Y$ if~$2 \le d \le 5$ 
or~$(\bcA_\cX)_o \simeq \tcB_Y$ if~$d = 1$.
\end{alenumerate}
In particular, the category~$\bcA_\cX$ is smooth and proper over~$B$.
\end{theorem}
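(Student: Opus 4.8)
The plan is to construct the family $\cX \to B$ explicitly and then track the semiorthogonal decomposition through the degeneration, using the absorption-of-singularities technology. The starting point is the given del Pezzo threefold $Y = Y_d$, whose derived category has the decomposition $\Db(Y_d) = \langle \cB_{Y_d}, \cO_{Y_d}, \cO_{Y_d}(H)\rangle$. The key geometric input must be a ``bridge'' construction (alluded to in \S\ref{sec:bridge}) realizing a one-parameter family whose general fiber is a smooth prime Fano threefold $X_{2d+2}$ of genus $g = 2d+2$ and whose special fiber is a degeneration built from $Y_d$. I expect this to come from a linear-algebraic or projective correspondence between the two classes of Fano threefolds — for instance, $X_{2d+2}$ and $Y_d$ should sit inside a common ambient (a Grassmannian, or a projective bundle over one of them) so that varying a defining section or a hyperplane produces a flat family $f\colon \cX \to B$ over a smooth pointed curve $(B,o)$ with $\cX_b = X_{2d+2}$ for $b \neq o$ and $\cX_o$ a $1$-nodal specialization related to $Y_d$.

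First I would set up the relative exceptional objects. The structure sheaf $\cO_\cX$ and the relative Mukai bundle $\cU_\cX$ must be produced as a $B$-point of $\bMFM_{\sX_{2d+2}}$, i.e.\ $\cU_\cX$ is a global section of $\VB_{\cX/B}(B)$ restricting to a Mukai bundle on each fiber (Definition~\ref{def:mukai}); the even-genus hypothesis $g \in \{4,6,8,10,12\}$ is exactly what guarantees fiberwise existence and rigidity of $\cU$, via Proposition~\ref{prop:mukai-bundles}. Granting this, relative Beilinson-type vanishing gives the semiorthogonal sequence $\langle f^*\Db(B),\, f^*\Db(B)\otimes\cU_\cX^\vee\rangle$ and one defines $\bcA_\cX$ as its right orthogonal inside the subcategory left-orthogonal to the ``new'' component $\Db(o)$ (the skyscraper at the node, reflecting that $\cX$ itself is singular only along the node of $\cX_o$). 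The decomposition $\Db(\cX) = \langle \Db(o), \bcA_\cX, f^*\Db(B), f^*\Db(B)\otimes\cU_\cX^\vee\rangle$ is then forced by semiorthogonality once the orthogonality relations among $\cO_\cX$, $\cU_\cX^\vee$ and the skyscraper are checked using \eqref{eq:mukai}.

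The heart of the argument is the fiberwise identification in \ref{it:thm-simple-open} and \ref{it:thm-simple-closed}, where I would invoke base change for $B$-linear semiorthogonal decompositions (\cite{K11}). Over $b\neq o$ the fiber $\cX_b$ is smooth, the skyscraper $\Db(o)$ contributes nothing, and base change identifies $(\bcA_\cX)_b$ with the component $\cA_{\cX_b}$ of \eqref{eq:dbx-i1}. Over the central point the node forces the appearance of the extra $\Db(o)$ factor: this is precisely the \emph{categorical absorption} phenomenon of \cite{KS22:abs}, in which the singularity of the total space is ``absorbed'' by an exceptional (indeed, $\P^\infty$-like or spherelike) object supported at the node, so that the residual component $(\bcA_\cX)_o$ is smooth and proper even though $\cX_o$ is singular. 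Identifying $(\bcA_\cX)_o$ with $\cB_Y$ (for $2\le d\le 5$) or with its crepant categorical resolution $\tcB_Y$ (for $d=1$, via Lemma~\ref{lem:tcb-y}) requires comparing the special fiber of the absorbed decomposition with the intrinsic decomposition \eqref{eq:dby-i2} of the nodal del Pezzo — again a base-change computation, now for the degenerate fiber.

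\textbf{The main obstacle} I anticipate is the central-fiber analysis: one must show both that the node of $\cX_o$ is genuinely absorbable (the relevant object is a $\P^\infty$-object in the sense of \cite{KS22:abs}, which demands a precise computation of its self-$\Ext$ algebra using the local structure of the ordinary double point) and that the residual category matches $\cB_Y$ resp.\ $\tcB_Y$ on the nose. The $d=1$ case is subtler because $\cB_Y$ is itself singular — the discrepancy in intermediate Jacobian dimensions noted in the introduction is exactly why a crepant categorical resolution $\tcB_Y$, rather than $\cB_Y$, appears — so I would treat $d=1$ by first constructing $\tcB_Y$ as in Lemma~\ref{lem:tcb-y} and then checking that absorption produces this resolution and not the singular category. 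The concluding assertion that $\bcA_\cX$ is smooth and proper over $B$ then follows formally: properness from properness of $f$ together with admissibility of the decomposition, and smoothness because absorbing the node removes the only source of singularity, leaving a component whose relative Serre functor and perfectness can be verified fiberwise.
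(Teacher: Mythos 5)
Your overall architecture (relative exceptional pair, an absorbing object supported on the central fiber, base change to identify the fibers) matches the paper, but there are two concrete gaps that would prevent the argument from going through as written.

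First, the special fiber is not produced by a ``common ambient'' or section-varying construction, and guessing at one leaves you with no handle on the central fiber. The paper's bridge (\S\ref{sec:bridge}) goes in the opposite direction: starting from~$(Y,C)$ (or~$(Y,F)$ when~$d=1$) one forms~$\tY=\Bl_C(Y)$ and proves (Proposition~\ref{prop:x-y}) that~$|2H-E|$ defines a small contraction~$\pi\colon\tY\to X$ onto a $1$-nodal, maximally nonfactorial prime Fano threefold of genus~$2d+2$; the family~$f\colon\cX\to B$ is then an \emph{abstract} smoothing of this~$X$ (Theorem~\ref{thm:smoothing}, Namikawa), and the Mukai bundle is first built on~$X$ as~$\pi_*\cU_\tY$ (Lemma~\ref{lemma:mutation-cu}, Proposition~\ref{prop:mutations-1}) and only then extended to~$\cX$ using the fact that~$\Ext^\bullet(\cU_X,\cU_X)=\kk$ makes the relative moduli space of stable sheaves \'etale over~$B$ at~$[\cU_X]$. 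Without the small resolution~$\pi\colon\tY\to X$ there is no mechanism for comparing~$(\bcA_\cX)_o$ with~$\cB_Y$: the identification is carried out by an explicit chain of mutations in~$\Db(\tY)$ (Steps~1--4 of Proposition~\ref{prop:mutations-1}), involving the Koszul complex of the flopping curve~$\tL$ and the spherical twist~$\bT_{\cO_\tL(-1)}$, together with the description of~$\pi_*$ as a Verdier localization by~$\cO_\tL(-1)$. ``A base-change computation for the degenerate fiber'' does not substitute for this.

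Second, your description of the component~$\Db(o)$ as ``the skyscraper at the node, reflecting that~$\cX$ itself is singular only along the node of~$\cX_o$'' is wrong in a way that matters. In the paper the total space~$\cX$ is \emph{smooth}; the singularity lives only in the fiber~$\cX_o\cong X$. The copy of~$\Db(o)$ is generated by~$\io_*\rP_X$, where~$\rP_X=\pi_*\cO_\tY(E-H)$ is a~$\Pinfty2$-object on~$X$ absorbing its node, and the point of the deformation-absorption theorem of~\cite{KS22:abs} is precisely that the pushforward of a~$\Pinfty2$-object along the inclusion of a Cartier divisor in a smooth total space becomes \emph{exceptional}. The skyscraper sheaf of the node is not exceptional in~$\Db(\cX)$ and cannot play this role; nor would the orthogonality relations you propose to check against~\eqref{eq:mukai} hold for it. Your closing remarks on smoothness and properness of~$\bcA_\cX$, and on why~$\tcB_Y$ rather than~$\cB_Y$ appears when~$d=1$, are correct in spirit, but they rest on the two missing inputs above.
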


In other words, the category~$\bcA_\cX$ provides a (smooth and proper) interpolation 
between the components~$\cA_{\cX_b}$ of smooth prime Fano threefolds of genus~$g = 2d + 2$ defined by~\eqref{eq:dbx-i1}
and the components~$\cB_Y$ (or their crepant categorical resolutions~$\tcB_Y$) 
of del Pezzo threefolds of degree~$d$ defined by~\eqref{eq:dby-i2}.

\begin{remark}
\label{rem:isotrivial}
We do not prove this here, but in the case~$d \ge 3$ 
one can choose the $B$-point~\mbox{$(f \colon \cX \to B, \cU_\cX)$} of the stack~$\bMFM_{\sX_{2d+2}}$
in such a way that the family of categories~$\bcA_\cX$ is \emph{isotrivial},
i.e., \mbox{$(\bcA_\cX)_b \simeq (\bcA_\cX)_o$} for all~$b$;
then~$\cA_{\cX_b} \simeq \cB_Y$, which implies that 
the correspondence~$\rZ_d \subset \MFM_{\sX_{2d+2}} \times \MF_{\sY_d}$ defined in~\eqref{eq:def-zd}
is dominant over~$\MF_{\sY_d}$.
\end{remark}

In fact, Theorem~\ref{thm:intro-simple} can be deduced by base change 
from a more general result describing the structure of the derived category
of more general families of Fano threefolds, see~\eqref{eq:dbcx-bca} and the preceding discussion.
Our techniques also allow us to prove similar extension results for the moduli stacks of curves
and moduli stacks of prime Fano threefolds of genus~$9$, $7$, and~$5$;
these results will be presented elsewhere.

A similar result in the case~$d = 2$ was proved by a completely different technique in~\cite[Theorem~1.6]{BP22}.
However, the approach of~\cite{BP22} only works for \emph{special}~$Y$ (for double solids whose ramification divisor contains a line)
and provides families~$\cX/B$ of \emph{special} threefolds of genus~$6$ (double covers of a quintic del Pezzo threefold),
but the category constructed in~\cite{BP22}  in this case  is equivalent to our category~$\bcA_\cX$.

\subsection{Geometric applications}

A smooth and proper family of complex varieties gives rise to a variation of pure Hodge structures
and, under appropriate assumptions, to a smooth and proper family of principally polarized abelian varieties.
In particular, the family of intermediate Jacobians 
of a smooth and proper family of Fano threefolds is also smooth and proper.
We expect the same to be true for the smooth and proper family of triangulated categories~$\bcA_\cX$
constructed in Theorem~\ref{thm:intro-simple}.

Recall that the intermediate Jacobian of an admissible triangulated subcategory~$\cC \subset \Db(Z)$
in the derived category of a smooth and proper complex variety~$Z$
was defined in~\cite[Definition~5.24]{Perry} as
\begin{equation*}
\cJ(\cC) = \cJ(\rKt(\cC)),
\end{equation*}
where~$\rKt(\cC)$ is the degree~$1$ component of the topological $\rK$-theory of categories, 
endowed with appropriately defined pure Hodge structure of weight~$-1$ (see~\cite[Proposition~5.4]{Perry}).
On the other hand, a relative version of topological $\rK$-theory of categories
was constructed by Moulinos in~\cite[\S7.2]{M19}.

\begin{conjecture}[{cf.~\cite[Remark~5.8]{Perry}}]
\label{conj:intro-relative-jacobian}
Let~$S$ be a scheme over~$\CC$ and let~$\cC/S$ be a smooth and proper $S$-linear triangulated category.
If for each point~$s \in S$ the category~$\cC_s$ is equivalent 
to an admissible subcategory in the derived category of a smooth and proper variety,
the relative topological $\rK$-theory~$\rKt(\cC/S)$ admits a variation of pure Hodge structures of weight~$-1$
which for each point~$s \in S$ agrees with the Hodge structure of~$\rKt(\cC_s)$.
In particular,
\begin{equation*}
\cJ(\cC/S) \coloneqq \cJ(\rKt(\cC/S))
\end{equation*}
is a smooth and proper family of complex tori, such that for each point~$s \in S$ we have~$\cJ(\cC/S)_s \cong \cJ(\cC_s)$.
\end{conjecture}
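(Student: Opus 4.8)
The plan is to assemble the variation out of three ingredients: Moulinos's relative topological $\mathrm{K}$-theory \cite{M19} as the underlying integral local system, relative periodic cyclic homology as the carrier of the Hodge filtration, and a relative form of Blanc's lattice comparison gluing the two. I would first show that, regarded as a sheaf of $\mathrm{KU}$-module spectra on the analytic site $S^{\mathrm{an}}$, the relative theory $\mathrm{K}^{\mathrm{top}}(\cC/S)$ is a local system whose stalk at a point $s$ recovers Blanc's $\mathrm{K}^{\mathrm{top}}(\cC_s)$; its degree-$1$ homotopy sheaf then yields a locally constant sheaf of finitely generated abelian groups $\rKt(\cC/S)$ with fibre $\rKt(\cC_s)$. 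Local constancy is the manifestation of smoothness and properness of $\cC/S$, and should follow either from a proper-smooth descent property of Moulinos's construction or from the flatness of the Gauss--Manin connection on the companion object below, whose monodromy defines the local system. That each stalk carries a weight $-1$ Hodge structure is precisely Perry's input \cite{Perry}, available because each $\cC_s$ is an admissible subcategory of $\Db(Z_s)$ for a smooth proper variety $Z_s$.

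Next I would produce the Hodge filtration in families. Relative periodic cyclic homology $\mathrm{HP}(\cC/S)$ is a $\ZZ/2$-graded algebraic vector bundle on $S$ carrying the Gauss--Manin connection together with the noncommutative Hodge filtration induced from relative negative cyclic homology $\mathrm{HC}^{-}(\cC/S)$; by Kaledin's degeneration theorem for smooth and proper dg-categories in characteristic zero this filtration is by subbundles and commutes with base change. The crux is then a relative lattice comparison, an isomorphism $\rKt(\cC/S)\otimes_{\ZZ}\cO_{S^{\mathrm{an}}}\cong\mathrm{HP}_{\mathrm{odd}}(\cC/S)^{\mathrm{an}}$ of $\cO_{S^{\mathrm{an}}}$-modules intertwining the flat connection of the left-hand side with the Gauss--Manin connection on the right. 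Transporting the Hodge filtration across this isomorphism equips $\rKt(\cC/S)\otimes\cO_{S^{\mathrm{an}}}$ with a holomorphic filtration by subbundles restricting fibrewise to Perry's filtration.

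It then remains to verify the two axioms of a variation. Griffiths transversality is built into the noncommutative Hodge package: the $u$-connection on $\mathrm{HC}^{-}$ lowers the filtration by at most one step, which translates into $\nabla F^{p}\subset F^{p-1}\otimes\Omega^{1}_{S^{\mathrm{an}}}$ for the Gauss--Manin connection, while flatness of $\nabla$ is the local constancy established above. This exhibits $\rKt(\cC/S)$ as a variation of pure Hodge structures of weight $-1$ with the prescribed fibres; applying the fibrewise construction $\cJ=V_{\CC}/(F^{0}+V_{\ZZ})$ relatively then produces the smooth proper family of complex tori $\cJ(\cC/S)$ and identifies $\cJ(\cC/S)_s\cong\cJ(\cC_s)$.

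I expect the main obstacle to be the relative lattice comparison, and within it the local constancy of the integral lattice. Fibrewise the comparison is Blanc's theorem for the geometric categories $\cC_s$ and is unproblematic; the difficulty is promoting it to an isomorphism of sheaves on $S^{\mathrm{an}}$ compatible with both connections, and showing that the integral structure glues to a genuine local system rather than a merely fibrewise datum. Since periodic cyclic homology is insensitive to the lattice, all integrality must be imported through Moulinos's construction, so the key technical point to establish is the compatibility of his relative topological $\mathrm{K}$-theory with the noncommutative Hodge filtration in families---a relative form of Blanc's degeneration comparison that is not available off the shelf.
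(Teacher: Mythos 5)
The statement you are proving is Conjecture~\ref{conj:intro-relative-jacobian}: the paper does not prove it. The authors state it as an open conjecture (cf.\ \cite[Remark~5.8]{Perry}) and deliberately route around it --- Corollary~\ref{cor:intro-jacobians} is \emph{not} deduced from the conjecture but is instead proved directly in the Appendix via a classical Hodge-theoretic argument (Proposition~\ref{prop:jacobian-extension}), using the explicit geometric construction of~$\cX$ from~$Y$, limit mixed Hodge structures, and the triviality of the monodromy around a $1$-nodal maximally nonfactorial degeneration. So there is no proof in the paper against which your argument can be matched; the relevant comparison is between your proposed strategy and the authors' decision to avoid the conjecture altogether.

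As for the proposal itself: it is a reasonable research program, and it assembles the right ingredients (Moulinos's relative~$\rKt$, relative~$\mathrm{HP}$ and~$\mathrm{HC}^-$ with the Gauss--Manin connection, Kaledin degeneration, Blanc's fibrewise lattice comparison, Perry's fibrewise Hodge structures), but it is not a proof, and you concede as much in your final paragraph. Three steps are genuinely missing rather than merely technical. First, local constancy of the integral sheaf~$\rKt(\cC/S)$: flatness of the Gauss--Manin connection only controls the $\CC$-linear companion and says nothing about the integral lattice, so this must come from a proper-smooth invariance property of Moulinos's construction that is not established. Second, the relative lattice comparison~$\rKt(\cC/S)\otimes\cO_{S^{\mathrm{an}}}\cong\mathrm{HP}_{\mathrm{odd}}(\cC/S)^{\mathrm{an}}$ intertwining the two connections; fibrewise this is Blanc's theorem for the geometric categories~$\cC_s$, but the family version is exactly the content of the conjecture and cannot be assumed. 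Third, the hypothesis only provides, for each~$s$, \emph{some} smooth proper~$Z_s$ with~$\cC_s\subset\Db(Z_s)$ admissible --- these~$Z_s$ need not fit into any family over~$S$ --- so you must also verify that Perry's filtration on~$\rKt(\cC_s)$, defined via the ambient~$Z_s$, coincides with the intrinsic noncommutative Hodge filtration you transport from~$\mathrm{HC}^-(\cC/S)$; this compatibility is asserted but not argued. Until these points are settled the statement remains, as in the paper, a conjecture.
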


Applying this to the smooth and proper family of categories~$\bcA_\cX$
and using~\cite[Theorem~1.6]{Perry} to identify the fibers, we would obtain the following

\begin{corollary}
\label{cor:intro-jacobians}
For~$Y$ and~$\cX$ constructed in Theorem~\textup{\ref{thm:intro-simple}}
there is a family~$\cJ \to B$ of principally polarized abelian varieties such that
\begin{equation*}
\cJ_b \cong 
\begin{cases}
\Jac(\cX_b), & \text{if~$b \ne o$},  
\\
\Jac(Y), & \text{if~$b = o$},
\end{cases}
\end{equation*}
where in the case~$d = 1$ the right-hand side is understood 
as~$\Jac(\Bl_{y_0}(Y))$, where~$y_0 \in Y$ is the node.
\end{corollary}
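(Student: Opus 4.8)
The proof is an application of Conjecture~\ref{conj:intro-relative-jacobian} to the $B$-linear category $\bcA_\cX$ furnished by Theorem~\ref{thm:intro-simple}, followed by a fiberwise geometric identification. First I would verify the hypotheses of the conjecture for $\cC/S \coloneqq \bcA_\cX/B$. Smoothness and properness over $B$ is the last assertion of Theorem~\ref{thm:intro-simple}, so it remains to check that every fiber is equivalent to an admissible subcategory in the derived category of a smooth and proper variety. For $b \ne o$ this is the inclusion $\cA_{\cX_b} \subset \Db(\cX_b)$ with $\cX_b$ smooth, and for $b = o$ with $2 \le d \le 5$ it is $\cB_Y \subset \Db(Y)$ with $Y$ smooth. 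The only delicate point is the case $d = 1$, $b = o$, where $(\bcA_\cX)_o \simeq \tcB_Y$ is merely a crepant categorical resolution; here I would use the construction of Lemma~\ref{lem:tcb-y} to realize $\tcB_Y$ as an admissible subcategory of $\Db$ of a genuine smooth model of the nodal threefold $Y$, namely (a resolution of) the blowup $\Bl_{y_0}(Y)$.

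Granting Conjecture~\ref{conj:intro-relative-jacobian}, I set $\cJ \coloneqq \cJ(\bcA_\cX/B) = \cJ(\rKt(\bcA_\cX/B))$. The conjecture endows the relative topological $\rK$-theory $\rKt(\bcA_\cX/B)$ with a variation of pure Hodge structures of weight $-1$, and hence produces a smooth and proper family of complex tori over $B$ whose fiber at each point $s$ is $\cJ((\bcA_\cX)_s)$. To compute these fibers I would invoke \cite[Theorem~1.6]{Perry}, which for a Fano threefold identifies the intermediate Jacobian of the nontrivial component of its derived category with the intermediate Jacobian of the threefold itself. This yields $\cJ(\cA_{\cX_b}) \cong \Jac(\cX_b)$ for $b \ne o$, $\cJ(\cB_Y) \cong \Jac(Y)$ when $2 \le d \le 5$, and $\cJ(\tcB_Y) \cong \Jac(\Bl_{y_0}(Y))$ when $d = 1$ — the last identification using that the weight $-1$ part of the $\rK$-theory of $\tcB_Y$ coincides with $\Ho^3$ of the resolution $\Bl_{y_0}(Y)$, the complementary semiorthogonal pieces contributing nothing in weight $-1$.

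It remains to promote the family of complex tori to a family of principally polarized abelian varieties. The required polarization is induced by the Euler pairing on $\rKt(\bcA_\cX/B)$, which in weight $-1$ specializes to the intersection form on the relevant $\Ho^3$; since this form is unimodular for each of the threefolds appearing above, every fiber $\cJ_s$ is a principally polarized abelian variety, matching $\Jac(\cX_b)$, $\Jac(Y)$, or $\Jac(\Bl_{y_0}(Y))$ as a \emph{polarized} variety.

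The principal obstacle is Conjecture~\ref{conj:intro-relative-jacobian} itself: constructing a variation of pure Hodge structures on the relative topological $\rK$-theory of a family of categories — building on Moulinos's relative construction in \cite{M19} — and matching it fiberwise with the absolute Hodge structures of \cite{Perry} is precisely what is \emph{not} established here, which is why the corollary is stated conditionally. A secondary difficulty, already signalled in the statement, is the case $d = 1$: because the fiber $\tcB_Y$ is only a categorical resolution, the naive identification with $\Jac(Y)$ fails by one dimension, and one is forced to reinterpret the right-hand side through the blowup of $Y$ at its node.
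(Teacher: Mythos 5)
Your argument has a genuine gap: it derives the corollary from Conjecture~\ref{conj:intro-relative-jacobian}, which is an \emph{open} conjecture, whereas the corollary is stated (and proved) in the paper unconditionally. The introduction explicitly flags this: the categorical route via the relative topological $\rK$-theory of $\bcA_\cX/B$ is only what one \emph{would} obtain if the conjecture were known, and the paper instead supplies a direct Hodge-theoretic proof in the Appendix. Since the existence of a variation of pure Hodge structures on $\rKt(\bcA_\cX/B)$ compatible fiberwise with Perry's Hodge structures is precisely the unproven part, your write-up establishes nothing unconditionally --- you have correctly identified the obstacle but not circumvented it.

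The paper's actual proof avoids the categorical machinery entirely. It takes the smoothing $f\colon\cX\to B$ from Theorem~\ref{thm:intro-simple}, notes that the central fiber $X$ is maximally nonfactorial (Remark~\ref{rem:nonfact} or Proposition~\ref{prop:mnf}), and applies Proposition~\ref{prop:jacobian-extension}: for a smoothing of a $1$-nodal maximally nonfactorial Fano threefold, the local system $\bR^3f_*\ZZ_\cX$ has trivial monodromy (because the vanishing $3$-sphere is homologically trivial, Lemma~\ref{lem:monodromy}), the limiting mixed Hodge structure at $o$ is pure and agrees with $\rH^3$ of a small resolution $\hX$ (via Lemma~\ref{lem:small-coh} and a semistable reduction/specialization argument), so one gets a smooth proper family of principally polarized abelian varieties with $\cJ_o\cong\Jac(\hX)$. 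Here the small resolution is $\tY$ from the bridge construction, and for $d\ge 2$ the map $\tY\to Y$ is the blowup of a smooth rational curve, whence $\Jac(\tY)\cong\Jac(Y)$; for $d=1$ one keeps $\Jac(\Bl_{y_0}(Y))$, exactly as in the statement. If you want an unconditional proof, this geometric route --- using the explicit construction of $\cX$ from $Y$ rather than the family of categories --- is the one to follow.
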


Although we would like to see Corollary~\ref{cor:intro-jacobians} 
as a consequence of Theorem~\ref{thm:intro-simple} and Conjecture~\ref{conj:intro-relative-jacobian},
there is a direct Hodge-theoretic proof, using the geometric construction of~$\cX$ from~$Y$;
we deduce it in the Appendix from a slightly more general Proposition~\ref{prop:jacobian-extension}.

Corollary~\ref{cor:intro-jacobians} provides a conceptual explanation for
the coincidences between the dimensions of the intermediate Jacobians 
that we list in the following table (Mukai calls it ``the periodic table of Fano threefolds'', see~\cite[Table~1.4]{Mukai02}).
\begin{equation*}
\begin{array}{|c|c|c|c|c|c|c|c|c|c|c|c|c|}
    \hline
    \text{Prime Fano threefolds} & \sX_{12} & \sX_{10} & \sX_9 & \sX_8 & \sX_7 & \sX_6 & \sX_5 & \sX_4 & - & \sX_3 & \sX_2 \\
    \hline
    \text{Del Pezzo threefolds} & \sY_5 & \sY_4 & - & \sY_3 & - & \sY_2 & - & - & \sY_1 & - & - \\
    \hline
    \dim(\Jac(-)) & 0 & 2 & 3 & 5 & 7 & 10 & 14 & 20 & 21 & 30 & 52\\
    \hline
\end{array}
\end{equation*}
Note how the discrepancy between~$\dim(\Jac(\sX_4))$ and~$\dim(\Jac(\sY_1))$
matches the fact that in Theorem~\ref{thm:intro-simple} and Corollary~\ref{cor:intro-jacobians}
we consider 1-nodal threefolds of type~$\sY_1$:
if~$Y$ is a 1-nodal and~$Y'$ is a smooth del Pezzo threefold of degree~$d = 1$, 
we have~$\dim(\Jac(Y)) = \dim(\Jac(Y')) - 1 = 20$.

\medskip 

Similarly, a proper family of varieties gives rise to various relative moduli spaces of stable coherent sheaves;
the same is true for families of categories if they are endowed with appropriate stability conditions,
see~\cite[Definition~1.1 and Theorem~21.24]{BLMNPS}.
We expect that stability conditions for the families of categories~$\bcA_\cX$ exist and give rise to interesting relative moduli spaces.

\begin{conjecture}
\label{conj:intro-moduli}
For a del Pezzo threefold~$Y$, a family of prime Fano threefolds~$\cX/B$, and the $B$-linear category~$\bcA_\cX/B$
constructed in Theorem~\textup{\ref{thm:intro-simple}},
the \'etale sheafification~$\rKn(\bcA_\cX / B)$ of the relative numerical Grothendieck group is locally constant,
and there is a numerical stability condition~$\underline{\upsigma}$
on~$\bcA_\cX$ over~$B$ such that the corresponding stability condition~$\underline{\upsigma}_b$ 
on the fiber~$(\bcA_\cX)_b$ of~$\bcA_\cX$ is
\begin{itemize}
\item 
a 
numerical stability condition on~$\cA_{\cX_b}$, if~$b \ne o$, and
\item 
a numerical stability condition on~$\cB_Y$ \textup(for~$2 \le d \le 5$\textup) or~$\tcB_Y$ \textup(for~$d = 1$\textup),
if~$b = o$.
\end{itemize}
Moreover, if~$\bv \in \rKn(\bcA_\cX / B)(B)$ is a section,
there is a moduli space~$\rM_{\underline{\upsigma}}(\bcA_\cX,\bv)$ over~$B$ such that
\begin{equation*}
\rM_{\underline{\upsigma}}(\bcA_\cX,\bv)_b \cong
\begin{cases}
\rM_{\underline{\upsigma}_b}(\cA_{\cX_b},\bv), & \text{if~$b \ne o$},
\\
\rM_{\underline{\upsigma}_o}(\cB_Y,\bv)\ \text{or}\ 
\rM_{\underline{\upsigma}_o}(\tcB_Y,\bv), & \text{if~$b = o$ and~$2 \le d \le 5$ or~$d = 1$}.
\end{cases}
\end{equation*}
In other words, the fibers of~$\rM_{\underline{\upsigma}}(\bcA_\cX,\bv)$ 
are appropriate moduli spaces of stable objects in~$\cA_{\cX_b}$, $\cB_Y$, or~$\tcB_Y$.
\end{conjecture}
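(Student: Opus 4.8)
The plan is to build the stability condition fibrewise, spread it out over~$B$ using the family framework of~\cite{BLMNPS}, and finally invoke their representability theorem to produce the relative moduli space. The three assertions of the conjecture — local constancy of~$\rKn(\bcA_\cX/B)$, existence of~$\underline{\upsigma}$, and existence of~$\rM_{\underline{\upsigma}}(\bcA_\cX,\bv)$ — would be established in this order, each feeding into the next.

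First I would treat the numerical lattice. Since~$\bcA_\cX$ is smooth and proper over the smooth connected curve~$B$ by Theorem~\ref{thm:intro-simple}, the relative Hom-sheaves and the relative Serre functor are compatible with base change, so the Euler pairing on the relative Grothendieck group is deformation invariant; its radical, and hence the numerical quotient, then organize into a sheaf of finitely generated abelian groups of constant rank. Over a smooth curve such a sheaf, after étale sheafification, is a local system, which is exactly the asserted local constancy; I would then take~$\bv$ to be a flat section of this local system.

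Next I would construct~$\underline{\upsigma}$. Over~$B \setminus \{o\}$ the fibres~$\cA_{\cX_b}$ are Kuznetsov components of smooth prime Fano threefolds, for which numerical stability conditions are available through the tilting construction and its refinements. The real task is to assemble these into a single stability condition on the total category that is compatible with base change: I would produce a relative torsion pair and the associated tilted heart in~$\bcA_\cX$, equip it with a relative central charge valued in the local system above, and then verify the support property uniformly in the family. By the openness result of~\cite{BLMNPS} this uniform support property forces the restriction to the closed fibre to remain a stability condition, which after the identification~$(\bcA_\cX)_o \simeq \tcB_Y$ of part~\ref{it:thm-simple-closed} of Theorem~\ref{thm:intro-simple} is a stability condition on~$\tcB_Y$. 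With~$\underline{\upsigma}$ in hand, the moduli space~$\rM_{\underline{\upsigma}}(\bcA_\cX,\bv)$ over~$B$ would be obtained by applying~\cite[Theorem~21.24]{BLMNPS} to the smooth and proper family~$\bcA_\cX/B$, and the identification of its fibres with~$\rM_{\underline{\upsigma}_b}(\cA_{\cX_b},\bv)$ and~$\rM_{\underline{\upsigma}_o}(\tcB_Y,\bv)$ is base change along the inclusion of a point.

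The main obstacle will be the closed fibre. Constructing a stability condition on the crepant categorical resolution~$\tcB_Y$ of the nodal del Pezzo component — and, more seriously, establishing the support property uniformly across the degeneration, so that no wall of the relative stability manifold is hit at~$o$ — is the crux of the matter. Because~$\tcB_Y$ is a genuinely noncommutative resolution with no underlying smooth projective model, the geometric inputs normally used to control the support property are unavailable, and it seems more promising to argue by deformation — showing that a stability condition on the nearby~$\cA_{\cX_b}$ admits a limit on~$\tcB_Y$ — than to construct one on~$\tcB_Y$ directly. A secondary difficulty is that the very existence of numerical stability conditions on~$\cA_{\cX_g}$ for all the relevant even genera~$g = 2d+2$ is not uniformly settled in the literature, so some cases may require an independent construction before the family argument can even begin.
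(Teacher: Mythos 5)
The statement you are addressing is Conjecture~\ref{conj:intro-moduli}: the paper states it as an open conjecture and offers no proof, so there is nothing in the paper to compare your argument against. What you have written is a strategy outline rather than a proof, and your own closing paragraph concedes the point: the ``main obstacle'' and ``secondary difficulty'' you identify are precisely the content of the conjecture, not technicalities to be dispatched after the framework is set up. Until a numerical stability condition actually exists on~$\tcB_Y$ (or is produced as a limit from the nearby fibers with a uniform support property), and until the existence of numerical stability conditions on~$\cA_{\cX_b}$ is settled for every genus~$g = 2d+2$ occurring here --- the case~$g = 4$, i.e.\ the component of a~$(2,3)$ complete intersection in~$\P^5$, being the most serious open case --- the middle step of your plan has no input, and the appeal to~\cite[Theorem~21.24]{BLMNPS} for the moduli space never gets off the ground.

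Two further points where your sketch asserts more than it establishes. First, the local constancy of~$\rKn(\bcA_\cX/B)$: you argue that deformation invariance of the Euler pairing makes the numerical quotient ``organize into'' a local system, but the relative numerical Grothendieck group of a family of categories is not a priori a sheaf of constant rank --- the rank of the numerical lattice of a fiber can jump under specialization, and ruling this out at~$b = o$ requires an actual computation (e.g.\ via topological K-theory or Hochschild homology of~$\tcB_Y$ versus~$\cA_{\cX_b}$), not a general principle. Second, the ``openness result'' of~\cite{BLMNPS} that you invoke to force the restriction to the closed fiber to remain a stability condition runs in the wrong direction for your purposes: openness lets you deform a stability condition from the special fiber to nearby ones, whereas you need to pass from the punctured family to the limit at~$o$, which is exactly the degeneration problem you flagged as unsolved. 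In short, your proposal is a reasonable roadmap for future work on the conjecture, but it is not a proof, and the paper does not claim one either.
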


If~$d = 2$, an interesting example of a moduli space associated with a 
stability condition
in the category~$\cA_X \subset \Db(X)$ for a general smooth Fano threefold~$X$ of genus~$2d + 2 = 6$ 
is the minimal model~$\rF^{\mathrm{min}}_2(X)$ of the Hilbert scheme of conics~$\rF_2(X)$ on~$X$, see~\cite[Theorem~7.12]{JLLZ}.
It is natural to expect that the analogous moduli space associated with the category~$\cB_Y \subset \Db(Y)$
is the Hilbert scheme of lines~$\rF_1(Y)$.
In this example we expect the following to be true.

\begin{conjecture}
\label{conj:intro-hilbert}
For a quartic double solid~$Y \to \P^3$ that contains no lines in the ramification divisor 
and a family of prime Fano threefolds~$\cX/B$ of genus~$g = 6$
constructed in Theorem~\textup{\ref{thm:intro-simple}} 
there is a stability condition~$\underline{\upsigma}$ on~$\bcA_\cX$ and a section~$\bv \in \rKn(\bcA_\cX / B)(B)$
such that the corresponding moduli space~$\rM_{\underline{\upsigma}}(\bcA_\cX,\bv)$ is
a smooth and proper family of surfaces~$\cF(\cX/B)$ such that
\begin{equation*}
\cF(\cX/B)_b \cong 
\begin{cases}
\rF^{\mathrm{min}}_2(\cX_b), & \text{if~$b \ne o$},\\
\rF_1(Y), & \text{if~$b = o$}.
\end{cases}
\end{equation*}
\end{conjecture}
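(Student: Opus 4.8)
The plan is to realize the family $\cF(\cX/B)$ as a relative Bridgeland moduli space over the whole base $B$, and then to identify its fibers by matching them with the two known moduli descriptions: that of $\rF^{\mathrm{min}}_2$ on the generic Gushel--Mukai fibers and that of $\rF_1$ on the central quartic double solid. Since $\cX_b$ for $b \ne o$ is a prime Fano threefold of genus $g = 6$, i.e.\ a Gushel--Mukai threefold, and $Y$ is a smooth del Pezzo threefold of degree $d = 2$, both endpoints of the problem have an established theory; the work is to interpolate between them over $B$.

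The first and most technical step is to produce the relative stability condition $\underline{\upsigma}$ on $\bcA_\cX$ over $B$. I would deduce this from the instance of Conjecture~\ref{conj:intro-moduli} at hand: first construct fiberwise stability conditions $\underline{\upsigma}_b$ on $\cA_{\cX_b}$ (for $b \ne o$ these are the tilt-type stability conditions on Gushel--Mukai categories used in \cite{JLLZ}, and for $b = o$ a stability condition on $\cB_Y$), and then assemble them into a stability condition over $B$ via \cite[Definition~1.1, Theorem~21.24]{BLMNPS}. The hypotheses to verify are that the \'etale sheafification $\rKn(\bcA_\cX/B)$ is locally constant, that the support property holds uniformly in $b$, and that the family of hearts is open; the uniform support property across the special point $o$, where the ambient threefold $\cX_o$ acquires a node while the component $\cB_Y$ stays smooth and proper, is the delicate point.

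Next I would fix the section $\bv \in \rKn(\bcA_\cX/B)(B)$. Using that $\rKn(\bcA_\cX/B)$ is locally constant, it suffices to pick the numerical class of a conic on a generic fiber $\cX_b$ (projected into $\cA_{\cX_b}$) and to check that its specialization to $b = o$ is the class of a line on $Y$ (projected into $\cB_Y$); this is a Chern-character computation compatible with the degeneration $\cA_{\cX_b} \rightsquigarrow \cB_Y$ of Theorem~\ref{thm:intro-simple}. With $\underline{\upsigma}$ and $\bv$ in hand, \cite{BLMNPS} furnishes a relative moduli space $\rM_{\underline{\upsigma}}(\bcA_\cX,\bv) \to B$ as a proper algebraic space (for $\bv$ primitive and $\underline{\upsigma}$ generic). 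Smoothness of the total space over $B$ I would establish via the deformation theory of objects: the tangent space at a stable object $E$ is $\Ext^1(E,E)$ and the obstruction lies in $\Ext^2(E,E)$, so one checks fiberwise that $\dim \Ext^1 = 2$ and that the obstruction vanishes, using that $\bcA_\cX$ is smooth and proper over $B$.

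Finally I would identify the fibers. For $b \ne o$ this is \cite[Theorem~7.13]{JLLZ}, once our $\underline{\upsigma}_b$ and $\bv$ are matched with theirs inside the known family of stability conditions on the Gushel--Mukai category $\cA_{\cX_b}$. For $b = o$, which is the geometric heart of the problem, I would construct a morphism $\rF_1(Y) \to \rM_{\underline{\upsigma}_o}(\cB_Y,\bv)$ sending a line $\ell \subset Y$ to the projection into $\cB_Y$ of a suitable twist of its structure sheaf, prove that this object is $\underline{\upsigma}_o$-stable of class $\bv$, and show the morphism is an isomorphism by comparing tangent spaces with the classical deformation theory of lines on the quartic double solid. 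The main obstacle is twofold: the uniform construction of $\underline{\upsigma}$ over the nodal central fiber, and this last identification $\rM_{\underline{\upsigma}_o}(\cB_Y,\bv) \cong \rF_1(Y)$, which requires reconciling the Bridgeland-moduli description with the classical geometry of the Fano surface of lines on $Y$.
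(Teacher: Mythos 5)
The statement you are proving is labelled a \emph{conjecture} in the paper (Conjecture~\ref{conj:intro-hilbert}), and the paper offers no proof of it: the authors only record it as an expectation, remark that it would reprove Corollary~\ref{cor:intro-jacobians} for $d=2$ via \cite{DK20jac} and \cite{Welters}, and explicitly state earlier in the same subsection that they merely ``expect that stability conditions for the families of categories $\bcA_\cX$ exist.'' So there is nothing in the paper to compare your argument against, and your text should be read as a programme rather than a proof.

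As a programme it is reasonable, but it does not close the statement, because the two steps you yourself flag as ``the main obstacle'' are exactly the open content of the conjecture. First, your construction of $\underline{\upsigma}$ is deduced ``from the instance of Conjecture~\ref{conj:intro-moduli} at hand,'' but Conjecture~\ref{conj:intro-moduli} is likewise unproven in the paper, so this is circular: the existence of a stability condition on $\bcA_\cX$ over $B$ whose restriction to the central fiber is a stability condition on $\cB_Y$ (equivalently, satisfying a uniform support property across the point $o$ where the ambient threefold becomes nodal while the component jumps from a K3-type category $\cA_{\cX_b}$ to the quite different category $\cB_Y$ --- recall \cite{Zhang,BP22} show these are never equivalent) is precisely what has to be established, and the openness/support-property machinery of \cite{BLMNPS} does not produce it for free. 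Second, the identification $\rM_{\underline{\upsigma}_o}(\cB_Y,\bv)\cong\rF_1(Y)$ is stated in the paper only as something ``natural to expect''; your sketch of sending a line to a projected twist of its structure sheaf and comparing tangent spaces is the right shape of argument, but stability of that object for $\underline{\upsigma}_o$ and the bijectivity of the resulting map are not verified. Smaller issues: the smoothness argument via ``the obstruction vanishes'' needs care, since $\Ext^2(E,E)$ need not vanish a priori (Serre duality in these components relates it to $\Hom(E,S(E)[-2])$, and one must use the specific Serre functor of $\cA_{\cX_b}$, resp.\ $\cB_Y$); and the matching of your $\bv$ with the class used in \cite[Theorem~7.13]{JLLZ} is asserted rather than computed. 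In short, the proposal is a sensible roadmap consistent with the authors' intentions, but every genuinely hard step is deferred, so the conjecture remains open.
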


By~\cite[Theorem~1.1]{DK20jac} and~\cite[Theorem~4.1]{Welters} 
the relative Albanese variety~$\Alb(\cF(\cX/B)/B)$ of the above family of surfaces should be isomorphic
to the relative family of intermediate Jacobians, so Conjecture~\ref{conj:intro-hilbert} should give
yet another proof of Corollary~\ref{cor:intro-jacobians} for~$d = 2$.

Of course, we expect a similar result to be true for~$d \ge 3$, but it is less interesting,
because in this case~$\rF_1(Y_d) \cong \rF_2(X_{2d+2})$ for any~$Y_d$ and appropriate~$X_{2d+2}$,
see~\cite[Propositions~B.4.1, B.5.1, and~B.6.1]{KPS18}, and, for instance, 
if we consider a family~$\cX/B$ giving rise to an isotrivial family of categories~$\bcA_\cX/B$ 
(see Remark~\ref{rem:isotrivial}), the corresponding family of surfaces~$\cF(\cX/B)$ will also be isotrivial.

On the other hand, the case~$d = 1$ may be very interesting, and may provide a useful insight
into the geometry of the Hilbert scheme of conics on~$X_4$ and the Hilbert scheme of lines on~$Y_1$.

\begin{remark}
We were informed by the authors that Conjectures~\ref{conj:intro-moduli} and~\ref{conj:intro-hilbert} 
are proved when~$B$ is the spectrum of a complete DVR (and~$d \ge 2$ for the first conjecture)
in the forthcoming paper~\cite{LMPSZ}.
\end{remark}

\subsection{A sketch of the proof}

Our proof of Theorem~\ref{thm:intro-simple}
and its generalization~\eqref{eq:dbcx-bca}
is based on a geometric construction, which we call {\sf a bridge}.
This construction connects the realms of del Pezzo threefolds and prime Fano threefolds of even genus.

If~$d \ge 2$ to construct a bridge we consider
a smooth del Pezzo threefold~$Y$ of degree~$d$, a smooth rational curve~$C \subset Y$ of degree~\mbox{$d - 1$},
and the blowup~$\Bl_C(Y)$.
In Proposition~\ref{prop:x-y} we show that the anticanonical class of~$\Bl_C(Y)$ is nef and big 
and defines a small birational contraction
\begin{equation*}
\pi \colon \Bl_C(Y) \to \Bl_C(Y)_\can \eqqcolon X
\end{equation*}
to a 1-nodal nonfactorial prime Fano threefold~$X$ of genus~$2d + 2$,
the {\sf anticanonical model} of~$\Bl_C(Y)$.
The morphism~$\pi$ contracts a single smooth rational curve 
(the strict transform of the unique bisecant line for~$C$ in~$Y$) in~$\Bl_C(Y)$ to the node~\mbox{$x_0 \in X$}.
Furthermore, in Lemma~\ref{lemma:mutation-cu} and Proposition~\ref{prop:mutations-1}
we construct a Mukai bundle~$\cU_{X}$ on~$X$; it is worth pointing here that~$\cU_X$ depends on both~$Y$ and~$C$.

If~$d = 1$ the construction is similar.
In this case we consider a $1$-nodal del Pezzo threefold~$Y$ of degree~$d = 1$, 
the blowup~$\Bl_{y_0}(Y)$ of~$Y$ at the node~$y_0 \in Y$, 
its anticanonical model~$X \coloneqq \Bl_{y_0}(Y)_\can$, and construct a Mukai bundle~$\cU_X$ on~$X$
that depends on a choice of ruling~$F$ of the exceptional divisor of~$\Bl_{y_0}(Y) \to Y$
(the ruling~$F$ plays here the role similar to that of a curve~$C$ when~$d \ge 2$).

From now on we concentrate on the simpler case where~$d \ge 2$.
Let~$(Y,C)$ be a smooth del Pezzo threefold of degree~$d \ge 2$ with a smooth rational curve of degree~\mbox{$d - 1$},
let~$X = \Bl_C(Y)_\can$ be the anticanonical model of the blowup~$\Bl_C(Y)$,
and let~$\cU_X$ be the corresponding Mukai bundle, obtained by the bridge construction.
Let, furthermore, $f \colon \cX \to B$ be a smoothing of~$X$ (it exists by~\cite[Theorem~11]{Na97}) 
over a smooth pointed curve~$(B,o)$.
Using exceptionality of the bundle~$\cU_X$ we check that (possibly after base change to an \'etale neighborhood of~$o \in B$) 
it extends to a global section~\mbox{$\cU_\cX \in \VB_{\cX/B}(B)$} 
such that~$(f \colon \cX \to B, \cU_\cX)$ is a $B$-point of the stack~$\bMFM_{\sX_{2d+2}}$, 
the central fiber~$\cX_o$ is isomorphic to~$X$, 
and the morphism~$f$ is smooth over the punctured curve~$B \setminus \{o\}$.

To complete the proof of Theorem~\ref{thm:intro-simple} it remains to explain 
how the subcategory~$\bcA_\cX \subset \Db(\cX)$ is constructed and how its properties are verified.
We do this using the technique of \emph{categorical absorption of singularities} developed in~\cite{KS22:abs}.
More precisely, applying~\cite[Theorem~1.5 and Theorem~6.1]{KS22:abs} we obtain semiorthogonal decompositions
\begin{equation*}
\Db(\cX) = \langle \io_*\rP_X, \cD \rangle,
\qquad 
\Db(X) = \langle \rP_X, \cD_o \rangle,
\qquad\text{and}\qquad 
\Db(\cX_b) = \cD_b,
\end{equation*}
where~$\io \colon X \to \cX$ is the embedding of the central fiber,
$\rP_X \in \Db(X)$ is a so-called $\Pinfty{2}$-object,
so that~\mbox{$\io_*\rP_X \in \Db(\cX)$} is an exceptional object,
$\cD$ is a $B$-linear admissible subcategory in~$\Db(\cX)$
which is smooth and proper over~$B$,
while~$\cD_o$ and~$\cD_b$ are its base changes along the embeddings~$\{o\} \hookrightarrow B$ and~$\{b\} \hookrightarrow B$.
The category~$\cD$ can be thought of 
as a family of smooth and proper triangulated categories parameterized by the curve~$B$.
For more details about this construction see Theorem~\ref{thm:cax-family} and its proof.

Finally, we refine the above decomposition slightly.
We observe that the structure sheaf~$\cO_\cX$ and the dual Mukai bundle~$\cU^\vee_\cX$ on~$\cX$ 
are contained in the subcategory~$\cD \subset \Db(\cX)$ and form a relative over~$B$ exceptional pair,
hence they induce a $B$-linear semiorthogonal decomposition
\begin{equation*}
\cD = \langle \bcA_\cX, f^*(\Db(B)) \otimes \cO_\cX, f^*(\Db(B)) \otimes \cU_\cX^\vee \rangle,
\end{equation*}
thus defining a $B$-linear triangulated subcategory~$\bcA_\cX \subset \cD \subset \Db(\cX)$ 
which is smooth and proper over~$B$.
It follows from~\cite[Theorem~5.6]{K11}
that the fibers~$(\bcA_\cX)_b$ of this category corresponding to points~$b \ne o$
are equivalent to the subcategories~\mbox{$\cA_{\cX_b} \subset \Db(\cX_b)$} from~\eqref{eq:dbx-i1}.
On the other hand, the category~$(\bcA_\cX)_o$ corresponding to the origin~\mbox{$o \in B$} can be identified
(by an appropriate sequence of mutations, explained in Proposition~\ref{prop:mutations-1}) 
with the subcategory~\mbox{$\cB_Y \subset \Db(Y)$} defined by~\eqref{eq:dby-i2}.

In the case~$d = 1$ the argument proving Theorem~\ref{thm:intro-simple} is essentially the same. 

\subsection{Boundary components of the compactified moduli stack}

Theorem~\ref{thm:intro-simple} shows that the family of categories~$\cA_X$ 
defined by~\eqref{eq:dbx-i1} for all smooth prime Fano threefolds 
(or, more precisely, for all smooth Fano--Mukai pairs) of genus~\mbox{$g \in \{4,6,8,10,12\}$}
extends naturally to some degenerations of these varieties.
In~\S\ref{sec:stacks-proofs} we describe precisely the locus in the stack~$\bMFM_{\sX_g}$ corresponding to such degenerations.

For this we consider the stack~$\tMF_{\sY_d}$ parameterizing pairs~$(Y,C)$ or~$(Y,F)$ 
used in the bridge construction as described in~\S\ref{ss:intro-modified-conjecture} 
(see Definition~\ref{def:tmf} for the actual definition of this stack) 
and applying a relative version of the bridge construction we define in Lemma~\ref{lem:mu} a morphism of stacks
\begin{equation*}
\upmu \colon \tMF_{\sY_d} \to \bMFM_{\sX_{2d+2}}^{(1)} \subset \bMFM_{\sX_{2d+2}},
\end{equation*}
where~$\bMFM_{\sX_{2d+2}}^{(1)} \subset \bMFM_{\sX_{2d+2}}$ is the $1$-nodal locus.
In fact, as we check in~Theorem~\ref{thm:mf-x-y} 
the substack~$\bMFM_{\sX_{2d+2}}^{(1)}$ 
is a Cartier divisor in the open substack~$\bMFM_{\sX_{2d+2}}^{\le 1} \subset \bMFM_{\sX_{2d+2}}$ of at most $1$-nodal Fano--Mukai pairs
and we prove in Theorem~\ref{thm:tmfy-bmfx-d2} that~$\upmu$ is an isomorphism of~$\tMF_{\sY_d}$
onto a connected component of~$\bMFM_{\sX_{2d+2}}^{(1)}$, which we denote 
\begin{equation*}
\bMFM_{\sX_{2d+2},\sY_d}^{(1)} \subset \bMFM_{\sX_{2d+2}}^{(1)} \subset \bMFM_{\sX_{2d+2}}
\end{equation*}
and call {\sf the del Pezzo component} of~$\bMFM_{\sX_{2d+2}}^{(1)}$.
Thus, Theorem~\ref{thm:intro-simple} (or rather its generalization~\eqref{eq:dbcx-bca})
can be interpreted as a construction of a smooth and proper extension of the family of categories~$\cA_X$ 
across the del Pezzo component of the boundary of~$\bMFM_{\sX_{2d+2}}$.
We discuss a convenient way to think about such an extension in~\S\ref{ss:cpm} below.

The above 
observation motivates the following problem, which is also interesting by itself.

\begin{problem}
\label{ques}
Classify all connected components of the boundary divisor~$\bMFM_{\sX_{2d+2}}^{(1)} \subset \bMFM_{\sX_{2d+2}}$
and study possible extensions of the family of categories~$\cA_X$ across these components.
\end{problem}

The categorical absorption point of view suggests that a nice extension is possible for those boundary components
that correspond to \emph{nonfactorial} 1-nodal degenerations of~$X$;
such degenerations are classified in~\cite{KP23} and independently in~\cite{CKMS}, thus answering the first half of this question.

\subsection{The categorical period map}
\label{ss:cpm}

We finish the Introduction with a speculative section, 
discussing a possible reformulation of our results in terms of a \emph{categorical period map}.

To define the categorical period map we need to introduce a stack~$\MT$ of triangulated categories.
One option is to define it as the \emph{\'etale sheafification} 
of the fibered category over~$(\Sch/\kk)$ whose fiber over a scheme~$S$ 
is the groupoid of $S$-linear enhanced triangulated categories and their $S$-linear enhanced equivalences.
To make this into a real definition, we need, however, to treat~$\MT$ as \emph{higher stack},
which goes very far out of the scope of this paper.
Another option is to use the approach developed in~\cite{AT}.
Anyway, assuming that~$\MT$ is defined appropriately, given any flat projective morphism~$f \colon \cX \to S$
and an admissible $S$-linear subcategory~$\cD \subset \Db(\cX)$ one should be able to produce an $S$-point of~$\MT$, 
i.e., a morphism
\begin{equation*}
\wp_\cD \colon S \to \MT
\end{equation*}
which (by analogy with Hodge theory) we call the {\sf categorical period map}.

Now we explain how our results would be interpreted in the (hypothetical) terms 
of the moduli stack~$\MT$ and the categorical period map~$\wp$.
Since the components~$\cA_\cX \subset \Db(\cX)$ and~$\cB_\cY \subset \Db(\cY)$ 
of the family versions of decompositions~\eqref{eq:dbx-i1} and~\eqref{eq:dby-i2}
associated to families~$(f \colon \cX \to S, \cU_\cX)$ and~$g \colon \cY \to S$
are admissible $S$-linear subcategories, they should define the categorical period maps of stacks
\begin{equation*}
\wp_{\cA} \colon \MFM_{\sX_g} \to \MT,
\qquad\text{and}\qquad 
\wp_{\cB} \colon \MF_{\sY_d} \to \MT.
\end{equation*} 
Then the original Fano threefolds conjecture (which holds for~$d \ge 3$ but fails for~$d \le 2$)
can be rephrased as the dominance of the fiber product 
\begin{equation*}
\rZ_d = 
\MFM_{\sX_{2d+2}} \bigtimes\limits_{\MT} \MF_{\sY_d},
\end{equation*}
(with respect to the categorical period maps~$\wp_\cA$ and~$\wp_\cB$) over both factors.

On the other hand, the semiorthogonal decomposition of Theorem~\ref{thm:intro-simple} 
(or rather its generalization~\eqref{eq:dbcx-bca}) can be interpreted as a commutative diagram
\begin{equation*}
\vcenter{\xymatrix@R=4.5ex{
\tMF_{\sY_d} \ar[r]^-\upmu_-\sim \ar[d] &
\bMFM_{\sX_{2d+2},\sY_d}^{(1)} \ar[d]^{\wp_\bcA}
\\
\MF_{\sY_d} \ar[r]^-{\wp_\cB} &
\MT
}}
\quad\text{for~$d \ge 2$,}
\qquad\text{or}\qquad
\vcenter{\xymatrix@R=4.5ex{
\tMF_{\sY_1} \ar[r]^-\upmu_-\sim \ar[d] &
\bMFM_{\sX_{4},\sY_1}^{(1)} \ar[d]^{\wp_\bcA}
\\
\bMF_{\sY_1}^{(1)} \ar[r]^-{\wp_\tcB} &
\MT
}}
\quad\text{for~$d = 1$.}
\end{equation*}
In both diagrams the left vertical arrow is the forgetful map,
$\wp_{\bcA}$ is the categorical period map associated with the component~$\bcA_\cX \subset \Db(\cX)$ 
of the semiorthogonal decomposition from~\eqref{eq:dbcx-bca} for a family~\mbox{$(f \colon \cX \to S, \cU_\cX)$} of Fano--Mukai pairs,
while~$\wp_\cB$ or~$\wp_\tcB$ is the categorical period map 
associated with (the categorical resolution~$\tcB_\cY$ of) the component~$\cB_\cY \subset \Db(\cY)$.
In particular, we see that
\begin{equation*}
\tMF_{\sY_d} \subset
\bar\rZ_d^{(1)} \coloneqq
\bMFM_{\sX_{2d+2},\sY_d}^{(1)} \bigtimes\limits_{\MT} \MF_{\sY_d},
\qquad\text{or}\qquad 
\tMF_{\sY_1} \subset
\bar\rZ_1^{(1)} \coloneqq
\bMFM_{\sX_{4},\sY_1}^{(1)} \bigtimes\limits_{\MT} \bMF^{(1)}_{\sY_1},
\end{equation*}
and therefore~$\bar\rZ_d$ is dominant over~$\MF_{\sY_d}$ for~$d \ge 2$, 
and over the $1$-nodal locus~$\bMF_{\sY_1}^{(1)} \subset \bMF_{\sY_1}$ for~\mbox{$d = 1$}.

Note that~$\wp_\bcA$ differs on the boundary of~$\bMFM_{\sX_{2d+2}}$ from the naive extension of~$\wp_\cA$;
in fact, $\wp_\bcA$ takes values in \emph{smooth and proper} triangulated categories,
while the naive extension does not.

Thus, our construction provides a partial extension~$\wp_\bcA$ of the categorical period map~$\wp_\cA$
across the del Pezzo component~$\bMFM_{\sX_{2d+2},\sY_d}^{(1)} \subset \bMFM_{\sX_{2d+2}}$,
but it has different flavour in the cases~$d \ge 3$ and~$d \le 2$.

When~$d \ge 3$ the images of~$\wp_\cA$ and~$\wp_\bcA$ are the same (and equal to the image of~$\wp_\cB$), 
but the fibers of~$\wp_\bcA$ are partial compactifications of the fibers of~$\wp_\cA$.
For instance, in the case~$d = 3$ the fiber of~$\wp_\cA$ over the point~$\wp_\cB([Y]) = [\cB_Y]$ for a smooth cubic threefold~$Y$
is isomorphic to the ``locally free'' locus in a certain moduli space of stable coherent sheaves of rank~$2$ on~$Y$, see~\cite{K04}.
The complement of this locus is the union of two divisors, see~\cite{Druel}, 
and the corresponding fiber of~$\wp_\bcA$ also includes a dense open subset of one of these.
We expect that a dense open subset of the second divisor corresponds to another boundary component of~$\bMFM_{\sX_8}$;
we plan to discuss this elsewhere.

On the other hand, when~$d \le 2$ the fibers of~$\wp_\cA$ and~$\wp_\bcA$ are the same,
but the image of~$\wp_\bcA$ is larger than the image of~$\wp_\cA$.
Moreover, the fibers of~$\wp_\bcA$ over the image of~$\wp_\cB$ or~$\wp_\tcB$, respectively, are deformations of the fibers of~$\wp_\cA$.
For instance, in the case~$d = 2$ the fiber of~$\wp_\bcA$ over the point~$\wp_\cB([Y])$ for a quartic double solid~$Y$
is isomorphic to the Hilbert scheme of lines on~$Y$,
and this is a deformation of double Eisenbud--Popescu--Walter surfaces 
that are (conjecturally) isomorphic to the fibers of~$\wp_\cA$.

Thus, the map~$\wp_\bcA$ compactifies~$\wp_\cA$ ``vertically'' for~$d \ge 3$, 
and ``horizontally'' for~$d \le 2$.

\subsubsection*{Structure of the paper}

In~\S\ref{sec:bridge} we explain the bridge construction of a 1-nodal prime Fano threefold~$X$ of genus~$2d + 2$
from a del Pezzo threefold~$Y$ of degree~$d$ endowed with a curve~$C$ or a ruling~$F$.
In~\S\ref{sec:component} we construct the~$\Pinfty2$-object~$\rP_X$ and the Mukai bundle~$\cU_X$ on~$X$
and prove Theorem~\ref{thm:intro-simple}.
In~\S\ref{sec:stacks} we discuss general properties of the moduli spaces of Mukai bundles 
and moduli stacks of Fano threefolds and Fano--Mukai pairs.
Finally, in~\S\ref{sec:stacks-proofs} we identify the del Pezzo component of the boundary divisor 
of the moduli stack~$\bMFM_{\sX_{2d+2}}$.

In Appendix~\ref{sec:nodal} we discuss some material about nodal varieties that is used in the body of the paper
and give a Hodge-theoretic proof of Corollary~\ref{cor:intro-jacobians}.

\subsubsection*{Conventions}

We work over an algebraically closed field~$\kk$ of characteristic~$0$.
When we say that a variety~$X$ is {\sf nodal} (resp.\ {\sf $k$-nodal}), 
we mean that~$\Sing(X)$ is finite or empty (resp.\ has length~$k$)
and every point in~$\Sing(X)$ is an ordinary double point on~$X$,
see Definition~\ref{def:nodal}.

\subsubsection*{Acknowledgements}

We would like to thank Olivier Debarre, Sergey Gorchinskiy, Alex Perry, Yuri Prok\-ho\-rov, Andrey Soldatenkov, and Claire Voisin
for useful discussions and the referee for their comments.

%%%%%%%%%%%%%%%%%%%%%%%%%%%%%%%%%%%%%%%%%%%%%%%%%%%%%%%%%%%%%%%%%%%
%%%%%%%%%%%%%%%%%%%%%%%%%%%%%%%%%%%%%%%%%%%%%%%%%%%%%%%%%%%%%%%%%%%
%%%%%%%%%%%%%%%%%%%%%%%%%%%%%%%%%%%%%%%%%%%%%%%%%%%%%%%%%%%%%%%%%%%
%%%%%%%%%%%%%%%%%%%%%%%%%%%%%%%%%%%%%%%%%%%%%%%%%%%%%%%%%%%%%%%%%%%
%%%%%%%%%%%%%%%%%%%%%%%%%%%%%%%%%%%%%%%%%%%%%%%%%%%%%%%%%%%%%%%%%%%
%%%%%%%%%%%%%%%%%%%%%%%%%%%%%%%%%%%%%%%%%%%%%%%%%%%%%%%%%%%%%%%%%%%

\section{The bridge}
\label{sec:bridge}

In this section we present the main geometric construction of the paper --- 
the bridge, linking del Pezzo threefolds~$Y$ to prime Fano threefolds~$X$ of even genus.
We upgrade this construction to an isomorphism of moduli stacks in Section~\ref{sec:stacks-proofs}.

Recall that for any del Pezzo threefold~$Y$ with~$\Pic(Y) = \ZZ \cdot H$, where~$H \coloneqq - \tfrac12 K_Y$,
we denote by~$\d(Y) \coloneqq H^3$ the degree of~$Y$.
Then~$d = \d(Y) \in \{1,2,3,4,5\}$, and~$Y$ can be described as follows (see~\cite[Theorem~1.2]{KP22}):
\begin{itemize}
\item 
if~$d = 5$ then~$Y = \Gr(2,5) \cap \P^6 \subset \P^9$;
\item 
if~$d = 4$ then~$Y$ is a complete intersection of two quadrics in~$\P^5$;
\item 
if~$d = 3$ then~$Y$ is a cubic hypersurface in~$\P^4$;
\item 
if~$d = 2$ then~$Y$ is a {\sf quartic double solid}, i.e., a double covering of~$\P^3$ branched at a quartic;
\item 
if~$d = 1$ then~$Y$ is a sextic hypersurface in the weighted projective space~$\P(1,1,1,2,3)$.
\end{itemize}
By the Riemann--Roch Theorem, we have~$\dim |H| = d + 1$,
and the linear system~$|H|$ defines a map
\begin{equation}\label{eq:phi-d}
\varphi  = \varphi_d \colon Y \dashrightarrow \P^{d+1}.
\end{equation}
If~$d \ge 3$ it is a closed embedding, 
if~$d = 2$ it is a regular double covering, 
and if~$d = 1$ it is a rational elliptic fibration with a single indeterminacy point, called the {\sf base point} of~$Y$ 
that coincides with the intersection of~$Y$ with the weighted projective line~$\P(2,3) \subset \P(1,1,1,2,3)$.

\begin{remark}
\label{rem:2h}
If~$d = 1$ the linear system~$|2H|$ is base point free and defines a regular double covering
\begin{equation*}
\widehat\varphi_1 \colon Y \to \P(1,1,1,2).
\end{equation*}
This follows from the analogous property of del Pezzo surfaces of degree~$1$ 
by the argument of~\cite[Proposition~2.2(ii)]{KP22}. 
Because of this property, $Y$ is called a {\sf double Veronese cone}.
\end{remark}

We will say that a singular point~$y_0 \in Y$ is a  {\sf cusp} (also called {\sf generalized cusp} in~\cite{KP23})
if it is a hypersurface singularity such that~$\Bl_{y_0}(Y)$ is smooth along the exceptional divisor~$E \subset \Bl_{y_0}(Y)$
which is an irreducible singular quadric surface and~$\cO_E(-E)$ is the hyperplane class of~$E$.

\begin{setup}
\label{setup}
In the rest of the paper we work in one of the following two situations:
\begin{alenumerate}
\item 
\label{it:curve-d-big}
$(Y,C)$ is a pair, where
$Y$ is a smooth del Pezzo threefold of degree~$d \in \{2,3,4,5\}$, 
$C \subset Y$ is a smooth rational curve on~$Y$ of degree~$d - 1$, and 
\begin{equation*}
\tY \coloneqq \Bl_C(Y) \xrightarrow{\ \sigma\ } Y
\end{equation*}
is the blowup with exceptional divisor~$E \subset \tY$, or
\item 
\label{it:curve-d=1}
$(Y,F)$ is a pair, where~$Y$ is a del Pezzo threefold of degree~$d = 1$ with a single node or cusp~$y_0 \in Y$,
\begin{equation*}
\tY \coloneqq \Bl_{y_0}(Y) \xrightarrow{\ \sigma\ } Y
\end{equation*}
is the blowup with exceptional divisor~$E \subset \tY$, 
so that~$E$ is a quadric and~$F \in \Cl(E)$ is a ruling.
\end{alenumerate}
Note that in any case~$\tY$ is a smooth projective threefold, $\sigma_*\cO_{\tY} \cong \sigma_*\cO_{\tY}(E) \cong \cO_Y$, 
and the derived pushforward of~$\cO_{\tY}(-E)$ is the ideal sheaf of~$C$ or~$y_0$.
\end{setup}

\begin{remark}
\label{rem:branching}
In the case~$d = 2$ the curve~$C \subset Y$ is a line;
if it is contained in the ramification divisor of~$\varphi_2 \colon Y \to \P^3$, 
we say it is a {\sf ramification line}.
In the case~$d = 1$ the point~$y_0 \in Y$ is distinct from the base point of~$Y$ (\cite[Proposition~2.2(ii)]{KP22}) 
and contained in the ramification divisor of~$\widehat\varphi_1 \colon Y \to \P(1,1,1,2)$.
\end{remark}

\begin{remark}
\label{rem:factorial}
Any del Pezzo variety~$Y$ as above is locally factorial:
for~$2 \le d \le 5$ this follows from smoothness of~$Y$, 
and for~$d = 1$ this is proved in~\cite[Corollary~2.5]{Pavic-Shinder-delPezzo} or~\cite[Corollary~B.4]{KP23}
(in the case of a cusp, see~\cite[(2.8) and Proposition 3.6]{Kalck-Pavic-Shinder} or~\cite[Remark~2.2]{KP23}).
Thus, the class group of Weil divisors~$\Cl(Y)$ is generated by~$H$
and the degree~$D\cdot H^2$ of any surface in~$D \subset Y$ is divisible by~$d$.
\end{remark}

\begin{lemma}
\label{lemma:curve-c-l}
Assume Setup~\textup{\ref{setup}}.
The linear system~$|H-E|$ on~$\tY$ is a pencil, its base locus 
\begin{equation}
\label{eq:def-tl}
\tL \coloneqq \Bs(|H - E|) \subset \tY
\end{equation} 
is a smooth rational curve such that
\begin{equation}
\label{eq:intersection-ty}
H \cdot \tL = 1,
\qquad 
E \cdot \tL = 2,
\end{equation} 
and
\begin{equation}
\label{eq:cn-tl-ty}
\cN_{\tL/\tY} \cong \cO_\tL(-1)^{\oplus 2},
\end{equation}
Moreover, 
\begin{itemize}
\item 
if~$d \ge 3$ or~$d = 2$ and~$C$ is not a ramification line
the map~$\sigma \colon \tL \to \sigma(\tL)$ is an isomorphism onto a line~$\sigma(\tL) \subset Y$ distinct from~$C$,
the scheme~$E \cap \tL$ has length~$2$, it is not contained in a fiber of~$E \to C$, 
and the scheme~$C \cap \sigma(\tL)$ has length~$2$ as well;
\item 
if~$d = 2$ and~$C$ is a ramification line
then~$\tL$ is the exceptional section of the $\P^1$-bundle~$E \to C$ 
and the map~$\sigma \colon \tL \to \sigma(\tL)$ is an isomorphism onto the line~$\sigma(\tL) = C$;

\item 
if~$d = 1$ the scheme~$E \cap \tL$ has length~$2$, it is not contained in a ruling of~$E$,
the map~$\sigma \colon \tL \to \sigma(\tL)$ is the normalization morphism,
and~$\sigma(\tL) \subset Y$ is the fiber of the elliptic fibration~$\varphi_1 \colon Y \dashrightarrow \P^2$
with singularity at the singular point~$y_0$ of~$Y$.
\end{itemize}
\end{lemma}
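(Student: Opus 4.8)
The plan is to pull everything back to the base of the blowup and read it off from the geometry on $Y$. Since $\sigma$ is the blowup of a smooth curve (in case~\ref{it:curve-d-big}), resp.\ of the maximal ideal of the node (in case~\ref{it:curve-d=1}), the projection formula together with $R^{>0}\sigma_*\cO_{\tY}(-E) = 0$ gives $\sigma_*\cO_{\tY}(\sigma^*H - E) = \cI_C\otimes\cO_Y(H)$, resp.\ $\cI_{y_0}\otimes\cO_Y(H)$, so that $\rH^0(\tY, \sigma^*H - E) = \rH^0(Y, \cI_C(H))$, resp.\ $\rH^0(Y, \cI_{y_0}(H))$. In case~\ref{it:curve-d-big} the restriction sequence $0 \to \cI_C(H) \to \cO_Y(H) \to \cO_C(H|_C) \to 0$, together with $h^0(Y,H) = d+2$ and $\cO_C(H|_C) \cong \cO_{\P^1}(d-1)$, reduces the pencil claim to surjectivity of $\rH^0(Y,H) \to \rH^0(C, \cO_{\P^1}(d-1))$, i.e.\ to the linear normality of $C$. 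For $d \ge 3$ this holds because under $\varphi_d$ the curve $C$ spans a $\P^{d-1}$ (being a rational normal curve of degree $d-1$); for $d = 2$ it is automatic since $\rH^0(Y,H) = \varphi_2^*\rH^0(\P^3, \cO(1))$ restricts onto the linear forms on the line $\ell \coloneqq \varphi_2(C)$. In case~\ref{it:curve-d=1} the single point $y_0$ imposes one condition on the $3$-dimensional space $\rH^0(Y,H)$, and again $|H - E|$ is a pencil.

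Next I would identify $\tL$ and $\sigma(\tL)$. For $d \ge 3$ two general members of $|H-E|$ are the strict transforms of hyperplane sections $S_1, S_2 \supset C$; the complete intersection $S_1 \cap S_2 \subset Y$ has $H$-degree $d$ and, by adjunction with $K_Y = -2H$, arithmetic genus $1$. As it contains $C$ (of degree $d-1$ and genus $0$), the residual component is a line $L \coloneqq \sigma(\tL)$, and $p_a(C \cup L) = 1$ forces the scheme $C \cap L$ to have length $2$. For $d = 2$ the same genus computation applies to $\varphi_2^{-1}(\ell)$: when $C$ is not a ramification line this preimage splits as $C \cup C'$ with $C' = \sigma(\tL)$ a second line, meeting $C$ in the length-$2$ scheme over the two tangency points of the bitangent $\ell$; when $C$ is a ramification line one has $\varphi_2^{-1}(\ell) = 2C$ and the base locus lifts instead to a section of $E \to C$. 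For $d = 1$ the pencil $|H - E|$ is the pullback of the pencil of lines through $\varphi_1(y_0) \in \P^2$, so $\sigma(\tL)$ is the fibre of the elliptic fibration through $y_0$; being of arithmetic genus $1$ and singular at the node, it is an irreducible nodal rational curve whose normalization is $\tL$.

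The intersection numbers then follow from the projection formula: $H \cdot \tL = H \cdot \sigma(\tL) = 1$ in each case, while $E \cdot \tL = 2$ records the length of $C \cap \sigma(\tL)$ (resp.\ the multiplicity $2$ of the fibre at the node, resp.\ the intersection of the section with $E$). For the normal bundle I would exhibit $\tL$ as the scheme-theoretic intersection of two general members $D_1, D_2 \in |H - E|$; once the generic members are seen to be smooth along $\tL$ and to meet transversally there, $\tL$ is a smooth complete intersection and $\cN_{\tL/\tY} \cong \cO_{\tY}(H - E)|_{\tL}^{\oplus 2}$. Since $(H - E) \cdot \tL = 1 - 2 = -1$, this yields $\cN_{\tL/\tY} \cong \cO_{\tL}(-1)^{\oplus 2}$, consistent with the adjunction count $\deg \cN_{\tL/\tY} = -2 - K_{\tY} \cdot \tL = -2$, where $K_{\tY} = \sigma^* K_Y + E = -2H + E$ gives $K_{\tY} \cdot \tL = 0$.

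The main obstacle is the case-by-case verification of the transversality and reducedness underlying the finer assertions, in particular those about $E \cap \tL$. Concretely, I expect the delicate points to be: (i) that the residual (bisecant) line $L$ meets $C$ in two \emph{distinct} points, so that $E \cap \tL$ is not contained in a single fibre of $E \to C$, rather than being tangent to $C$; (ii) for $d = 2$, separating the bitangent behaviour of $\ell$ from the ramification-line case and showing that $\varphi_2^{-1}(\ell)$ has exactly two nodes, which follows from the triviality of its dualizing sheaf; and (iii) for $d = 1$, that the fibre through $y_0$ is irreducible with a single node located precisely at $y_0$, so that $E \cap \tL$ consists of the two branch directions there and is not contained in a ruling of $E \cong \P^1 \times \P^1$. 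Each of these is a transversality or genericity statement about the specific del Pezzo geometry, and carrying them out uniformly across the five degrees is where the real work lies; granting them, the cohomological pencil count and the complete-intersection identification of the normal bundle are then formal.
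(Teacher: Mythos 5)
Your overall architecture is sound and in places genuinely different from the paper's: for the identification of $\tL$ you intersect two general members of $|H-E|$ and use the adjunction computation $p_a(S_1\cap S_2)=1$ to force the residual line and the length-$2$ scheme $C\cap L$, whereas the paper works directly with the Cohen--Macaulay base locus and its class $(H-E)^2$ and reads off $H\cdot\tL=1$, $E\cdot\tL=2$ from the intersection table on $\tY$. Both routes are viable. However, there are two genuine gaps. First, for $d=5$ you justify $\dim|H-C|=1$ by asserting that $\varphi_5(C)\subset\P^6$ is a rational normal quartic, i.e.\ spans a $\P^4$. This is not automatic: a smooth rational quartic can span only a $\P^3$. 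Ruling this out requires an actual argument --- the paper observes that a quartic spanning a $\P^3$ would lie on a unique quadric surface in that $\P^3$, which (since $Y$ is cut out by quadrics) would be a surface contained in $Y$ of degree not divisible by $d=5$, contradicting factoriality. Without this step the pencil claim, and hence everything downstream, is unproven in the hardest case.

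Second, and more seriously, the central technical point of the lemma is that the base locus $\tL=\Bs(|H-E|)$ has no \emph{vertical} components, i.e.\ contains no fibers of $E\to C$ (equivalently, in your language, that two general members of $|H-E|$ meet transversally along $E$). You explicitly defer this ("carrying them out uniformly across the five degrees is where the real work lies"), but this is precisely the step that cannot be waved through by genericity: a priori the strict transforms of two hyperplane sections through $C$ could fail to be transverse along $E$ over finitely many points of $C$, producing extra $\P^1$'s in $\tL$ and destroying the smoothness of $\tL$, the intersection numbers, and the normal bundle computation. The paper closes this with a short cohomological argument rather than a transversality one: pushing forward the Koszul complex of $\tL$ twisted by $\cO_\tY(-E)$ gives $\rH^\bullet(\tL,\cO_\tL(-E))=\kk[-1]$, while any vertical component $\tL_1$ would inject $\cO_{\tL_1}$ into $\cO_\tL(-E)$ and force $\rH^0(\tL_1,\cO_{\tL_1})=0$, a contradiction. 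Some such argument is needed; as written your proof does not contain one. (A smaller point: for the claim that $E\cap\tL$ is not contained in a fiber of $E\to C$ you do not actually need $C\cap L$ to consist of two distinct points --- it suffices that $\sigma\colon\tL\to L$ is an isomorphism, since then the tangent direction of $\tL$ is never the contracted fiber direction; so item (i) of your list of obstacles is asking for more than the lemma requires.)
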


\begin{proof}
In case~\ref{setup}\ref{it:curve-d-big} we consider the linear subsystem~$|H - C| \subset |H|$ and prove that
\begin{equation*}
\dim|H - C| = 1.
\end{equation*} 
Indeed, if~$2 \le d \le 4$ the image~$\varphi_d(C) \subset \P^{d+1}$  of~$C$ under the morphism~\eqref{eq:phi-d} 
is a smooth rational curve of degree~$1 \le d - 1 \le 3$, 
it spans a~$\P^{d - 1} \subset \P^{d + 1}$, hence~$\dim|H - C| = 1$.
On the other hand,
if~$d = 5$ the image~$\varphi_5(C) \subset \P^6$ is a smooth rational quartic curve.
If it only spans a~$\P^3$, then there is a unique quadric surface in this~$\P^3$ containing~$\varphi_5(C)$
(otherwise, $C$ would be contained in a curve of degree~$4$ and arithmetic genus~$1$, which is absurd),
and since~$Y$ is an intersection of quadrics, this quadric surface must be contained in~$Y$
which is impossible by Remark~\ref{rem:factorial}.
Therefore, $\varphi_5(C)$ spans a~$\P^4$ and~$\dim|H - C| = 1$.

Moreover, in all these cases the base locus~$\Bs(|H - C|) = Y \cap \langle C \rangle$ contains no divisorial components
(again by Remark~\ref{rem:factorial}), hence it is a local complete intersection curve.
Since on the other hand, its degree equals~$d$ and it contains the curve~$C$ of degree~$d - 1$,
it is equal to the union of~$C$ and an extra line~$L$, or the curve~$C$ with multiplicity~$2$ 
(this is only possible if~$d = 2$ and~$C \subset Y$ is a ramification line).
It also follows that the dimension of the tangent space to~$\Bs(|H - C|)$ at any point does not exceed~2;
indeed, if~$d = 2$ then~$\Bs(|H - C|)$ is the preimage of a line in~$\P^3$ under the double covering~$\varphi_2 \colon Y \to \P^3$,
which factors as the composition~$Y \hookrightarrow \P(1,1,1,1,2) \dashrightarrow \P^3$,
hence~$\Bs(|H - C|)$ is contained in the smooth locus of the surface~$\P(1,1,2) \subset \P(1,1,1,1,2)$,
and if~$d > 2$ this follows from the fact that~$\Bs(|H - C|)$ 
is a local complete intersection curve with two smooth components.

Now consider the linear system~$|H - E|$ on the blowup~$\tY$;
the morphism~$\sigma$ induces its isomorphism onto~$|H - C|$, so it is a pencil.
Note that~$\Bs(|H - E|)$ does not contain any fiber of~$E \to C$, 
because the dimension of the tangent space to~$\Bs(|H - C|)$ at any point of~$C$ does not exceed~2;
in particular~\mbox{$E \not\subset \Bs(|H - E|)$}, and therefore
the scheme~$\tL \coloneqq \Bs(|H - E|)$ has no divisorial components.
Moreover, it follows that~$\tL$ is a local complete intersection curve, 
the restriction of~$\sigma$ to~$\tL$ is finite, and
\begin{equation*}
[\tL] = (H - E)^2
\end{equation*}
in the Chow group~$\CH^2(\tY)$.
Standard intersection theory (see, e.g., \cite[Lemma~4.1.2]{IP}) gives
\begin{equation}
\label{eq:he-intersections}
H^3 = d,
\qquad 
H^2\cdot E = 0,
\qquad 
H \cdot E^2 = 1 - d,
\qquad 
E^3 = 4 - 2d,
\end{equation} 
and~\eqref{eq:intersection-ty} follows.
In particular, since~$H \cdot \tL = 1$ and~$|H|$ is base point free, 
it follows that~$\tL$  is irreducible and generically reduced, 
and since it is a local complete intersection, it is everywhere reduced.
Moreover, the image~$\varphi_d(\sigma(\tL)) \subset \P^{d+1}$ must be a line, 
and therefore~$\sigma(\tL) \subset Y$ is also a line,
and the map~$\sigma \colon \tL \to \sigma(\tL)$ is finite of degree~$1$, hence it must be an isomorphism;
in particular, $\tL$ is a smooth rational curve.

Next, we consider the Koszul complex
\begin{equation}
\label{eq:tl-koszul-untwisted}
0 \to \cO_{\tY}(2E-2H) \to \cO_{\tY}(E-H)^{\oplus 2} \to \cO_{\tY} \to \cO_\tL \to 0.
\end{equation}
Restricting it to~$\tL$ and using~\eqref{eq:intersection-ty}, we deduce~\eqref{eq:cn-tl-ty}.

Finally, if~$\tL$ is not contained in~$E$ it follows from~\eqref{eq:intersection-ty}
that the intersection~$E \cap \tL$ is a scheme of length~$2$.
This scheme is not contained in a fiber of~$E \to C$ 
because the projection~$\tL \to \sigma(\tL)$ is an isomorphism.
Therefore, $C \cap \sigma(\tL)$ is a scheme of length~$2$ as well.
On the other hand, if~$\tL \subset E$ then~$\sigma(\tL) = C$, 
hence~$d - 1 = H \cdot C = H \cdot \tL = 1$ by definition of~$C$ and~\eqref{eq:intersection-ty}, respectively, hence~$d = 2$. 
Moreover, since~$Y$ in this case is a quartic double solid and~$C$ is a line, the base locus of~$|H - C|$ 
is the union of~$C$ and its image under the involution of the double covering~$\varphi_2 \colon Y \to \P^3$,
therefore the equality~$\sigma(\tL) = C$ means that~$C$ is fixed by the involution, hence it is a ramification line.
Finally, it follows from~\eqref{eq:intersection-ty} in this case
that~$\tL$ is the exceptional section of the $\P^1$-bundle $E \to C$.

Now consider case~\ref{setup}\ref{it:curve-d=1}.
Since the singular point~$y_0 \in Y$ is distinct 
from the base point of the two-dimensional linear system~$|H|$ (see Remark~\ref{rem:branching}),
it follows that
\begin{equation*}
\dim |H - y_0| = 1
\qquad\text{and}\qquad 
|H - 2y_0| = \varnothing.
\end{equation*}
Now consider the linear system~$|H - E|$ on the blowup~$\tY$;
the morphism~$\sigma$ induces its isomorphism onto~$|H - y_0|$, so it is a pencil,
and its base locus~$\tL$ does not contain~$E$.
As before, $\tL$ is a local complete intersection curve, hence~$\cO_\tL$ has the Koszul resolution~\eqref{eq:tl-koszul-untwisted}.
Moreover, the equalities~\eqref{eq:he-intersections} still hold true (see, e.g., \cite[Lemma~4.1.6]{IP}), 
hence~\eqref{eq:intersection-ty} follows.
In particular, $H \cdot \tL = 1$,
and therefore~$\tL$ has a unique horizontal component~$\tL_0$, 
the map~\mbox{$\tL_0 \to \sigma(\tL_0)$} is finite of degree~$1$,
and the image~$\sigma(\tL) = \sigma(\tL_0)$ is a fiber 
of the elliptic fibration~$\varphi_1 \colon Y \dashrightarrow \P^2$.
Thus, $\sigma(\tL_0)$ is a curve of arithmetic genus~$1$.

On the other hand, 
$E$ is an irreducible quadric surface and~$\cO_E(-E)$ is its hyperplane class,
hence the divisor~$E$ has negative intersection with any curve in~$E$.
Therefore, $E \cdot \tL_0 \ge E \cdot \tL = 2$, hence the fiber of~$\tL_0 \to \sigma(\tL_0)$
over the point~$y_0 \in \sigma(\tL_0)$ has length at least~$2$.
This implies that~$y_0$ is a singular point of~$\sigma(\tL_0)$, $\tL_0$ is a smooth rational curve, 
the map~$\tL_0 \to \sigma(\tL_0)$ is the normalization morphism,
the length of the fiber over~$y_0$ is~$2$, 
and~$E \cdot \tL_0 = E \cdot \tL$, hence~$\tL$ has no vertical components.
Moreover, the length~$2$ scheme~$E \cap \tL$ is not contained in a ruling of~$E$
because it is an intersection in~$E$ of two divisors equivalent to~$(H - E)\vert_E$
and~$\cO_\tY(H-E)$ restricts to~$E$ as the hyperplane class.

It only remains to prove~\eqref{eq:cn-tl-ty}.
For this we again restrict the Koszul complex~\eqref{eq:tl-koszul-untwisted} to~$\tL$ and use~\eqref{eq:intersection-ty}.
\end{proof}

In what follows we will often use the notation introduced in Lemma~\ref{lemma:curve-c-l}.
Furthermore, we consider the following two classes in the group of $1$-cycles on~$\tY$
modulo numerical equivalence:
\begin{itemize}
\item 
$\tell$ --- the class of the curve~$\tL$ defined by~\eqref{eq:def-tl};
\item 
$\ell_E$ --- the class of a fiber of~$E \to C$ if~$d \ge 2$,
or the class of a ruling of~$E$ if~$d = 1$.
\end{itemize}
Note that in the latter case, if~$y_0 \in Y$ is a node, so that~$E \cong \P^1 \times \P^1$,
the classes of the two rulings are numerically equivalent
because~$Y$ is locally factorial, see Remark~\ref{rem:factorial}.

\begin{proposition}
\label{prop:x-y}
Assume Setup~\textup{\ref{setup}}.
\begin{renumerate}
\item 
\label{item:cones-ty} 
The nef cone of~$\tY$ is generated by~$H$ and~$2H-E$
and the Mori cone is generated by~$\tell$ and~$\ell_E$.

\item 
\label{item:eff-cone-ty} 
The effective cone of~$\tY$ is generated by~$E$ and~$H-E$.
\item
\label{item:map-to-X} 
The linear system~$|2H - E|$ on~$\tY$ is base point free and defines a small birational contraction 
\begin{equation*}
\pi \colon \tY \to X
\end{equation*}
onto a $1$-nodal prime Fano threefold~$X$ of genus~$g = 2d + 2$.
The exceptional locus of~$\pi$ is the smooth rational curve~$\tL \subset \tY$ defined in~\eqref{eq:def-tl},
and~$x_0 \coloneqq \pi(\tL)$ is the node of~$X$.
\end{renumerate}
\end{proposition}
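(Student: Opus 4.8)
The plan is to use that $\rho(\tY) = 2$, so that the nef, effective and Mori cones are all two-dimensional and it suffices to pin down their two boundary rays. Write $\Pic(\tY) = \ZZ H \oplus \ZZ E$. Since $\sigma$ is the blowup of a smooth curve (for $d \ge 2$) or of the ordinary double point $y_0$ (for $d = 1$, where the blowup of the node is again smooth, with $E \cong \P^1 \times \P^1$), the discrepancy of $\sigma$ is $1$ in both cases, and as $K_Y = -2H$ one gets $-K_{\tY} = 2H - E$ uniformly. From~\eqref{eq:he-intersections} and~\eqref{eq:intersection-ty} I record the pairings
\[
H \cdot \ell_E = 0, \qquad (2H-E)\cdot \ell_E = 1, \qquad H \cdot \tell = 1, \qquad (2H-E)\cdot \tell = 0,
\]
together with $(2H-E)^3 = 8H^3 - 12H^2E + 6HE^2 - E^3 = 4d+2 = 2g-2$ for $g = 2d+2$.

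The key point for part~(i) is that $2H-E$ is nef, which I would prove using only the pencil $|H-E|$ of Lemma~\ref{lemma:curve-c-l}. The class $H = \sigma^*H$ is nef, being the pullback of an ample class. Let $Z \subset \tY$ be any irreducible curve. If $(H-E)\cdot Z \ge 0$, then $(2H-E)\cdot Z = H\cdot Z + (H-E)\cdot Z \ge 0$. Otherwise $Z$ is contained in every member of the pencil $|H-E|$, hence in its base locus $\Bs|H-E| = \tL$, so $Z = \tL$; but $(2H-E)\cdot \tell = 0 \ge 0$. Thus $2H-E$ is nef. Neither $H$ nor $2H-E$ is ample (they annihilate $\ell_E$ and $\tell$ respectively), so they span two distinct boundary rays, giving the nef cone $\langle H, 2H-E\rangle$; dually the Mori cone is $\langle \tell, \ell_E\rangle$, which is part~(i).

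For part~(iii) I would observe that $2H-E = -K_{\tY}$ is now nef and big (nef, with $(2H-E)^3 = 4d+2 > 0$), and $\tY$ is smooth. The Base-Point-Free Theorem then makes $-K_{\tY}$ semiample and produces the anticanonical contraction $\pi\colon \tY \to X$ onto $X = \Proj\bigoplus_m \rH^0(\tY, -mK_{\tY})$; it contracts exactly the curves with $(2H-E)\cdot Z = 0$, i.e.\ the ray $\tell$, whose locus is $\tL$. Since $\cN_{\tL/\tY} \cong \cO_\tL(-1)^{\oplus 2}$ by~\eqref{eq:cn-tl-ty}, the contraction $\pi$ is \emph{small} and $x_0 := \pi(\tL)$ is an ordinary double point, so $X$ is $1$-nodal. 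As $\pi$ is crepant, $-K_X$ is ample with $(-K_X)^3 = 2g-2$; and since a small contraction of a single extremal ray drops the Picard rank by one, $\pi^*\Pic(X)$ equals the primitive orthogonal $\ZZ\cdot(2H-E)$ of $\tell$, so $\Pic(X) = \ZZ\cdot(-K_X)$, $X$ has index $1$, and $X$ is a prime Fano threefold of genus $g = 2d+2$. Finally $|2H-E| = |-K_{\tY}| = \pi^*|-K_X|$, so its base-point-freeness reduces to that of $|-K_X|$ on the Gorenstein terminal Fano threefold $X$, which is standard for genus $\ge 3$.

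For part~(ii), $E$ is the rigid exceptional divisor of the divisorial contraction $\sigma$ and spans one edge of $\overline{\mathrm{Eff}}(\tY)$ (pushing an effective class $aH - cE$ forward to $Y$ already forces $a \ge 0$). The class $H-E$ is effective and mobile: the pencil $|H-E|$ has no fixed component and base locus the curve $\tL$. Flopping the $(-1,-1)$-curve $\tL$ produces a smooth model on which $|H-E|$ becomes base-point-free and defines a fiber-type contraction onto $\P^1$; being of fiber type, its polarization $H-E$ lies on the boundary of the pseudoeffective cone, whence $\overline{\mathrm{Eff}}(\tY) = \langle E, H-E\rangle$. I expect this last point to be the main obstacle: the upper edge $H-E$ is invisible to intersection with movable curve classes, since pairing an effective $aH - cE$ with the nef curve class $(2H-E)^2$ only yields the slope bound $c \le a(3d+1)/(2d)$, which is weaker than the required $c \le a$. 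One therefore genuinely needs the birational structure above, or an equivalent direct estimate $\mathrm{mult}_C S \le a$ for surfaces $S \in |aH|$; by contrast, the nef-ness of $2H-E$ and the small contraction $\pi$ are comparatively clean.
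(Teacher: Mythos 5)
Your argument is correct and reaches all three claims, but it takes a genuinely different route from the paper at each step. The paper's proof hinges on one computation you bypass entirely: it first proves that $\cO_\tY(2H-E)$ is \emph{globally generated}, by twisting the Koszul complex~\eqref{eq:tl-koszul-untwisted} into $0 \to \cO_{\tY}(E) \to \cO_{\tY}(H)^{\oplus 2} \to \cO_{\tY}(2H-E) \to \cO_\tL \to 0$ and comparing evaluation maps (with a separate argument near the base point of~$|H|$ when $d=1$); nefness in~\ref{item:cones-ty} and base-point-freeness in~\ref{item:map-to-X} then come for free, and $\pi$ is the Stein factorization of the resulting morphism to~$\P^{2d+3}$. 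Your alternative for~\ref{item:cones-ty} --- nefness of $2H-E$ from $\Bs|H-E|=\tL$ alone --- is cleaner and perfectly valid, but the cost resurfaces in~\ref{item:map-to-X}, where you must import base-point-freeness of $|-K_X|$ for terminal Gorenstein Fano threefolds from the classification literature; note that for $d\le 2$ the anticanonical model is hyperelliptic or trigonal (Remark~\ref{rem:hyperelliptic-trigonal}), so ``standard'' here genuinely means citing a classification theorem rather than very-ampleness, whereas the paper's route is self-contained. For~\ref{item:eff-cone-ty}, the direct estimate you suspected might exist is exactly what the paper uses: for an effective $D=aH+bE$ one computes $D\cdot(2H-E)\cdot(H-E)=(a+b)(d+1)$, and this is nonnegative because $(2H-E)\cdot(H-E)$ is represented by the intersection of a base-point-free divisor with a general member of a pencil without fixed components, hence pairs nonnegatively with every effective divisor --- the trick is to pair with the \emph{mixed} product rather than with $(2H-E)^2$, which, as you correctly computed, only gives the weaker bound $c\le a(3d+1)/(2d)$. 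This makes the detour through the flop and the del Pezzo fibration of Remark~\ref{rem:sarkisov} unnecessary, although your version of~\ref{item:eff-cone-ty} is also correct granting the Sarkisov link from~\cite{Tak09}.
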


\begin{proof}
First, we show that the sheaf~$\cO_{\tY}(2H-E)$ on~$\tY$ is globally generated.
For this we twist the Koszul complex~\eqref{eq:tl-koszul-untwisted} by~$\cO_\tY(2H-E)$, 
and using~\eqref{eq:intersection-ty} we obtain the following exact sequence:
\begin{equation}
\label{eq:tl-koszul}
0 \to \cO_{\tY}(E) \to \cO_{\tY}(H)^{\oplus 2} \to \cO_{\tY}(2H-E) \to \cO_\tL \to 0.
\end{equation}
The sheaves~$\cO_{\tY}(E)$, $\cO_{\tY}(H)$, and~$\cO_\tL$ have no higher cohomology, 
hence the same is true for~$\cO_{\tY}(2H-E)$.
It also follows that
\begin{equation}
\label{eq:dim-h0-jc2h}
\dim \rH^0(\tY,\cO_\tY(2H-E)) = 2(d+2) - 1 + 1 = 2d + 4,
\end{equation}
and~\eqref{eq:tl-koszul} induces a long exact sequence of global sections.
Therefore, there is a commutative diagram
\begin{equation*}
\xymatrix@C=1.4em{
0 \ar[r] & 
\rH^0(\cO_\tY(E)) \otimes \cO_\tY \ar[r] \ar[d] & 
\rH^0(\cO_\tY(H))^{\oplus 2} \otimes \cO_\tY \ar[r] \ar[d] & 
\rH^0(\cO_\tY(2H-E)) \otimes \cO_\tY \ar[r] \ar[d] & 
\rH^0(\cO_\tL) \otimes \cO_\tY \ar[r] \ar[d] & 
0
\\
0 \ar[r] & 
\cO_\tY(E) \ar[r] & 
\cO_\tY(H)^{\oplus 2} \ar[r] & 
\cO_\tY(2H-E) \ar[r] & 
\cO_\tL \ar[r] & 
0,
}
\end{equation*}
where the vertical arrows are given by evaluation and the rows are exact.

If~$d \ge 2$ the sheaves~$\cO_\tY(H)$ and~$\cO_\tL$ are globally generated,
hence the second and fourth vertical arrows are surjective, 
hence so is the third arrow, i.e., $\cO_\tY(2H - E)$ is globally generated.

If~$d = 1$ the above argument works as well over the complement of the base point of~$|H|$, therefore
\begin{equation*}
\Bs(|2H - E|) \subset \Bs(|H|).
\end{equation*}
On the other hand, the linear system~$|2H|$ 
defines the regular double covering~$\widehat{\varphi}_1 \colon Y \to \P(1,1,1,2)$ 
(see Remark~\ref{rem:2h})
and the point~$y_0$ lies on its ramification divisor (see Remark~\ref{rem:branching}), hence we have
\begin{equation*}
\Bs(|2H - E|) \subset E.
\end{equation*}
The right sides of the inclusions are disjoint (by Remark~\ref{rem:branching}), hence~$\cO_\tY(2H-E)$ is globally generated.

\ref{item:cones-ty}
Now we describe the nef cone and the Mori cone of~$\tY$.
Since $\Cl(Y) = \ZZ \cdot H$ (see Remark~\ref{rem:factorial})
the group~$\Pic(\tY) = \Cl(\tY)$ is generated by~$H$ and~$E$.
Since~$\cO_\tY(2H - E)$ is globally generated, it is nef.
Moreover, the line bundle~$\cO_\tY(H)$ is the pullback of an ample line bundle from~$Y$, hence it is also nef.
On the other hand, 
using~\eqref{eq:intersection-ty}, we compute
\begin{equation}
\label{eq:int-ty}
\begin{aligned}
H \cdot \ell_E &= 0,
\qquad\qquad&
H \cdot \tell &= 1,
\\
(2H - E) \cdot \ell_E &= 1,
&
(2H - E) \cdot \tell &= 0.
\end{aligned}
\end{equation} 
Since both~$\ell_E$ and~$\tell$ are effective curve classes, we conclude that~$H$ and~$2H - E$ 
generate the nef cone of~$\tY$, while~$\ell_E$ and~$\tell$ generate the Mori cone.

\ref{item:eff-cone-ty}
Assume~$D = aH + bE$ is an effective divisor.
It is enough to show that~$a \ge 0$ and~$a + b \ge 0$.
The first follows immediately because~$\sigma(D) \sim aH$ is also effective.
For the second, using~\eqref{eq:he-intersections} we compute
\begin{equation*}
D \cdot (2H - E) \cdot (H - E) = (a + b)(d + 1),
\end{equation*}
and since~$H - E$ has no fixed components and~$2H - E$ is nef, this must be nonnegative, hence~$a + b \ge 0$.

\ref{item:map-to-X}
We already checked that the linear system~$|2H - E|$ is base point free and has dimension~$2d + 3$.
Now consider the morphism~$\tY \to \P^{2d+3}$ induced by this linear system, and its Stein factorization
\begin{equation}
\label{eq:stein}
\tY \xrightarrow{\ \pi\ } X \xrightarrow\quad \P^{2d+3},
\end{equation}
where the morphism~$\pi$ has connected fibers, $X$ is normal, and the morphism~$X \to \P^{2d+3}$ is finite.

Note that any curve in a fiber of~$\pi$ has zero intersection with~$2H - E$;
therefore the description of the Mori cone in~\ref{item:cones-ty} implies that 
its class is a positive multiple of the class of~$\tL$.
Since~$(H - E) \cdot \tL = -1$ by~\eqref{eq:intersection-ty}, 
it follows that any such curve has negative intersection with~$H - E$,
hence it is contained in the base locus of the linear system $|H - E|$ 
which, as we showed in Lemma~\ref{lemma:curve-c-l}, equals~$\tL$.
Thus, $\tL$ is the only curve contracted by~$\pi$; in particular~$\pi$ is birational and small.

Since~$\tY$ is smooth, $X$ is smooth away from the point~$x_0 \coloneqq \pi(\tL)$.
On the other hand, \eqref{eq:cn-tl-ty} implies that the point~$x_0 \in X$ is an ordinary double point, so~$X$ is 1-nodal.

Furthermore, by~\eqref{eq:intersection-ty} the subgroup in~$\Pic(\tY)$ of classes restricting trivially to~$\tL$ 
is generated by the anticanonical class~$2H - E$ of~$\tY$,
and since~$\pi$ is small, it is the pullback of the anticanonical class of~$X$, 
which therefore generates~$\Pic(X)$.
By definition of~$\pi$, this class is the pullback of the hyperplane class of~$\P^{2d+3}$
under the finite morphism in~\eqref{eq:stein},
hence it is ample, hence~$X$ is a prime Fano threefold.
Finally,
\begin{equation*}
\rH^0(X, \cO_X(-K_X)) =
\rH^0(\tY, \cO_{\tY}(2H - E)) = 
2d + 4,
\end{equation*}
where we used~\eqref{eq:dim-h0-jc2h} in the second equality,
so it follows that the genus of~$X$ is~$2d + 2$.
\end{proof}

In what follows we refer to the construction of Proposition~\ref{prop:x-y} as {\sf the bridge construction},
and to the variety~$X$ constructed in Proposition~\ref{prop:x-y}
as the {\sf anticanonical model} of~$\Bl_C(Y)$, and use for it the notation
\begin{equation}
\label{eq:def-x}
X \coloneqq 
\begin{cases}
\Bl_C(Y)_\can, & \text{if~$d \ge 2$,}\\
\Bl_{y_0}(Y)_\can, & \text{if~$d = 1$.}
\end{cases}
\end{equation}

\begin{remark}\label{rem:nonfact}
By construction, $X$ is not factorial;
indeed, $\Pic(X) = \Z \cdot K_X$ while~$\Cl(X) = \Cl(\tY) \cong \Z^2$.
Furthermore, since~$H \cdot \tL = 1$, it follows that~$X$ is \emph{maximally nonfactorial}, 
see~\S\ref{sec:mnf} or~\cite[Lemma~6.14]{KS22:abs}, 
that is the map from $\Cl(X)$ to the direct sum of local class groups of the singular points is surjective.
This is not a coincidence: in Proposition~\ref{prop:mnf} we show that every nodal Fano threefold such that 
\begin{equation*}
\rank(\Cl(X)) = \rank(\Pic(X)) + |\Sing(X)|
\end{equation*}
is maximally nonfactorial.
This is important because maximal nonfactoriality is a necessary condition for
the categorical absorption of singularities~\cite[Proposition~6.12]{KS22:abs},
which is crucial for our applications.
\end{remark}

\begin{remark}
\label{rem:hyperelliptic-trigonal}
For~$d \ge 3$ one can check that the anticanonical class of~$X$ is very ample,
so that the second arrow in~\eqref{eq:stein} is a closed embedding, 
and its image is an intersection of quadrics, see, e.g., \cite[Theorem~4.5]{Prokhorov-GorensteinRational} or~\cite[Corollary~4.11]{KP23}.
On the other hand, one can check that if~$d = 1$ then~$X$ is a hyperelliptic threefold of type~$H_5$ from~\cite{PCS}
(see~\cite[Example~4.3]{Prokhorov-GorensteinRational} or~\cite[Proposition~4.4(iv)]{KP23}),
and if~$d = 2$ then~$X$ is a trigonal threefold of type~$T_7$ from~\cite{PCS}
(see~\cite[Example~4.7]{Prokhorov-GorensteinRational} or~\cite[Proposition~4.7(iv)]{KP23}).
\end{remark}

\begin{remark}
\label{rem:sarkisov}
One can extend the blowup~$\sigma \colon \tY \to Y$ and the small contraction~$\pi \colon \tY \to X$ 
to a Sarkisov link by flopping the curve~$\tL$.
By~\cite{Tak09} (or~\cite[Table~2]{KP23}) the other extremal contraction 
is a del Pezzo fibration~$\tY^+ \to \P^1$ of degree~$d + 1$, 
so that we have the following diagram
\begin{equation}
\label{eq:sl}
\vcenter{\xymatrix{
&& \tX \ar[dl] \ar[dr]
\\
& \tY \ar[dl]_\sigma \ar[dr]^\pi \ar@{-->}[rr]^\psi &&
\tY^+ \ar[dl]_{\pi^+} \ar[dr]^{\sigma^+}
\\
Y &&
X &&
\P^1,
}}
\end{equation}
where the map~$\tX \to \tY$ is the blowup of~$\tL$, the map~$\tX \to X$ is the blowup of the node~$x_0 \in X$, 
and~$\psi$ is the flop of~$\tL$.
Moreover, if~$D$ is the exceptional divisor of~$\tX$ over~$X$, 
it is not hard to check that the nef cone of~$\tX$ 
is generated by the pullbacks~$H$, $2H-E$, and~$H-E-D$ of the ample generators of~$\Pic(Y)$, $\Pic(X)$, and~$\Pic(\P^1)$, respectively.
In particular, the three rays of the nef cone give contractions of different type, 
which allows one to reconstruct the entire diagram~\eqref{eq:sl} from~$X$ by blowing up the node and running the minimal model program.
We will use this idea in the proof of Theorem~\ref{thm:tmfy-bmfx-d2}.
\end{remark}

We will also need the following observation regarding the Hilbert scheme~$\rF_{d-1}(Y)$
of curves with Hilbert polynomial~$p(t) = (d-1)t + 1$ on a del Pezzo threefold~$Y$.

\begin{lemma}
\label{lem:hilbert-smooth}
Let~$Y$ be a smooth del Pezzo threefold of degree~$2 \le d \le 5$.
\begin{renumerate}
\item\label{it:Hilb-nonenmpty} 
The open subset~$\rF_{d-1}^\circ(Y) \subset \rF_{d-1}(Y)$ parameterizing smooth rational curves of degree~$d - 1$ 
is nonempty and connected.

\item\label{it:Hilb-smooth}
Assume~$C \subset Y$ is a smooth rational curve of degree~$d - 1$ 
which is not a ramification line when~$d = 2$.
Then
\begin{equation}
\label{eq:h-cn-c-y}
\rH^1(C,\cN_{C/Y}) = 0
\qquad\text{and}\qquad
\dim \rH^0(C,\cN_{C/Y}) = 2d - 2.
\end{equation} 
In particular, the Hilbert scheme~$\rF_{d-1}(Y)$ is smooth of dimension~$2d - 2$ at~$[C]$.
\end{renumerate}
\end{lemma}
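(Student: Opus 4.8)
The plan is to treat part~(ii) first, since it is the heart of the statement, and then deduce part~(i).

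For part~(ii) I would begin with the numerics of the normal bundle. As $C \cong \P^1$ lies in the smooth locus of $Y$, the bundle $\cN_{C/Y}$ has rank~$2$, and the normal bundle sequence $0 \to T_C \to T_Y\vert_C \to \cN_{C/Y} \to 0$ gives
\[
\deg \cN_{C/Y} = (-K_Y)\cdot C - 2 = 2H\cdot C - 2 = 2(d-1) - 2 = 2d - 4,
\]
so Riemann--Roch on $\P^1$ yields $\chi(\cN_{C/Y}) = 2d - 2$, exactly the claimed value of $\dim \rH^0$. Writing $\cN_{C/Y} \cong \cO_{\P^1}(a) \oplus \cO_{\P^1}(b)$ with $a \le b$ and $a + b = 2d - 4$, both assertions of~\eqref{eq:h-cn-c-y} reduce to the single inequality $a \ge -1$; equivalently, setting $n \coloneqq b - a \ge 0$ (an even integer), I must show $n \le 2d - 2$.

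The key idea is to read off $n$ from the geometry of the exceptional divisor $E \subset \tY = \Bl_C(Y)$, which is the Hirzebruch surface $\mathbb{F}_n$ since $E = \P(\cN_{C/Y})$. I would compute the class $(2H - E)\vert_E$ intrinsically on $E$: by~\eqref{eq:int-ty} it meets each fiber of $E \to C$ once, so it equals $C_0 + kf$ for the negative section $C_0$ and fiber class $f$, and from $((2H-E)\vert_E)^2 = (2H-E)^2\cdot E = 2d$ (expand and use~\eqref{eq:he-intersections}) one finds $k = d + n/2$. Since $\cO_\tY(2H-E)$ is globally generated by Proposition~\ref{prop:x-y}, so is its restriction to $E$, hence $C_0 + kf$ is nef on $\mathbb{F}_n$; as $(C_0+kf)\cdot C_0 = k - n$, this forces $k \ge n$, i.e.\ $n \le 2d$. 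To rule out the borderline value $n = 2d$ I would invoke the \emph{smallness} of the contraction: if $n = 2d$ then $k = n$, so $(C_0 + nf)\cdot C_0 = 0$ and the morphism $\pi$ defined by $|2H-E|$ would contract the negative section $C_0 \subset E$; this contradicts Proposition~\ref{prop:x-y}, by which the only curve contracted by $\pi$ is $\tL$, together with Lemma~\ref{lemma:curve-c-l}, which gives $\tL \not\subset E$ (this is exactly where the hypothesis that $C$ is not a ramification line when $d = 2$ is used). Therefore $n \le 2d - 2$, i.e.\ $a \ge -1$, so $\rH^1(\cN_{C/Y}) = 0$ and $\dim \rH^0(\cN_{C/Y}) = \chi(\cN_{C/Y}) = 2d-2$. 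Smoothness of $\rF_{d-1}(Y)$ of dimension $2d-2$ at $[C]$ is then immediate from deformation theory, the tangent space being $\rH^0(\cN_{C/Y})$ and the obstruction space $\rH^1(\cN_{C/Y})$ vanishing.

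For part~(i), nonemptiness is classical: the curves in question are lines, conics, twisted cubics and rational quartics for $d = 2,3,4,5$, and each family is known to be nonempty on the corresponding del Pezzo threefold, so I would either exhibit a member explicitly or cite~\cite{KPS18,IP}. For connectedness, part~(ii) shows that $\rF_{d-1}^\circ(Y)$ is smooth of pure dimension $2d-2$, so connectedness is equivalent to irreducibility, which again follows from the known description of these Hilbert schemes of low-degree rational curves; alternatively one can run a monodromy argument over the irreducible family of pairs $(Y,C)$.

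I expect the main obstacle to be the bookkeeping in part~(ii) around the Hirzebruch model $E \cong \mathbb{F}_n$ and the borderline case $n = 2d$, and in particular making precise that contracting the negative section is genuinely forbidden by smallness; computing $(2H-E)\vert_E$ through its intersection numbers rather than a chosen projectivization convention is meant to sidestep any sign ambiguity. The connectedness in part~(i) is the place where I would most likely lean on the classification of these Hilbert schemes rather than on a self-contained argument.
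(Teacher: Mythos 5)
Your treatment of part~(ii) is correct but follows a genuinely different route from the paper. You extract the splitting type $\cN_{C/Y}\cong\cO(a)\oplus\cO(b)$ from the geometry of the exceptional divisor $E\cong\mathbb{F}_n\subset\tY$, bounding $n=b-a$ by nefness of $(2H-E)\vert_E$ and excluding the borderline case $n=2d$ via smallness of $\pi$ together with $\tL\not\subset E$ (which is indeed exactly where the non-ramification hypothesis enters); all the intersection numbers check out, and there is no circularity since Proposition~\ref{prop:x-y} precedes the lemma. The paper instead restricts the resolution~\eqref{eq:jc} of $\cJ_C$ coming from the pencil $|H-C|$ to $C$, obtaining the exact sequence $0\to\cO_C(d-3)^{\oplus 2}\to\cN_{C/Y}\to\cO_{C\cap L}\to 0$ with $L=\sigma(\tL)$ the residual line, from which~\eqref{eq:h-cn-c-y} is immediate. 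The paper's route gives finer information (an explicit presentation of $\cN_{C/Y}$, and it makes transparent why the ramification-line case degenerates to $\cO(2)\oplus\cO(-2)$), while yours has the virtue of using only the nef-cone data already established and would generalize to any situation where one controls the contracted locus.

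Part~(i), however, has a genuine gap for $d\ge 3$. Nonemptiness is fine, but connectedness does not ``follow from the known description'': for $d=4,5$ the relevant curves are twisted cubics and rational quartics, which are not covered by the standard references on lines and conics ([KPS18] treats only $\rF_1$ and $\rF_2$), so there is no off-the-shelf citation. Your fallback monodromy argument also does not close the gap: irreducibility of the family of pairs $(Y,C)$ only shows that the monodromy permutes the connected components of $\rF_{d-1}^\circ(Y)$ transitively for general $Y$, not that there is a single component (compare lines on a smooth quadric surface). What is needed is the paper's argument: by Lemma~\ref{lemma:curve-c-l} every such $C$ determines a residual line $L=\sigma(\tL)$ and the pencil of hyperplanes through $\langle C\rangle\supset L$, exhibiting $\rF_{d-1}^\circ(Y)$ as fibered over the connected Hilbert scheme of lines $\rF_1(Y)$ with fibers dense open subsets of $\Gr(2,d)$; connectedness (and nonemptiness) then follows. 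You should supply this, or an equivalent self-contained argument, rather than a citation.
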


\begin{proof}
\ref{it:Hilb-nonenmpty}
If~$d = 2$ then~$\rF_{d-1}^\circ(Y) = \rF_{d-1}(Y)$ is the Hilbert scheme of lines;
connectedness (and nonemptiness) of this Hilbert scheme is proved in~\cite[Theorem~3.57 and Remark~3.58]{Welters}.

Assume~$d \ge 3$.
In this case we apply a construction converse to that of Lemma~\ref{lemma:curve-c-l}.
More precisely, we consider a general hyperplane section~$S \subset Y$ 
(this is a smooth del Pezzo surface of degree~$d$),
choose any line~$L \subset S$, and a general curve in the linear system~$|H - L|$ on~$S$;
it is easy to see that this is a smooth rational curve of degree~$d - 1$ (see, e.g., \cite[Lemma~4.6]{LSZ}).
This proves that~$\rF_{d-1}^\circ(Y)$
has a structure of a fibration over the Hilbert scheme of lines on~$Y$ 
(which is connected, see~\cite[Proposition~2.2.10]{KPS18}) with fiber over a line~$L$ 
an open dense subset of the Grassmannian~$\Gr(2,d)$ that parameterizes pencils of hyperplanes through~$L$,
hence~$\rF_{d-1}^\circ(Y)$ is nonempty and connected.

\ref{it:Hilb-smooth} We use notation of Lemma~\ref{lemma:curve-c-l}.
Twisting the Koszul complex~\eqref{eq:tl-koszul-untwisted} by~$\cO_\tY(-E)$ and pushing it forward along~$\sigma$, we obtain an exact sequence
\begin{equation}
\label{eq:jc}
0 \to \cO_{Y}(-2H) \to \cO_{Y}(-H)^{\oplus 2} \to \cJ_C \to \sigma_*(\cO_\tL(-E)) \to 0.
\end{equation}
Combining it with~\eqref{eq:intersection-ty} we obtain a right exact sequence
\begin{equation*}
\cO_Y(-H)^{\oplus 2} \to \cJ_C \to \cO_L(-2) \to 0,
\end{equation*}
where we set~$L \coloneqq \sigma(\tL)$.
Restricting it to~$C$ we obtain a right exact sequence
\begin{equation*}
\cO_C(1-d)^{\oplus 2} \to \cN_{C/Y}^\vee \to \cO_{C \cap L} \to 0.
\end{equation*}
where~$C \cap L$ is a scheme of length~$2$ by Lemma~\ref{lemma:curve-c-l}.
Note that~$\cO_C(1-d)^{\oplus 2}$ and~$\cN_{C/Y}^\vee$ are locally free sheaves of rank~$2$,
and the cokernel of the first arrow is a torsion sheaf, 
hence the kernel is a torsion subsheaf of a locally free sheaf, hence it vanishes, and this morphism is injective.
Thus, the above sequence is also left exact.
Twisting it by~$\det(\cN_{C/Y}) \cong \cO_C(2d - 4)$ we obtain
\begin{equation*}
0 \to \cO_C(d - 3)^{\oplus 2} \to \cN_{C/Y} \to \cO_{C \cap L} \to 0.
\end{equation*}
Since~$d \ge 2$, the equalities~\eqref{eq:h-cn-c-y} 
as well as the dimension and smoothness of the Hilbert scheme follow.
\end{proof}

\begin{remark}
\label{rem:f1-sing}
The argument does not work when~$d = 2$ and~$C$ is a ramification line, 
because in this case~$L = C$, so restricting~\eqref{eq:jc} to~$C$ we obtain a right exact sequence
\begin{equation*}
\cO_C(-1)^{\oplus 2} \to \cN_{C/Y}^\vee \to \cO_C(-2) \to 0
\end{equation*}
which implies~$\cN_{C/Y} \cong \cO_C(2) \oplus \cO_C(-2)$.
In particular, the cohomology groups of~$\cN_{C/Y}$ jump, 
hence the point~$[C]$ on the Hilbert scheme is singular.
\end{remark}

In the next lemma we construct an important vector bundle~$\cU_\tY$ of rank~$2$ on~$\tY$.
As we will prove later (see Proposition~\ref{prop:mutations-1}),
the sheaf~$\pi_*(\cU_\tY^\vee)$ is locally free and dual to a Mukai bundle on~$X$.

If~$d = 1$ the construction works both in the nodal and cuspidal cases,
but the nodal case is simpler and as we do not need the cuspidal case for applications, we omit it.

\begin{lemma}
\label{lemma:mutation-cu}
Assume Setup~\textup{\ref{setup}} and if~$d = 1$ assume that~$Y$ is $1$-nodal.
Then there is an exact sequence
\begin{equation}
\label{eq:def-cu}
0 \to \cU_\tY^\vee \to \cO_\tY(H) \oplus \cO_\tY(H) \to \cO_E(d\rf) \to 0,
\end{equation}
where~$F$ is the class of a fiber of~$E \to C$ if~$d \ge 2$, or a ruling on~$E \cong \P^1 \times \P^1$ if~$d = 1$.
It defines a vector bundle~$\cU_\tY$ on~$\tY$ of rank~$2$ with~$\rc_1(\cU_\tY) = E - 2H = K_\tY$.
Moreover, the sheaf~$\pi_*(\cU_\tY^\vee)$ is $(-K_X)$-stable.
\end{lemma}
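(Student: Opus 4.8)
The plan is to build the sequence by an elementary modification along the smooth divisor $E$, read off the rank and $c_1$ from it, and then reduce $(-K_X)$-stability to a single cohomology vanishing on $\tY$.

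First I would construct the surjection. Since $H$ is pulled back from $Y$ and $C$ has degree $d-1$ with respect to $H$, one has $\cO_\tY(H)|_E = \cO_E((d-1)F)$ (for $d=1$ this reads $\cO_\tY(H)|_E = \cO_E$, as $E$ contracts to the point $y_0$); in particular $\cHom(\cO_\tY(H)|_E, \cO_E(dF)) = \cO_E(F)$, whose space of sections is two-dimensional and base-point free. Choosing a basis $s_0, s_1 \in \rH^0(E,\cO_E(F))$ and composing the restriction $\cO_\tY(H)^{\oplus 2} \twoheadrightarrow \cO_\tY(H)^{\oplus 2}|_E$ with $(s_0,s_1)$ yields a surjection $\cO_\tY(H)^{\oplus 2} \to \cO_E(dF)$; I define $\cU_\tY^\vee$ to be its kernel, which realizes \eqref{eq:def-cu}. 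Because the quotient is a line bundle on the smooth Cartier divisor $E$ and the source is locally free of rank $2$, the kernel is locally free of rank $2$ (a standard elementary transformation: locally one of $s_0,s_1$ is a unit on $E$, and the kernel is freely generated by a lift of $(-s_0^{-1}s_1,1)$ together with $(z,0)$, where $z$ is a local equation of $E$). Taking first Chern classes in \eqref{eq:def-cu}, and using that the determinant of any line bundle supported on $E$ is $\cO_\tY(E)$, gives $c_1(\cU_\tY^\vee) = 2H-E$, hence $c_1(\cU_\tY) = E-2H$, which equals $K_\tY$ since $-K_\tY = 2H-E$.

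For stability I would first reduce to $\tY$. As $\pi$ is small, it is an isomorphism away from $\tL \leftrightarrow x_0$, so $\Cl(X) \cong \Cl(\tY)$, and any saturated rank-$1$ subsheaf $\cG \subset \pi_*\cU_\tY^\vee$ extends over the codimension-$2$ locus to a line bundle $\cO_\tY(aH+bE) \hookrightarrow \cU_\tY^\vee$ of the same slope, measured through the projection formula by $\pi^*(-K_X) = 2H-E$, namely $c_1(\cG)\cdot(-K_X)^2 = (aH+bE)\cdot(2H-E)^2$. Using \eqref{eq:he-intersections} I compute $H\cdot(2H-E)^2 = 3d+1$, $E\cdot(2H-E)^2 = 2d$, and $(2H-E)^3 = 4d+2$, so that $\mu(\pi_*\cU_\tY^\vee) = \tfrac12(2H-E)^3 = 2d+1$ while the slope of $\cO_\tY(aH+bE)$ is $a(3d+1)+2bd$. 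Thus it suffices to show that no line bundle $\cO_\tY(aH+bE)$ with $a(3d+1)+2bd \ge 2d+1$ admits a nonzero map to $\cU_\tY^\vee$.

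The heart is a short combinatorial squeeze followed by one vanishing. Any nonzero map $\cO_\tY(aH+bE) \to \cU_\tY^\vee \hookrightarrow \cO_\tY(H)^{\oplus 2}$ is a pair of sections of $\cO_\tY((1-a)H-bE)$, so this class is effective; by Proposition~\ref{prop:x-y}\ref{item:eff-cone-ty} (effective cone $=\langle E, H-E\rangle$) this forces $a\le 1$ and $a+b\le 1$. Combined with the slope bound, the constraint $b\le 1-a$ yields $a(d+1)\ge 1$, so the integer $a\le 1$ must equal $1$, and then $a+b\le 1$ together with the slope bound forces $b=0$. It remains to rule out $\cO_\tY(H)$, i.e. to prove $\rH^0(\tY,\cU_\tY^\vee(-H))=0$: twisting \eqref{eq:def-cu} by $\cO_\tY(-H)$ and using $\cO_\tY(H)|_E=\cO_E((d-1)F)$ gives $0 \to \cU_\tY^\vee(-H) \to \cO_\tY^{\oplus 2} \to \cO_E(F) \to 0$, and on global sections the map $\kk^2 = \rH^0(\cO_\tY^{\oplus 2}) \to \rH^0(E,\cO_E(F)) = \kk^2$ is $(\lambda,\mu)\mapsto \lambda s_0 + \mu s_1$, an isomorphism since $s_0,s_1$ form a basis. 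Hence $\rH^0(\cU_\tY^\vee(-H))=0$, no destabilizing subsheaf exists, and $\pi_*\cU_\tY^\vee$ is $(-K_X)$-stable. I expect the main obstacle to be bookkeeping rather than genuine difficulty: getting the slope comparison across the small contraction exactly right (the identification $\Cl(X)\cong\Cl(\tY)$ and the projection formula pairing a Weil class with the Cartier class $-K_X$), and confirming that the effective-cone constraint together with the slope bound isolates the single class $\cO_\tY(H)$ uniformly for all $d\in\{1,\dots,5\}$; once that is in place, the vanishing $\Hom(\cO_\tY(H),\cU_\tY^\vee)=0$ is immediate from the defining sequence.
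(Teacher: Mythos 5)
Your proposal is correct and follows essentially the same route as the paper: the elementary modification along $E$ via the evaluation map for $\rH^0(E,\cO_E(F))\cong\kk^2$, and for stability the reflexive extension of a saturated rank-one subsheaf across the small contraction, the effective-cone constraint from Proposition~\ref{prop:x-y}\ref{item:eff-cone-ty}, a slope comparison against $\tfrac12(2H-E)^3=2d+1$, and the vanishing $\rH^0(\tY,\cU_\tY^\vee(-H))=0$ from the defining sequence. The only (immaterial) difference is bookkeeping: the paper bounds the slope of any candidate subsheaf directly by $\max(d+1,2d)<2d+1$, whereas you isolate the single class $\cO_\tY(H)$ and then exclude it by the same cohomology vanishing.
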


\begin{proof}
If~$d \ge 2$ the curve~$C$ has degree~$d - 1$, hence~$\cO_\tY(H)\vert_E \cong \cO_E((d-1)F)$,
and if~$d = 1$ the divisor~$E$ is contracted by~$\sigma$ to a point, hence~$\cO_\tY(H)\vert_E \cong \cO_E$.
In any case, we have
\begin{equation*}
\Ext^\bullet(\cO_\tY(H), \cO_E(d\rf)) =
\Ext^\bullet(\cO_E((d-1)\rf), \cO_E(d\rf)) =
\rH^\bullet(E, \cO_E(\rf)) =
\kk^2.
\end{equation*}
The natural evaluation morphism $\cO_\tY(H) \oplus \cO_\tY(H) \to \cO_E(d\rf)$
is surjective because the line bundle~$\cO_E(F)$ is globally generated.
Since~$E$ is a Cartier divisor, the projective dimension of the sheaf~$\cO_E(d\rf)$ on~$\tY$ is~$1$, 
hence the kernel of this epimorphism is a vector bundle of rank~2, and its first Chern class is obvious.

To prove the stability of~$\pi_*(\cU_\tY^\vee)$ consider any saturated subsheaf~$\cF \subset \pi_*(\cU_\tY^\vee)$ of rank~$1$.
The restriction of~$\cF$ to~$X \setminus \{x_0\}$ is reflexive;
considering it as a sheaf on~$\tY \setminus \tL$ and taking its reflexive extension to~$\tY$,
we obtain a reflexive subsheaf~$\tcF \subset \cU_\tY^\vee$ of rank~$1$, 
which is invertible because~$\tY$ is smooth.
Composing the embeddings~$\tcF \hookrightarrow \cU_\tY^\vee \hookrightarrow \cO_\tY(H)^{\oplus 2}$,
we see that the line bundle~$\tcF^\vee(H)$ has global sections.
This line bundle is nontrivial, because~$\rH^\bullet(\tY, \cU_\tY^\vee(-H)) = 0$ by construction,
hence by Proposition~\ref{prop:x-y}\ref{item:eff-cone-ty} we have
\begin{equation*}
\rc_1(\tcF^\vee(H)) = aE + b(H-E),
\qquad 
a,b \ge 0,
\qquad 
(a,b) \ne (0,0).
\end{equation*}
It follows that~$\rc_1(\cF) = H - aE - b(H - E)$, hence~$\rc_1(\cF) \le H - E$ or~$\rc_1(\cF) \le H - (H - E) = E$.
Now using~\eqref{eq:he-intersections} we see that the slope of~$\tcF$ with respect to~$2H - E$ is bounded from above 
by the maximum of
\begin{equation*}
(H - E) \cdot (2H - E)^2 = d + 1
\qquad\text{and}\qquad 
E \cdot (2H - E)^2 = 2d.
\end{equation*}
On the other hand, the slope of~$\cU_\tY^\vee$ is equal to
\begin{equation*}
\tfrac12 (2H - E)^3 = 2d + 1.
\end{equation*}
It follows that the slope of~$\tcF$ is strictly less than the slope of~$\cU_\tY^\vee$,
hence the quadratic term in the reduced Hilbert polynomial of~$\pi_*\tcF$
is less than the quadratic term in the reduced Hilbert polynomial of~$\pi_*(\cU_\tY^\vee)$.
But the sheaves~$\cF$ and~$\pi_*\tcF$ are isomorphic away from the point~$x_0$, 
hence their reduced Hilbert polynomials agree up to a constant,
and hence the reduced Hilbert polynomial of~$\cF$ 
is less than the reduced Hilbert polynomial of~$\pi_*(\cU_\tY^\vee)$.
Therefore, $\pi_*(\cU_\tY^\vee)$ is stable.
\end{proof}

\begin{remark}
\label{rem:ruling}
Note that in the case~$d = 1$ the bundle~$\cU_\tY$ depends on the choice of a ruling of~$E$.
\end{remark}

\section{Derived categories}
\label{sec:component}

In~\S\ref{sec:bridge} we introduced
the bridge construction of 
the 1-nodal prime Fano threefold~$X$
from a del Pezzo threefold~$Y$ with some extra data as in Setup~\ref{setup}.
In this section we relate the component~$\cB_Y$ of the derived category of~$Y$, defined in~\eqref{eq:dby-i2},
and the components~$\cA_{\cX_b}$ of smoothings~$\cX_b$ of~$X$, defined in~\eqref{eq:dbx-i1}.

We start with a remark about the case~$d = 1$.
Recall that in this case~$Y$ is singular, hence the category~$\cB_Y$ defined in~\eqref{eq:dby-i2} is not proper.
On the other hand, $\tY$ is a resolution of singularities of~$Y$, 
so one can find a smooth and proper replacement for~$\cB_Y$ inside~$\Db(\tY)$.
We do this in the next lemma.

In contrast to Lemma~\ref{lemma:mutation-cu}, it is crucial here to assume that~$Y$ is 1-nodal, 
because in the cuspidal case the sheaf~$\cO_E(F)$ is not exceptional;
in fact, in this case~$\Ext^\bullet(\cO_E(F), \cO_E(F)) \cong \kk \oplus \kk[-1] \oplus \kk[-2]$ 
(where we consider~$\cO_E(F)$ as an object of~$\Db(\tY)$).

\begin{lemma}
\label{lem:tcb-y}
Let~$(Y,F)$ be as in Setup~\textup{\ref{setup}\ref{it:curve-d=1}} and~$Y$ is $1$-nodal, 
so that~$F$ is a ruling of the exceptional divisor~$E$ of the blowup~$\tY = \Bl_{y_0}(Y)$ at the node~$y_0 \in Y$.
Then there is a semiorthogonal decomposition
\begin{equation}
\label{eq:def-tcby}
\Db(\tY) = \langle \tcB_Y, \cO_\tY, \cO_\tY(H), \cO_E, \cO_E(\rf) \rangle,
\end{equation} 
where~$\tcB_Y$ is a smooth and proper triangulated category. 
\end{lemma}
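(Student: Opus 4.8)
The plan is to exhibit $(\cO_\tY, \cO_\tY(H), \cO_E, \cO_E(\rf))$ as an exceptional collection in $\Db(\tY)$, to \emph{define} $\tcB_Y$ as its right orthogonal
\begin{equation*}
\tcB_Y \coloneqq \langle \cO_\tY, \cO_\tY(H), \cO_E, \cO_E(\rf) \rangle^\perp,
\end{equation*}
so that the decomposition~\eqref{eq:def-tcby} holds by construction, and then to observe that smoothness and properness come for free. Indeed, an exceptional collection generates an admissible subcategory, the orthogonal to an admissible subcategory is again admissible, and since $\tY$ is smooth and projective the category $\Db(\tY)$ is smooth and proper, these properties being inherited by admissible subcategories. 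Thus the entire content of the lemma is the verification of the semiorthogonality relations: that each of the four objects is exceptional and that $\RHom_\tY(\cE_j,\cE_i)=0$ whenever $j>i$ in the listed order $\cE_1=\cO_\tY$, $\cE_2=\cO_\tY(H)$, $\cE_3=\cO_E$, $\cE_4=\cO_E(\rf)$.

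The geometric inputs I would use are all recorded above: the morphism $\sigma$ resolves the single ordinary double point $y_0$, so $\mathrm{R}\sigma_*\cO_\tY=\cO_Y$ (a node is a rational singularity); the exceptional divisor is $i\colon E\cong\P^1\times\P^1\hookrightarrow\tY$ with $\cN_{E/\tY}=\cO_\tY(E)\vert_E\cong\cO_E(-1,-1)$ (Setup~\ref{setup}\ref{it:curve-d=1} and the proof of Lemma~\ref{lemma:curve-c-l}); and $\cO_\tY(H)\vert_E\cong\cO_E$ because $H=\sigma^*H_Y$ and $E$ is contracted by $\sigma$. The two pullback objects $\cO_\tY,\cO_\tY(H)$ are exceptional because $\mathrm{R}\sigma_*$ identifies their self-$\RHom$ with $\rH^\bullet(Y,\cO_Y)=\kk$, and the relation $\RHom_\tY(\cO_\tY(H),\cO_\tY)=\rH^\bullet(\tY,\cO_\tY(-H))=\rH^\bullet(Y,\cO_Y(-H))=0$ (via projection formula and $\mathrm{R}\sigma_*\cO_\tY=\cO_Y$) is precisely the semiorthogonality of the exceptional pair $(\cO_Y,\cO_Y(H))$ underlying~\eqref{eq:dby-i2}, pulled back along $\sigma$. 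This disposes of exceptionality of $\cE_1,\cE_2$ and of the relation $j=2,\,i=1$.

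For the divisorial objects I would invoke the two standard formulas for an effective divisor. For the mixed terms $\RHom_\tY(\cO_E(a\rf),\cL)$ with $\cL\in\{\cO_\tY,\cO_\tY(H)\}$ and $a\in\{0,1\}$ (the relations $j\in\{3,4\}$, $i\in\{1,2\}$) I would use Grothendieck duality $i^!\cL\cong i^*\cL\otimes\cO_E(-1,-1)[-1]$: since $i^*\cO_\tY=i^*\cO_\tY(H)=\cO_E$, each reduces to $\rH^\bullet\bigl(E,\cO_E(-1-a,-1)\bigr)[-1]$, which vanishes because the Künneth factor $\cO_{\P^1}(-1)$ is acyclic. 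The remaining relations — exceptionality of $\cO_E,\cO_E(\rf)$ and $\RHom_\tY(\cO_E(\rf),\cO_E)=0$ — follow from the self-intersection formula
\begin{equation*}
\mathrm{L}i^* i_* \cG \cong \cG \oplus \bigl(\cG\otimes\cO_E(1,1)\bigr)[1]
\qquad(\text{using } \cN_{E/\tY}^\vee\cong\cO_E(1,1)),
\end{equation*}
which turns each $\RHom_\tY(i_*\cG,i_*\cG')=\RHom_E(\mathrm{L}i^*i_*\cG,\cG')$ into a sum of cohomology groups of line bundles on $\P^1\times\P^1$; a direct check shows all of them vanish except the scalars computing $\End(\cO_E)=\End(\cO_E(\rf))=\kk$.

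These computations are elementary once the three geometric facts are in place, so I do not expect a genuine obstacle. The one point requiring care — and where I would be most cautious — is the behaviour of $\mathrm{R}\sigma_*$ on the \emph{singular} base $Y$: one must use that the node is rational, so that $\mathrm{R}\sigma_*\cO_\tY=\cO_Y$ and, by induction from $\mathrm{R}\sigma_*\cO_E(-1,-1)=0$, that $\mathrm{R}\sigma_*\cO_\tY(kE)=\cO_Y$ for $k\ge 0$, rather than invoking Orlov's blow-up formula, which does not apply here because $Y$ is not smooth at $y_0$. Beyond that, one should only confirm that the choice of ruling $\rf$ is immaterial: all the relevant bundles $\cO_E(-1-a,-1)$ and $\cO_E(\pm1,0)\otimes\cO_E(-1,-1)$ carry a strictly negative Künneth factor, so the required groups vanish regardless of which ruling is taken.
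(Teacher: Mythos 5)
Your proof is correct, and all the semiorthogonality computations check out, but it takes a genuinely different route from the paper. The paper does not verify the exceptional collection by hand: it invokes the categorical-resolution machinery of~\cite[Theorem~5.8]{KS22:abs} to produce a decomposition~$\Db(\tY) = \langle \cO_E(E), \cO_E(E+F), \tcD_Y\rangle$ with~$\tcD_Y = \{\cF \mid \cF\vert_E \in \langle \cO_E(-F), \cO_E\rangle\}$ smooth, proper, and containing~$\sigma^*\Dp(Y)$, then mutates the two torsion objects to the far right (using~$-K_\tY\vert_E = -E$) and splits off the exceptional pair~$(\cO_\tY,\cO_\tY(H)) = \sigma^*(\cO_Y,\cO_Y(H))$ inside~$\tcD_Y$. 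What the paper's route buys is the explicit description of~$\tcD_Y$ by a restriction condition on~$E$, which is exactly what feeds Remark~\ref{rem:ccr} identifying~$\tcB_Y$ as a \emph{crepant categorical resolution} of~$\cB_Y$; your route does not exhibit that structure, though the lemma as stated does not require it. What your route buys is self-containedness: it needs only Grothendieck duality for the divisor~$E$, the self-intersection formula, and~$\mathrm{R}\sigma_*\cO_\tY = \cO_Y$ (correctly attributed to rationality of the node, and correctly avoiding Orlov's blow-up formula, which is inapplicable over the singular~$Y$). One small caveat: your aside that~$\mathrm{R}\sigma_*\cO_\tY(kE) = \cO_Y$ for all~$k \ge 0$ is false for~$k \ge 2$ (already~$\rH^2(E,\cO_E(-2,-2)) \ne 0$ obstructs the inductive step), but since none of your actual computations use anything beyond~$k = 0$, this does not affect the argument.
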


\begin{proof}
We apply~\cite[Theorem~5.8]{KS22:abs} 
(see also~\cite[Theorem~1.1]{CGLMMPS} for a slightly different treatment or~\cite[\S4]{K08} for a general approach).
Using~\cite[(43)]{KS22:abs} we obtain a semiorthogonal decomposition
\begin{equation*}
\Db(\tY) = \langle \cO_E(E), \cO_E(E + F), \tcD_Y \rangle,
\end{equation*}
where the category~$\tcD_Y = \{\cF \in \Db(\tY) \mid \cF\vert_E \in \langle \cO_E(-F), \cO_E \rangle \}$
is smooth and proper and contains the image of the fully faithful functor~$\sigma^* \colon \Dp(Y) \to \Db(\tY)$.
Mutating the objects~$\cO_E(E)$ and~$\cO_E(E + F)$ to the right of~$\tcD_Y$
and using the fact that~$-K_\tY = 2H - E$ restricts to~$E$ as~$-E$, 
we obtain a semiorthogonal decomposition
\begin{equation*}
\Db(\tY) = \langle \tcD_Y, \cO_E, \cO_E(F) \rangle.
\end{equation*}
As~$\sigma^*$ is fully faithful, the pair~$(\cO_Y, \cO_Y(H))$ from~\eqref{eq:dby-i2} gives rise to the exceptional pair~$(\cO_\tY, \cO_\tY(H))$ in~$\tcD_Y$
and therefore to the semiorthogonal decomposition~\eqref{eq:def-tcby} defining the subcategory~$\tcB_Y$.
It is admissible in the derived category of a smooth and proper variety~$\tY$, 
hence it is a smooth and proper category.
\end{proof}

\begin{remark}
\label{rem:ccr}
One can make a relation between the categories~$\tcB_Y$ and~$\cB_Y$ more precise;
in fact, using the techniques developed in~\cite{KS22:abs} and~\cite{KS22:hfd}
one can check that the restrictions of the functors~$\sigma_* \colon \Db(\tY) \to \Db(Y)$
and~$\sigma^* \colon \Dp(Y) \to \Db(\tY)$ give functors
\begin{equation*}
\sigma_* \colon \tcB_Y \to \cB_Y
\qquad\text{and}\qquad 
\sigma^* \colon \cB_Y \cap \Dp(Y) \to \tcB_Y
\end{equation*}
such that~$\sigma_*$ is a crepant categorical contraction and~$(\tcB_Y, \sigma^*, \sigma_*)$
is a crepant categorical resolution of~$\cB_Y$.
\end{remark}

For convenience, in the case where~$2 \le d \le 5$  we will write
\begin{equation}
\label{eq:tcby-big-d}
\tcB_Y \coloneqq \sigma^*(\cB_Y) \subset \Db(\tY).
\end{equation}
Thus, in all cases~$\tcB_Y \subset \Db(\tY)$ is a smooth and proper admissible subcategory,
and when~$d \ge 2$ the functors~$\sigma_* \colon \tcB_Y \to \cB_Y$ and~$\sigma^* \colon \cB_Y \to \tcB_Y$ 
are equivalences.

In the following crucial proposition we show that the bundle~$\cU_\tY$ constructed in Lemma~\ref{lemma:mutation-cu}
is a pullback of a Mukai bundle~$\cU_X$ on~$X$ (as defined in Definition~\ref{def:mukai})
and construct a semiorthogonal decomposition of~$\Db(X)$ containing~$\tcB_Y$ as one of components.

Recall that an object~$\rP \in \Db(X)$ is called a~{\sf $\Pinfty{2}$-object} 
if~$\Ext^\bullet(\rP, \rP) = \kk[\uptheta]$ with~$\deg(\uptheta) = 2$,
see~\cite[Definition~2.6 and~Remark 2.7]{KS22:abs}.

\begin{proposition}
\label{prop:mutations-1}
Assume Setup~\textup{\ref{setup}} and if~$d = 1$ assume that~$Y$ is $1$-nodal.
Let~$X$ be the $1$-nodal Fano threefold
constructed in Proposition~\textup{\ref{prop:x-y}} 
and let~$\cU_\tY$ be the vector bundle constructed in Lemma~\textup{\ref{lemma:mutation-cu}}.
Then
\begin{equation}
\label{eq:def-cux}
\cU_\tY \cong \pi^*\cU_X,
\end{equation} 
where~$\cU_X \cong \pi_*\cU_\tY$ is a Mukai bundle on~$X$,
and there is a semiorthogonal decomposition
\begin{equation}
\label{eq:sod-cdo}
\Db(X) = \langle \cP_X, \bcA_X, \cO_X, \cU_X^\vee \rangle,
\end{equation}
where the category~$\cP_X$ is generated by the $\Pinfty{2}$-object~$\rP_X \coloneqq \pi_*(\cO_{\tY}(E-H))$ and
\begin{equation*}
\bcA_X \simeq \tcB_Y.
\end{equation*}
In particular, the category~$\bcA_X$ is smooth and proper.
\end{proposition}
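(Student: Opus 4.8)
The plan is to construct the Mukai bundle by descent and then build the decomposition by transporting a semiorthogonal decomposition of $\Db(\tY)$ across the small contraction $\pi\colon\tY\to X$ of Proposition~\ref{prop:x-y}.

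\emph{Descent.} First I would restrict the defining sequence~\eqref{eq:def-cu} to the contracted curve~$\tL$. Since~$E$ is Cartier and~$\tL\not\subset E$, the restriction stays exact, and using~\eqref{eq:intersection-ty} it reads
\begin{equation*}
0\to\cU_\tY^\vee\vert_\tL\to\cO_\tL(1)^{\oplus2}\to\cO_{E\cap\tL}\to0 ,
\end{equation*}
where~$E\cap\tL$ has length~$2$. By Lemma~\ref{lemma:curve-c-l} this scheme is not contained in a fiber of~$E\to C$ (a ruling of~$E$ when~$d=1$), so the two sections defining the right-hand map are linearly independent at the two points; hence the induced map~$\cO_\tL^{\oplus2}\to\cO_{E\cap\tL}$ is an isomorphism on global sections and~$\cU_\tY^\vee\vert_\tL\cong\cO_\tL^{\oplus2}$ is trivial. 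A bundle trivial on the contracted~$\P^1$ descends (standard for small contractions, cf.\ Appendix~\ref{sec:nodal}), so~$\cU_\tY=\pi^*\cU_X$ with~$\cU_X\coloneqq\pi_*\cU_\tY$ locally free of rank~$2$ and~$\mathrm{R}^{>0}\pi_*\cU_\tY=0$. As~$\pi$ is crepant, $\rc_1(\cU_X)=K_X$.

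\emph{Mukai conditions.} By adjunction and~$\mathrm{R}^{>0}\pi_*\cU_\tY=0$ one has~$\rH^\bullet(X,\cU_X)=\rH^\bullet(\tY,\cU_\tY)$ and~$\Ext^\bullet_X(\cU_X,\cU_X)=\Ext^\bullet_\tY(\cU_\tY,\cU_\tY)$. I would compute~$\rH^\bullet(\tY,\cU_\tY)=0$ from the dual of~\eqref{eq:def-cu} (both terms~$\cO_\tY(-H)^{\oplus2}$ and~$\cExt^1(\cO_E(d\rf),\cO_\tY)$ are acyclic) and~$\Ext^\bullet_\tY(\cU_\tY,\cU_\tY)=\kk$ directly from~\eqref{eq:def-cu}, using~$\rH^\bullet(\tY,\cU_\tY^\vee(-H))=0$ and~$\Ext^\bullet_\tY(\cO_E(d\rf),\cO_E(d\rf))=\kk$. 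Together with the stability from Lemma~\ref{lemma:mutation-cu}, this verifies Definition~\ref{def:mukai}, so~$\cU_X$ is a Mukai bundle; moreover~$\Hom^\bullet_X(\cU_X^\vee,\cO_X)=\rH^\bullet(X,\cU_X)=0$, which is the semiorthogonality needed for the pair~$(\cO_X,\cU_X^\vee)$ in~\eqref{eq:sod-cdo}.

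\emph{The decomposition on~$\tY$.} For~\eqref{eq:sod-cdo} I would start from a decomposition of~$\Db(\tY)$: when~$d=1$ this is~\eqref{eq:def-tcby}, and when~$d\ge2$ Orlov's blow-up formula for~$\sigma=\Bl_C$, combined with~\eqref{eq:dby-i2}, gives~$\Db(\tY)=\langle\tcB_Y,\cO_\tY,\cO_\tY(H),\cG_1,\cG_2\rangle$ with~$\cG_1,\cG_2$ an exceptional pair supported on~$E$. The key mutation is read off from~\eqref{eq:def-cu}: the evaluation~$\cO_\tY(H)^{\oplus2}\to\cO_E(d\rf)$ has cone~$\cU_\tY^\vee[1]$, exhibiting~$\cU_\tY^\vee[1]$ as the left mutation of~$\cO_E(d\rf)$ through~$\cO_\tY(H)$. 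Bringing~$\cO_E(d\rf)$ next to~$\cO_\tY(H)$ and performing this together with a few further mutations among the~$E$-supported objects rewrites the decomposition as~$\Db(\tY)=\langle\tcB_Y,\cO_\tY,\cU_\tY^\vee,\cM_1,\cM_2\rangle$, in which~$\cO_\tY$ and~$\cU_\tY^\vee$ restrict trivially to~$\tL$ (hence descend along~$\pi$), whereas the remaining exceptional pair~$\cM_1,\cM_2$ is supported near~$\tL$ and involves~$\cO_\tY(E-H)$.

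\emph{Passage to~$X$, and the main obstacle.} Finally I would transport this across the small contraction by categorical absorption. Since~$X$ is maximally nonfactorial (Remark~\ref{rem:nonfact}), the results of~\cite{KS22:abs} apply: the node pair~$\langle\cM_1,\cM_2\rangle$ collapses to a single~$\Pinfty{2}$-object, which one identifies concretely with~$\rP_X=\pi_*\cO_\tY(E-H)$ (note~$\cO_\tY(E-H)\vert_\tL=\cO_\tL(1)$, so its pushforward is the non-locally-free reflexive generator of the node), while~$\cO_\tY,\cU_\tY^\vee$ descend to~$\cO_X,\cU_X^\vee$ and the induced functor restricts to an equivalence~$\tcB_Y\simeq\bcA_X$ on the left orthogonal. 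This produces~\eqref{eq:sod-cdo} with~$\cP_X=\langle\rP_X\rangle$ and~$\bcA_X\simeq\tcB_Y$, whence~$\bcA_X$ is smooth and proper by Lemma~\ref{lem:tcb-y} and~\eqref{eq:tcby-big-d}. I expect this last step to be the hard part: carrying out the absorption through the \emph{singular} morphism~$\pi$ so that the two node-exceptional objects on~$\tY$ are genuinely replaced by the single object~$\rP_X$ with the correct algebra~$\Ext^\bullet(\rP_X,\rP_X)=\kk[\uptheta]$, $\deg\uptheta=2$, and so that~$\tcB_Y$ is carried isomorphically onto~$\bcA_X$—this requires the machinery of~\cite{KS22:abs} rather than naive pushforward, since $R\pi_*$ and $L\pi^*$ do not by themselves yield the semiorthogonal decomposition.
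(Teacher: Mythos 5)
Your overall route is the same as the paper's: start from the blowup decomposition of $\Db(\tY)$, mutate $\cO_E(d\rf)$ through $\cO_\tY(H)$ to produce $\cU_\tY^\vee$, and then descend along the small contraction $\pi$ using the absorption machinery of~\cite{KS22:abs}, with $\rP_X=\pi_*\cO_\tY(E-H)$ generating the absorbing piece. Two of your local arguments differ from the paper's and are legitimate alternatives: the paper deduces $\cU_\tY\cong\pi^*\cU_X$ and the exceptionality of $\cU_X$ from the fact that $\cU_\tY^\vee$ ends up in the left orthogonal of $\cO_\tL(-1)$ in the mutated collection (via~\cite[Prop.~5.5, 6.1]{KS22:hfd}), whereas you check triviality of $\cU_\tY^\vee\vert_\tL$ and compute the Ext's on $\tY$ directly. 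Note, however, that your restriction argument assumes $\tL\not\subset E$, which fails when $d=2$ and $C$ is a ramification line (Lemma~\ref{lemma:curve-c-l}); that case is allowed by Setup~\ref{setup} and needs a separate (still elementary) computation, since the restricted sequence then acquires a $\Tor_1$ term and $\cO_E(d\rf)\otimes\cO_\tL$ is a line bundle rather than $\cO_{E\cap\tL}$.

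The genuine gap is in your last step. You correctly name the target ($\cP_X$ generated by $\rP_X$, $\tcB_Y\simeq\bcA_X$) and the tool (\cite{KS22:abs}), but the theorem you need takes as input a \emph{specific} configuration on $\tY$: an adjacent exceptional pair of the form $\bigl(\cO_\tY(E-H),\ \bT_{\cO_\tL(-1)}(\cO_\tY(E-H))\bigr)$ sitting next to the spherical object $\cO_\tL(-1)$, so that after the Verdier localization $\Db(X)=\Db(\tY)/\cO_\tL(-1)$ this pair collapses onto the single $\Pinfty{2}$-object $\rP_X$ (\cite[Thm.~6.17]{KS22:abs}). Producing this pair is exactly the content of the paper's Step~2: one mutates $\cO_\tY(H-E)$ leftward through $\cO_\tY$ and identifies the result, via the twisted Koszul complex of $\tL$ and a Serre-duality computation of $\Ext^\bullet(\cO_\tL(-1),\cO_\tY(E-H))\cong\kk[-2]$, as the spherical twist $\bT_{\cO_\tL(-1)}(\cO_\tY(E-H))$. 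Your phrase ``a few further mutations among the $E$-supported objects'' skips precisely this identification, and without it you cannot verify the hypotheses of the absorption theorem, compute $\Ext^\bullet(\rP_X,\rP_X)=\kk[\uptheta]$, or conclude that the localization carries $\tcB_Y$ (after the mutation $\bR_{\bT_{\cO_\tL(-1)}(\cO_\tY(E-H))}$) equivalently onto $\bcA_X$. So the plan is right, but the crux of the proof is deferred rather than carried out.
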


\begin{proof}
If~$d \ge 2$ we apply the blowup formula for~$\sigma \colon \tY = \Bl_C(Y) \to Y$;
combining it with~\eqref{eq:dby-i2} and~\eqref{eq:tcby-big-d}
and the standard semiorthogonal decomposition~$\Db(C) = \langle \cO_C(d-1), \cO_C(d) \rangle$ for the curve~$C \cong \P^1$,
we obtain the semiorthogonal decomposition:
\begin{equation}
\label{eq:dbty-start}
\Db(\tY) = \langle \tcB_Y, \cO_\tY, \cO_\tY(H), \cO_E((d-1)\rf), \cO_E(d\rf) \rangle,
\end{equation}
where~$\tcB_Y$ is defined in~\eqref{eq:tcby-big-d}
and~$\rf$ stands for the class of a fiber of~$E \to C$.
If~$d = 1$ we use the semiorthogonal decomposition~\eqref{eq:def-tcby};
it has exactly the same form as~\eqref{eq:dbty-start}.

Next, we modify~\eqref{eq:dbty-start} by a sequence of mutations.

{\bf Step 1.}
First, mutate the last two objects to the left of~$\cO_\tY(H)$.
The computation similar to that of Lemma~\ref{lemma:mutation-cu} 
shows that~$\Ext^\bullet(\cO_\tY(H), \cO_E((d-1)\rf)) = \kk$, 
hence the mutation of~$\cO_E((d-1)\rf)$ is given by the exact sequence
\begin{equation*}
0 \to \cO_\tY(H-E) \to \cO_\tY(H) \to \cO_E((d-1)\rf) \to 0;
\end{equation*}
in particular, the result of the mutation is the exceptional bundle~$\cO_\tY(H-E)$.
The mutation of~$\cO_\tY(d\rf)$ is described in Lemma~\ref{lemma:mutation-cu} and the result is~$\cU_\tY^\vee$.
Thus, we obtain the semiorthogonal decomposition
\begin{equation}
\label{eq:dbty-second}
\Db(\tY) = \langle \tcB_Y, \cO_\tY, \cO_\tY(H-E), \cU_\tY^\vee, \cO_\tY(H) \rangle.
\end{equation}

{\bf Step 2.}
Next, mutate $\cO_\tY(H-E)$ to the left of~$\cO_\tY$.
Twisting~\eqref{eq:tl-koszul-untwisted} by~$\cO_\tY(H-E)$ and using~\eqref{eq:intersection-ty}
we obtain an exact sequence
\begin{equation}
\label{eq:koszul-tl-twisted-h-e}
0 \to \cO_{\tY}(E-H) \to \cO_{\tY}^{\oplus 2} \to \cO_{\tY}(H-E) \to \cO_\tL(-1) \to 0.
\end{equation}
By~\eqref{eq:dbty-second} the pair~$(\cO_\tY, \cO_\tY(H - E))$ is exceptional, 
hence~$\rH^\bullet(\tY, \cO_\tY(E - H)) = 0$,
and so it follows from~\eqref{eq:koszul-tl-twisted-h-e}
that~$\rH^\bullet(\tY, \cO_\tY(H-E)) = \kk^2$ and that the middle arrow in~\eqref{eq:koszul-tl-twisted-h-e} 
is the evaluation morphism.
Therefore, the result of the mutation is the cone of the middle arrow in~\eqref{eq:koszul-tl-twisted-h-e},
and it also follows that the same object can be represented
as the cone of the morphism~$\cO_\tL(-1)[-1] \to \cO_{\tY}(E-H)[1]$ associated with~\eqref{eq:koszul-tl-twisted-h-e}, 
which itself is considered as Yoneda extension.

On the other hand, Serre duality gives
\begin{equation*}
\Ext^\bullet(\cO_\tY(H - E), \cO_\tY(E - H)) =
\rH^\bullet(\tY, \cO_\tY(2E - 2H)) \cong
\rH^\bullet(\tY, \cO_\tY(-E)[3])^\vee = 0.
\end{equation*}
Combining this with the vanishing~$\rH^\bullet(\tY, \cO_\tY(E - H)) = 0$ proved above,
we deduce from~\eqref{eq:koszul-tl-twisted-h-e} that
\begin{equation*}
\Ext^\bullet(\cO_\tL(-1), \cO_{\tY}(E-H)) \cong 
\Ext^\bullet(\cO_{\tY}(E-H)[2], \cO_{\tY}(E-H)) \cong 
\kk[-2]
\end{equation*}
and that the morphism~$\cO_\tL(-1)[-1] \to \cO_{\tY}(E-H)[1]$ associated with~\eqref{eq:koszul-tl-twisted-h-e} 
is the evaluation morphism. 
Since by~\eqref{eq:cn-tl-ty} the object~$\cO_\tL(-1)$ is spherical, 
we finally conclude that the result of the mutation (up to twist)
is the spherical twist~$\bT_{\cO_\tL(-1)}(\cO_\tY(E - H))$
and we have the triangle
\begin{equation}
\label{eq:twist-triangle}
\cO_\tL(-1)[-2] \to \cO_\tY(E - H) \to \bT_{\cO_\tL(-1)}(\cO_\tY(E - H)).
\end{equation}
Thus, we obtain the semiorthogonal decomposition
\begin{equation*}
\Db(\tY) = \langle \tcB_Y, \bT_{\cO_\tL(-1)}(\cO_\tY(E - H)), \cO_\tY, \cU_\tY^\vee, \cO_\tY(H) \rangle.
\end{equation*}

{\bf Step 3.}
Next, we mutate~$\cO_\tY(H)$ to the far left.
Since~$K_\tY = E - 2H$, we obtain
\begin{equation*}
\Db(\tY) = \langle \cO_\tY(E - H), \tcB_Y, \bT_{\cO_\tL(-1)}(\cO_\tY(E - H)), \cO_\tY, \cU_\tY^\vee \rangle.
\end{equation*}

{\bf Step 4.}
Finally, we mutate~$\tcB_Y$ to the right of~$\bT_{\cO_\tL(-1)}(\cO_\tY(E - H))$:
\begin{equation}
\label{eq:db-ty-2}
\Db(\tY) = \langle 
\cO_\tY(E - H), 
\bT_{\cO_\tL(-1)}(\cO_\tY(E - H)), 
\bR_{\bT_{\cO_\tL(-1)}(\cO_\tY(E - H))}(\tcB_Y), 
\cO_\tY, 
\cU_\tY^\vee 
\rangle,
\end{equation}
where~$\bR_{\bT_{\cO_\tL(-1)}(\cO_\tY(E - H))}$ is the right mutation functor 
through the subcategory~$\bT_{\cO_\tL(-1)}(\cO_\tY(E - H))$.

Now we check that~\eqref{eq:db-ty-2} induces a decomposition of~$\Db(X)$.
Indeed, the functor~$\pi_* \colon \Db(\tY) \to \Db(X)$ is a Verdier localization 
with respect to the subcategory generated by the sheaf~$\cO_\tL(-1)$
(this, e.g., follows from~\cite[Theorem~5.8 and Corollary~5.11]{KS22:abs}), i.e.,
\begin{equation*}
\Db(X) \simeq \Db(\tY) / \cO_\tL(-1).
\end{equation*}
By~\cite[Propositions~5.5 and~6.1]{KS22:hfd} the functors~$\pi_*$ and~$\pi^*$ induce an equivalence 
between the left orthogonal of~$\cO_\tL(-1)$ in~$\Db(\tY)$ and~$\Dp(X)$.
By~\eqref{eq:twist-triangle} and~\eqref{eq:db-ty-2} the bundle~$\cU_\tY^\vee$ belongs to this left orthogonal, 
hence~$\cU_\tY^\vee$ is the pullback of a perfect complex~$\cU_X^\vee$ on~$X$.
Since pullback of perfect complexes commutes with dualization, we obtain~\eqref{eq:def-cux},
and then the projection formula implies~$\cU_X \cong \pi_*\cU_\tY$.

It is clear that~$\cU_X$ is a vector bundle, $\rank(\cU_X) = 2$, and~$\rc_1(\cU_X) = K_X$.
Moreover, $\cU_X$ is $(-K_X)$-stable by Lemma~\ref{lemma:mutation-cu},
and it is acyclic and exceptional 
because~$\pi^*$ is fully faithful and the pair~$(\cO_\tY,\cU_\tY^\vee)$ is exceptional by~\eqref{eq:db-ty-2}.
Thus, $\cU_X$ is a Mukai bundle on~$X$.

Similarly, $\bR_{\bT_{\cO_\tL(-1)}(\cO_\tY(E - H))}(\tcB_Y)$ is left orthogonal to~$\cO_\tL(-1)$, 
hence~$\pi_*$ and~$\pi^*$ induce an equivalence
\begin{equation*}
\bR_{\bT_{\cO_\tL(-1)}(\cO_\tY(E - H))}(\tcB_Y) \simeq \bcA_X 
\end{equation*}
with a subcategory~$\bcA_X \subset \Dp(X) \subset \Db(X)$, which is smooth and proper because~$\tcB_Y$ is.
Finally, applying~\cite[Proposition~4.1]{KS22:abs} we obtain the required decomposition~\eqref{eq:sod-cdo}, where 
\begin{equation*}
\cP_X \coloneqq \langle \cO_\tY(E - H), \bT_{\cO_\tL(-1)}(\cO_\tY(E - H)) \rangle / \cO_\tL(-1).
\end{equation*}
and using~\cite[Theorem~6.17]{KS22:abs}, we deduce that~$\cP_X$ is generated by the object
\begin{equation}
\label{eq:def-rpx}
\rP_X \coloneqq \pi_*(\cO_\tY(E - H)),
\end{equation}
which is a $\Pinfty{2}$-object.
\end{proof}

\begin{remark}
\label{rem:equivalences}
For future reference we note that the equivalence~$\tcB_Y \simeq \bcA_X$ is given by the functors
\begin{equation}
\label{eq:equivalences}
\pi_* \circ \bR_{\bT_{\cO_\tL(-1)}(\cO_\tY(E - H))} \colon \tcB_Y \to \bcA_X
\qquad\text{and}\qquad
\bL_{\bT_{\cO_\tL(-1)}(\cO_\tY(E - H))} \circ \pi^* \colon \bcA_X \to \tcB_Y,
\end{equation}
where~$\bL_{\bT_{\cO_\tL(-1)}(\cO_\tY(E - H))}$ is the left mutation functor,
and when~$d \ge 2$ the equivalence~$\cB_Y \simeq \bcA_X$ is given by their compositions
with~$\sigma^*$ and~$\sigma_*$, respectively.
\end{remark}

\begin{remark}
\label{rem:gr2-embedding}
When~$X$ is a smooth prime Fano threefold of genus~$2d + 2$
and~$\cU_X$ is a Mukai bundle, then the dual $\cU_X^\vee$
is globally generated and induces an embedding~$X \hookrightarrow \Gr(2,d+3)$.
For~$d \le 4$ this leads to a nice description of~$X$ which will be recalled in~\S\ref{sec:stacks}.
One can check that for 1-nodal threefolds constructed in Proposition~\ref{prop:x-y}
the dual Mukai bundle~$\cU_X^\vee$ constructed in Proposition~\ref{prop:mutations-1}
is globally generated if and only if~$d \ge 3$;
it would be interesting to understand how the description of the image of~$X$ in~$\Gr(2,d+3)$ differs from the smooth case.
\end{remark}

All the components of the semiorthogonal decomposition~\eqref{eq:sod-cdo} constructed in Proposition~\ref{prop:mutations-1}
except for~$\cP_X$ are smooth and proper; 
in terminology of~\cite{KS22:abs} this means that~$\cP_X$ \emph{absorbs singularities} of~$X$ (see~\cite[Definition~1.1]{KS22:abs}).
An important property of a subcategory generated by a $\Pinfty2$-object,
called the \emph{universal deformation absorption property} in~\cite[Definition~1.4]{KS22:abs},
is that it disappears in any smoothing of the variety, while its orthogonal complement deforms.
Using this property we prove the main result of this section,
a more precise version of Theorem~\ref{thm:intro-simple} from the Introduction.

Recall that {\sf a smoothing} of~$X$ 
is a flat projective morphism~$f \colon \cX \to B$ to a smooth pointed curve~$(B,o)$
such that the total space~$\cX$ is smooth, the central fiber~$\cX_o$ is isomorphic to~$X$,
and~$f$ is smooth over~$B \setminus \{o\}$.
Note also that by~\cite{Na97} any nodal Fano threefold~$X$ admits a smoothing
such that for each~$b \ne o$ the fiber~$\cX_b$ is a smooth prime Fano threefold with~$\g(\cX_b) = \g(X)$, 
see~Theorem \ref{thm:smoothing} for details.

\begin{theorem}
\label{thm:cax-family}
Assume Setup~\textup{\ref{setup}} and if~$d = 1$ assume that~$Y$ is $1$-nodal.
Let~$X$ be the $1$-nodal Fano threefold constructed in Proposition~\textup{\ref{prop:x-y}} 
and let~$\cU_X$ and~$\rP_X$ be the Mukai bundle and the~$\Pinfty2$-object on~$X$ constructed in Proposition~\textup{\ref{prop:mutations-1}}.

For any smoothing~$f \colon \cX \to B$ of~$X$ 
after appropriate \'etale base change there is a vector bundle~$\cU_\cX$ on~$\cX$ such that
\begin{itemize}
\item 
$\cU_\cX\vert_X \cong \cU_X$,
\item 
$\cU_\cX\vert_{\cX_b}$ is a Mukai bundle for all~$b \ne o$ in~$B$, and
\item 
the object~$\io_*\rP_X \in \Db(\cX)$ is exceptional, 
where~$\io \colon X \hookrightarrow \cX$ is the embedding of the central fiber.
\end{itemize}
Moreover, there is a $B$-linear semiorthogonal decomposition
\begin{equation}
\label{eq:sod:dbcx-cd}
\Db(\cX) = \langle \io_*\rP_X, \bcA_\cX, f^*\Db(B), f^*\Db(B) \otimes \cU_\cX^\vee \rangle,
\end{equation}
where~$\bcA_\cX$ is an admissible $B$-linear subcategory in~$\Db(\cX)$
which is smooth and proper over~$B$.
Finally,
\begin{equation*}
(\bcA_\cX)_o \simeq \tcB_Y
\qquad\text{and}\qquad
(\bcA_\cX)_b \simeq \cA_{\cX_b}
\quad\text{for~$b \ne o$},
\end{equation*}
where~$\tcB_Y$ is defined in~\eqref{eq:def-tcby} for~$d = 1$ and in~\eqref{eq:tcby-big-d} for~$d \ge 2$,
and~$\cA_{\cX_b}$ is defined in~\eqref{eq:dbx-i1}.
\end{theorem}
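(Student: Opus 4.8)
The plan is to assemble the family in three stages---smoothing, extension of the Mukai bundle, and categorical absorption---and then to read off the fibres by base change. First I would invoke Namikawa's smoothability theorem \cite[Theorem~11]{Na97}: as the $1$-nodal prime Fano threefold $X$ of Proposition~\ref{prop:x-y} is terminal Gorenstein, it admits a smoothing $f\colon\cX\to B$ over a smooth pointed curve $(B,o)$ with $\cX$ smooth, $\cX_o\cong X$ and $f$ smooth over $B\setminus\{o\}$. Since the Picard rank and the anticanonical degree are locally constant in a flat family of Fano threefolds, every fibre $\cX_b$ with $b\ne o$ is again a smooth prime Fano threefold with $\g(\cX_b)=2d+2=\g(X)$.

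Next I would extend $\cU_X$. By Proposition~\ref{prop:mutations-1} the bundle $\cU_X$ is exceptional, so $\Ext^1_X(\cU_X,\cU_X)=\Ext^2_X(\cU_X,\cU_X)=0$, and since $X=\cX_o$ is a fibre of the map to the smooth curve $B$ we have $\cN_{X/\cX}\cong\cO_X$. The standard deformation theory of sheaves over $B$ then places the obstruction to extending $\cU_X$ across each infinitesimal neighbourhood of $X$ in $\Ext^2_X(\cU_X,\cU_X)=0$ and makes the extension unique (the ambiguity being $\Ext^1_X(\cU_X,\cU_X)=0$). By Grothendieck existence and a limit argument, after replacing $B$ by an \'etale neighbourhood of $o$, this formal extension algebraises to a coherent sheaf $\cU_\cX$ on $\cX$ with $\cU_\cX\vert_X\cong\cU_X$; as $\cU_X$ is locally free, $\cU_\cX$ is locally free near $X$, hence on $\cX$ after shrinking $B$. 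All four Mukai conditions are open---the vanishing of $\rH^\bullet$ and of $\Ext^{>0}$ by semicontinuity together with constancy of Euler characteristics, and $(-K)$-stability by openness of stability---so $\cU_\cX\vert_{\cX_b}$ is a Mukai bundle for $b\ne o$ after a further shrinking.

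With the pair $(\cX,\cU_\cX)$ in hand I would apply absorption. Since $\rP_X$ is a $\Pinfty2$-object (Proposition~\ref{prop:mutations-1}) absorbing the singularity of the maximally nonfactorial threefold $X$ (Remark~\ref{rem:nonfact}), \cite[Theorems~1.5 and~6.1]{KS22:abs} yield a $B$-linear semiorthogonal decomposition $\Db(\cX)=\langle\io_*\rP_X,\cD\rangle$ with $\io_*\rP_X$ exceptional (this is the universal deformation absorption property, \cite[Definition~1.4]{KS22:abs}) and $\cD$ smooth and proper over $B$, whose base changes are $\cD_o=\langle\bcA_X,\cO_X,\cU_X^\vee\rangle$ (the complement of $\cP_X$ in \eqref{eq:sod-cdo}) and $\cD_b=\Db(\cX_b)$. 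I would then check that $\cO_\cX,\cU_\cX^\vee\in\cD={}^\perp\langle\io_*\rP_X\rangle$: by the $(\io^*,\io_*)$-adjunction $\Hom^\bullet_\cX(\cO_\cX,\io_*\rP_X)=\Hom^\bullet_X(\cO_X,\rP_X)$ and $\Hom^\bullet_\cX(\cU_\cX^\vee,\io_*\rP_X)=\Hom^\bullet_X(\cU_X^\vee,\rP_X)$, both of which vanish since $\cO_X,\cU_X^\vee\in\cD_o={}^\perp\cP_X$. Fibrewise exceptionality of the pair $(\cO,\cU^\vee)$ (Definition~\ref{def:mukai} and \eqref{eq:dbx-i1}), propagated by cohomology and base change, makes $(\cO_\cX,\cU_\cX^\vee)$ a relative exceptional pair, so it splits off a $B$-linear decomposition $\cD=\langle\bcA_\cX,f^*\Db(B)\otimes\cO_\cX,f^*\Db(B)\otimes\cU_\cX^\vee\rangle$. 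This gives \eqref{eq:sod:dbcx-cd}, and $\bcA_\cX$ is smooth and proper over $B$ because $\cD$ is and a relative exceptional pair has been removed.

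Finally I would identify the fibres using base change for $B$-linear decompositions \cite{K11}. Over $b\ne o$ the pair restricts to $(\cO_{\cX_b},\cU_{\cX_b}^\vee)$ and $\cD_b=\Db(\cX_b)$, so $\Db(\cX_b)=\langle(\bcA_\cX)_b,\cO_{\cX_b},\cU_{\cX_b}^\vee\rangle$, which is exactly \eqref{eq:dbx-i1} for the Mukai bundle $\cU_\cX\vert_{\cX_b}$; hence $(\bcA_\cX)_b\simeq\cA_{\cX_b}$. Over $o$ we get $\cD_o=\langle(\bcA_\cX)_o,\cO_X,\cU_X^\vee\rangle$, which against $\cD_o=\langle\bcA_X,\cO_X,\cU_X^\vee\rangle$ forces $(\bcA_\cX)_o\simeq\bcA_X\simeq\tcB_Y$ by Proposition~\ref{prop:mutations-1}. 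The main obstacle is the second stage and its meshing with absorption: one must guarantee that $\cU_X$ deforms to a genuine relative Mukai bundle and that peeling off $(\cO_\cX,\cU_\cX^\vee)$ commutes with specialisation to $o$, and one must handle the genus-$4$ case $d=1$, where a general fibre carries two Mukai bundles---here the chosen extension $\cU_\cX$ singles out the decomposition \eqref{eq:dbx-i1} realising $(\bcA_\cX)_b\simeq\cA_{\cX_b}$.
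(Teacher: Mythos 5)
Your proposal is correct and follows essentially the same three-stage route as the paper's proof: Namikawa smoothing, extension of $\cU_X$ using the vanishing of $\Ext^{1}(\cU_X,\cU_X)$ and $\Ext^{2}(\cU_X,\cU_X)$ (the paper packages this as \'etaleness at $[\cU_X]$ of the relative moduli space of stable sheaves via \cite[Corollary~4.5.2]{HL10} rather than formal deformations plus algebraization, and uses constancy of the relative Picard sheaf to pin down $\rc_1$ on nearby fibres), and categorical absorption via \cite{KS22:abs} followed by splitting off the relative exceptional pair $(\cO_\cX,\cU_\cX^\vee)$ and base change. The only cosmetic divergence is that the paper deduces smoothness and properness of $\bcA_\cX$ over $B$ from its fibres via \cite[Theorem~2.10]{K21}, whereas you obtain it by removing an exceptional pair from the smooth and proper category $\cD$; both are valid.
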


\begin{proof}
To construct a vector bundle~$\cU_\cX$ we consider the relative moduli space of stable vector bundles on the fibers of~$\cX/B$.
Since~$\cU_X$ is stable (by Proposition~\ref{prop:mutations-1} and Definition~\ref{def:mukai}), 
it corresponds to a point of this relative moduli space over~$o \in B$,
and by~\cite[Corollary~4.5.2]{HL10} the property~$\Ext^\bullet(\cU_X,\cU_X) = \kk$ implies 
that the moduli space is \'etale over~$B$ at the point~$[\cU_X]$.
Therefore, after a base change from~$B$ to a small \'etale neighborhood of~$o \in B$, 
we will have a section of this relative moduli space, i.e., a fiberwise stable vector bundle~$\cU_\cX$ of rank~$2$ on~$\cX$
which restricts to~$\cU_X$ on the central fiber.

Since the Picard sheaf of~$\cX/B$ is locally constant by Corollary~\ref{cor:pic-constant}
and~$\rc_1(\cU_X) = K_X$, it follows that~$\rc_1(\cU_\cX\vert_{\cX_b}) = K_{\cX_b}$ for all~$b \in B$.
Semicontinuity of cohomology implies that, after appropriate shrinking of~$B$, 
we may assume that for each~$b \in B$ the bundle~$\cU_\cX\vert_{\cX_b}$ is acyclic and exceptional;
therefore, it is a Mukai bundle.

The object~$\io_*\rP_X$ is exceptional by~\cite[Theorem~1.8]{KS22:abs}.
The sheaves~$\cO_{\cX}$ and~$\cU_\cX^\vee$ form a relative exceptional pair,
and~\eqref{eq:sod-cdo} implies that they are semiorthogonal to~$\io_*\rP_X$.
Therefore, together they induce the required semiorthogonal decomposition~\eqref{eq:sod:dbcx-cd}.

Since~$\io_*\rP_X$ is supported on the central fiber of~$f \colon \cX \to B$
the base change of the subcategory it generates along the embedding~$b \hookrightarrow B$ for~$b \ne o$ is zero.
Therefore, such a base change results in a semiorthogonal decomposition 
\begin{equation*}
\Db(\cX_b) = \langle (\bcA_\cX)_b, \cO_{\cX_b}, \cU_{\cX_b}^\vee \rangle.
\end{equation*}
Comparing it with~\eqref{eq:dbx-i1} we conclude that~$(\bcA_\cX)_b = \cA_{\cX_b}$.

On the other hand, using~\cite[Theorem~1.5]{KS22:abs} 
we obtain a semiorthogonal decomposition
\begin{equation*}
\Db(\cX_o) = \langle \rP_X, (\bcA_\cX)_o, \cO_X, \cU_X^\vee \rangle
\end{equation*}
and comparing it with~\eqref{eq:sod-cdo}, we conclude that~$(\bcA_\cX)_o = \bcA_X$.
Finally, Proposition~\ref{prop:mutations-1} provides an equivalence~$(\bcA_\cX)_o \simeq \cB_Y$.

Since~$\bcA_{\cX_b}$ is smooth and proper for all points~$b \in B$ (including~$b = o$), 
the category~$\bcA_\cX$ is smooth and proper over~$B$ by~\cite[Theorem~2.10]{K21}.
\end{proof}

Informally, the category~$\bcA_\cX$ provides an interpolation between the component~$\cA_{\cX_b} \subset \Db(\cX_b)$ 
and the component~$\cB_Y \subset \Db(Y)$ if~$d \ge 2$
or its categorical resolution~$\tcB_Y \subset \Db(\tY)$, if~$d = 1$.

%%%%%%%%%%%%%%%%%%%%%%%%%%%%%%%%%%%%%%%%%%%%%%%%%%%%%%%%%%%%%%%%%%%
%%%%%%%%%%%%%%%%%%%%%%%%%%%%%%%%%%%%%%%%%%%%%%%%%%%%%%%%%%%%%%%%%%%
%%%%%%%%%%%%%%%%%%%%%%%%%%%%%%%%%%%%%%%%%%%%%%%%%%%%%%%%%%%%%%%%%%%

\section{Moduli stacks}
\label{sec:stacks}

In this section we study the stacks~$\bMF_{\sY_d}$, $\bMF_{\sX_g}$, and~$\bMFM_{\sX_g}$ 
of del Pezzo threefolds, prime Fano threefolds, and Fano--Mukai pairs, respectively, 
introduced in Definitions~\ref{def:mf} and~\ref{def:mfm} in the Introduction.

To start with, we discuss some properties of Mukai bundles, see Definition~\ref{def:mukai}.
We denote by~$\cM(X;p)$ the coarse moduli space of $(-K_X)$-semistable sheaves on~$X$ 
with Hilbert polynomial~$p(t) \in \QQ[t]$ (computed with respect to~$-K_X$);
this is a projective scheme, see, e.g., \cite[Theorem~4.3.4]{HL10}.

\begin{lemma}
\label{lem:mukai-bundles}
Let~$X$ be a nodal prime Fano threefold of genus~$g \in \{4,6,8,10,12\}$.
For any Mukai bundle~$\cU$ on~$X$ one has
\begin{equation}
\label{eq:mukai-c2-p}
\rc_2(\cU) \cdot K_X = -\tfrac{g+2}2
\qquad\text{and}\qquad 
p_\cU(t) = \pmu(t) \coloneqq \tfrac{2g-2}{3} t^3 + \tfrac{16-g}{6} t.
\end{equation}
Moreover, the subscheme~$\cMuk(X) \subset \cM(X;\pmu)$ 
corresponding to Mukai bundles is an open subscheme.
Finally, $\cMuk(X)$ is a finite reduced scheme.
\end{lemma}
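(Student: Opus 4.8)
The plan is to derive both finiteness and reducedness from the single vanishing $\Ext^1(\cU,\cU) = 0$, which is part of the defining condition $\Ext^\bullet(\cU,\cU) = \kk$ of a Mukai bundle. First I would record that, since $\cU$ is locally free, one has $\cExt^i(\cU,\cU) = 0$ for $i > 0$, so the local-to-global spectral sequence degenerates and yields
\begin{equation*}
\Ext^i(\cU,\cU) \cong \rH^i(X,\cEnd(\cU))
\end{equation*}
for all $i$. This observation is what lets the argument proceed verbatim on the nodal $X$: all relevant $\Ext$-groups, and hence the first-order deformation theory of $\cU$, are computed with the genuine vector bundle $\cEnd(\cU)$ and are therefore insensitive to the singularities of $X$. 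By the deformation theory of stable sheaves (cf.~\cite[\S4.5]{HL10}), the Zariski tangent space to the coarse moduli space $\cM(X;\pmu)$ at a stable point $[\cU]$ is canonically identified with $\Ext^1(\cU,\cU)$; as $\cMuk(X)$ is open in $\cM(X;\pmu)$, the same description holds for its tangent space.

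Next I would run the standard Nakayama argument at each Mukai bundle. Since $\Ext^1(\cU,\cU) = 0$, the tangent space to $\cMuk(X)$ at $[\cU]$ vanishes. Writing $R = \cO_{\cMuk(X),[\cU]}$ for the Noetherian local ring with maximal ideal $\fm$ and residue field $\kk$, the vanishing of the tangent space says $\fm/\fm^2 = 0$, i.e.\ $\fm = \fm^2$; Nakayama then gives $\fm = 0$, so $R = \kk$. Thus $[\cU]$ is at once a reduced point and an isolated point of $\cMuk(X)$. As this holds at every closed point, $\cMuk(X)$ is reduced and of dimension zero.

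Finally, for finiteness I would invoke quasi-compactness. The scheme $\cMuk(X)$ is an open subscheme of the projective, hence Noetherian, scheme $\cM(X;\pmu)$, so it is of finite type over $\kk$; a zero-dimensional scheme of finite type over $\kk$ is finite, being a finite disjoint union of spectra of Artinian local rings, each of which is $\kk$ by the previous paragraph. Hence $\cMuk(X)$ is a finite disjoint union of reduced $\kk$-points, which is exactly the claim. I do not expect a genuine obstacle here; the only subtlety is the singularity of $X$, which---as noted above---is neutralized by the local freeness of $\cU$ reducing everything to the cohomology of $\cEnd(\cU)$. The one point to state carefully is that the tangent-space identification of~\cite{HL10}, usually given for smooth projective varieties, continues to hold in this mildly singular setting.
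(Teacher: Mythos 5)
Your treatment of the finiteness and reducedness is correct and is essentially the paper's argument: the paper likewise notes that first-order deformations of $\cU$ are classified by $\Ext^1(\cU,\cU)$ with obstructions in $\Ext^2(\cU,\cU)$, both of which vanish by exceptionality, so that $[\cU]$ is a reduced isolated point of $\cM(X;\pmu)$, and then concludes finiteness from projectivity of $\cM(X;\pmu)$ (your Nakayama-plus-quasi-compactness phrasing is the same argument). Your remark that local freeness of $\cU$ reduces everything to the cohomology of $\cEnd(\cU)$ and thus neutralizes the nodes is a fair way to justify citing \cite[\S2.A.6 and Corollary~4.5.2]{HL10}, whose moduli-theoretic statements in any case hold over an arbitrary projective scheme.

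However, there is a genuine gap: you prove only the last sentence of the lemma. The displayed formula~\eqref{eq:mukai-c2-p} --- the computation of $\rc_2(\cU)\cdot K_X$ and of the Hilbert polynomial $p_\cU(t)=\pmu(t)$ --- is never addressed, and without it the inclusion $\cMuk(X)\subset\cM(X;\pmu)$ that your whole argument rests on is not even defined: you must first show that every Mukai bundle has Hilbert polynomial exactly $\pmu$. The paper does this by restricting $\cU$ to a general anticanonical divisor $S\subset X$ (a smooth K3 surface even in the nodal case, by Mella's theorem since $g\ge 4$), using exceptionality of $\cU$ and Serre duality on $X$ to get $\upchi(\cU\vert_S,\cU\vert_S)=2$, and then applying Riemann--Roch on $S$ to pin down $\rc_2(\cU\vert_S)$ and hence $p_\cU$ (the constant term being forced by the acyclicity $\rH^\bullet(X,\cU)=0$). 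You also assert rather than prove the openness of $\cMuk(X)$ in $\cM(X;\pmu)$; this is easy (stability, local freeness, and the two cohomological conditions in Definition~\ref{def:mukai} are all open in flat families), but it is part of the statement and should be said.
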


\begin{proof}
Assume~$\cU$ is a Mukai bundle on a nodal prime Fano threefold~$X$ 
of genus~$g \in \{4,6,8,10,12\}$.
Let~$S \subset X$ be a general anticanonical divisor;
then~$S$ is a smooth K3 surface
(when~$X$ is smooth, this is a result of Shokurov~\cite[Theorem~1.2]{Shokurov} and Reid~\cite[Theorem~0.5]{Reid},
and in the nodal case we can use~\cite[Theorem~1]{Mel} because~$g \ge 4$).
Consider the standard exact sequence
\begin{equation*}
0 \to \cU^\vee \otimes \cU \otimes \omega_X \to \cU^\vee \otimes \cU \to \cU\vert_S^\vee \otimes \cU\vert_S \to 0.
\end{equation*}
Using exceptionality of~$\cU$ and Serre duality on~$X$, we deduce that
\begin{equation}
\label{eq:us-simple}
\Ext^\bullet(\cU\vert_S, \cU\vert_S) = 
\rH^\bullet(S, \cU\vert_S^\vee \otimes \cU\vert_S) = 
\kk \oplus \kk[-2],
\end{equation}
hence~$\upchi(\cU\vert_S, \cU\vert_S) = 2$.
Since~$\rc_1(\cU) = K_X$ by Definition~\ref{def:mukai}, we have $\rc_1(\cU\vert_S)^2 = K_X^2 \cdot (-K_X) = 2g - 2$, 
hence the Riemann--Roch theorem on~$S$ applied to~$\cU\vert_S^\vee \otimes \cU\vert_S$ gives
\begin{equation*}
\rc_2(\cU\vert_S) = \tfrac{g + 2}{2} 
\qquad\text{and}\qquad 
p_{\cU\vert_S}(t) = (2g-2)(t^2 - t) + \tfrac{g+4}2.
\end{equation*}
Since~$p_{\cU\vert_S}(t) = p_\cU(t) - p_{\cU}(t-1)$, 
these formulas imply~\eqref{eq:mukai-c2-p} up to constant term of~$p_{\cU}(t)$.
On the other hand, the constant term must be zero because~$\cU$ is acyclic by Definition~\ref{def:mukai}.

What we have proved so far implies that~$\cMuk(X) \subset \cM(X;\pmu)$.
This is an open embedding because stability and local freeness of~$\cU$
and the conditions~$\rH^\bullet(X,\cU) = 0$ and~$\Ext^\bullet(\cU,\cU) = \kk$
defining~$\cMuk(X)$ are open properties.
Furthermore, first order deformations of~$\cU$ are classified by the space~$\Ext^1(\cU,\cU)$
and the obstruction space is~$\Ext^2(\cU,\cU)$ (see, e.g., ~\cite[\S2.A.6 and Corollary~4.5.2]{HL10});
since~$\cU$ is exceptional, both spaces vanish, 
hence both schemes~$\cMuk(X)$ and~$\cM(X;\pmu)$ are smooth and zero dimensional at~$[\cU]$.
This means that~$\cMuk(X)$ is a union of reduced isolated points of~$\cM(X;\pmu)$,
and since~$\cM(X;\pmu)$ is projective, it follows 
that~$\cMuk(X)$ is a finite reduced scheme.
\end{proof}

In the next proposition we identify the scheme~$\cMuk(X)$ in the cases where~$X$ is smooth.
Recall that a smooth prime Fano threefold of genus~4 is a complete intersection of a quadric and a cubic in~$\P^5$;
moreover, the quadric passing through~$X$ is unique (and thus canonically defined by~$X$) and has corank~$0$ or~$1$.
We denote this quadric by~$Q(X)$.

\begin{proposition}
\label{prop:mukai-bundles}
Let $X$ be a smooth prime Fano threefold of genus~$g \in \{4,6,8,10,12\}$.
Then
\begin{equation*}
\cMuk(X) \cong
\begin{cases}
\Spec(\kk), & \text{if~$g \in \{6,8,10,12\}$},\\
\Spec(\kk) \sqcup \Spec(\kk), & \text{if~$g = 4$ and~$Q(X)$ is smooth,}\\
\varnothing, & \text{if~$g = 4$ and~$Q(X)$ is singular.}
\end{cases}
\end{equation*}
\end{proposition}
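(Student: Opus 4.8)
The plan is to reduce the statement to an enumeration of isomorphism classes of Mukai bundles. By Lemma~\ref{lem:mukai-bundles} the scheme $\cMuk(X)$ is finite and reduced, so every Mukai bundle contributes a single reduced point $\Spec(\kk)$ and it suffices to count Mukai bundles up to isomorphism. The basic computational input is the Euler characteristic of $\cU^\vee$: setting $d \coloneqq (g-2)/2$ and using $\cU^\vee \cong \cU \otimes \cO_X(-K_X)$ (since $\rank(\cU)=2$ and $\rc_1(\cU)=K_X$) together with the Hilbert polynomial $p_\cU = \pmu$ from~\eqref{eq:mukai-c2-p}, one gets $\upchi(\cU^\vee) = p_\cU(1) = (g+4)/2 = d+3$. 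By Remark~\ref{rem:gr2-embedding} the bundle $\cU^\vee$ is globally generated and (as I would confirm by the same Koszul/Serre-duality bookkeeping) acyclic, so $\rH^0(X,\cU^\vee) = \kk^{d+3}$ and we obtain a morphism $X \to \Gr(2,d+3)$ for which $\cU^\vee$ is the pullback of a tautological rank-$2$ bundle and $\cO_{\Gr}(1)$ pulls back to $\det(\cU^\vee) = \cO_X(-K_X)$. Thus a Mukai bundle is exactly the datum of such a Grassmannian structure on $X$.

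For $g \in \{6,8,10,12\}$, i.e.\ $d \ge 2$, I would invoke the Mukai description (and, in genus $6$, the Gushel--Mukai description): the morphism above realizes $X$ as a possibly ramified linear section of $\Gr(2,d+3)$, this structure is unique up to isomorphism, and the only rank-$2$ bundle producing it is the restriction of the tautological sub-bundle. Existence of one Mukai bundle is classical, and uniqueness then gives $\cMuk(X) = \Spec(\kk)$. The rank constraint is what forces uniqueness: on $\Gr(2,d+3)$ with $d\ge 2$ the tautological sub- and quotient bundles have different ranks, so there is no competing candidate.

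The genus-$4$ case is where the dichotomy appears, because $\Gr(2,4)$ is a smooth four-dimensional quadric, embedded by Plücker as a hypersurface in $\P(\wedge^2\kk^4) = \P^5$. Given a Mukai bundle $\cU$, the count above yields $\rH^0(\cU^\vee) = \kk^4$, and the induced map $\wedge^2\rH^0(\cU^\vee) \to \rH^0(\det\cU^\vee) = \rH^0(X,-K_X) = \kk^6$ is an isomorphism of six-dimensional spaces (as I would check by the dimension count together with nondegeneracy of the composite); hence $X \to \Gr(2,4) \hookrightarrow \P^5$ is precisely the anticanonical embedding, and $X$ lies on the smooth Plücker quadric $\Gr(2,4)$. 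Since the quadric through $X$ is unique, this forces $Q(X) \cong \Gr(2,4)$ to be smooth; in particular, if $Q(X)$ is singular no Mukai bundle exists and $\cMuk(X)=\varnothing$. Conversely, when $Q(X)$ is smooth I would fix an isomorphism $Q(X) \cong \Gr(2,4)$ and restrict to $X = Q(X)\cap C$ the two tautological rank-$2$ bundles $\cS_1,\cS_2$ (the two spinor bundles, interchanged by $\Gr(2,4)\cong\Gr(2,(\kk^4)^\vee)$); the argument above shows every Mukai bundle arises in this way, so $\cMuk(X) = \{\cS_1\vert_X,\cS_2\vert_X\}$.

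The hard part will be the converse in genus $4$: verifying that each restriction $\cS_i\vert_X$ really is a Mukai bundle and that the two are non-isomorphic. For this I would use the Koszul resolution $0 \to \cO_{Q}(-3) \to \cO_{Q} \to \cO_X \to 0$ of $X$ as a cubic section of $Q = \Gr(2,4)$, together with the standard cohomology of spinor bundles and their endomorphisms on the four-dimensional quadric, to check $\rc_1(\cS_i\vert_X)=K_X$, the vanishing $\rH^\bullet(X,\cS_i\vert_X)=0$, and $\Ext^\bullet(\cS_i\vert_X,\cS_i\vert_X)=\kk$, as well as $(-K_X)$-stability; non-isomorphy follows because $\cS_1 \not\cong \cS_2$ on $Q$ remain distinct after restriction to the ample divisor $X$. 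A secondary subtlety is pinning down the citation for uniqueness when $g\ge 6$, where I would argue through the Gushel map (rather than an honest embedding) so as to treat the ordinary and special/double-cover families in genus $6$ uniformly.
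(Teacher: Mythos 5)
Your architecture is recognizably different from the paper's in the genus-$4$ case: you deduce ``a Mukai bundle exists $\Rightarrow$ $X$ embeds in a smooth $\Gr(2,4)$ $\Rightarrow$ $Q(X)$ is smooth,'' whereas the paper never argues through a Grassmannian map. For singular $Q(X)$ it instead identifies the unique candidate bundle directly (the pullback of the spinor bundle of $\bar{Q}$ along the triple cover $X \to \bar{Q}$ obtained by projecting from the vertex) and computes $\Ext^\bullet(\cS\vert_X,\cS\vert_X) \cong \kk \oplus \kk[-1] \oplus \kk[-2]$, so it is not exceptional; and for smooth $Q(X)$ uniqueness is run through the dichotomy of \cite[Proposition~B.1.5]{KPS18} (either $\cU \cong \cS\vert_X$ or $\Hom(\cS'\vert_X,\cU) \ne 0$, the latter forcing $\cU \cong \cS'\vert_X$ by stability). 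Your verification that the two spinor restrictions are Mukai bundles via the Koszul resolution, and their non-isomorphy, matches the paper.

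The genuine gap is your reliance on global generation (and acyclicity) of $\cU^\vee$ for an \emph{arbitrary} Mukai bundle, quoted from Remark~\ref{rem:gr2-embedding}. That remark is a summary of the classical theory of smooth prime Fano threefolds --- i.e.\ a consequence of the classification you are in the middle of proving --- not an input, and global generation does not follow formally from the axioms in Definition~\ref{def:mukai}: the paper notes that for the $1$-nodal threefolds of genus $4$ and $6$ constructed by the bridge the dual Mukai bundle is \emph{not} globally generated, and it explicitly points out that the uniqueness argument of \cite[Proposition~B.1.5]{KPS18} is arranged so as not to use global generation. Without an independent proof that $\rH^0(X,\cU^\vee) = \kk^{d+3}$ and that evaluation is surjective, the map $X \to \Gr(2,d+3)$ does not exist, and with it collapse both your uniqueness argument for $g \ge 6$ and the implication ``Mukai bundle exists $\Rightarrow$ $Q(X)$ smooth'' (hence the $\cMuk(X)=\varnothing$ case). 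To repair this you would either have to prove global generation from scratch (e.g.\ by restricting to a general anticanonical K3 and invoking Mukai's results on rigid bundles), or fall back on the direct arguments the paper uses.
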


\begin{proof}
If~$g \in \{6,8,10,12\}$ this is~\cite[Theorem~1.1]{BKM},
see also~\cite[Theorem~B.1.1, Proposition~B.1.5, and Lemma~B.1.9]{KPS18}.

Now let~$g = 4$.
First, assume the quadric~$Q(X)$ is smooth.
Let~$\cS$ be one of the two spinor bundles on~$Q(X)$ (see~\cite{Ottaviani}).
Then the restriction~$\cS\vert_X$ 
is a vector bundle of rank~$2$ with~$\rc_1(\cS\vert_X) = K_X$.
Using the vanishing of cohomology of~$\cS$ and~$\cS(-3)$ (see~\cite[Theorems~2.1 and~2.3]{Ottaviani})
and the Koszul resolution
\begin{equation*}
0 \to \cS(-3) \to \cS \to i_*(\cS\vert_X) \to 0,
\end{equation*}
where~$i \colon X \hookrightarrow Q(X)$ is the embedding, we deduce that~$\cS\vert_X$ is acyclic.
Similarly, using the vanishing of~$\Ext^\bullet(\cS,\cS(-3))$ and exceptionality of~$\cS$ on~$Q(X)$,
we deduce that~$\Ext^\bullet(\cS, i_*(\cS\vert_X)) \cong \kk$, 
which by adjunction imples that~$\cS\vert_X$ is exceptional.
Finally, since~$\cS\vert_X$ has rank~$2$ and~$\rc_1(\cS\vert_X) = K_X$ and~$X$ is smooth of Picard rank~$1$,
stability of~$\cS\vert_X$ follows from the vanishing of~$\rH^\bullet(X,\cS\vert_X)$ proved above.
We conclude that~$\cS\vert_X$ is a Mukai bundle.

Similarly, the restriction~$\cS'\vert_X$ of the other spinor bundle to~$X$ is another Mukai bundle, not isomorphic to~$\cS\vert_X$.
Indeed, one has~$\Ext^\bullet(\cS',\cS) = 0$ and~$\Ext^\bullet(\cS',\cS(-3)) = \kk[-3]$,
therefore~$\Ext^\bullet(\cS'\vert_X,\cS\vert_X) = \kk[-2]$.

On the other hand, the argument of~\cite[Proposition~B.1.5]{KPS18} shows that 
if~$\cU$ is a Mukai bundle on $X$
then either~$\cU \cong \cS\vert_X$ or~$\Ext^1(\cS\vert_X(1),\cU) \ne 0$.
If we assume the second then applying the functor~$\Ext^\bullet(-,\cU)$ to the natural exact sequence
(restricted from~$Q(X)$, see~\cite[Theorem~2.8]{Ottaviani})
\begin{equation*}
0 \to \cS'\vert_X \to \cO_X^{\oplus 4} \to \cS\vert_X(1) \to 0
\end{equation*}
and using the acyclicity of~$\cU$ we deduce that~$\Hom(\cS'\vert_X,\cU) \ne 0$.
Since~$\cS'\vert_X$ and~$\cU$ are stable bundles with equal rank and first Chern class, they are isomorphic.
This proves that either~$\cU \cong \cS\vert_X$ or~$\cU \cong \cS'\vert_X$, 
hence the moduli stack~$\cMuk(X)$ has exactly two points: $[\cS\vert_X]$ and~$[\cS'\vert_X]$.

Now assume that the quadric~$Q(X)$ is singular, i.e., $Q(X) \cong \Cone(\bar{Q})$ is the cone over a smooth quadric threefold~$\bar{Q}$.
Since~$X$ is smooth, the cubic hypersurface cutting out~$X \subset Q(X)$ does not pass through the vertex of the cone, 
hence the projection out of the vertex defines a regular triple covering~\mbox{$X \to \bar{Q}$}.
Let~$\cS\vert_X$ be the pullback to~$X$ of the unique spinor bundle of~$\bar{Q}$.
It is stable, acyclic, but not exceptional; 
in fact it is easy to see that
\begin{equation*}
\Ext^\bullet(\cS\vert_X,\cS\vert_X) \cong \kk \oplus \kk[-1] \oplus \kk[-2].
\end{equation*}
Moreover, using the exact sequence (pulled back from~$\bar{Q}$, see~\cite[Theorem~2.8]{Ottaviani})
\begin{equation*}
0 \to \cS\vert_X \to \cO_X^{\oplus 4} \to \cS\vert_X(1) \to 0
\end{equation*}
and the same argument as above, one can check that any Mukai bundle~$\cU$ would have been isomorphic to~$\cS\vert_X$,
but as~$\cS\vert_X$ is not a Mukai bundle, we conclude that~$\cMuk(X) = \varnothing$.
\end{proof}

\begin{remark}
In~\cite{BKM,BKM2} the existence and uniqueness of Mukai bundles is proved for any locally factorial Fano threefolds 
of genus~$g \ge 6$ with terminal Gorenstein singularities.
\end{remark}

Before we pass to the stacks~$\bMF_{\sY_d}$, $\bMF_{\sX_g}$, and~$\bMFM_{\sX_g}$,
we discuss a simple but instructive example of the moduli stacks~$\MQ_n \subset \bMQ_n$ of smooth and nodal $n$-dimensional quadrics 
(with the definition analogous to Definition~\ref{def:mf})
and the moduli stack~$\MQS_n$ of smooth quadrics endowed with a spinor bundle 
(with the definition analogous to Definition~\ref{def:mfm}).
We denote by~$\xi_\sQ \colon \MQS_n \to \MQ_n$ the morphism of stacks that forgets the spinor bundle.

\begin{lemma}
\label{lem:mq-mqs}
The stacks~$\MQ_n$, $\bMQ_n$, and~$\MQS_n$ are smooth irreducible Artin stacks of finite type over~$\kk$,
the substack~$\MQ_n \subset \bMQ_n$ is open and dense, 
and the complement~$\bMQ^{(1)}_{n} \coloneqq \bMQ_n \setminus \MQ_n$ is a Cartier divisor.
Moreover, if~$n$ is even the morphism~$\xi_\sQ \colon \MQS_n \to \MQ_n$ is a finite \'etale covering of degree~$2$,
branched over~$\bMQ^{(1)}_n$.
\end{lemma}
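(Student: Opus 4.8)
The plan is to realize all three stacks as global quotient stacks and then read off every assertion from the geometry of the space of quadratic forms. Fix a vector space $V$ of dimension $n+2$ and set $W \coloneqq \Sym^2 V^\vee$, so that a quadric hypersurface in $\P(V) = \P^{n+1}$ is a point of $\P(W)$. The associated symmetric form has corank $k$ exactly when $\Sing$ of the quadric is a $\P^{k-1}$; in particular the quadric is smooth when $k = 0$ and has a single ordinary double point when $k = 1$. Writing $\P(W)^{\le 1} \subset \P(W)$ for the open locus of forms of corank $\le 1$ and $\P(W)^{\mathrm{sm}}$ for the corank-$0$ locus, I would first verify the presentations
\[
\bMQ_n \simeq [\P(W)^{\le 1}/\PGL(V)], \qquad \MQ_n \simeq [\P(W)^{\mathrm{sm}}/\PGL(V)],
\]
using that a flat family of $n$-dimensional quadrics together with its relative polarization is étale-locally pulled back from the universal quadric over $\P(W)$, with $\PGL(V) = \Aut(\P^{n+1})$ being the ambiguity of such a trivialization. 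As $\P(W)$ is smooth projective and $\PGL(V)$ is a smooth affine group, both quotients are smooth algebraic stacks of finite type over $\kk$.

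Next I would extract the boundary statements. The corank-$\ge 1$ locus is the zero scheme of the discriminant $\Delta = \det$, a section of $\cO_{\P(W)}(n+2)$, while the corank-$\ge 2$ locus (the singular locus of $\{\Delta = 0\}$) has codimension $3$. Hence $\P(W)^{\mathrm{sm}}$ is the complement in $\P(W)^{\le 1}$ of the hypersurface $\{\Delta = 0\} \cap \P(W)^{\le 1}$, giving openness and density of $\MQ_n \subset \bMQ_n$. On the smooth atlas $\P(W)^{\le 1}$ the boundary pulls back to that hypersurface, which is cut out by the single equation $\Delta$ in a smooth variety and is therefore an effective Cartier divisor; by flat descent along the atlas, $\bMQ^{(1)}$ is a Cartier divisor in $\bMQ_n$.

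For the spinor cover I would use that $\PGL(V)$ acts transitively on $\P(W)^{\mathrm{sm}}$ with stabilizer the projective orthogonal group $\mathrm{PO}_{n+2}$ (over $\kk$ the projective similitude group equals $\mathrm{PO}$), so that
\[
\MQ_n \simeq [\,(\PGL(V)/\mathrm{PO}_{n+2})/\PGL(V)\,] \simeq B\,\mathrm{PO}_{n+2};
\]
in particular $\Aut(Q) = \mathrm{PO}_{n+2}$ for every smooth quadric $Q$. A spinor bundle on $Q$ amounts to a choice of one of the two connected components of the orthogonal Grassmannian $\OGr$ of maximal isotropic subspaces, i.e.\ to a reduction of the $\mathrm{PO}_{n+2}$-structure to $\PSO_{n+2}$; here $n$ even is used, so that $-\mathrm{id} \in \SO_{n+2}$, the Grassmannian $\OGr$ has two components, $\PSO_{n+2}$ preserves each of them, and the nontrivial coset of $\PSO_{n+2}$ in $\mathrm{PO}_{n+2}$ swaps them. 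This identifies $\MQS_n \simeq B\,\PSO_{n+2}$ and realizes $\xi_\sQ$ as the map $B\,\PSO_{n+2} \to B\,\mathrm{PO}_{n+2}$ induced by the index-$2$ inclusion. Pulling back along the atlas $\Spec(\kk) \to B\,\mathrm{PO}_{n+2}$ produces the finite étale degree-$2$ scheme $\mathrm{PO}_{n+2}/\PSO_{n+2} \cong \ZZ/2$, and since these properties are smooth-local on the target, $\xi_\sQ$ is finite étale of degree $2$ and $\MQS_n$ is smooth of finite type. Note that, just as $\cMuk(X)$ consists of two points in Proposition~\ref{prop:mukai-bundles}, the cover $\MQS_n \to \MQ_n$ is nonetheless connected, because the two spinor bundles are exchanged by the determinant-$(-1)$ automorphisms of $Q$.

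The main obstacle I anticipate is the clean identification of the spinor-bundle datum with a $\PSO_{n+2}$-reduction, i.e.\ checking that $\PSO_{n+2}$ stabilizes each spinor bundle while the outer coset interchanges them, together with the more routine bookkeeping of the relative polarization at the corank-$1$ fibers needed to justify the quotient presentation of $\bMQ_n$. Alternatively, the étaleness and degree of $\xi_\sQ$ could be deduced exactly as in the proof of Theorem~\ref{thm:cax-family}: spinor bundles are exceptional, so \cite[Corollary~4.5.2]{HL10} makes the relative moduli space of such bundles étale over $\MQ_n$ with fibres of cardinality $2$; but then finiteness is less transparent than in the classifying-stack description, which is why I would take $B\,\PSO_{n+2} \to B\,\mathrm{PO}_{n+2}$ as the primary route.
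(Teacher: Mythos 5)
Your proof is correct and follows essentially the same route as the paper: both realize $\MQ_n$ and $\bMQ_n$ as quotients by $\PGL(V)$ of the corank-$0$ and corank-$\le 1$ loci in $\P(\Sym^2 V^\vee)$, and read off smoothness, finite type, density, and the Cartier-divisor property from the discriminant hypersurface. The only real divergence is in the last step: the paper presents $\MQS_n$ as the quotient of the \'etale double cover $\hrU_n^0 \to \rU_n^0$ obtained by restricting the flat double cover of the corank-$\le 1$ locus branched over the discriminant, whereas you identify $\MQ_n \simeq B\,\mathrm{PO}_{n+2}$ and $\MQS_n \simeq B\,\PSO_{n+2}$; these are equivalent descriptions of the same degree-$2$ cover (corresponding to the index-$2$ subgroup), and yours makes \'etaleness and the degree more transparent, while the paper's branched-cover construction has the advantage of exhibiting an extension of the cover over the nodal boundary, which is used later (e.g.\ in the proof of Theorem~\ref{thm:tmfy-bmfx-d2}).
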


\begin{proof}
Let~$V_{n + 2}$ denote a vector space of dimension~$n + 2$.
Consider the open subsets
\begin{equation*}
\rU_n^0 \subset \rU_n^{\le 1} \subset \P(\Sym^2V_{n+2}^\vee)
\end{equation*}
parameterizing $n$-dimensional quadrics in~$\P(V_{n+2})$ of corank~$0$ (i.e., smooth) 
and corank~$\le 1$ (i.e., nodal), respectively.
Note that the complement~$\P(\Sym^2V_{n+2}^\vee) \setminus \rU_n^0$ is a Cartier divisor of degree~$n + 2$,
and its intersection~$\rU_n^{\le 1} \setminus \rU_n^0$ with~$\rU_n^{\le 1}$ is smooth.
If~$n$ is even, we denote by~$\hrU_n^{\le 1} \to \rU_n^{\le 1}$ and~$\hrU_n^0 \to \rU_n^0$
the flat double covering branched over~$\rU_n^{\le 1} \setminus \rU_n^0$
and its restriction, which is \'etale over~$\rU_n^0$.
Then it is easy to prove the following global quotient representations 
\begin{equation*}
\MQ_n \cong \rU_n^0 / \PGL(V_{n+2}),
\qquad
\bMQ_n \cong \rU_n^{\le 1} / \PGL(V_{n+2})
\qquad\text{and}\qquad 
\MQS_n \cong \hrU_n^0  / \PGL(V_{n+2}).
\end{equation*}
Thus, $\rU_n^{\le 1}$, $\rU_n^0$, and~$\hrU_n^0$ are smooth irreducible atlases 
for~$\bMQ_n$, $\MQ_n$, and~$\MQS_n$, respectively, and all statements of the lemma follow easily.
\end{proof}

Now we finally consider the moduli stacks of Fano threefolds.
When discussing the stacks~$\bMF_{\sX_g}$ of prime Fano threefolds 
we concentrate on the cases where~$g \in \{4,6,8,10,12\}$ that are crucial for this paper,
although the same arguments can be also applied for~$g \in \{2,3,5,7,9\}$.

Recall from~\cite[\S12.2]{IP}, \cite[\S2]{Muk92}, or~\cite[Theorem~1.1]{BKM2}
the following descriptions of smooth prime Fano threefolds~$X$ of genus~$g \in \{4,6,8,10,12\}$
as zero loci of regular global sections of vector bundles:
\begin{itemize}
\item 
if~$g = 12$ then~$X$ is the zero locus of a section 
of the vector bundle~$(\wedge^2\cU_3^\vee)^{\oplus 3}$ on~$\Gr(3,7)$,
\item 
if~$g = 10$ then~$X$ is the zero locus of a section 
of the vector bundle~$\cU_2^\perp(1) \oplus \cO(1)^{\oplus 2}$ on~$\Gr(2,7)$,
\item 
if~$g = 8$ then~$X$ is the zero locus of a section 
of the vector bundle~$\cO(1)^{\oplus 5}$ on~$\Gr(2,6)$,
\item 
if~$g = 6$ then~$X$ is the zero locus of a section 
of the vector bundle~$\cO(1)^{\oplus 3} \oplus \cO(2)$ on~$\CGr(2,5)$,
\item 
if~$g = 4$ then~$X$ is the zero locus of a section 
of the vector bundle~$\cO(2) \oplus \cO(3)$ on~$\P^5$,
\end{itemize}
where~$\cU_k$ stands for the tautological vector bundle of rank~$k$ on the Grassmannian~$\Gr(k,n)$, 
$\cU_k^\perp$ is the dual of the corresponding quotient bundle,
and~$\CGr(2,5)$ is the cone over~$\Gr(2,5)$.
The stacks~$\bMF_{\sX_g}$, $\bMF_{\sY_d}$, and~$\bMFM_{\sX_g}$
were defined in Definitions~\ref{def:mf} and~\ref{def:mfm};
the 1-nodal loci are defined in~\S\ref{subsec:nodal-morphisms}.

\begin{theorem}
\label{thm:mf-x-y}
The stacks~$\bMF_{\sX_g}$ and~$\bMFM_{\sX_g}$ for~$g \in \{4,6,8,10,12\}$
and~$\bMF_{\sY_d}$ for~$d \in \{1,2,3,4,5\}$ 
are smooth irreducible algebraic stacks of finite type over~$\kk$,
their substacks
\begin{equation*}
\MF_{\sX_g} \subset \bMF_{\sX_g},
\qquad
\MFM_{\sX_g} \subset \bMFM_{\sX_g},
\qquad\text{and}\qquad 
\MF_{\sY_d} \subset \bMF_{\sY_d}
\end{equation*}
are open and dense, and~$\bMF_{\sX_g} \setminus \MF_{\sX_g}$, $\bMFM_{\sX_g} \setminus \MFM_{\sX_g}$, 
and~$\bMF_{\sY_d} \setminus \MF_{\sY_d}$ are Cartier divisors,
smooth along the $1$-nodal loci~$\bMF_{\sX_g}^{(1)}$, $\bMFM_{\sX_g}^{(1)}$, and~$\bMF_{\sY_d}^{(1)}$, respectively.

Moreover, the forgetful morphism~$\xi \colon \bMFM_{\sX_g} \to \bMF_{\sX_g}$ 
is \'etale, separated, and representable,
and its fiber over~$[X] \in \bMF_{\sX_g}$ is the moduli space~$\cMuk(X)$ of Mukai bundles on~$X$.
More precisely,
\begin{itemize}
\item 
if~$g \ge 6$ then~$\xi$ is an open embedding
and induces an isomorphism~$\MFM_{\sX_g} \xrightiso{\ \xi\vert_{\MFM_{\sX_g}}\ } \MF_{\sX_g}$, and
\item 
if~$g = 4$ then~$\xi\vert_{\MFM_{\sX_4}}$ is induced by the double covering~$\xi_\sQ$, i.e., there is a Cartesian diagram
\begin{equation}
\label{eq:mfx4-mq4}
\vcenter{\xymatrix{
\MFM_{\sX_4} \ar[r] \ar[d]_{\xi\vert_{\MFM_{\sX_4}}} &
\MQS_4 \ar[d]^{\xi_\sQ}
\\
\MF_{\sX_4}^\circ \ar[r] &
\MQ_4,
}}
\end{equation}
where~$\MF_{\sX_4}^\circ \subset \MF_{\sX_4}$ is the open substack 
parameterizing smooth~$X$ with smooth quadric~$Q(X)$,
and the bottom horizontal arrow is defined by~$[X] \mapsto [Q(X)]$.
\end{itemize}
\end{theorem}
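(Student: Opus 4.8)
The plan is to treat the three families uniformly, modelling the argument on Lemma~\ref{lem:mq-mqs}: build explicit smooth atlases to get algebraicity, smoothness and irreducibility; control the boundary by the deformation theory of nodes; and then read off the properties of~$\xi$ from the results on Mukai bundles proved above. First I would realise each stack as a global quotient. For~$\bMF_{\sX_g}$ I use the embedding descriptions recalled before the theorem: a nodal prime Fano threefold of genus~$g$ is the zero locus of a regular global section of the relevant bundle~$\cE_g$ on the homogeneous space or cone $G_g \in \{\Gr(3,7),\Gr(2,7),\Gr(2,6),\CGr(2,5),\P^5\}$. Letting $\rU_g \subset \rH^0(G_g,\cE_g)$ be the open subscheme of sections whose zero locus is a prime Fano threefold of genus~$g$ with at worst nodal (corank~$\le 1$) singularities, one gets $\bMF_{\sX_g} \cong [\rU_g/\Aut(G_g)]$, with $\MF_{\sX_g}$ the further open quotient of sections with smooth zero locus. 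Since $\rU_g$ is open in a vector space (hence smooth and irreducible) and $\Aut(G_g)$ is connected, $\bMF_{\sX_g}$ is a smooth irreducible algebraic stack of finite type with $\MF_{\sX_g}$ open in it; the same construction, using the del Pezzo descriptions from the beginning of~\S\ref{sec:bridge}, treats $\bMF_{\sY_d}$.

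\textbf{The boundary divisor.} For the Cartier-divisor statements I would argue through deformation theory, exactly as the discriminant enters Lemma~\ref{lem:mq-mqs}. A nodal $X$ is lci, so its deformations are governed by the cotangent cohomology modules $\rT^i_X$ fitting into
\begin{equation*}
\rH^1(X,\Theta_X) \to \rT^1_X \to \rH^0(X,\cExt^1(\Omega_X,\cO_X)) \to \rH^2(X,\Theta_X),
\end{equation*}
where $\Theta_X = \cHom(\Omega_X,\cO_X)$ and $\cExt^1(\Omega_X,\cO_X)$ is supported on $\Sing(X)$ with stalk the one-dimensional local $\rT^1$ of an ordinary double point. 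The key input is the Fano vanishing $\rH^2(X,\Theta_X)=0$ (together with $\rT^2_X=0$, since ODPs are unobstructed hypersurface singularities): it makes $X$ unobstructed and makes the restriction $\rT^1_X \to \bigoplus_{p\in\Sing(X)}\rT^1_p$ surjective. Hence the map from the atlas to the versal deformation of each node is smooth, and $\bMF_{\sX_g}\setminus\MF_{\sX_g}$ is the preimage of the reduced one-dimensional nodal discriminant; it is therefore a Cartier divisor, smooth along $\bMF^{(1)}_{\sX_g}$, and likewise for $\bMF_{\sY_d}$. Density of $\MF_{\sX_g}$ follows because every nodal $X$ is smoothable (\cite{Na97}), so $[X]$ lies in the closure of the smooth locus; with smoothness this also gives irreducibility.

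\textbf{The morphism $\xi$ and the two cases.} The fibre of $\xi$ over $[X]$ is the groupoid of Mukai bundles on $X$, i.e.\ the scheme $\cMuk(X)$ of Lemma~\ref{lem:mukai-bundles}. Relatively over $\bMF_{\sX_g}$, Mukai bundles form an open substack of the stack of simple sheaves on the universal family; since they are stable this problem is representable with finite reduced fibres, separated (stability yields the valuative criterion, \cite{HL10}), and, because $\Ext^1(\cU,\cU)=\Ext^2(\cU,\cU)=0$, formally \'etale, hence \'etale of finite type. This shows $\xi$ is \'etale, separated and representable with fibre $\cMuk(X)$, and that $\bMFM_{\sX_g}$ is smooth, algebraic and of finite type. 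Over the smooth locus Proposition~\ref{prop:mukai-bundles} identifies the fibres. For $g\ge 6$ the Mukai bundle is unique, so $\MFM_{\sX_g}\to\MF_{\sX_g}$ is an \'etale bijection, hence an isomorphism; as $\MFM_{\sX_g}$ is then a dense connected open in the smooth stack $\bMFM_{\sX_g}$, the latter is irreducible. To upgrade $\xi$ to an open embedding I would show it is a monomorphism by the diagonal argument: \'etaleness makes $\bMFM_{\sX_g}\to\bMFM_{\sX_g}\times_{\bMF_{\sX_g}}\bMFM_{\sX_g}$ open and separatedness makes it closed, so its complement $R$ is open-closed; since $\pr_1\vert_R$ is \'etale it would dominate the irreducible $\bMFM_{\sX_g}$ and meet the locus over $\MFM_{\sX_g}$, where $\xi$ is already a monomorphism, forcing $R=\varnothing$; an \'etale monomorphism is an open immersion. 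For $g=4$, Proposition~\ref{prop:mukai-bundles} shows the Mukai bundles on $X$ with $Q(X)$ smooth are exactly the restrictions of the two spinor bundles of $Q(X)$, and none exist when $Q(X)$ is singular; as these two bundles are the two points of the fibre of $\xi_\sQ\colon\MQS_4\to\MQ_4$ (Lemma~\ref{lem:mq-mqs}), the assignment $X\mapsto(Q(X),\cS\vert_X)$ identifies $\MFM_{\sX_4}$ with $\MF^\circ_{\sX_4}\times_{\MQ_4}\MQS_4$, giving the Cartesian square~\eqref{eq:mfx4-mq4}.

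I expect the deformation-theoretic control of the boundary to be the main obstacle: one must establish the Fano vanishing $\rH^2(X,\Theta_X)=0$ for nodal $X$ and the resulting surjectivity onto the local smoothing parameters, and one must verify that the embedding descriptions genuinely extend from smooth to $1$-nodal threefolds near the $1$-nodal locus (in particular handling the singular ambient quadric $Q(X)$ when $g=4$ and the cone $\CGr(2,5)$ when $g=6$), so that the quotient atlases remain valid there.
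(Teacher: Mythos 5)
Your treatment of the morphism $\xi$ (\'etaleness via $\Ext^{1}(\cU,\cU)=\Ext^{2}(\cU,\cU)=0$, separatedness and representability via the relative moduli space of stable sheaves, the identification of the fibres with $\cMuk(X)$, and the two cases $g\ge 6$ and $g=4$) matches the paper's proof, and your deformation-theoretic description of the boundary (unobstructedness plus surjectivity of $\rT^1_X\to\bigoplus_p\rT^1_p$) is essentially the content of the results of Namikawa that the paper invokes. The genuine problem is in your first step. You propose to present $\bMF_{\sX_g}$ as a quotient $[\rU_g/\Aut(G_g)]$, where $\rU_g$ is an open set of sections of $\cE_g$ on $G_g$ whose zero loci are \emph{nodal} prime Fano threefolds. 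But the embedding descriptions recalled before the theorem are established only for \emph{smooth} prime Fano threefolds, and they demonstrably fail on the boundary: by Remark~\ref{rem:hyperelliptic-trigonal}, the $1$-nodal threefolds of genus~$4$ produced by the bridge construction from $d=1$ are hyperelliptic (type $H_5$), so their anticanonical map is $2:1$ and they are not $(2,3)$ complete intersections in $\P^5$; similarly the genus~$6$ ones are trigonal (type $T_7$) and are not zero loci of sections of $\cO(1)^{\oplus 3}\oplus\cO(2)$ on $\CGr(2,5)$. Hence $\rU_g$ does not surject onto $\bMF_{\sX_g}$, and the atlas you build misses part of the boundary --- precisely the part this paper cares about. (For $\bMF_{\sY_d}$ your atlas is fine, since the hypersurface and complete-intersection descriptions of del Pezzo threefolds do hold in the nodal case by~\cite{KP22}.)

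The paper avoids this by decoupling the two uses of the explicit descriptions: algebraicity and smoothness of $\bMF_{\sX_g}$ come from the general results of~\cite{JL} together with unobstructedness of nodal Fano threefolds~\cite[Proposition~3]{Na97} and boundedness of terminal Gorenstein Fano threefolds~\cite[Proposition~2.6.4]{Kollar}; the explicit descriptions are used only to see that the \emph{smooth} locus $\MF_{\sX_g}$ is irreducible, and irreducibility of $\bMF_{\sX_g}$ then follows from density of $\MF_{\sX_g}$, i.e.\ from smoothability~\cite[Proposition~4]{Na97}. For the Cartier-divisor claim the paper also needs one point your sketch glosses over: the full boundary contains multi-nodal threefolds, and one reduces to the $1$-nodal locus by observing (after~\cite[Theorem~11]{Na97}) that every nodal Fano threefold admits a partial smoothing which is $1$-nodal, so the boundary is the closure of the locally principal $1$-nodal locus, hence a Weil divisor in a smooth stack and therefore Cartier. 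If you replace your atlas construction for $\bMF_{\sX_g}$ by this argument, the rest of your proof goes through.
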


\begin{proof}
Let~$\rM$ be either of the stacks~$\MF_{\sX_g}$ or~$\MF_{\sY_d}$ 
and let~$\brM$ be the corresponding stack~$\bMF_{\sX_g}$ or~$\bMF_{\sY_d}$.

By~\cite[Lemmas~2.4 and~2.5]{JL} the stack~$\rM$ is a smooth algebraic stacks locally of finite type over~$\kk$
(it is a union of connected components of the stack~${\normalfont\textsc{Fano}}$ considered in~\cite{JL}).
Moreover, $\rM$ is irreducible of finite type because the corresponding Fano varieties 
have uniform descriptions listed just before the theorem (in the case where~$\rM = \MF_{\sX_g}$)
and at the beginning of~\S\ref{sec:bridge} (in the case where~$\rM = \MF_{\sY_d}$);
indeed, we see that in each case an open subset of an appropriate vector space (of global sections) surjects onto~$\rM$.

Note that smoothness of Fano varieties is only used in~\cite[\S2]{JL} 
to deduce unobstructedness of their deformations in the proof of~\cite[Lemma~2.5]{JL}.
However, nodal (or even terminal Gorenstein) Fano threefolds are also unobstructed by~\cite[Proposition~3]{Na97}, 
hence the same argument proves that the stack~$\brM$ 
is a smooth algebraic stacks of finite type over~$\kk$
(for finiteness just note that terminal Gorenstein Fano varieties 
form a bounded family by~\cite[Proposition~2.6.4]{Kollar}).
Furthermore, $\rM$ is dense in~$\brM$ because any nodal Fano variety is smoothable by~\cite[Proposition~4]{Na97};
therefore irreducibility of~$\rM$ implies irreducibility of~$\brM$,
and since~$\brM$ is smooth, also its integrality.

Next, we prove that the 
boundary~$\brM \setminus \rM$ is a Cartier divisor in~$\brM$.
First, the argument of~\cite[Theorem~11]{Na97} shows that 
any nodal Fano variety has a partial smoothing which is 1-nodal;
this means that the boundary~$\brM \setminus \rM$ 
is equal to the closure of the 1-nodal locus~$\brM^{(1)} \subset \brM \setminus \rM$.
On the other hand, as the property of being Cartier divisor is local on the base in the smooth topology,
Corollary~\ref{cor:nodal-cartier} proves that~$\brM^{(1)}$ 
is a Cartier divisor in~$\brM^{\le 1}$, the substack of threefolds with at most one node.
Therefore, $\brM \setminus \rM$ is a Weil divisor in~$\brM$,
and since ~$\brM$ is smooth, $\brM \setminus \rM$ is a Cartier divisor in~$\brM$.
Note also that for any point~$[X]$ in the $1$-nodal locus~$\brM^{(1)}$
there is a smoothing~$\cX/B$ of~$X$ with smooth~$\cX$ (see Theorem~\ref{thm:smoothing}),
hence by Lemma~\ref{lem:s1-cartier} the corresponding curve~$B \to \brM$ intersects the divisor~$\brM^{(1)}$ transversely at~$[X]$,
hence the divisor is smooth at~$[X]$.

Now consider the canonical morphism of stacks~$\xi \colon \bMFM_{\sX_g} \to \bMF_{\sX_g}$.
Let~$(X,\cU_X)$ be a $\kk$-point of~$\bMFM_{\sX_g}$.
By standard deformation theory (see~\cite[Corollary~4.5.2]{HL10}) the relative tangent space to deformations of the pair~$(X,\cU_X)$ 
over~$\bMF_{\sX_g}$ is~$\Ext^1(\cU_X, \cU_X)$ and the obstruction space is~$\Ext^2(\cU_X, \cU_X)$.
Since~$\cU_X$ is exceptional, both spaces vanish, hence the morphism~$\xi$ is \'etale.

Since~$\bMF_{\sX_g}$ is smooth, we deduce that~$\bMFM_{\sX_g}$ is also smooth.
Since the preimage of a Cartier divisor under an \'etale map is a Cartier divisor,
we conclude that~$\bMFM_{\sX_g} \setminus \MFM_{\sX_g}$ is a Cartier divisor in~$\bMFM_{\sX_g}$,
and since the complement of a Cartier divisor is dense, 
we conclude that~$\MFM_{\sX_g}$ is dense in~$\bMFM_{\sX_g}$. 
It also follows that the $1$-nodal locus~$\bMFM_{\sX_g}^{(1)}$ is smooth.

Now consider the fiber of~$\xi$ over a $\kk$-point~$[X]$.
By Definition~\ref{def:mfm} this is the fibered category over~$(\Sch/\kk)$ whose fiber over a scheme~$S$
is the groupoid --- or rather the set --- of global sections~$\cU \in \VB_{X \times S/S}(S)$ 
such that~$\cU_s$ is a Mukai bundle on~$X$ for every geometric point~$s \in S$.
By definition of the \'etale sheaf~$\VB$ the datum of~$\cU$ 
is the same as a collection of vector bundles on~$X \times S_i$ for an \'etale covering~$\{S_i\} \to S$
whose pullbacks are isomorphic on~$X \times (S_i \times_S S_j)$.
Obviously, this fibered category is equivalent to the \'etale sheafification 
of the usual functor of semistable sheaves as in~\cite[\S4.1]{HL10}.
Finally, using the condition that~$\cU_s$ are Mukai bundles and Lemma~\ref{lem:mukai-bundles},
we conclude that~$\xi^{-1}([X]) \cong \cMuk(X)$.

The same argument combined with a relative version of Lemma~\ref{lem:mukai-bundles} 
shows that~$\xi$ factors through an open embedding into a relative moduli space of semistable sheaves, 
which is projective over~$\bMF_{\sX_g}$.
Therefore, $\xi$ is separated and representable.

Now assume~$g \ge 6$. 
If~$X$ is smooth then~$\xi^{-1}([X]) \cong \Spec(\kk)$ by Proposition~\ref{prop:mukai-bundles}.
Since~$\xi$ is \'etale and separated, and~$\MF_{\sX_g}$ is dense in~$\bMF_{\sX_g}$, 
it follows that every nonempty fiber of~$\xi$ is a point.
Since the property of being open immersion is local on the base in the smooth 
(and even in the fpqc) topology (see~\cite[Lemma~02L3]{stacks-project}),
it follows that~$\xi$ is an open embedding;
in particular, it follows that~$\bMFM_{\sX_g}$ is irreducible.

Finally, assume~$g = 4$.
Given an $S$-point~$f \colon \cX \to S$ of~$\MF_{\sX_4}^\circ$
we consider the relative anticanonical embedding~\mbox{$\cX \hookrightarrow \P_S(\cV)$}, 
where~$\cV \coloneqq (f_*\omega_X^{-1})^\vee$.
It is easy to see that there is a unique flat family of quadrics~$\cQ(\cX) \subset \P_S(\cV)$ containing~$\cX$,
and this family defines an $S$-point of~$\MQ_4$. 
This construction is functorial in~$S$, hence defines a morphism of stacks~$\MF_{\sX_4}^\circ \to \MQ_4$.
The argument of Proposition~\ref{prop:mukai-bundles} then proves that~$\MFM_{\sX_4} \cong \MQS_4 \times_{\MQ_4} \MF_{\sX_4}^\circ$, 
i.e., that~\eqref{eq:mfx4-mq4} is a Cartesian diagram.
It also follows that~$\MFM_{\sX_4}$ is an open substack in a projective space bundle over~$\MQS_4$,
and since~$\MQS_4$ is irreducible (by Lemma~\ref{lem:mq-mqs}), 
$\MFM_{\sX_4}$, and hence also~$\bMFM_{\sX_4}$, is irreducible.
\end{proof}

\begin{remark}
If~$g \le 10$ and~$d \le 4$ the stacks~$\MF_{\sX_g}$, $\MFM_{\sX_g}$, and~$\MF_{\sY_d}$ are Deligne--Mumford stacks;
this follows from finiteness of the automorphisms groups of the corresponding Fano threefolds (\cite[Theorem~1.1.2]{KPS18})
by the argument of~\cite[Proposition~A.2]{KP18}.
\end{remark}

%%%%%%%%%%%%%%%%%%%%%%%%%%%%%%%%%%%%%%%%%%%%%%%%%%%%%%%%%%%%%%%%%%%
%%%%%%%%%%%%%%%%%%%%%%%%%%%%%%%%%%%%%%%%%%%%%%%%%%%%%%%%%%%%%%%%%%%
%%%%%%%%%%%%%%%%%%%%%%%%%%%%%%%%%%%%%%%%%%%%%%%%%%%%%%%%%%%%%%%%%%%

\section{A boundary divisor}
\label{sec:stacks-proofs}

In this section we show that Fano threefolds obtained in~\S\ref{sec:bridge} by the bridge construction
form a connected component of the open substack~$\bMFM_{\sX_g}^{(1)}$ 
of the boundary divisor~$\bMFM_{\sX_g}^{\ge 1} = \bMFM_{\sX_g} \setminus \MFM_{\sX_g}$.
We also discuss generalizations of Theorem~\ref{thm:cax-family} 
for families of nodal Fano threefolds over arbitrary bases.

We need to define yet another stack.
Note that for any family of 1-nodal threefolds~$\cY \to S$, 
if~$y_0$ is the nodal $S$-point of~$\cY$ then the blowup~$\Bl_{y_0}(\cY)$ is smooth over~$S$ 
and its exceptional divisor~$E \to S$ is a smooth family of quadric surfaces, 
see Lemma~\ref{lem:uniformly-1n}.

\begin{definition}
\label{def:tmf}
For each~$d \in \{1,2,3,4,5\}$ we define the stack~$\tMF_{\sY_d}$ as the fibered category over~$(\Sch/\kk)$ 
whose fiber over a scheme~$S$ is the groupoid of:
\begin{itemize}
\item 
(for~$d \ge 2$) pairs~$(\cY \to S,\cC)$, 
where~$\cY \to S$ is an $S$-point of~$\MF_{\sY_d}$,
i.e., a family of smooth del Pezzo threefolds of degree~$d$,
and~$\cC \subset \cY$ is a family of smooth rational curves of degree~$d - 1$;
\item 
(for~$d = 1$) pairs~$(\cY \to S,\cF)$, 
where~$\cY \to S$ is an $S$-point of~$\bMF_{\sY_1}^{(1)}$,
i.e., a family of $1$-nodal del Pezzo threefolds of degree~$1$,
and~$\cF \in \Pic_{E/S}(S)$ is a global section of the relative Picard sheaf for the exceptional divisor~$E$
of the blowup~$\Bl_{y_0}(\cY)$, where~$y_0$ is the nodal $S$-point of~$\cY$,
such that for each geometric point~$s \in S$ the divisor class~$\cF_s \in \Pic(E_s)$ is a ruling of~$E_s \cong \P^1 \times \P^1$.
\end{itemize}
Morphisms in~$\tMF_{\sY_d}$ are defined as morphisms in~$\bMF_{\sY_d}$ 
compatible with~$\cC$ or~$\cF$ in the obvious way.
\end{definition}

\begin{lemma}
\label{lem:mfc-smooth-connected}
For all~$d \in \{1,2,3,4,5\}$ the stack~$\tMF_{\sY_d}$ is smooth and connected.
\end{lemma}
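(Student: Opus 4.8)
The plan is to study the two natural forgetful morphisms separately, reducing smoothness and connectedness of $\tMF_{\sY_d}$ to properties of the base stacks (established in Theorem~\ref{thm:mf-x-y}) and of the fibres --- the Hilbert schemes of curves when $d \ge 2$, and the set of rulings of the exceptional quadric when $d = 1$.

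First I would treat the case $d \ge 2$. Here there is a forgetful morphism
\[
p \colon \tMF_{\sY_d} \longrightarrow \MF_{\sY_d}, \qquad (Y,C) \mapsto Y,
\]
exhibiting $\tMF_{\sY_d}$ as the relative Hilbert scheme of smooth rational curves of degree $d-1$ in the universal del Pezzo threefold $\cY \to \MF_{\sY_d}$. Since $C \cong \P^1$ is connected and proper with $\rH^0(C,\cO_C) = \kk$, any deformation of $C$ inside the smooth total space $\cY$ automatically stays in a single fibre, so near $(Y,C)$ the stack $\tMF_{\sY_d}$ is identified with the Hilbert scheme of $C$ in $\cY$, whose tangent and obstruction spaces are $\rH^0(C,\cN_{C/\cY})$ and $\rH^1(C,\cN_{C/\cY})$. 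From the normal bundle sequence
\[
0 \to \cN_{C/Y} \to \cN_{C/\cY} \to \cO_C^{\oplus \dim \MF_{\sY_d}} \to 0
\]
and $\rH^1(C,\cO_C) = 0$ I get $\rH^1(C,\cN_{C/\cY}) = \Coker\bigl(T_{[Y]}\MF_{\sY_d} \to \rH^1(C,\cN_{C/Y})\bigr)$. Thus $\tMF_{\sY_d}$ is smooth at $(Y,C)$ exactly when this obstruction map is surjective; for $d \ge 3$, and for $d = 2$ when $C$ is not a ramification line, this is automatic because $\rH^1(C,\cN_{C/Y}) = 0$ by Lemma~\ref{lem:hilbert-smooth}\ref{it:Hilb-smooth}. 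Connectedness follows since $\MF_{\sY_d}$ is irreducible (Theorem~\ref{thm:mf-x-y}) and the general fibre $\rF^\circ_{d-1}(Y)$ of $p$ is nonempty and connected (Lemma~\ref{lem:hilbert-smooth}\ref{it:Hilb-nonenmpty}): the preimage of the dense locus where $p$ is smooth is then connected and dense in the pure-dimensional smooth stack $\tMF_{\sY_d}$.

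The main obstacle in this range is the case $d = 2$ with $C$ a ramification line, where $\cN_{C/Y} \cong \cO_C(2) \oplus \cO_C(-2)$, so $\rH^1(C,\cN_{C/Y}) \cong \kk \ne 0$ and I must show the obstruction map $T_{[Y]}\MF_{\sY_2} \to \kk$ is \emph{nonzero}. Identifying deformations of the quartic double solid $Y$ with deformations of its branch quartic surface $B \subset \P^3$, and recalling (Remark~\ref{rem:branching}) that a ramification line is the lift of a line $\bar{C} \subset B$, the cokernel in question measures the failure of $\bar{C}$ to remain on the deforming quartic. As a general quartic surface contains no line, there is a first-order deformation of $B$ moving it off $\bar{C}$, so the obstruction map is surjective and $\rH^1(C,\cN_{C/\cY})$ vanishes; this yields smoothness at ramification lines as well, and since these form a locus of codimension $\ge 1$ they do not disturb the connectedness argument.

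Finally, for $d = 1$ the forgetful morphism
\[
\tMF_{\sY_1} \longrightarrow \bMF_{\sY_1}^{(1)}, \qquad (Y,F) \mapsto Y,
\]
records the choice of one of the two rulings of the exceptional quadric $E \cong \P^1 \times \P^1$ of $\Bl_{y_0}(Y)$; since the relative Picard sheaf of $E$ is \'etale-locally constant and the two rulings form an \'etale double cover, this morphism is finite \'etale of degree $2$, and smoothness of $\tMF_{\sY_1}$ is immediate from smoothness of $\bMF_{\sY_1}^{(1)}$ (Theorem~\ref{thm:mf-x-y}). Connectedness is the second, more delicate, obstacle: it amounts to showing that this degree-$2$ cover is nontrivial, i.e.\ that monodromy interchanges the two rulings. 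I would deduce this by comparison with the ruling double cover over the moduli of smooth quadric surfaces: the assignment sending $[Y]$ to the projectivized tangent quadric at its node defines a morphism $\bMF_{\sY_1}^{(1)} \to \MQ_2$ along which $\tMF_{\sY_1}$ is the pullback of $\MQS_2 \to \MQ_2$, the latter being a connected (nontrivial) double cover by Lemma~\ref{lem:mq-mqs}. It then suffices to check that the ruling monodromy of a smooth quadric surface is realized inside a family of $1$-nodal degree-$1$ del Pezzo threefolds --- for instance by exhibiting a single pencil of such threefolds whose tangent quadrics have nontrivial ruling monodromy --- which, together with connectedness of $\bMF_{\sY_1}^{(1)}$, forces $\tMF_{\sY_1}$ to be connected.
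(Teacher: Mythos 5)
Your strategy is sound, and for $d \ge 3$ it coincides with the paper's: $\tMF_{\sY_d}$ is the relative Hilbert scheme over the irreducible stack $\MF_{\sY_d}$, smoothness comes from $\rH^1(C,\cN_{C/Y})=0$, and connectedness from connectedness of the fibres $\rF^\circ_{d-1}(Y)$. The two places where you have correctly isolated the difficulty but not actually resolved it are the ones below.

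For $d=2$ at a ramification line, the kernel of the obstruction map $T_{[Y]}\MF_{\sY_2}\to\rH^1(C,\cN_{C/Y})\cong\kk$ consists of first-order deformations of $Y$ along which $C$ deforms \emph{as a curve in $Y$}, not necessarily as a ramification line. Your argument shows that a first-order deformation of the branch quartic $B$ can destroy the line $\bar C\subset B$, i.e.\ that $T_{[B]}\to\rH^1(\bar C,\cN_{\bar C/B})\cong\kk$ is onto; but a priori $C$ could still deform inside the deformed $Y$ to a non-ramification line, so the two obstruction problems must be compared. Concretely, one needs that the isomorphism $\rH^1(\cN_{C/R})\xrightarrow{\ \sim\ }\rH^1(\cN_{C/Y})$ coming from the sequence $0\to\cN_{C/R}\to\cN_{C/Y}\to\cO_C(2)\to 0$ (with $R\cong B$ the ramification divisor) intertwines the two obstruction maps; you assert this identification ("the cokernel in question measures the failure of $\bar C$ to remain on the deforming quartic") rather than prove it. The paper avoids the issue entirely by parameterizing the incidence the other way round: pairs $(C,Y)$ form an open subscheme of a projective bundle over the smooth connected space $\rF_1^\circ(\P(1,1,1,1,2))$ of degree-one curves avoiding the vertex, itself a projective bundle over $\Gr(2,4)$, so no deformation theory at ramification lines is needed.

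For $d=1$ your reduction to nontriviality of the ruling double cover, via pullback of $\MQS_2\to\MQ_2$ along the tangent-quadric map, is exactly the paper's diagram~\eqref{eq:mfy1-covering}, but you feed in two facts that Theorem~\ref{thm:mf-x-y} does not supply and that you do not prove. First, connectedness of $\bMF_{\sY_1}^{(1)}$: the theorem gives smoothness of this stratum and irreducibility of $\bMF_{\sY_1}$, but a boundary divisor of an irreducible stack may well be disconnected; the paper proves connectedness separately by realizing $\bMF_{\sY_1}^{(1)}$ as a quotient of the (connected) Hilbert scheme of pairs $y_0\in Y$ for sextics in $\P(1,1,1,2,3)$ with $y_0\notin\P(2,3)$. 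Second, the family of $1$-nodal degree-$1$ del Pezzo threefolds with nontrivial ruling monodromy is only postulated ("for instance by exhibiting a single pencil"); the paper actually constructs one, by fixing a general $y_0$, taking a family of quadrics in $\P(\rT_{y_0}\P(1,1,1,2,3))$ with nontrivial monodromy, and fibering the threefolds with prescribed tangent cone over it. Both gaps are fillable, but as written the $d=2$ ramification-line case and the $d=1$ connectedness are incomplete.
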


\begin{proof}
For~$d \ge 3$ the stack~$\tMF_{\sY_d}$ is an open substack 
in the relative Hilbert scheme of rational curves of degree~$d - 1$ 
over the stack~$\MF_{\sY_d}$ of del Pezzo threefolds of degree~$d$.
Its connectedness and smoothness follow from Theorem~\ref{thm:mf-x-y} and Lemma~\ref{lem:hilbert-smooth}.

Let~$d = 2$. 
Since Lemma~\ref{lem:hilbert-smooth} fails for ramification lines, see Remark~\ref{rem:f1-sing}, we argue differently.
Every smooth del Pezzo threefold~$Y$ of degree~$2$ is a quartic hypersurface
in the weighted projective space~$\P(1,1,1,1,2)$ (see, e.g., \cite[Theorem~1.2(ii)]{KP22}),
which can be thought of as a cone in~$\P^{10}$ over the second Veronese embedding~$\P^3 \hookrightarrow \P^9$.
The Hilbert scheme~$\rF_1^\circ(\P(1,1,1,1,2))$ of curves~\mbox{$C \subset \P(1,1,1,1,2)$} of degree~$1$ 
with respect to the generator of the class group~$\Cl(\P(1,1,1,1,2))$ 
(i.e., of degree~$2$ in the ambient~$\P^{10}$)
not passing through the singular point of the cone
has the following structure:
the projection from the vertex of the cone maps any such curve~$C$ isomorphically onto a line in~$\P^3$
(which under the second Veronese embedding becomes a conic in~$\P^9$),
the preimage of this line in~$\P(1,1,1,1,2)$ is~$\P(1,1,2)$, i.e., a quadratic cone in~$\P^3 \subset \P^{10}$,
and the curve~$C$ is a hyperplane section of this quadratic cone. 
Thus, the Hilbert scheme~$\rF_1^\circ(\P(1,1,1,1,2))$ is an open subscheme in a projective space bundle over~$\Gr(2,4)$, 
hence it is smooth and connected.
Furthermore, the Hilbert scheme of pairs~$C \subset Y$ is an open subscheme 
in a projective space bundle over~$\rF_1^\circ(\P(1,1,1,1,2))$, hence it is also smooth and connected.
Finally, the moduli stack~$\tMF_{\sY_2}$ is the quotient stack of this smooth and connected scheme
by the automorphism group of the weighted projective space,
hence it is smooth and connected.

Finally, let~$d = 1$.
Every $1$-nodal del Pezzo threefold~$Y$ of degree~$1$ 
is a sextic hypersurface in the weighted projective space~$\P(1,1,1,2,3)$ (see, e.g., \cite[Theorem~1.2(i)]{KP22})
and the position of the node~$y_0 \in Y$ is constrained to the complement of the line~$\P(2,3) \subset \P(1,1,1,2,3)$,
(see, e.g., \cite[Proposition~2.2(ii)]{KP22}).
Therefore, the Hilbert scheme of pairs~$y_0 \in Y$ is an open subscheme
in a projective space bundle over~$\P(1,1,1,2,3) \setminus \P(2,3)$, hence it is smooth and connected.
The moduli stack~$\bMF_{\sY_1}^{(1)}$ of $1$-nodal del Pezzo threefolds of degree~$1$ 
is the quotient stack of this smooth and connected scheme 
by the automorphism group of the weighted projective space, hence it is smooth and connected.

Furthermore, since the spinor bundles on a smooth quadric surface are the line bundles associated with the rulings,
the definition of the stack~$\tMF_{\sY_1}$ implies that there is a Cartesian diagram
\begin{equation}
\label{eq:mfy1-covering}
\vcenter{\xymatrix{
\tMF_{\sY_1} \ar[r]^{\hat\upepsilon} \ar[d] &
\MQS_2 \ar[d]^{\xi_{\sQ}} 
\\
\bMF_{\sY_1}^{(1)} \ar[r]^{\upepsilon} &
\MQ_2, 
}}
\end{equation}
where~$\upepsilon$ takes a family~$\cY \to S$ of $1$-nodal del Pezzo threefolds
to the family of quadric surfaces~$E \to S$ where~$E$ is the exceptional divisor of~$\Bl_{y_0}(\cY)$,
and~$\xi_\sQ$ is the \'etale double covering defined in Lemma~\ref{lem:mq-mqs}.
It follows that the left vertical arrow in~\eqref{eq:mfy1-covering} is finite \'etale of degree~$2$,
hence the smoothness of~$\bMF_{\sY_1}^{(1)}$ proved above implies the smoothness of~$\tMF_{\sY_1}$.
It also follows that the morphism~$\hat\upepsilon \colon \tMF_{\sY_1} \to \MQS_2$ 
is a quotient of an locally trivial fibration with smooth conected fibers by an algebraic group, 
and since~$\MQS_2$ is connected by Lemma~\ref{lem:mq-mqs}, the same is true for~$\tMF_{\sY_1}$.
\end{proof}

Next, we discuss a universal version of the bridge construction from~\S\ref{sec:bridge} and its inverse.
To explain it we need universal versions of the line bundles~$\cO_Y(H)$ and~$\cO_E(F)$ and the Mukai bundle~$\cU_X$ used in~\S\ref{sec:bridge}.
A convenient notion to use here is that of a vector bundle on a family of varieties
twisted by a Brauer class pulled back from the base of the family, see~\cite[\S2.2]{K22} for a discussion of this notion.

Note that if~$\cX \to S$ is a flat morphism of schemes, $\upbeta \in \Br(S)$ is a Brauer class,
and~$\cE \in \Coh(\cX,\upbeta)$ is a $\upbeta$-twisted vector bundle on~$\cX$, 
then there is an \'etale covering~$\{S_i\} \to S$ on which~$\upbeta$ becomes trivial,
hence the pullbacks of~$\cE$ to~$\cX \times_S S_i$ are untwisted vector bundles~$\cE_i$ 
such that for all~$i,j$ the pullbacks of~$\cE_i$ and~$\cE_j$ to~$\cX \times_S S_i \times_S S_j$ are isomorphic.
Therefore, the collection~$\{\cE_i\}$ defines a global section of the \'etale sheaf~$\VB_{\cX/S}$.
If this is the case we will say that the twisted bundle~$\cE$ 
{\sf represents} the corresponding global section of~$\VB_{\cX/S}$, which we will abusively also denote~$\cE$.

In the next lemma we show that some global sections of~$\VB$ can be represented by twisted sheaves.

\begin{lemma}
\label{lem:twisted-universal}
Let~$d \in \{1,2,3,4,5\}$ and let~$S$ be a $\kk$-scheme.
\begin{alenumerate}
\item 
\label{it:beta-h}
For any $S$-point~$\cY \to S$ of~$\bMF_{\sY_d}$
there is a $2$-torsion Brauer class~\mbox{$\upbeta_\cH \in \Br(S)$} 
and a $\upbeta_\cH$-twisted line bundle~$\cO_\cY(\cH) \in \Coh(\cY, \upbeta_\cH)$
such that for any geometric point~$s \in S$ the restriction~$\cO_\cY(\cH)\vert_{\cY_s}$ 
is the ample generator of~$\Pic(\cY_s)$.

\item 
\label{it:beta-f}
For any $S$-point~$(\cY \to S,\cC)$ or~$(\cY \to S, \cF)$ of~$\tMF_{\sY_d}$
there is a $2$-torsion Brauer class~\mbox{$\upbeta_\cF \in \Br(S)$} 
and a $\upbeta_\cF$-twisted line bundle~$\cO_E(\cF) \in \Coh(E, \upbeta_\cF)$
on the exceptional divisor~$E$ of~$\tcY = \Bl_\cC(\cY)$ or~$\tcY = \Bl_{y_0}(\cY)$
such that for any geometric point~$s \in S$ the restriction~$\cO_E(\cF)\vert_{E_s}$ 
is the line bundle corresponding to the class of a fiber of the map~$E_s \to C_s$ 
or to the chosen ruling of~$E_s$.

\item
\label{it:beta-u}
For any $S$-point~$(\cX \to S,\cU_\cX)$ of~$\bMFM_{\sX_{2d+2}}$
there is a $2$-torsion Brauer class~\mbox{$\upbeta_\cU \in \Br(S)$} 
and a $\upbeta_\cU$-twisted vector bundle~$\cU_\cX \in \Coh(\cX, \upbeta_\cU)$ 
that represents the global section~$\cU_\cX \in \VB_{\cX/S}(S)$.
\end{alenumerate}
\end{lemma}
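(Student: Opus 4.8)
The plan is to handle all three parts through a single mechanism. For a proper flat morphism $f\colon\cZ\to S$ with $f_*\cO_\cZ=\cO_S$ (universally), the Leray spectral sequence for $\Gm$ on the \'etale site yields an exact sequence
\[
\Pic(\cZ)\longrightarrow \Pic_{\cZ/S}(S)\xrightarrow{\ \partial\ }\Br(S),
\]
where $\Pic_{\cZ/S}=\rR^1 f_*\Gm$ is the relative Picard sheaf and $\partial(\lambda)$ is the obstruction to lifting a section $\lambda$ to an honest line bundle on $\cZ$. By the standard theory of twisted sheaves (\cite[\S2.2]{K22}) the gluing data defining such a $\lambda$ assemble into a $\partial(\lambda)$-twisted line bundle representing $\lambda$. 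So in each part I would first exhibit the relevant class as a section of a relative Picard sheaf (or, in~\ref{it:beta-u}, of $\VB_{\cZ/S}$), define the Brauer class as its obstruction, and then prove $2$-torsion by producing an honest even multiple. The hypothesis $f_*\cO_\cZ=\cO_S$ holds in every case here because the fibers are Fano threefolds, rational surfaces, or rational curves, for which $\rH^0(\cO)=\kk$ and, for threefolds, $\rH^1(\cO)=0$; this vanishing also makes $\bPic_{\cZ/S}$ \'etale, so the canonical section below is well defined.

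For~\ref{it:beta-h} I would take $\cH\in\Pic_{\cY/S}(S)$ to be the section whose value at each geometric point $s$ is the ample generator of $\Pic(\cY_s)$ (well defined because the fibers are locally factorial of Picard rank $1$ by Remark~\ref{rem:factorial} and $\bPic_{\cY/S}$ is \'etale), set $\upbeta_\cH\coloneqq\partial(\cH)$, and let $\cO_\cY(\cH)$ be the associated twisted line bundle. Since $2\cH=-K_{\cY/S}$ and the relative anticanonical sheaf $\omega_{\cY/S}^{-1}$ is an honest line bundle on $\cY$ (the family is Gorenstein), we get $\partial(2\cH)=0$, i.e.\ $2\upbeta_\cH=0$. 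Part~\ref{it:beta-u} is the rank-$2$ analogue: the section $\cU_\cX\in\VB_{\cX/S}(S)$ is given \'etale-locally by honest rank-$2$ bundles with isomorphic pullbacks, so the scalar ambiguity cancels in $\cEnd(\cU_\cX)$ and the latter descends to an honest Azumaya algebra $\cA$ of degree $2$ on $\cX$ whose Brauer class is fibrewise trivial, hence pulled back from a class $\upbeta_\cU\in\Br(S)$. An Azumaya algebra of degree $2$ has $2$-torsion class, so $2\upbeta_\cU=0$, and $\cU_\cX$, viewed as an $\cA$-module, is exactly a $\upbeta_\cU$-twisted vector bundle representing the given section.

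The genuine work is in part~\ref{it:beta-f}, which is where I expect the main obstacle: a ruling fibrewise generates a $\ZZ$-summand but need not extend to an honest line bundle, and its obstruction is a priori an arbitrary Brauer class. The plan is to realise $\cO_E(\cF)$ as the pullback of a fibrewise degree-$1$ line bundle from a family of genus-$0$ curves. For $d\ge 2$ one has $E=\P_\cC(\cN_{\cC/\cY})$ and $\cO_E(\cF)=p^*\cL$ with $p\colon E\to\cC$ and $\cL$ of fibrewise degree $1$ on the conic bundle $\cC\to S$; for $d=1$ the chosen ruling (available precisely because we work over $\tMF_{\sY_1}$, which by Lemma~\ref{lem:mfc-smooth-connected} is the double cover trivialising the ruling-swap monodromy) defines a Severi--Brauer projection $E\to\cP$ with $\cP\to S$ a genus-$0$ curve and $\cO_E(\cF)=p^*\cL$ again. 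In either case I set $\upbeta_\cF\coloneqq\partial(\cL)$; the twisted representative is the pullback of the twisted degree-$1$ line bundle. For $2$-torsion I would compare $\cL^{\otimes 2}$ with the honest relative anticanonical $\omega_{\cC/S}^{-1}$ (resp.\ $\omega_{\cP/S}^{-1}$): both have fibrewise degree $2$, so their difference is fibrewise trivial, hence pulled back from $S$, giving $\partial(\cL^{\otimes 2})=0$ and $2\upbeta_\cF=0$.

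Thus the main obstacle is the bookkeeping in~\ref{it:beta-f}: verifying that the ruling descends to a genuine Severi--Brauer projection over $S$ (using the monodromy analysis behind $\tMF_{\sY_1}\to\bMF_{\sY_1}^{(1)}$ for $d=1$ and the structure $E=\P_\cC(\cN_{\cC/\cY})$ for $d\ge 2$), and checking that the obstruction attached to $\cO_E(\cF)$ on $E/S$ coincides with the one attached to $\cL$ on the base of the projection, so that the $2$-torsion computation transfers correctly. Everything else reduces to the twisted-sheaf formalism above together with the Gorenstein property of the families.
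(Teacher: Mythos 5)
Your proposal is correct and follows essentially the same route as the paper: in each part one exhibits the class as a global section of the relative Picard sheaf (or of $\VB_{\cX/S}$), takes the obstruction to lifting it to an honest (vector) bundle as the Brauer class on~$S$ via the mechanism of~\cite[Lemma~2.11]{K22}, and kills $2\upbeta$ by exhibiting an honest even multiple (the relative anticanonical class, resp.\ the determinant/endomorphism algebra of a rank-$2$ twisted bundle). The one place you deviate is part~\ref{it:beta-f} for~$d=1$: the paper applies the twisted-line-bundle construction directly to the section~$\cF \in \Pic_{E/S}(S)$ that is already part of the data of a point of~$\tMF_{\sY_1}$ (with $2$-torsion following from the determinant of the rank-$2$ twisted pushforward of~$\cO_E(\cF)$ to~$S$), so the Severi--Brauer projection~$E \to \cP$ that you single out as the main obstacle is a valid but unnecessary detour.
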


\begin{proof}
We use~\cite[Lemma~2.11]{K22} which is stated for smooth morphisms and line bundles,
but the second part of this lemma that we are applying here does not use the smoothness assumption,
and besides, the same argument works for any global section of~$\VB$ whose fibers are \emph{simple} sheaves.

\ref{it:beta-h}
If~$\cY \to S$ is an $S$-point of~$\bMF_{\sY_d}$, 
the \'etale Picard sheaf~$\Pic_{\cY/S}$ is constant of rank~$1$ by Corollary~\ref{cor:pic-constant},
hence there is a global section~$\cH \in \Pic_{\cY/S}(S)$ 
that restricts to the ample generator of the Picard group of each geometric fiber.
By~\cite[Lemma~2.11]{K22} it gives rise to a Brauer class~$\upbeta_\cH$ and a twisted line bundle as required.
The index of~$\cY/S$ is~$2$, hence~$\upbeta_\cH$ is $2$-torsion, see~\cite[Corollary~2.17]{K22}.

\ref{it:beta-f}
If~$(\cY \to S,\cC)$ is an $S$-point of~$\tMF_{\sY_d}$ for $d \ge 2$, then~$\cC \to S$ is a $\P^1$-bundle.
The argument of part~\ref{it:beta-h} allows us to construct a $2$-torsion Brauer class on~$S$
and a twisted line bundle on~$\cC$ that restricts to the ample generator of the Picard group of each geometric fiber;
its pullback along~$E \to \cC$
is then the required twisted line bundle on~$E$.

Similarly, if~$(\cY \to S,\cF)$ is an $S$-point of~$\tMF_{\sY_1}$ 
we just apply~\cite[Lemma~2.11]{K22} to~$\cF \in \Pic_{E/S}(S)$.

\ref{it:beta-u}
Let~$(\cX \to S,\cU_\cX)$ be an $S$-point of~$\bMFM_{\sX_{2d+2}}$.
Then arguing as in the second part of the proof of~\cite[Lemma~2.11]{K22} 
and using exceptionality of~$\cU_\cX$ on the fibers of~$\cX/S$
we construct the Brauer class~$\upbeta_\cU$ and a $\upbeta_\cU$-twisted vector bundle on~$\cX$ representing~$\cU_\cX$.
The class~$\upbeta_\cU$ is $2$-torsion, because~$\wedge^2\cU_\cX$ 
is, on the one hand, $\upbeta_\cU^2$-twisted (by~\cite[Lemma~2.13]{K22}),
and on the other hand, it is untwisted, because it is isomorphic to the canonical line bundle on each fiber.
\end{proof}

\begin{remark}
\label{rem:brauer-vanishing}
Using the argument of~\cite[Corollary~2.17]{K22} it is easy to check that
\begin{equation*}
\upbeta_\cH = 1\quad\text{when~$d \in \{1,3,5\}$},
\qquad\text{and}\qquad
\upbeta_\cU = 1\quad\text{when~$d \in \{2,4\}$, i.e., $g \in \{6,10\}$}.
\end{equation*}
Indeed, the pushforward of~$\cO_\cY(\cH)$ to~$S$ is a $\upbeta_\cH$-twisted vector bundle of rank~$d+2$,
hence its top wedge power is a $\upbeta_\cH^{d+2}$-twisted line bundle on~$S$, hence~$\upbeta_\cH^{d+2} = 1$.
Since~$d+2$ is odd for~$d \in \{1,3,5\}$ and~$\upbeta_\cH$ is $2$-torsion, we conclude that~$\upbeta_\cH = 1$.
Similarly, the pushforward of~$\cU_\cX^\vee$ to~$S$ is a $\upbeta_\cU$-twisted vector bundle of rank~$d+3$ 
(this follows from the Hilbert polynomial computation in Lemma~\ref{lem:mukai-bundles}),
and the same argument applies.
It is also easy to see that~$\upbeta_\cF = \upbeta_\cH$ (hence~$\upbeta_\cF \cdot \upbeta_\cH = 1$) for~$d \in \{2,4\}$.
\end{remark}

Now we explain the relative version of the bridge construction.

\begin{lemma}
\label{lem:mu}
For each~$d \in \{1,2,3,4,5\}$ there is a morphism of stacks
\begin{equation}
\label{eq:upmu}
\upmu \colon \tMF_{\sY_d} \to \bMFM_{\sX_{2d + 2}}^{(1)}
\end{equation}
which takes a geometric point~$(Y,C)$ or~$(Y,F)$ of~$\tMF_{\sY_d}$ to the Fano--Mukai pair~$(X,\cU_X)$,
where~$X$ is constructed in Proposition~\textup{\ref{prop:x-y}} 
and~$\cU_X$ is constructed in Proposition~\textup{\ref{prop:mutations-1}}.
\end{lemma}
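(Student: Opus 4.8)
The plan is to globalize the pointwise bridge construction of Propositions~\ref{prop:x-y} and~\ref{prop:mutations-1} over an arbitrary base, and to verify that each step commutes with base change, so that the assignment $(Y,C) \mapsto (X,\cU_X)$ (or $(Y,F) \mapsto (X,\cU_X)$) upgrades to a morphism of fibered categories, hence of stacks. Throughout I would work with the twisted line bundles and twisted Mukai bundle supplied by Lemma~\ref{lem:twisted-universal}, so that the output lands in $\VB_{\cX/S}(S)$ as required by Definition~\ref{def:mfm}.

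First I would produce the underlying family of Fano threefolds. Given an $S$-point $(\cY \to S, \cC)$ of $\tMF_{\sY_d}$ with $d \ge 2$, I form the relative blowup $\tcY := \Bl_\cC(\cY) \to S$ (for $d = 1$, the blowup of the section of nodes $y_0 \subset \cY$). Since $\cC \to S$ is a relative smooth rational curve embedded as a relative local complete intersection inside the smooth family $\cY \to S$, this blowup is flat and compatible with base change. The relative anticanonical bundle $\omega_{\tcY/S}^{-1}$ is canonically defined and restricts to $\cO_{\tY}(2H - E)$ on fibers, where it is nef and big and contracts the relative version $\tcL$ of the curve from~\eqref{eq:def-tl}. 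I would then set $\cX := \Proj_S\bigl(\bigoplus_{n \ge 0} f_* \omega_{\tcY/S}^{-n}\bigr)$, with $f \colon \tcY \to S$. Kawamata--Viehweg vanishing applied fiberwise gives $R^i f_* \omega_{\tcY/S}^{-n} = 0$ for $i > 0$ and $n \ge 1$, so each $f_* \omega_{\tcY/S}^{-n}$ is locally free with base-change-compatible formation; consequently $\cX \to S$ is flat with locally constant Hilbert polynomial, and its geometric fibers are exactly the anticanonical models $X_s = \Bl_{\cC_s}(\cY_s)_\can$. By Proposition~\ref{prop:x-y} each fiber is a $1$-nodal prime Fano threefold of genus $2d+2$, so $\cX \to S$ is an $S$-point of $\bMF_{\sX_{2d+2}}^{(1)}$, with the contracted relative curve $\tcL$ mapping to a relative node finite of degree $1$ over $S$.

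Next I would construct the relative Mukai bundle. Using the twisted bundles $\cO_{\cY}(\cH)$ and $\cO_E(\cF)$ of Lemma~\ref{lem:twisted-universal} with matched Brauer classes, the relative analogue of~\eqref{eq:def-cu} produces a $\upbeta_\cU$-twisted rank-$2$ bundle $\cU_{\tcY}^\vee$ on $\tcY$. Applying $\pi_*$ and invoking the fiberwise content of Proposition~\ref{prop:mutations-1}, together with cohomology-and-base-change (so that $\pi_*\cU_{\tcY}^\vee$ is fiberwise the dual Mukai bundle and its formation commutes with base change), I obtain a $\upbeta_\cU$-twisted bundle $\cU_\cX$ on $\cX$ whose restriction to every geometric fiber is a Mukai bundle. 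By Lemma~\ref{lem:twisted-universal}\ref{it:beta-u} this twisted bundle represents a well-defined section $\cU_\cX \in \VB_{\cX/S}(S)$, whether or not $\upbeta_\cU$ vanishes. Thus $(\cX \to S, \cU_\cX)$ is an $S$-point of $\bMFM_{\sX_{2d+2}}^{(1)}$ that recovers the pair of Propositions~\ref{prop:x-y} and~\ref{prop:mutations-1} on geometric points.

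Finally, to conclude that $\upmu$ is a morphism of stacks I would record that every construction above --- the relative blowup, the relative $\Proj$ forming $\cX$, the twisted bundle $\cU_{\tcY}^\vee$, and its pushforward --- commutes with an arbitrary base change $S' \to S$; this is exactly the local freeness and base-change compatibility established along the way, and it furnishes the compatibility with the Cartesian-square morphisms of Definitions~\ref{def:mf} and~\ref{def:mfm}. The main obstacle is precisely this base-change compatibility of the relative anticanonical model and of $\pi_*\cU_{\tcY}^\vee$: one must guarantee uniform fiberwise cohomology vanishing (so that all relevant direct images are locally free and stable under base change) and that the relative $\Proj$ has the expected flat, $1$-nodal fibers over a possibly non-reduced base. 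The Brauer-class bookkeeping needed to pass from the twisted bundle $\cU_\cX$ to a genuine section of $\VB_{\cX/S}$ is a secondary but essential point, entirely handled by Lemma~\ref{lem:twisted-universal}.
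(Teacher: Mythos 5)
Your proposal is correct and follows essentially the same route as the paper: relative blowup, relative anticanonical model, the relative version of~\eqref{eq:def-cu} pushed forward along~$\pi$, and the twisted-sheaf bookkeeping of Lemma~\ref{lem:twisted-universal}. The only cosmetic difference is that you form~$\cX$ as the relative $\Proj$ of the anticanonical section ring with Kawamata--Viehweg vanishing, whereas the paper takes the Stein factorization of the morphism to~$\P_S(\cV)$ defined by the relatively globally generated bundle~$\cO_\tcY(2\cH-E)$ (relying on Proposition~\ref{prop:x-y}\ref{item:map-to-X} for the fiberwise statements); these produce the same family.
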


\begin{proof}
First, assume~$d \ge 2$.
Let~$(g \colon \cY \to S, \cC)$ be an $S$-point of~$\tMF_{\sY_d}$.
Then~$\tcY \coloneqq \Bl_\cC(\cY) \xrightarrow{\ \sigma\ } \cY$ is a family of smooth threefolds over~$S$.
Let~$\tg \coloneqq g \circ \sigma \colon \tcY \to S$,
and let~$\cO_\cY(\cH)$ and~$\cO_E(\cF)$
be the twisted line bundles constructed in Lemma~\ref{lem:twisted-universal},
where~$E$ is the exceptional divisor of the blowup~$\sigma$.
By Proposition~\ref{prop:x-y}\ref{item:map-to-X} the (untwisted, because~$\upbeta_\cH$ is $2$-torsion) 
line bundle~$\cO_\tcY(2\cH - E)$ is globally generated over~$S$.
Let~$\cV \coloneqq (\tg_*\cO_\tcY(2\cH - E))^\vee$.
Taking the Stein factorization 
\begin{equation*}
\tcY \xrightarrow{\ \pi\ } \cX \xrightarrow{\quad} \P_S(\cV)
\end{equation*}
of the morphism to~$\P_S(\cV)$ given by~$\cO_\tcY(2\cH - E)$ (a relative version of~\eqref{eq:stein}),
we obtain a family~$f \colon \cX \to S$ of $1$-nodal prime Fano threefolds of genus~$2d + 2$.
Furthermore, the relative version of~\eqref{eq:def-cu}
\begin{equation*}
0 \to \cU_\tcY^\vee \to \cO_\tcY(\cH) \otimes \tg^*\tg_*\cO_E(\cF) \to \cO_E(\cH + \cF) \to 0,
\end{equation*}
defines a $(\upbeta_\cF \cdot \upbeta_\cH)$-twisted vector bundle~$\cU_\tcY$ on~$\tcY$,
Proposition~\ref{prop:mutations-1} implies that~$\cU_\cX \coloneqq \pi_*\cU_\tcY$ 
is a~$(\upbeta_\cF \cdot \upbeta_\cH)$-twisted Mukai bundle on~$\cX$ such that~$\cU_\tcY \cong \pi^*\cU_\cX$,
so that~$(f \colon \cX \to S, \cU_\cX)$ is an $S$-point of~$\bMFM_{\sX_g}^{(1)}$.

Similarly, if~$d = 1$, let~$(g \colon \cY \to S, \cF)$ be an $S$-point of~$\tMF_{\sY_1}$
and let~$y_0$ be the nodal $S$-point of~$\cY/S$.
Then~$\tcY \coloneqq \Bl_{y_0}(\cY)$ is a family of smooth threefolds over~$S$ (see Lemma~\ref{lem:uniformly-1n}),
and as before we construct a family~\mbox{$f \colon \cX \to S$} of $1$-nodal prime Fano threefolds of genus~$4$
with a $(\upbeta_\cF \cdot \upbeta_\cH)$-twisted Mukai bundle~$\cU_\cX$ on~$\cX$ 
so that~$(f \colon \cX \to S, \cU_\cX)$ is an $S$-point of~$\bMFM_{\sX_4}^{(1)}$.

In both cases the construction is functorial in~$S$, hence defines a morphism of stacks.
\end{proof}

In the case~$d = 1$ we will need the following obvious observation.

\begin{lemma}
\label{lem:umpu-z2-equivariant}
There is a morphism of stacks~$\bar\upmu \colon \bMF_{\sY_1}^{(1)} \to \bMF_{\sX_4}^{(1)}$ and a Cartesian diagram 
\begin{equation}
\label{eq:mubar}
\vcenter{\xymatrix{
\tMF_{\sY_1} \ar[r]^-\upmu \ar[d] &
\bMFM_{\sX_4}^{(1)} \ar[d]^\xi
\\
\bMF_{\sY_1}^{(1)} \ar[r]^-{\bar\upmu} &
\bMF_{\sX_4}^{(1)},
}}
\end{equation}
where the left vertical arrow is the natural \'etale double cover 
and~$\xi$ is defined in Theorem~\textup{\ref{thm:mf-x-y}}.
\end{lemma}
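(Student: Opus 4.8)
The plan is to define $\bar\upmu$ by the relative bridge construction of Proposition~\ref{prop:x-y}, applied to a family $\cY\to S$ of $1$-nodal degree-$1$ del Pezzo threefolds \emph{without} recording a ruling: sending $\cY\to S$ to the anticanonical model $\cX=\Bl_{y_0}(\cY)_\can$ of the blowup of the nodal section produces an $S$-point of $\bMF_{\sX_4}^{(1)}$, functorially in $S$. Since the threefold $X$ underlying $\upmu(Y,F)$ in Lemma~\ref{lem:mu} is exactly this anticanonical model, and $\xi$ forgets the Mukai bundle while the left vertical map of~\eqref{eq:mubar} forgets the ruling $\cF$, commutativity of~\eqref{eq:mubar} is immediate from the two constructions.

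It then remains to prove that the induced comparison morphism $\nu\colon \tMF_{\sY_1}\to P:=\bMF_{\sY_1}^{(1)}\times_{\bMF_{\sX_4}^{(1)}}\bMFM_{\sX_4}^{(1)}$ over $\bMF_{\sY_1}^{(1)}$ is an isomorphism. First I would reduce to geometric fibers: by Theorem~\ref{thm:mf-x-y} the morphism $\xi$ is \'etale and separated, so $P\to\bMF_{\sY_1}^{(1)}$ is \'etale and separated; the left vertical map is finite \'etale of degree~$2$ by Lemma~\ref{lem:mfc-smooth-connected} (diagram~\eqref{eq:mfy1-covering}); hence $\nu$ is \'etale and, being proper over the connected base (source finite, target separated), is finite \'etale onto a union of connected components of $P$. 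Thus it suffices to show that $\nu$ is bijective on the fiber over one geometric point $[Y]$, where it becomes the map $\{F,F'\}\to\cMuk(X)$ from the two rulings of $E\cong\P^1\times\P^1$ to the Mukai bundles of $X=\bar\upmu(Y)$, given by $F\mapsto[\cU_X]$ as in Proposition~\ref{prop:mutations-1}.

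For injectivity I would recover the ruling from the bundle: pulling back along $\pi$ gives $\pi^*\cU_X\cong\cU_\tY$, and $\cO_E(\cF)$ is the cokernel of the canonical inclusion $\cU_\tY^\vee\hookrightarrow\cO_\tY(H)^{\oplus2}$ of~\eqref{eq:def-cu}, so $\cU_X$ determines $\cF$ and the two distinct rulings yield non-isomorphic Mukai bundles (cf.\ Remark~\ref{rem:ruling}); hence $\nu$ is a monomorphism on this fiber and $|\cMuk(X)|\ge 2$. For the reverse inequality I would use a smoothing $f\colon\cX\to B$ of $X$ as in Theorem~\ref{thm:cax-family}: its smooth fibers carry a Mukai bundle, so by Proposition~\ref{prop:mukai-bundles} their defining quadric is smooth and they have exactly two Mukai bundles; since $\xi$ is \'etale and separated, every point of the special fiber $\cMuk(X)$ is the limit of one of these two generic Mukai bundles, whence $|\cMuk(X)|\le 2$. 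Combining the two bounds gives $|\cMuk(X)|=2$ and bijectivity of $\nu$ on fibers, so $\nu$ is an isomorphism and~\eqref{eq:mubar} is Cartesian.

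The main obstacle is the fiberwise count of Mukai bundles. The lower bound is essentially formal once one checks that~\eqref{eq:def-cu} reconstructs the ruling, but the upper bound $|\cMuk(X)|\le 2$ is the delicate point: it relies on knowing that the smoothings produced in Theorem~\ref{thm:cax-family} are of the generic type of Proposition~\ref{prop:mukai-bundles} (smooth defining quadric, hence two Mukai bundles rather than none) and on a careful use of separatedness of $\xi$ to control the specialization of the \'etale multisection $\cMuk$ across the boundary. An alternative, perhaps cleaner, route would be to identify $\cMuk(X)$ directly with the two spinor bundles of the quadric surface $E$, matching the description of the left-hand cover in~\eqref{eq:mfy1-covering} through the covering $\xi_\sQ\colon\MQS_2\to\MQ_2$ of Lemma~\ref{lem:mq-mqs}.
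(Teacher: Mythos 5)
Your construction of $\bar\upmu$ and the commutativity of~\eqref{eq:mubar} coincide with the paper's argument: the family $\cX \to S$ produced in Lemma~\ref{lem:mu} for $d=1$ does not use $\cF$, so the construction descends to $\bMF_{\sY_1}^{(1)}$. For the Cartesian property the paper is far terser: it observes that both vertical arrows are \'etale and simply \emph{asserts} that $\upmu$ induces a bijection between the fiber $\{F,F'\}$ of the left column and the fiber $\cMuk(X)$ of $\xi$ over each geometric point, without carrying out the count. Your proposal actually supplies that count, and both halves are essentially sound. For injectivity, the missing verification is that $\Hom(\cU_{\tY}^\vee,\cO_{\tY}(H))$ is exactly $2$-dimensional (dualize~\eqref{eq:def-cu}, twist by $H$, and check that the relevant sheaf on $E\cong\P^1\times\P^1$ is $\cO_E(-2,-1)$, which has no cohomology); only then is the evaluation map $\cU_\tY^\vee\to\cO_\tY(H)^{\oplus 2}$ canonical and its cokernel $\cO_E(\rf)$ an invariant of $\cU_X$. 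For the bound $|\cMuk(X)|\le 2$, your appeal to a smoothing from Theorem~\ref{thm:cax-family} together with Proposition~\ref{prop:mukai-bundles} is correct, but note the division of labor: separatedness of $\cMuk(\cX/B)\to B$ only gives \emph{uniqueness} of limits of the two generic Mukai bundles; it is \'etaleness (flatness over the curve $B$) that forces every point of the closed fiber $\cMuk(X)$ to lie in the closure of the generic fiber, so that the two facts together give the upper bound.

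One small logical repair: reducing to bijectivity of $\nu$ on the fiber over a \emph{single} geometric point $[Y]$ is not quite enough, since the fiber product $P$ is not known to be connected a priori, and a finite \'etale map onto a union of connected components could miss components of $P$ lying over other points of $\bMF_{\sY_1}^{(1)}$. This is harmless here because both of your fiber arguments (recovering the ruling, and the specialization bound) are uniform in $[Y]$, so you should simply run them at every geometric point; then $\nu$ is surjective and of degree one everywhere, hence an isomorphism.
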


\begin{proof}
Indeed, in the case~$d = 1$ the construction of the family~$f \colon \cX \to S$ of prime Fano threefolds in Lemma~\ref{lem:mu}  
does not depend on the choice of~$\cF \in \Pic_{E/S}(S)$ (only the Mukai bundle does), 
therefore it defines a morphism~$\bar\upmu$ such that the diagram~\eqref{eq:mubar} commutes.

The diagram~\eqref{eq:mubar} induces a morphism
\begin{equation*}
\tMF_{\sY_1} \to \bMFM_{\sX_4}^{(1)} \times_{\bMF_{\sX_4}^{(1)}} \bMF_{\sY_1}^{(1)},
\end{equation*}
which is \'etale, because the vertical arrows in~\eqref{eq:mubar} are \'etale.
Now, the fibers of the left side over~$\bMF_{\sY_1}^{(1)}$ are 2-point sets, 
while the fibers of the right side over~$\bMF_{\sY_1}^{(1)}$ are at most 2-points sets 
(by separatedness of~$\xi$ and Proposition~\ref{prop:mukai-bundles}), 
hence the morphism is bijective, and therefore the diagram~\eqref{eq:mubar} is Cartesian.
\end{proof}

Recall from Lemma~\ref{lem:mfc-smooth-connected} that the stack~$\tMF_{\sY_d}$ is connected.
For each~$d \in \{1,2,3,4,5\}$ let
\begin{equation*}
\bMFM_{\sX_{2d+2},\sY_d}^{(1)} \subset \bMFM_{\sX_{2d+2}}^{(1)} \subset \bMFM_{\sX_{2d+2}}
\end{equation*}
be the connected component of the 1-nodal locus~$\bMFM_{\sX_{2d+2}}^{(1)}$ of the stack~$\bMFM_{\sX_{2d+2}}$
containing~$\upmu(\tMF_{\sY_d})$.

\begin{theorem}
\label{thm:tmfy-bmfx-d2}
For all~$d \in \{1,2,3,4,5\}$ the morphism~$\upmu$ defined in Lemma~\textup{\ref{lem:mu}} 
induces an isomorphism 
\begin{equation*}
\tMF_{\sY_d} \xrightiso{\ \upmu\ } \bMFM_{\sX_{2d+2},\sY_d}^{(1)}.
\end{equation*}
\end{theorem}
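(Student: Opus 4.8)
The plan is to prove that $\upmu$ is an \'etale monomorphism which is moreover proper, and therefore an isomorphism onto a connected component. Writing $\cZ \coloneqq \bMFM^{(1)}_{\sX_{2d+2},\sY_d}$, the source $\tMF_{\sY_d}$ is connected by Lemma~\ref{lem:mfc-smooth-connected} and $\upmu(\tMF_{\sY_d}) \subset \cZ$ by definition; once $\upmu$ is shown to have open-and-closed image in the connected stack $\cZ$, surjectivity onto $\cZ$ follows, and an \'etale monomorphism onto $\cZ$ is an isomorphism.

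First I would check that $\upmu$ is \'etale. Both $\tMF_{\sY_d}$ (Lemma~\ref{lem:mfc-smooth-connected}) and the $1$-nodal locus $\bMFM^{(1)}_{\sX_{2d+2}}$ (smooth by Theorem~\ref{thm:mf-x-y}) are smooth, so it suffices to show that $\upmu$ induces an isomorphism on tangent spaces at each geometric point $(Y,C)$ (resp.\ $(Y,F)$). By the bridge construction, first-order deformations of $(Y,C)$ are identified with deformations of $\tY = \Bl_C(Y)$ preserving the flopping curve $\tL$, and passing to the anticanonical model these match deformations of $X$ as a $1$-nodal Fano threefold; since $\cU_X$ is exceptional by Proposition~\ref{prop:mutations-1}, its deformations are unobstructed and rigid relative to $X$, so deformations of the pair $(X,\cU_X)$ agree with those of $X$ --- this is precisely the \'etaleness of $\xi$ from Theorem~\ref{thm:mf-x-y}. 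All the relevant deformation functors are unobstructed by~\cite{Na97}, and matching them gives the tangent-space isomorphism.

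Next I would prove that $\upmu$ is a monomorphism by reversing the bridge. Given $(X,\cU_X)$ in the image, blow up the node to obtain $\tX = \Bl_{x_0}(X)$; by Lemma~\ref{lem:Y-from-X} the nef cone of $\tX$ has exactly three extremal contractions, onto $Y$, $X$, and $\P^1$. The contraction to $X$ is the blow-down of the exceptional divisor $D$, and among the two remaining rays the divisorial one recovers $\sigma \colon \tY \to Y$, hence $Y$; the image $C = \sigma(E)$ of the $\sigma$-exceptional divisor recovers the curve when $d \ge 2$, while for $d = 1$ the node $y_0$ together with the ruling $F$ is read off from the restriction of $\cU_\tY \cong \pi^*\cU_X$ to $E$ via the defining sequence~\eqref{eq:def-cu} of Lemma~\ref{lemma:mutation-cu}. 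This reconstruction is canonical and functorial, so the fibre of $\upmu$ over $(X,\cU_X)$ is a single reduced point and automorphisms correspond on both sides; combined with \'etaleness, $\upmu$ is an open immersion.

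Finally I would establish properness of $\upmu$ through the valuative criterion, which --- given the reconstruction above --- is the crux of the argument. For a DVR $R$ with fraction field $K$ and a family $(X_R,\cU_R)$ of Fano--Mukai pairs in $\cZ$ over $\Spec R$ whose generic fibre lies in $\upmu(\tMF_{\sY_d})$, I would extend the del Pezzo data $(Y_K,C_K)$ across the special fibre. Since $X_R \to \Spec R$ is $1$-nodal the node defines a section, so $\tX_R \coloneqq \Bl_{x_0}(X_R)$ is smooth over $R$; the classes $H$, $2H-E$, $H-E-D$ extend as relative divisors because the relative Picard sheaf is locally constant (Corollary~\ref{cor:pic-constant}), and nefness being open-and-closed along $\cZ$, the divisorial contraction $\tX_R \to Y_R$ of Lemma~\ref{lem:Y-from-X} exists over all of $\Spec R$. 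This yields a family $Y_R$ of del Pezzo threefolds together with $C_R$ (or, for $d=1$, the ruling $F_R$ extracted from $\cU_R$ as above), i.e.\ the required $R$-point extending $(Y_K,C_K)$, with uniqueness from the reconstruction. Hence $\upmu$ is proper, its image is open and closed in the connected $\cZ$, and the open immersion $\upmu$ is an isomorphism onto $\cZ$; for $d=1$ one may alternatively deduce this from the Cartesian square of Lemma~\ref{lem:umpu-z2-equivariant}, reducing to the underlying threefolds and the \'etale double cover $\xi_\sQ$. The hard part is exactly this family-theoretic reverse construction --- ensuring that the del Pezzo structure and the auxiliary datum $C$ or $F$ extend canonically across degenerate fibres --- which rests on the nef-cone description of Lemma~\ref{lem:Y-from-X} holding uniformly over the whole component $\cZ$.
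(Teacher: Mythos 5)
Your packaging of the argument (\'etale monomorphism plus properness, hence an isomorphism onto an open-and-closed substack of the connected component) differs from the paper, which instead constructs a two-sided inverse to~$\upmu$ directly on smooth charts~$S \to \bMFM^{(1)}_{\sX_{2d+2},\sY_d}$. The technical engine is nevertheless the same: blow up the nodal section, show that~$\Pic_{\tcX/S}$ is \emph{constant} (the paper deduces this from the fact that the three nef-cone generators of Lemma~\ref{lem:Y-from-X} have pairwise non-isomorphic contraction targets, so the monodromy fixes each of them), and run the relative MMP to recover~$(\cY,\cC)$ or~$(\cY,\cF)$. Your valuative-criterion step is exactly this argument over a DVR, and your assertion that ``nefness is open-and-closed along~$\cZ$'' needs precisely that monodromy-triviality argument. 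Note that the paper's route avoids a separate deformation-theoretic proof of \'etaleness; in your write-up that step is only sketched (one must identify deformations of the pair~$(Y,C)$ with deformations of~$\tY$, then with locally trivial deformations of~$X$, using that~$\tL$ deforms uniquely with~$\tY$ and that~$\cU_X$ is rigid), and while nothing there is false, it is genuine unfinished work that the paper's strategy simply does not require.

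The genuine gap is in the case~$d=1$, at the final step of the reverse construction. After contracting~$D$ and then the divisor~$E'$, you assert that the result is ``a family~$Y_R$ of del Pezzo threefolds'' of the required type; but for~$d=1$ the classification of extremal contractions only tells you that each fibre of~$E'$ is a smooth quadric surface \emph{or a quadric cone}, and over the closed point of the DVR (equivalently, over a point of~$\cZ$ away from the locus you start from) a cone could a priori occur, in which case the contracted point of the special fibre of~$Y_R$ would not be a node and one would not obtain a point of~$\tMF_{\sY_1}$. Ruling this out is where the paper does real work: it identifies the double cover of the base classifying the rulings of~$E'$ both with the pullback of the discriminant cover of the family of quadrics (hence ramified exactly over the cone locus) and, via the Cartesian squares~\eqref{eq:mubar} and~\eqref{eq:mfy1-covering}, with a cover induced by the \'etale morphism~$\xi$ (hence everywhere unramified, because the Mukai bundle~$\cU_\cX$ exists over the whole base). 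You invoke Lemma~\ref{lem:umpu-z2-equivariant} only as an ``alternative'' way to conclude, not to exclude the cone degeneration, so as written your properness argument for~$d=1$ is incomplete; for~$d\ge 2$ your outline matches the paper's computation (the intersection numbers force the centre of~$\sigma$ to be a family of smooth rational curves of degree~$d-1$) and is sound.
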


\begin{proof}
We will construct a morphism~$\bMFM_{\sX_{2d+2},\sY_d}^{(1)} \to  \tMF_{\sY_d}$ inverse to~$\upmu$.

Let~$S \to \bMFM_{\sX_{2d+2},\sY_d}^{(1)}$ be a chart (i.e., a smooth morphism from a connected scheme~$S$)
meeting the image of~$\upmu$.
Since~$\bMFM_{\sX_{2d+2},\sY_d}^{(1)}$ is smooth by Theorem~\ref{thm:mf-x-y}, the scheme~$S$ is also smooth.
Let~$(\cX \to S,\cU_\cX)$ be the corresponding $1$-nodal Fano--Mukai pair. 
Let~$x_0$ be the nodal $S$-point of~$\cX$ and let
\begin{equation*}
\tcX \coloneqq \Bl_{x_0}(\cX)
\qquad\text{and}\qquad 
D \subset \tcX
\end{equation*}
be the blowup and its exceptional divisor.
For general~$s \in S$ we have~$\uprho(\tcX_s) = 3$ by Remark~\ref{rem:sarkisov}.
Therefore, by Corollary~\ref{cor:pic-constant} we have~$\uprho(\tcX_s) = 3$ for every~$s \in S$, 
hence~$\cX_s$ is nonfactorial for all~$s$.
Moreover, by~\cite[Theorem~1.1]{KP23} for each point~$s \in S$ the anticanonical class of~$\tcX_{s}$ is nef and big;
it is also not ample for general~$s$, hence not ample for all~$s$ by openness of ampleness.

Let~$\bar{\cX} \to S$ be the relative anticanonical model of~$\tcX \to S$.
For general~$s \in S$ it follows from~\cite[Theorem~1.4 and Remark~1.5]{KP23} 
that~$\bar{\cX}_s \subset Y \times \P^1$ is a complete intersection of ample divisors,
where~$Y$ is a del Pezzo threefold of degree~$d$,
and~$\Pic(\bar{\cX}_s)$ is generated by the pullbacks of the ample generators of~$\Pic(Y_d)$ and~$\Pic(\P^1)$.
By Corollary~\ref{cor:pic-constant} the \'etale Picard sheaf~$\Pic_{\bar{\cX}/S}$ is constant of rank~$2$ 
and has relatively nef generators~$H_1$ and~$H_2$ that by semicontinuity satisfy the inequalities
\begin{equation*}
\dim \rH^0(\bar{\cX}_s, \cO_{\bar{\cX}_s}(H_1)) \ge d + 2
\qquad\text{and}\qquad 
\dim \rH^0(\bar{\cX}_s, \cO_{\bar{\cX}_s}(H_2)) \ge 2
\end{equation*}
for all~$s \in S$.
Now applying~\cite[Theorem~1.4 and Remark~1.5]{KP23} we conclude that for each point~$s \in S$
the anticanonical model of~$\tcX_s$ is a divisor
\begin{equation*}
\bar{\cX}_s \subset \cY_s \times \P^1
\end{equation*}
of bidegree~$(1,1)$,
where~$\cY_s$ is a del Pezzo threefold of degree~$d$ (which is smooth if~$d \ge 2$ and has a single node or cusp if~$d = 1$)
and~$H_1$, $H_2$ are the pullbacks of the ample generators of the Picard groups of the factors.
The relative over~$S$ contraction defined by the class~$H_1$ is a morphism
\begin{equation*}
\bar\cX \to \cY,
\end{equation*}
where~$\cY \to S$ is a flat family of del Pezzo threefolds of degree~$d$.
If~$d \ge 2$ this family is smooth over~$S$
and if~$d = 1$ the singular locus of~$\cY$ over~$S$ is an $S$-point~$y_0$ of~$\cY$
such that~$\Bl_{y_0}(\cY)$ is smooth over~$S$ and its exceptional divisor~$E$ is a family of irreducible quadric surfaces over~$S$.

Assume~$d \ge 2$; then~$\cY \to S$ is an $S$-point of~$\MF_{\sY_d}$.
Moreover, in this case the exceptional divisor of the composition~$\tcX \to \bar\cX \to \cY$ has two irreducible components:
one of them is the exceptional divisor~$D$ and the other component~$E$ 
is contracted onto a family of smooth rational curves~$\cC \subset \cY$ of degree~$d - 1$.
Then~$(\cY \to S,\cC)$ is an $S$-point of the stack~$\tMF_{\sY_d}$, and it is clear that 
\begin{equation}
\label{eq:mu-d-big}
\upmu(\cY \to S, \cC) = (\cX \to S, \cU_\cX).
\end{equation}
The above construction is obviously functorial in~$S$, hence it gives a morphism of stacks which is right inverse to~$\upmu$.
Moreover, a simple verification shows that this map is also left inverse to~$\upmu$,
which completes the proof of the theorem in the case~$d \ge 2$.

Now assume~$d = 1$. 
Then the above argument gives a morphism
\begin{equation}
\bMF_{\sX_4,\sY_1}^{(1)} \to \bMF_{\sY_1}^{(1+)}
\end{equation}
from a component of the stack of 1-nodal Fano threefolds of genus~4
to the stack of 1-nodal or 1-cuspidal del Pezzo threefolds of degree~1 (whose definition is analogous to Definition~\ref{def:mf})
which is right inverse to the natural extension of the morphism~$\bar\upmu$ to the stack~$\bMF_{\sY_1}^{(1+)}$. 
Thus, these two stacks are isomorphic. 

Furthermore, the morphism~$\upmu$ is a lift of~$\bar\upmu$ to the natural double coverings
\begin{equation*}
\tMF_{\sY_1} \to \MF_{\sY_1}^{(1)},
\qquad\text{and}\qquad 
\bMFM_{\sX_4, \sY_1}^{(1)} \xrightarrow{\ \xi\ } \bMF_{\sX_4, \sY_1}^{(1)}
\end{equation*}
where the first is induced by~$\xi_\sQ$, see diagram~\eqref{eq:mfy1-covering},
and the second is the restriction of the morphism defined in Theorem~\ref{thm:mf-x-y}.
Since~$\xi$ is \'etale and~$\xi_\sQ$ is ramified over the substack~$\bMQ_2^{(1)} \subset \bMQ_2$ of nodal quadrics 
by construction (see Lemma~\ref{lem:mq-mqs}), it follows that the image of~$\bMFM_{\sX_4, \sY_1}^{(1)}$ under the composition
\begin{equation*}
\bMFM_{\sX_4, \sY_1}^{(1)} \xrightarrow{\ \xi\ } 
\bMF_{\sX_4, \sY_1}^{(1)} \xrightarrow{\ \bar\upmu^{-1}\ } 
\bMF_{\sY_1}^{(1+)} \xrightarrow{\ \upepsilon\ }
\bMQ_2
\end{equation*}
does not intersect the branch divisor~$\bMQ_2^{(1)} \subset \bMQ_2$ of~$\xi_\sQ$, 
hence the image of~$\bar\upmu^{-1} \circ \xi$ does not intersect the cuspidal locus in~$\bMF_{\sY_1}^{(1+)}$.
This proves that the restriction of~$\bar\upmu$ induces an isomorphism
\begin{equation*}
\bar\upmu \colon \bMF_{\sY_1}^{(1)} \xrightiso{} \xi(\bMFM_{\sX_4, \sY_1}^{(1)})
\end{equation*}
Finally, Lemma~\ref{lem:umpu-z2-equivariant} implies that~$\upmu$ 
induces an isomorphism~$\tMF_{\sY_1} \xrightiso{} \bMFM_{\sX_{4},\sY_1}^{(1)}$ over~$\bar\upmu$.
\end{proof}

In conclusion  we explain how one can construct semiorthogonal decompositions for families of Fano threefolds
more general than those considered in Theorem~\ref{thm:cax-family}.

On the one hand, for all~$d \in \{1,2,3,4,5\}$ and all $S$-points
\begin{equation*}
(f \colon \cX \to S,\cU_\cX) \in \bMFM_{\sX_{2d+2}}(S),
\qquad
(g \colon \cY \to S) \in \bMF_{\sY_d}(S),
\qquad\text{and}\qquad
(g \colon \cY \to S,\cF) \in \tMF_{\sY_1}(S)
\end{equation*}
the construction of Lemma~\ref{lem:twisted-universal} provides $2$-torsion Brauer classes on~$S$
and the corresponding twisted sheaves~$\cU_\cX$, $\cO_\cY(\cH)$, and~$\cO_E(\cF)$
on~$\cX$, $\cY$, and the exceptional divisor~$E$ of~$\tcY = \Bl_\cC(\cY)$ or~$\tcY = \Bl_{y_0}(\cY)$, respectively,
that form relative exceptional collections over~$S$ and induce semiorthogonal decompositions
\begin{align}
\label{eq:dbcx-i1}
\Db(\cX) &= \langle \cA_\cX, f^*\Db(S), f^*\Db(S, \upbeta_\cU) \otimes \cU_\cX^\vee \rangle,\\
\label{eq:dbcy-i2}
\Db(\cY) &= \langle \cB_\cY, g^*\Db(S), g^*\Db(S, \upbeta_\cH) \otimes \cO_\cY(\cH) \rangle,\\
\label{eq:dbtcy}
\Db(\tcY) &= \langle 
\tcB_\cY, 
\tg^*\Db(S), 
\tg^*\Db(S, \upbeta_\cH) \otimes \cO_\cY(\cH), 
\tg^*\Db(S) \otimes \cO_E, 
\tg^*\Db(S, \upbeta_\cF) \otimes \cO_E(\cF) 
\rangle.
\end{align}
Here~$\tg \colon \tcY \to S$ is the composition of the blowup morphism with~$g$
and for~$d \ge 2$ the category~$\tcB_\cY$ defined by~\eqref{eq:dbtcy} 
is equivalent to the category~$\cB_\cY$ defined by~\eqref{eq:dbcy-i2},
while for~$d = 1$ it provides a crepant categorical resolution of~$\cB_\cY$ as in Remark~\ref{rem:ccr}.

On the other hand, consider the open substack
\begin{equation}
\label{eq:bmfm-x-y}
\bMFM_{\sX_{2d + 2},\sY_d} \coloneqq 
\MFM_{\sX_{2d + 2}} \mathop{\sqcup} \bMFM_{\sX_{2d + 2},\sY_d}^{(1)} =
\MFM_{\sX_{2d + 2}} \mathop{\sqcup} \upmu(\tMF_{\sY_d}) \subset 
\bMFM_{\sX_{2d + 2}}
\end{equation}
that parameterizes Fano--Mukai pairs~$(X,\cU_X)$ such that either~$X$ is smooth 
or~$(X,\cU_X)$ is obtained by the bridge construction of~\S\ref{sec:bridge}.
Note that~$\bMFM_{\sX_{2d + 2},\sY_d}^{(1)} \subset \bMFM_{\sX_{2d + 2},\sY_d}$ is a smooth Cartier divisor.
Consider an $S$-point of~$\bMFM_{\sX_{2d + 2},\sY_d}$ \emph{transverse to the boundary},
i.e., an $S$-point~$(f \colon \cX \to S, \cU_\cX)$ such that the $1$-nodal locus
\begin{equation*}
S^{(1)} = S \times_{\bMFM_{\sX_{2d + 2},\sY_d}} \bMFM_{\sX_{2d + 2},\sY_d}^{(1)}
\end{equation*}
is a smooth Cartier divisor in~$S$.
By Theorem~\ref{thm:tmfy-bmfx-d2} there is an $S^{(1)}$-point 
$(\cY \to S^{(1)}, \cC)$ or~$(\cY \to S^{(1)}, \cF)$ of~$\tMF_{\sY_d}$
such that
\begin{equation*}
\cX^{(1)} \coloneqq \cX \times_S S^{(1)} \cong \tcY_\can,
\end{equation*}
where~$\tcY = \Bl_\cC(\cY)$ or~$\tcY = \Bl_{y_0}(\cY)$, and
\begin{equation*}
\pi \colon \tcY \to \tcY_\can = \cX^{(1)}
\end{equation*}
is a small birational contraction.
Consider the twisted sheaf
\begin{equation*}
\rP \coloneqq \pi_*\cO_\tcY(E - \cH) \in \Db(\cX^{(1)}, \upbeta_\cH),
\end{equation*}
this is a relative twisted version of the $\Pinfty2$-object constructed in~\eqref{eq:def-rpx}.
Let~$\io \colon \cX^{(1)} \hookrightarrow \cX$ be the embedding induced by the embedding~$S^{(1)} \hookrightarrow S$.
One can prove that the $S$-linear Fourier--Mukai functor
\begin{equation*}
\Phi_\rP \colon \Db(S^{(1)}, \upbeta_\cH) \to \Db(\cX),
\qquad
\Phi_\rP(-) = \io_*((f\vert_{\cX^{(1)}})^*(-) \otimes \rP)
\end{equation*}
is fully faithful and there is an $S$-linear semiorthogonal decomposition
\begin{equation}
\label{eq:dbcx-bca}
\Db(\cX) = 
\Big\langle 
\Phi_\rP(\Db(S^{(1)}, \upbeta_\cH)), 
\bcA_\cX, 
f^*\Db(S), 
f^*\Db(S, \upbeta_\cU) \otimes \cU_\cX^\vee 
\Big\rangle
\end{equation}
refining~\eqref{eq:dbcx-i1},
where~$\bcA_\cX$, defined as the orthogonal complement in~$\cA_\cX$ of the image of~$\Phi_\rP$,
is smooth and proper over~$S$.
Finally, setting~$S^{(0)} \coloneqq S \setminus S^{(1)}$ and~$\cX^{(0)} \coloneqq \cX \times_S S^{(0)}$,
one can check that
\begin{equation}
\label{eq:bca-general}
(\bcA_\cX)_{S^{(0)}} \simeq \cA_{\cX^{(0)}}
\qquad\text{and}\qquad
(\bcA_\cX)_{S^{(1)}} \simeq 
\begin{cases}
\cB_{\cY}, & \text{if~$d \ge 2$},\\
\tcB_{\cY}, & \text{if~$d = 1$},
\end{cases}
\end{equation}
where the categories in the right-hand sides are defined in~\eqref{eq:dbcx-i1}, \eqref{eq:dbcy-i2},
and~\eqref{eq:dbtcy}, respectively.

One way to prove~\eqref{eq:dbcx-bca} and~\eqref{eq:bca-general}
is by extending our results from~\cite{KS22:abs} about~$\Pinfty2$-objects to the relative setting.
Another way is to bootstrap from Theorem~\ref{thm:cax-family} by base change to appropriate curves~\mbox{$B \subset S$}
and the technique developed in~\cite{K06}.
We leave the details to the interested reader.

%%%%%%%%%%%%%%%%%%%%%%%%%%%%%%%%%%%%%%%%%%%%%%%%%%%%%%%%%%%%%%%%%%%
%%%%%%%%%%%%%%%%%%%%%%%%%%%%%%%%%%%%%%%%%%%%%%%%%%%%%%%%%%%%%%%%%%%
%%%%%%%%%%%%%%%%%%%%%%%%%%%%%%%%%%%%%%%%%%%%%%%%%%%%%%%%%%%%%%%%%%%

\appendix

\section{Nodal varieties}
\label{sec:nodal}

In this appendix we discuss geometry of nodal varieties and of families of nodal varieties.
We assume that the base field~$\kk$ is algebraically closed of characteristic zero.
Recall that a scheme~$X$ of dimension~$n$ {\sf has hypersurface singularities} 
if it is locally isomorphic to a hypersurface~$\{\varphi = 0\}$ in~$\AA^{n+1}$;
this is equivalent to the inequality~$\dim T_x(X) \le n + 1$ for each geometric point~$x \in X$.
We will use the following

\begin{definition}
\label{def:nodal}
A $\kk$-scheme~$X$ of dimension~$n$ is {\sf nodal} if it has isolated hypersurface singularities
and the Hessian matrix~$(\partial^2\!\varphi/\partial x_i\partial x_j)_{i,j=0}^{n+1}$ 
of its local equation is nondegenerate at every singular point of~$X$.
\end{definition}

By definition any nodal scheme~$X$ is Gorenstein.
Moreover, it follows (see Lemma~\ref{lem:uniformly-1n} for an argument in the relative case) 
that the blowup~$\pi \colon \tX \to X$ of~$X$ at its singular points is smooth, 
each connected component~$E \subset X$ of the exceptional divisor is a smooth quadric of dimension~$n - 1$
with conormal bundle~$\cO_E(-E)$ isomorphic to the hyperplane bundle of the quadric.
Moreover, the discrepancy of~$E$ equals~$n - 2$;
in particular, $X$ has terminal singularities when~$n \ge 3$.

\subsection{Morphisms with nodal fibers}
\label{subsec:nodal-morphisms}

We will say that~$f \colon \cX \to S$ is a morphism {\sf with nodal fibers} 
if~$f$ is flat and all geometric fibers of~$f$ are smooth or nodal.
We always assume that the base~$S$ is Noetherian.

\begin{lemma}
\label{lem:nodal-hypersurface}
If every geometric fiber of a flat morphism~$f \colon \cX \to S$ has hypersurface singularities
then~$\cX/S$ has hypersurface singularities, 
i.e., $\cX$ is locally isomorphic to a hypersurface in an affine space over~$S$.
In particular, a morphism with nodal fibers has hypersurface singularities.
\end{lemma}

\begin{proof}
Let~$x \in \cX$ be a geometric point and set~$s = f(x) \in S$.
Since the fiber~$\cX_s$ has hypersurface singularities, 
there is a neighborhood~$(U_s,x)$ of~$x$ in~$\cX_s$
and an open embedding~$U_s \hookrightarrow \{\varphi = 0\} \subset \AA^{n+1}$, where~$n = \dim(\cX/S)$.
The morphism~$U_s \hookrightarrow \AA^{n+1}$ is given by~$n + 1$ functions;
they extend to regular functions on a neighborhood~$(U,x)$ of~$x$ in~$\cX$ 
such that~$U \cap \cX_s = U_s$ and define an $S$-morphism
\begin{equation*}
g \colon U \to \AA_S^{n+1}
\end{equation*}
that restricts to the above morphism of~$U_s$ over~$s$.
The image of~$g$ is a hypersurface over~$s$, hence it is a hypersurface over a neighborhood of~$s$ in~$S$.
The property of being an open immersion is local on the base (see~\cite[Lemma~02L3]{stacks-project}),
hence, shrinking~$U$ if necessary, we may assume that locally~$g$ is an open embedding into a hypersurface in~$\AA_S^{n+1}$.
\end{proof}

If~$f \colon \cX \to S$ is a flat morphism and~$\cX/S$ has hypersurface singularities, 
the Jacobian ideals of local equations of~$\cX \subset \AA^{n+1}_S$ 
provide the relative singular locus~$\Sing(\cX/S)$ with a scheme structure.
More precisely, if~$\{\varphi = 0\} \subset \AA^{n+1}_S$ is a local presentation of~$\cX$ then
\begin{equation*}
\Sing(\cX/S) \coloneqq \{ \varphi_0 = \varphi_1 = \dots = \varphi_n = 0 \} \subset \cX,
\end{equation*}
where~$\varphi_i = \partial\varphi/\partial x_i$ 
is the derivative of~$\varphi$ with respect to $i$-th coordinate on~$\AA_S^{n+1}$.
We always endow~$\Sing(\cX/S)$ with this scheme structure.

\begin{lemma}
\label{lem:sing-finite}
If~$f \colon \cX \to S$ is a flat proper morphism with nodal fibers
then~$\Sing(\cX/S)$ is finite over~$S$.
\end{lemma}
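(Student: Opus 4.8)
The plan is to show that the natural morphism $\Sing(\cX/S) \to S$ is proper and quasi-finite; since a proper quasi-finite morphism of Noetherian schemes is finite, this suffices. Properness is automatic: by construction $\Sing(\cX/S)$ is a closed subscheme of $\cX$, and $f$ is proper, so the composite $\Sing(\cX/S) \hookrightarrow \cX \xrightarrow{f} S$ is proper as well; as $f$ is of finite type, so is this composite. Thus the only thing left to establish is that each fiber of $\Sing(\cX/S) \to S$ is finite.

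To analyze the fibers I would first invoke Lemma~\ref{lem:nodal-hypersurface} to present $\cX$ \'etale-locally as a hypersurface $\{\varphi = 0\} \subset \AA^{n+1}_S$, so that $\Sing(\cX/S)$ is cut out by the relative Jacobian ideal $(\varphi_0, \dots, \varphi_n)$, where $\varphi_i = \partial\varphi/\partial x_i$ are the derivatives along the fiber coordinates. The key observation is that forming this ideal commutes with restriction to a fiber: for a geometric point $s \in S$ one has $(\varphi_i)\vert_{\cX_s} = \partial(\varphi\vert_{\cX_s})/\partial x_i$, because differentiation in the fiber directions is unaffected by base change along $\{s\} \hookrightarrow S$. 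Equivalently, by the Jacobian criterion applied to the flat morphism $f$, a point $x$ of $\cX_s$ lies in $\Sing(\cX/S)$ exactly when $\cX_s$ fails to be smooth at $x$. Hence the fiber of $\Sing(\cX/S) \to S$ over $s$ has support equal to $\Sing(\cX_s)$, the singular locus of the fiber.

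Finally, each fiber $\cX_s$ is nodal by hypothesis, so it has only isolated singularities; being proper over the field $\kappa(s)$, its singular locus $\Sing(\cX_s)$ is then a finite set. Thus every fiber of $\Sing(\cX/S) \to S$ is finite, the morphism is quasi-finite, and combined with properness it is finite, as required. The only genuinely substantive point, and where I would spend the most care, is the base-change compatibility in the second step: one must verify that the \emph{relative} singular subscheme restricts on each fiber to the \emph{absolute} singular subscheme of that fiber, so that the fiberwise nodality assumption can be applied. This is essentially formal once the local hypersurface presentation of Lemma~\ref{lem:nodal-hypersurface} is fixed, since the partial derivatives in the fiber variables are insensitive to the base, but it is precisely the step that links the relative construction to the assumed nodality of the fibers.
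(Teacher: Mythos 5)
Your proof is correct and follows essentially the same route as the paper: the paper also observes that $\Sing(\cX/S)$ is quasi-finite over $S$ (by the nodality of the fibers) and closed in $\cX$, hence proper over $S$, and concludes finiteness from proper plus quasi-finite. The only difference is that you spell out the base-change compatibility of the relative Jacobian ideal to justify quasi-finiteness, a point the paper leaves implicit by appealing directly to Definition~\ref{def:nodal}.
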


\begin{proof}
The singular locus of~$\cX$ over~$S$ is quasi-finite over~$S$ by Definition~\ref{def:nodal}, 
and on the other hand it is closed in~$\cX$, hence proper over~$S$;
therefore, it is finite over~$S$.
\end{proof}

Now assume~$f \colon \cX \to S$ is a morphism with nodal fibers and~$\Sing(\cX/S)$ is finite over~$S$
(by Lemma~\ref{lem:sing-finite} this holds if~$f$ is proper).
Then~$f_*\cO_{\Sing(\cX/S)}$ is a coherent sheaf on~$S$.
We denote by
\begin{equation*}
S^{\ge m} \subset S
\qquad\text{and}\qquad 
S^{\le m-1} \subset S
\end{equation*}
the closed subscheme where the rank of~$f_*\cO_{\Sing(\cX/S)}$ is at least~$m$
(it is defined by an appropriate Fitting ideal) 
and its open complement, respectively.
Furthermore, we denote 
\begin{equation*}
S^{(m)} \coloneqq S^{\le m} \cap S^{\ge m}.
\end{equation*}
The subscheme~$S^{(m)}$ is closed in~$S^{\le m}$, open in~$S^{\ge m}$, and locally closed in~$S$;
it parametrizes points of~$S$ over which the fibers are exactly~$m$-nodal.
We also write~$\cX^{\le m}/S^{\le m}$, $\cX^{\ge m}/S^{\ge m}$, and~$\cX^{(m)}/S^{(m)}$ 
for the corresponding families obtained from~$\cX/S$ by base change.

\begin{lemma}
\label{lem:s1-cartier}
If~$f \colon \cX \to S$ is a morphism with nodal fibers,
the morphism~$\Sing(\cX/S) \to S$ is unramified.
Moreover, if~$\Sing(\cX/S)$ is finite over~$S$ 
then the closed subscheme~$S^{(1)} \subset S^{\le 1}$ is locally principal 
and the morphism~$\Sing(\cX^{(1)}/S^{(1)}) \to S^{(1)}$ is an isomorphism. 
Finally, if~$S$ is smooth then~$S^{(1)}$ is smooth if and only if~$\cX$ is smooth along~$f^{-1}(S^{(1)})$.
\end{lemma}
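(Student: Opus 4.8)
The plan is to reduce everything to an étale-local computation at a node and to tie together two Jacobian criteria by a single chain-rule identity. First I would observe that $\cX$ can only fail to be smooth at the nodes of the fibres lying over $S^{(1)}$: if $y \in f^{-1}(S^{(1)})$ is a point at which its fibre $\cX_{f(y)}$ is smooth, then flatness of $f$, regularity of $\cO_{S,f(y)}$ (as $S$ is smooth), and regularity of the fibre at $y$ force $\cO_{\cX,y}$ to be regular. Hence ``$\cX$ is smooth along $f^{-1}(S^{(1)})$'' is equivalent to ``$\cX$ is smooth at every node over $S^{(1)}$'', and since $\Sing(\cX/S)$ is finite over $S$ it suffices to prove the pointwise statement: for a geometric point $s \in S^{(1)}$ with node $p \in \cX_s$, the scheme $S^{(1)}$ is smooth at $s$ if and only if $\cX$ is smooth at $p$.

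Next I would fix such a $p$ and use Lemma~\ref{lem:nodal-hypersurface} to present $\cX$ étale-locally as a hypersurface $\{\varphi = 0\} \subset \AA^{n+1}_S$, with fibre coordinates $x_0,\dots,x_n$. Since $p$ is a node of $\cX_s$, all fibre partials $\varphi_i = \partial\varphi/\partial x_i$ vanish at $p$ while the fibre Hessian $(\partial\varphi_i/\partial x_j)$ is nondegenerate there. Applying the inverse function theorem (in its étale-local form) to $(\varphi_0,\dots,\varphi_n)$, whose fibrewise Jacobian is exactly this nondegenerate Hessian, shows that $\Sing(\cX/S)$ is, near $p$, the graph of a section $t \mapsto \xi(t)$ with $\xi(s)=p$. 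Restricting $\varphi$ to this graph produces the critical-value function $h(t) \coloneqq \varphi(t,\xi(t))$, with $h(s)=0$, and intersecting the graph with $\{\varphi=0\}$ identifies $\Sing(\cX/S) \cong V(h)$ near $p$; feeding this into the Fitting-ideal definition (or invoking the already established local principality of $S^{(1)}$ together with the isomorphism $\Sing(\cX^{(1)}/S^{(1)}) \cong S^{(1)}$) shows that $h$ is a local equation for $S^{(1)}$ in $S$.

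The conclusion then follows from two applications of the Jacobian criterion joined by one identity. On the one hand, $\cX = \{\varphi=0\}$ is smooth at $p$ if and only if $d\varphi_p \ne 0$; since all fibre partials $\varphi_i(p)$ vanish, this holds exactly when some base partial of $\varphi$ at $p$ is nonzero. On the other hand, the chain rule gives that $dh_s$ is the tuple of base partials $\partial\varphi/\partial t_j$ evaluated at $p$, because the contributions of $\xi'(s)$ are annihilated by the vanishing of the $\varphi_i(p)$; thus $dh_s \ne 0$ under precisely the same condition. Finally, $S^{(1)} = V(h)$ with $h \in \fm_s$ is smooth at $s$ in the smooth ambient $S$ precisely when $dh_s \ne 0$. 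Combining, $\cX$ is smooth at $p$ iff $d\varphi_p \ne 0$ iff $dh_s \ne 0$ iff $S^{(1)}$ is smooth at $s$, and running this over all nodes over $S^{(1)}$ yields the lemma.

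I expect the main obstacle to be the scheme-theoretic bookkeeping in the second step: matching the Fitting-ideal structure on $S^{(1)}$ with the vanishing scheme $V(h)$ of the critical-value function, and applying the inverse function theorem in the correct henselian or formal local setting. One should also keep in mind the degenerate possibility $h \equiv 0$ near $s$ (an entire family of nodal fibres), where $V(h)=S$ is vacuously smooth although $\cX$ is not; this does not occur once the generic fibre is smooth, and in any case the implication actually needed elsewhere — smoothness of the total space $\cX$ forcing smoothness (transversality) of $S^{(1)}$ — is exactly the direction $d\varphi_p \ne 0 \Rightarrow dh_s \ne 0$, which holds unconditionally.
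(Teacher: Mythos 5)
Your argument is correct and is essentially the paper's own proof: the authors likewise work \'etale-locally with a hypersurface presentation $\{\varphi=0\}\subset\AA^{n+1}_S$ from Lemma~\ref{lem:nodal-hypersurface}, observe that the nondegenerate fibre Hessian makes the critical locus $\tS=\{\varphi_0=\dots=\varphi_n=0\}$ \'etale over $S$ along $\Sing(\cX/S)$ (your section $\xi$), cut out $\Sing(\cX/S)=\tS\cap\cX$ inside $\tS$ by the single equation $\varphi\vert_\tS$ (your $h$) to obtain local principality and the isomorphism onto $S^{(1)}$, and conclude the smoothness equivalence by the same tangent-space comparison that your chain-rule identity makes explicit. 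The degenerate possibility $h\equiv 0$ that you flag is a genuine caveat to the ``if and only if'' as literally stated, but it afflicts the paper's own argument in exactly the same way, and only the unconditional direction you isolate ($\cX$ smooth $\Rightarrow$ $S^{(1)}$ smooth, i.e.\ transversality) is actually invoked later, in the proof of Theorem~\ref{thm:mf-x-y}.
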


\begin{proof}
By Lemma~\ref{lem:nodal-hypersurface} we may assume that~$\cX \subset \AA^{n+1}_S$ is a hypersurface with equation~$\varphi$.
Consider the critical locus of~$\varphi$, that is the subscheme
\begin{equation*}
\tS \coloneqq \{ \varphi_0 = \varphi_1 = \dots = \varphi_n = 0 \} \subset \AA^{n+1}_S.
\end{equation*}
Then~$\Sing(\cX/S) = \tS \cap \cX$, 
and since the Hessian matrix of~$\varphi$ is nondegenerate at every point of~$\Sing(\cX/S)$,
the projection~$f\vert_\tS \colon \tS \to S$ is \'etale along~$\Sing(\cX/S)$, hence~$\Sing(\cX/S) \to S$ is unramified.

For the second claim we may assume~$S = S^{\le 1}$, i.e., $S^{\ge 2} = \varnothing$.
The assumption~$S^{\ge 2} = \varnothing$ implies that the sheaf~$f_*\cO_{\Sing(\cX/S)}$ 
as $\cO_S$-module is locally generated by its unit, hence~$f\vert_{\Sing(\cX/S)} \colon \Sing(\cX/S) \to S$ is a closed embedding
and~$S^{(1)} = f(\Sing(\cX/S))$ by definition.
Finally, since locally around~$S^{(1)}$ the morphism~$f\vert_\tS \colon \tS \to S$ is an isomorphism,
and~$\Sing(\cX/S)$ is cut out in~$\tS$ by the equation~$\varphi = 0$, 
it follows that~$\Sing(\cX/S)$ is locally principal in~$\tS$,
hence~$S^{(1)}$ is locally principal in~$S$.

For the last claim note that~$\cX$ is smooth away from~$\Sing(\cX/S)$.
So, let~$x \in \Sing(\cX/S)$ be a point such that~$s = f(x) \in S^{(1)}$. 
Then~$x \in \tS$, hence~$\varphi_i(x) = 0$ for~$0 \le i \le n$, 
and the equality~$\Sing(\cX/S) = \tS \cap \cX$ implies that the tangent space to~$\cX$ at~$x$
is equal to the sum of the tangent space to~$\Sing(\cX/S)$ and the relative tangent space to~$\AA^{n+1}_S$ at~$x$.
Therefore, $\cX$ is smooth at~$x$ if and only if~$\Sing(\cX/S)$ is smooth at~$x$,
and since~$\Sing(\cX^{(1)}/S^{(1)}) \to S^{(1)}$ is an isomorphism, this holds if and only if~$S^{(1)}$ is smooth at~$s$.
\end{proof}

\begin{corollary}
\label{cor:nodal-cartier}
Let~$f \colon \cX \to S$ be a flat proper morphism with nodal fibers over an integral scheme~$S$.
If the general fiber of~$f$ is smooth
then~$S^{(1)} \subset S^{\le 1}$ is a Cartier divisor.
\end{corollary}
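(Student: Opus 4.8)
The plan is to deduce the statement almost formally from Lemma~\ref{lem:s1-cartier}, the only genuinely new inputs being the integrality of~$S$ and the smoothness of the generic fiber. First I would observe that since~$f$ is proper, Lemma~\ref{lem:sing-finite} guarantees that~$\Sing(\cX/S)$ is finite over~$S$; this is precisely the hypothesis needed to invoke the second assertion of Lemma~\ref{lem:s1-cartier}, which tells us that~$S^{(1)}$ is a \emph{locally principal} closed subscheme of~$S^{\le 1}$. Thus it remains only to upgrade ``locally principal'' to ``Cartier'', i.e. to check that a local defining equation is a non-zero-divisor.

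For this I would use that~$S^{\le 1}$ is an open subscheme of the integral scheme~$S$, hence has domain local rings. The key remaining point is that~$S^{(1)}$ does not contain the generic point~$\eta$ of~$S$: by hypothesis the generic fiber~$\cX_\eta$ is smooth, so~$\Sing(\cX_\eta) = \varnothing$ and the stalk of~$f_*\cO_{\Sing(\cX/S)}$ at~$\eta$ has rank~$0$; therefore~$\eta \notin S^{\ge 1} \supseteq S^{(1)}$. In particular~$\eta$ lies in~$S^{\le 1}$, so that~$S^{\le 1}$ is a nonempty open of an integral scheme, hence itself integral, with generic point~$\eta$.

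Finally, around any point~$s \in S^{(1)}$ I would choose an affine open~$U \subset S^{\le 1}$ on which~$S^{(1)}$ is cut out by a single element~$g \in \cO(U)$. Since~$U$ is a nonempty open of the integral scheme~$S^{\le 1}$, the ring~$\cO(U)$ is a domain and~$U$ contains~$\eta$; as~$\eta \notin S^{(1)}$ we have~$g \ne 0$ in~$\cO(U)$, hence~$g$ is a non-zero-divisor, and multiplication by~$g$ exhibits the ideal sheaf of~$S^{(1)}$ as locally free of rank~$1$. Thus~$S^{(1)}$ is an effective Cartier divisor in~$S^{\le 1}$. I do not expect any serious obstacle here: the substantive work (unramifiedness and local principality) is already carried out in Lemma~\ref{lem:s1-cartier}, and the only point requiring care is confirming that the generic point witnesses the non-vanishing of the local equation, which is exactly where the smoothness of the general fiber enters.
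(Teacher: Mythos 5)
Your proposal is correct and follows essentially the same route as the paper: invoke Lemma~\ref{lem:s1-cartier} (via Lemma~\ref{lem:sing-finite}) for local principality, then use smoothness of the general fiber to see that the local equation is nonzero and integrality of~$S$ to conclude it is a non-zero-divisor. The only difference is that you spell out the role of the generic point a bit more explicitly than the paper does, which is harmless.
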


\begin{proof}
By Lemma~\ref{lem:s1-cartier} the subscheme~$S^{(1)} \subset S^{\le 1}$ is locally principal.
It is not equal to~$S^{\le 1}$ because the general fiber of~$f$ is smooth, 
hence the equation locally defining~$S^{(1)}$ in~$S^{\le 1}$ is nonzero,
and since~$S$ is integral, this equation is not a zero divisor.
Therefore, $S^{(1)} \subset S^{\le 1}$ is a Cartier divisor.
\end{proof}

We will say that~$f \colon \cX \to S$ is {\sf uniformly $m$-nodal} if~$\Sing(\cX/S)$ is finite over~$S$ and~$S^{(m)} = S$.
In the special case where~$m = 1$
the subscheme~$\Sing(\cX/S) \subset \cX$ is an $S$-point of~$\cX$;
it will be called {\sf the nodal $S$-point} or simply {\sf the node} of~$\cX/S$.

\begin{lemma}
\label{lem:uniformly-1n}
If~$f \colon \cX \to S$ is a uniformly $m$-nodal flat morphism,
then
\begin{renumerate}
\item 
\label{it:sing-etale}
the scheme~$\Sing(\cX/S)$ is finite \'etale of degree~$m$ over~$S$, 
\item 
\label{it:bl-smooth}
the blowup~$\Bl_{\Sing(\cX/S)}(\cX)$ is smooth over~$S$, 
\item 
\label{it:ed-smooth}
the exceptional divisor~$E \subset \Bl_{\Sing(\cX/S)}(\cX)$ 
is a smooth quadric fibration over~$\Sing(\cX/S)$, and
\item 
\label{it:ed-conormal}
the conormal bundle~$\cO_E(-E)$ is the relative hyperplane bundle on this quadric fibration.
\end{renumerate}
\end{lemma}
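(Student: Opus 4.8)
The plan is to handle part~\ref{it:sing-etale} by a Fitting-ideal argument and then reduce the remaining parts to an explicit computation in a local hypersurface model. For~\ref{it:sing-etale}: by Lemma~\ref{lem:s1-cartier} the morphism $\Sing(\cX/S) \to S$ is unramified, and by hypothesis it is finite, so $\cG \coloneqq f_*\cO_{\Sing(\cX/S)}$ is a finitely presented $\cO_S$-module ($S$ being Noetherian). The key observation is that the scheme-theoretic equality $S^{(m)} = S$ unwinds precisely into two Fitting conditions: since $S^{\ge m} = V(\mathrm{Fitt}_{m-1}(\cG))$ and $S^{\le m}$ is the open complement of $V(\mathrm{Fitt}_m(\cG))$, the equality $S = S^{\ge m} \cap S^{\le m}$ forces
\[
\mathrm{Fitt}_{m-1}(\cG) = 0 \qquad\text{and}\qquad \mathrm{Fitt}_m(\cG) = \cO_S.
\]
By the Fitting criterion for local freeness these conditions are equivalent to $\cG$ being locally free of rank $m$; hence $\Sing(\cX/S) \to S$ is finite flat of degree $m$, and being also unramified it is finite étale of degree $m$.

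For parts~\ref{it:bl-smooth}--\ref{it:ed-conormal} all assertions are local on $\cX$ near $Z \coloneqq \Sing(\cX/S)$, since away from $Z$ the blowup is an isomorphism onto $\cX \setminus Z$, which is $S$-smooth (the smooth locus of $f$ is exactly $\cX \setminus Z$). So I would fix a geometric point $z \in Z$ over $s \in S$ and use Lemma~\ref{lem:nodal-hypersurface} to present $\cX$ étale-locally as a hypersurface $\{\varphi = 0\} \subset \AA^{n+1}_S$ in fibre coordinates $y_0,\dots,y_n$. Because $Z \to S$ is a section cut out transversally by $\varphi_0,\dots,\varphi_n$ with nondegenerate Hessian, after an $S$-translation of the $y_i$ I may assume $Z = \{y_0 = \dots = y_n = 0\}$ as a scheme. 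The inclusion $Z \subset \cX$ gives $\varphi(0,s)=0$ and the critical-point condition gives $\varphi_i(0,s)=0$, so the Taylor expansion in $y$ reads
\[
\varphi = Q(y;s) + R(y;s), \qquad Q(y;s) = \sum_{i,j} a_{ij}(s)\, y_i y_j, \qquad R = O(y^3),
\]
where the Hessian $(a_{ij}(s))$ is nondegenerate.

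Then I would compute the blowup directly. In the chart $y_i = y_0 v_i$ (with $v_0 = 1$) of $\Bl_Z(\AA^{n+1}_S)$ one finds $\varphi = y_0^2\,\psi$ with $\psi = Q(v;s) + y_0\,\widetilde R(v,y_0;s)$, so $\tX = \Bl_Z(\cX)$ is the strict transform $\{\psi = 0\}$ and $E = \{y_0 = Q(v;s) = 0\}$ is the fibrewise quadric $\{Q = 0\} \subset \P^n_Z$, smooth of relative dimension $n-1$ by nondegeneracy of $Q$; this gives~\ref{it:ed-smooth}. For~\ref{it:bl-smooth} I apply the relative Jacobian criterion to the single equation $\psi$: at a point $p \in E$ the relative partials are $\partial\psi/\partial v_i = \partial Q/\partial v_i$ and $\partial\psi/\partial y_0 = R_3(v)$, and the Euler relation for the nondegenerate form $Q$ shows that the derivatives $\partial Q/\partial v_i$ cannot all vanish on $\{Q = 0\}$, so the relative differential of $\psi$ is nonzero at $p$ and $\tX \to S$ is smooth of relative dimension $n$ along $E$. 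Finally~\ref{it:ed-conormal} follows from the standard formula $\cO_{\cE}(-\cE) \cong \cO_{\P^n_Z}(1)$ for the exceptional divisor $\cE$ of $\Bl_Z(\AA^{n+1}_S)$: since $\varphi = y_0^2\psi$ shows $E = \cE\vert_{\tX}$, restriction gives $\cO_E(-E) \cong \cO_{\P^n_Z}(1)\vert_E$, the relative hyperplane bundle of the quadric fibration $E \to Z$.

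The hardest point to get right will be the local normalization underlying parts~\ref{it:bl-smooth}--\ref{it:ed-conormal}: one must carefully justify that the étale-local hypersurface equation can be arranged with $Z$ as the zero section and with vanishing value and gradient along it, and then control the cubic-and-higher remainder $R$ through the blowup. The nondegeneracy of the fibrewise Hessian is exactly what guarantees both that $Z \to S$ is étale (reproving~\ref{it:sing-etale} locally) and that the remainder does not destroy smoothness of the strict transform along $E$; the Euler-relation verification that $\{Q=0\}$ carries no relative critical point of $\psi$ is the crux of the whole argument.
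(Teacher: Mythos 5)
Your proposal is correct and follows essentially the same route as the paper: part (i) via local freeness of $f_*\cO_{\Sing(\cX/S)}$ of rank $m$ (which you justify more explicitly through the Fitting criterion) combined with unramifiedness from Lemma~\ref{lem:s1-cartier}, and parts (ii)--(iv) via the \'etale-local hypersurface model, the Taylor expansion $\varphi = Q + R$ with nondegenerate Hessian, and the explicit blowup chart. The only cosmetic difference is that the paper deduces smoothness of $\Bl_{\Sing(\cX/S)}(\cX)$ over $S$ from the fact that its exceptional divisor is a smooth Cartier divisor, whereas you verify the relative Jacobian criterion directly on the strict transform; both reduce to the same nondegeneracy of $Q$.
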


\begin{proof}
\ref{it:sing-etale}
The equality~$S^{(m)} = S$ means that~$f_*\cO_{\Sing(\cS/S)}$ is locally free of rank~$m$,
hence~$\Sing(\cX/S)$ is flat and finite
over~$S$.
Since it is also unramified over~$S$ by Lemma~\ref{lem:s1-cartier}, it is \'etale.

\ref{it:bl-smooth}--\ref{it:ed-conormal}
All these claims are \'etale local around~$\Sing(\cX/S)$, 
so we may assume that~$m = 1$ and~$\Sing(\cX/S)$ is just the nodal $S$-point~$x_0$ of~$\cX$.
Furthermore, by Lemma~\ref{lem:nodal-hypersurface} 
we may assume that~$\cX \subset \AA^{n+1}_S$ is a hypersurface with equation~$\varphi$.
Note that~$\varphi$ vanishes together with its first derivatives at~$x_0$,
while the Hessian matrix of~$\varphi$ at~$x_0$ is nondegenerate.
Clearly,
\begin{equation*}
\Bl_{x_0}(\cX) \subset \Bl_{x_0}(\AA^{n+1}_S) 
\end{equation*}
is the Cartier divisor with equation obtained by dividing the pullback of~$\varphi$ 
by the square of the equation of the exceptional divisor~$\tE$ of~$\Bl_{x_0}(\AA^{n+1}_S)$,
and the exceptional divisor~$E$ of~$\Bl_{x_0}(\cX)$ is the quadric in~$\tE \cong \P^n_S$ 
associated with the Hessian matrix at~$x_0$.
In particular, $E$ is smooth over~$S$, hence~$\Bl_{x_0}(\cX)$ is smooth along~$E$.
On the other hand, $\Bl_{x_0}(\cX) \setminus E = \cX \setminus\{x_0\}$ is also smooth over~$S$,
hence~$\Bl_{x_0}(\cX)$ is smooth over~$S$.
Finally, the conormal bundle of~$E$ is isomorphic to the restriction of the conormal bundle of~$\tE$;
the latter is obviously the hyperplane bundle, hence so is the former.
\end{proof}

\subsection{Families of nodal Fano varieties}

In this section we study properties of families of nodal Fano varieties,
i.e., proper morphisms~$f \colon \cX \to S$ with nodal fibers 
such that the relative anticanonical class~$\omega_{\cX/S}^{-1}$ is $f$-ample.
However, for the statement of Proposition~\ref{prop:pic-loc-const} we can weaken the assumptions.

We denote by~$\Pic_{\cX/S}$ the \'etale sheafification of the Picard functor, see~\cite[\S9.2]{Kleiman}.

\begin{proposition}
\label{prop:pic-loc-const}
Let~$f \colon \cX \to S$ be a flat projective morphism of relative dimension~$n \ge 3$ such that 
\begin{equation}
\label{eq:h1h2}
\rH^1({\cX_s}, \cO_{\cX_s}) = \rH^2({\cX_s}, \cO_{\cX_s}) = 0
\end{equation} 
for any geometric point~$s \in S$.
Assume also that either
\begin{alenumerate}
\item 
\label{it:nodal-fibers}
$f$ has nodal fibers, or
\item 
\label{it:single-fiber}
the scheme~$S$ is integral of positive dimension, 
the fiber of~$f$ over a geometric point~$s \in S$ has isolated hypersurface singularities,
and over~$S \setminus \{s\}$ the morphism~$f$ is smooth.
\end{alenumerate}
Then the \'etale sheaf\/~$\Pic_{\cX/S}$ is locally constant on~$S$.
In particular, if~$S$ is connected then the Picard rank of geometric fibers of~$f$ is constant.
\end{proposition}

\begin{proof}
Recall from~\cite[Theorem~9.4.8, Theorem~9.5.11, Remark~9.5.12, and Proposition~9.5.19]{Kleiman} that 
the assumption~\eqref{eq:h1h2} implies that the Picard sheaf~$\Pic_{\cX/S}$ 
is represented by the Picard scheme~$\bPic_{\cX/S}$ which is \'etale over~$S$.
Therefore, any line bundle on a geometric fiber~$\cX_s$ of~$f$ extends uniquely 
to a line bundle over an \'etale neighborhood of~$s$ in~$S$.

Let us prove that the morphism~$\bPic_{\cX/S} \to S$ satisfies the valuative criterion for properness
(note, however, that this morphism is not of finite type, hence not proper).
So, assume~$S$ is the spectrum of a discrete valuation ring, $s \in S$ is the closed point,
and~$\cL_0$ is a line bundle on the general fiber of~$\cX/S$.
We need to show that there is a line bundle~$\cL$ on~$\cX$ which restricts to the general fiber as~$\cL_0$.

First, consdier case~\ref{it:single-fiber}.
Then we extend~$\cL_0$ to a coherent sheaf on~$\cX$ and then, 
by taking the double dual, to a reflexive sheaf~$\cL$ of rank~$1$.
Since~$\cX/S$ is smooth over the general point of~$S$, we have an inclusion~$\Sing(\cX) \subset \Sing(\cX_s)$. 
Moreover, $\cX$ has isolated hypersurface singularities by Lemma~\ref{lem:nodal-hypersurface}.
Finally, since~$\dim(\cX_s) \ge 3$, we have~$\dim(\cX) \ge 4$.
Now it follows from~\cite[XI, Corollaire~3.14]{Gr} that~$\cX$ is locally factorial,
hence~$\cL$ is a line bundle.

Now, consider case~\ref{it:nodal-fibers}.
Since~$\Sing(\cX/S)$ is finite and unramified over~$S$ by Lemmas~\ref{lem:sing-finite} and~\ref{lem:s1-cartier}, 
we have
\begin{equation*}
\Sing(\cX/S) = Z \sqcup Z',
\end{equation*}
where~$Z$ is finite \'etale over~$S$ and~$Z'$ is a finite reduced scheme contained in the central fiber of~$\cX/S$.
Consider the blowup
\begin{equation*}
\pi \colon \cX' \coloneqq \Bl_{Z}(\cX) \to \cX.
\end{equation*}
By Lemma~\ref{lem:uniformly-1n} the map~$\cX' \to S$ has nodal fibers, $\Sing(\cX'/S) = \Sing(\cX'_s) = Z'$,
and the exceptional divisor~$E$ of~$\pi$ is a smooth quadric fibration over~$Z$.
In particular, $\cX' \to S$ satisfies the assumption of part~\ref{it:single-fiber}.
Therefore, the line bundle~$\cL'_0 \coloneqq \pi^*\cL_0$ extends to a line bundle~$\cL'$ on~$\cX'$.
Let~$E_0$ and $Z_0$ be the general fibers of~$E$ and~$Z$ over~$S$.
The line bundle~$\cL'\vert_{E_0} \cong \cL'_0\vert_{E_0} \cong \pi^*\cL_0\vert_{E_0}$ is trivial over~$Z_0$,
and since~$E \to Z$ is a smooth quadric bundle, $\cL'\vert_E$ is trivial over~$Z$, 
and therefore there is a line bundle~$\cL$ on~$\cX$ such that~$\cL' \cong \pi^*\cL$.
Clearly, the restriction of~$\cL$ to the general fiber of~$\cX/S$ is isomorphic to~$\cL_0$.

Finally, since~$\bPic_{\cX/S}$ is \'etale and satisfies the valuative criterion for properness over~$S$, 
it follows that \'etale sheaf~$\Pic_{\cX/S}$ represented by it is locally constant.
\end{proof}

Since any nodal Fano variety satisfies~\eqref{eq:h1h2} (see~\cite[Proposition~2.1.2(i)]{IP}),
the conclusion of Proposition~\ref{prop:pic-loc-const} holds for families of nodal Fano varieties of dimension at least~$3$.

\begin{corollary}
\label{cor:pic-constant}
If~$f \colon \cX \to S$ is a family of nodal Fano varieties of dimension~$n \ge 3$
then the Picard rank, Fano index, and anticanonical degree of the fibers of~$\cX/S$ are locally constant.

Moreover, if the Picard rank of fibers is~$1$,
the \'etale sheaf~$\Pic_{\cX/S} \cong \underline{\ZZ}_S$ is constant.
\end{corollary}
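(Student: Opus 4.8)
The plan is to deduce everything from Proposition~\ref{prop:pic-loc-const}. Since a nodal Fano variety of dimension $n \ge 3$ satisfies the vanishing~\eqref{eq:h1h2} (by~\cite[Proposition~2.1.2(i)]{IP}), the proposition applies to $f \colon \cX \to S$, so the \'etale sheaf $\Pic_{\cX/S}$ is locally constant; moreover, its proof exhibits $\Pic_{\cX/S}$ as represented by an \'etale $S$-group scheme $\bPic_{\cX/S} \to S$. Because $\rH^1(\cX_s,\cO_{\cX_s}) = 0$, the stalk of $\Pic_{\cX/S}$ at a geometric point $s$ is the finitely generated group $\Pic(\cX_s)$, and local constancy of the sheaf gives local constancy of the isomorphism class of this stalk; in particular the Picard rank of the fibers is locally constant.

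For the Fano index, recall that the fibers are Gorenstein (Definition~\ref{def:nodal}), so the relative dualizing sheaf $\omega_{\cX/S}$ is a line bundle restricting to $\omega_{\cX_s}$ on each fiber. Hence $\omega_{\cX/S}^{-1}$ defines a global section of $\Pic_{\cX/S}$ whose value at $s$ is the class of $-K_{\cX_s}$. A global section of a locally constant sheaf takes, \'etale-locally on $S$, a fixed value in the stalk, so the divisibility of $-K_{\cX_s}$ in $\Pic(\cX_s)$ --- that is, the Fano index --- is locally constant. For the anticanonical degree, note that $(-K_{\cX_s})^n$ equals $n!$ times the leading coefficient of the Hilbert polynomial $m \mapsto \upchi(\cX_s, \omega_{\cX/S}^{-m}\vert_{\cX_s})$, which is locally constant in $s$ by flatness of $f$; hence the anticanonical degree is locally constant as well.

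Finally, suppose the Picard rank of the fibers equals $1$. Since the fibers are terminal (Definition~\ref{def:nodal}) Fano varieties, they are rationally connected, hence simply connected and with torsion-free Picard group, so $\Pic(\cX_s) \cong \ZZ$ at every geometric point. The only point requiring genuine work beyond Proposition~\ref{prop:pic-loc-const} --- and the step I expect to be the main obstacle --- is to upgrade \emph{local} constancy to \emph{global} constancy, i.e.\ to show that the monodromy representation $\uppi_1(S) \to \Aut(\ZZ) = \{\pm 1\}$ is trivial. For this I would use the canonical ample generator: over each geometric point there is a unique ample generator of $\Pic(\cX_s)$, and since both ampleness and divisibility by $1$ are open and closed conditions, the ample generators form a union of connected components $P \subseteq \bPic_{\cX/S}$ which is \'etale over $S$ and meets each geometric fiber in exactly one reduced point. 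An \'etale morphism that is bijective on geometric points is (being \'etale and radicial in characteristic $0$) an open immersion, hence, being surjective, an isomorphism; thus $P \to S$ is an isomorphism and yields a global section $H \in \Pic_{\cX/S}(S)$ restricting to the ample generator on every fiber. The induced homomorphism $\underline{\ZZ}_S \to \Pic_{\cX/S}$, $1 \mapsto H$, is then an isomorphism on every stalk, giving $\Pic_{\cX/S} \cong \underline{\ZZ}_S$.
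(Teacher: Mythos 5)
Your proposal is correct and follows the same overall strategy as the paper: apply Proposition~\ref{prop:pic-loc-const} to get local constancy of~$\Pic_{\cX/S}$, and exploit the fact that the (anti)canonical class gives a monodromy-invariant element of the stalk to control the index, degree, and, in rank~$1$, the monodromy itself. The only real divergence is in the last step: where you build the global section by isolating the ample generators as an open and closed subscheme~$P \subset \bPic_{\cX/S}$ that is \'etale and universally bijective over~$S$, the paper argues more directly that the monodromy representation lands in~$\Aut(\ZZ) = \{\pm 1\}$ and fixes the nonzero torsion-free class~$K_{\cX_s}$, hence is trivial. Both are valid; the paper's version is shorter, while yours has the mild advantage of producing the ample generator as an honest global section of~$\Pic_{\cX/S}$ (which is what Lemma~\ref{lem:twisted-universal} ultimately needs anyway). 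One small remark: for torsion-freeness of~$\Pic(\cX_s)$ the paper simply cites~\cite[Proposition~2.1.2(ii,iii)]{IP}, which is cleaner than routing through rational connectedness and simple connectedness of terminal Fano varieties, though your argument also works in characteristic zero.
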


\begin{proof}
The sheaf~$\Pic_{\cX/S}$ is locally constant by Proposition~\ref{prop:pic-loc-const},
hence by~\cite[\S2.1]{K22} it corresponds to a monodromy action of the \'etale fundamental group~$\uppi_1(S)$ 
on the Picard group~$\Pic(\cX_s)$ of a  geometric fiber.
It follows that the Picard rank is locally constant.
Since the canonical class~$K_{\cX_s} \in \Pic(\cX_s)$ is monodromy invariant (see~\cite[Lemma~2.5]{K22}), 
the Fano index and anticanonical degree are also locally constant.

Assume now the Picard rank of~$\cX_s$ is~$1$.
Since~$\cX_s$ is Fano, the group~$\Pic(\cX_s)$ is torsion free by~\cite[Proposition~2.1.2(ii,iii)]{IP}
hence~$\Pic(\cX_s) \cong \ZZ$.
Since the canonical class~$K_{\cX_s} \in \Pic(\cX_s)$ is monodromy invariant and nonzero,
the monodromy action is trivial, and the sheaf~$\Pic_{\cX/S}$ is constant.
\end{proof}

The following result proved by Namikawa is crucial for our paper.

\begin{theorem}[{\cite{Na97}}]
\label{thm:smoothing}
Let~$X$ be a nodal Fano threefold.
There exists a smoothing of~$X$, 
i.e., a flat projective morphism~$\cX \to B$ over a smooth connected pointed curve~$(B,o)$ 
such that
\begin{itemize}
\item 
the total space~$\cX$ is smooth,
\item 
$\cX_o \cong X$,
\item 
the morphism~$f^{-1}(B \setminus \{o\}) \to B \setminus \{o\}$ is a smooth family of Fano threefolds 
of the same Picard number, Fano index, and anticanonical degree as~$X$.
\end{itemize}
\end{theorem}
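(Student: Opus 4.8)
The plan is to deduce the statement from Namikawa's deformation theory of Gorenstein terminal Fano threefolds \cite{Na97}; the only work beyond citing his results is to package them into a one-parameter family and to check the numerical claims using the results of this appendix. As a preliminary, recall the local deformation theory of a node. By Lemma~\ref{lem:nodal-hypersurface} the variety $X$ is, near each of its finitely many nodes $x_1,\dots,x_k$, a hypersurface $\{\varphi = 0\} \subset \AA^4$; since a three-dimensional node is an isolated hypersurface singularity of Milnor number $1$, the sheaf $\cExt^1(\Omega_X,\cO_X)$ is a length-one skyscraper supported at each $x_i$ while $\cExt^{\ge 2}(\Omega_X,\cO_X) = 0$. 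The edge map $T^1_X \to \rH^0(X,\cExt^1(\Omega_X,\cO_X)) = \bigoplus_{i=1}^k \kk$ of the local-to-global Ext spectral sequence records in which directions the individual nodes are smoothed.

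The heart of the argument is Namikawa's theorem, which I would simply invoke. By \cite[Proposition~3]{Na97} the deformation functor of $X$ is unobstructed, so the semiuniversal deformation space $\mathrm{Def}(X)$ is smooth; and by \cite[Proposition~4]{Na97} (see also \cite[Theorem~11]{Na97}) a general deformation of $X$ is smooth, which is exactly the assertion that one can choose a first-order deformation $v \in T^1_X$ whose image in each summand of $\bigoplus_{i=1}^k \kk$ is nonzero. These two inputs are the deep part of the proof.

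Granting them, I would extract the family as follows. Since $\mathrm{Def}(X)$ is smooth, the class $v$ is realized by a smooth arc $(B,o) \to \mathrm{Def}(X)$ with tangent $v$ at $o$; pulling back the (projective, since we polarize by $\omega^{-1}$) semiuniversal family gives a flat projective morphism $f \colon \cX \to B$ with $\cX_o \cong X$. Smoothness of $\cX$ and of the fibers $\cX_b$ for $b \ne o$ is checked locally: away from $\{x_1,\dots,x_k\}$ the variety $X$ is already smooth, so $\cX$ and the nearby fibers are smooth there, while near $x_i$ the family is cut out by an equation $\varphi(x) + t\,u(x) + O(t^2) = 0$ with $u(x_i)\neq 0$, because $v$ smooths this node to first order. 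Such an equation has nonvanishing $t$-derivative at $(x_i,o)$, so its total space in $\AA^4 \times B$ is smooth at that point and the fibers over $t \ne 0$ are smooth near $x_i$. After shrinking $B$ around $o$ this produces a genuine smoothing.

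It remains to identify the fibers as Fano threefolds with the asserted invariants. Ampleness of $\omega_{\cX_b}^{-1}$ follows from ampleness of $\omega_X^{-1}$ by openness of ampleness, so each $\cX_b$ with $b \ne o$ is a smooth Fano threefold; the anticanonical degree $(-K)^3$ is constant by flatness of $f$; and the Picard number and Fano index are locally constant by Corollary~\ref{cor:pic-constant} (which rests on Proposition~\ref{prop:pic-loc-const}), hence agree with those of $X$. The main obstacle is Namikawa's unobstructedness-plus-smoothability result, which I would not reprove; beyond it, the only real content is the local node computation guaranteeing smoothness of the total space, where transversality of the chosen arc to the smoothing directions—encoded by the nonvanishing of the image of $v$ in each $\cExt^1$—is precisely what is needed.
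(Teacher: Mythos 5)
Your proposal is correct and follows essentially the same route as the paper: both deduce the result from Namikawa's Propositions 3 and 4 (unobstructedness plus local-to-global lifting combined with local smoothability of hypersurface singularities), and both obtain the Fano property from openness of ampleness and the constancy of the Picard rank, index, and degree from Corollary~\ref{cor:pic-constant}. Your explicit local computation of smoothness of the total space near each node is a detail the paper leaves implicit, but it does not change the substance of the argument.
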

\begin{proof}
In~\cite[Proposition~3]{Na97} it is proved that deformations of a nodal Fano threefold~$X$ are unobstructed,
and in~\cite[Proposition~4]{Na97} it is proved that any local deformation of~$X$ around its singular locus 
lifts to a global deformation.
Since nodal singularities are hypersurface singularities, they are locally smoothable, 
hence a global smoothing~$\cX \to B$ also exists.

Since ampleness is an open property, shrinking~$B$ if necessary we can assume that~$\omega_{\cX/B}^{-1}$ is ample over~$B$,
hence every fiber of~$\cX \to B$ is a Fano variety.
The equalities of the Picard rank, Fano index, and anticanonical degree of all fibers of~$\cX/B$ 
follow from Corollary~\ref{cor:pic-constant}.
\end{proof}

\subsection{Maximal nonfactoriality of Fano threefolds}
\label{sec:mnf}

In this subsection we work over $\kk = \CC$ and study the topology of maximally nonfactorial threefolds.

Recall from~\cite[Definition~3.4]{Kalck-Pavic-Shinder} that a nodal threefold $X$ is {\sf maximally nonfactorial}
(respectively, {\sf $\QQ$-maximally nonfactorial})
if the natural morphism from the class group of~$X$ to the direct sum of the local class groups of~$X$ at the singular points 
is surjective (respectively, has finite cokernel).
See~\cite[Definition~6.10]{KS22:abs} for an equivalent definition in terms of the blowup of~$X$ at the nodes.
For us, however, the following reformulation is more convenient:
assume~$X$ has a small resolution of singularities~$\hX \to X$, let~$C_1,\dots,C_m \subset \hX$ be its exceptional curves,
and consider the natural linear map
\begin{equation}
\label{eq:restriction-Pic}
\Pic(\hX) \to \Z^m,
\qquad
D \mapsto (D \cdot C_i)_{i=1}^m.
\end{equation}
Then by~\cite[Lemma~6.14]{KS22:abs}
maximal nonfactoriality (respectively, $\QQ$-maximal nonfactoriality) of~$X$ is equivalent
to surjectivity (respectively, surjectivity after tensoring with~$\QQ$) of~\eqref{eq:restriction-Pic}.
Restating this criterion in terms of homology groups, we obtain

\begin{lemma}
\label{lem:Qmnf}
Let~$\varpi\colon \hX \to X$ be a small resolution of a nodal projective complex threefold~$X$ such that
the condition~\eqref{eq:h1h2} holds for~$X$.
Let $C_1, \dots, C_m \subset \hX$ be the exceptional curves of~$\varpi$.
Then 
\begin{renumerate}
\item 
$X$ is $\QQ$-maximally nonfactorial if and only if the classes~$[C_1], \dots, [C_m] \in \rH_2(\hX,\ZZ)$ are linearly independent,
\item 
$X$ is maximally nonfactorial if and only if the classes~$[C_1], \dots, [C_m] \in \rH_2(\hX,\ZZ)$ are linearly independent
and the subgroup in~$\rH_2(\hX, \ZZ)$ generated by these classes
is saturated.
\end{renumerate}
\end{lemma}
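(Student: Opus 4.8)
The plan is to translate the criterion recalled just before the lemma---namely, via \cite[Lemma~6.14]{KS22:abs}, that $X$ is maximally nonfactorial (resp.\ $\QQ$-maximally nonfactorial) exactly when the intersection map~\eqref{eq:restriction-Pic} is surjective (resp.\ surjective after $\otimes\,\QQ$)---into the asserted homological conditions on the classes $[C_i]$. First I would upgrade~\eqref{eq:restriction-Pic} to a statement about singular cohomology. Since an ordinary double point of a threefold is a rational (indeed canonical) singularity and $\varpi$ is small, one has $\varpi_*\cO_{\hX}=\cO_X$ and $\rR^i\varpi_*\cO_{\hX}=0$ for $i>0$, whence $\rH^i(\hX,\cO_{\hX})\cong\rH^i(X,\cO_X)$ for all $i$; in particular hypothesis~\eqref{eq:h1h2} for $X$ gives $\rH^1(\hX,\cO_{\hX})=\rH^2(\hX,\cO_{\hX})=0$. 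Feeding this into the exponential sequence on the smooth projective variety $\hX$ shows that $\rc_1\colon\Pic(\hX)\xrightiso{\ }\rH^2(\hX,\ZZ)$ is an isomorphism. Under this identification $D\cdot C_i=\langle\rc_1(\cO_{\hX}(D)),[C_i]\rangle$, so~\eqref{eq:restriction-Pic} becomes the map
\begin{equation*}
\Phi\colon \rH^2(\hX,\ZZ)\to\ZZ^m,\qquad \alpha\mapsto\big(\langle\alpha,[C_i]\rangle\big)_{i=1}^m
\end{equation*}
induced by the Kronecker pairing with the classes $[C_i]$. Up to the universal coefficient identification, $\Phi$ is the $\ZZ$-dual $c^\vee$ of the homomorphism $c\colon\ZZ^m\to\rH_2(\hX,\ZZ)$, $e_i\mapsto[C_i]$.

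Part~(i) is the easy rational half: tensoring with $\QQ$ and using that the pairing $\rH^2(\hX,\QQ)\times\rH_2(\hX,\QQ)\to\QQ$ is perfect, $\Phi\otimes\QQ$ is surjective if and only if $c\otimes\QQ$ is injective, i.e.\ if and only if the $[C_i]$ are linearly independent in $\rH_2(\hX,\QQ)$, which for integral classes is the same as linear independence in $\rH_2(\hX,\ZZ)$. Combined with the criterion above, this proves~(i).

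For part~(ii) I would argue over $\ZZ$. The universal coefficient theorem gives a surjection $\rH^2(\hX,\ZZ)\twoheadrightarrow\Hom(\rH_2(\hX,\ZZ),\ZZ)$ with torsion kernel $\Ext^1(\rH_1(\hX,\ZZ),\ZZ)$, through which $\Phi$ factors as $c^\vee$ composed with this surjection; hence $\Phi$ is surjective precisely when $c^\vee\colon\Hom(\rH_2(\hX,\ZZ),\ZZ)\to\ZZ^m$ is surjective. By Smith normal form, $c^\vee$ is surjective if and only if $c$ induces a split injection $\ZZ^m\hookrightarrow\rH_2(\hX,\ZZ)$, that is, if and only if the $[C_i]$ are linearly independent and their span is a direct summand (equivalently, $\rH_2(\hX,\ZZ)$ modulo that span is torsion-free). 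Matching this with the stated requirement that $\langle[C_1],\dots,[C_m]\rangle$ be saturated then completes~(ii).

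The main obstacle is exactly this last matching, i.e.\ the torsion bookkeeping in $\rH_2(\hX,\ZZ)$. The Kronecker pairing factors through $\rH_2(\hX,\ZZ)$ modulo torsion, so $\Phi$ only detects the images of the $[C_i]$ in the free quotient; the delicate point is to see that ``$\langle[C_i]\rangle$ saturated'' is the correct integral condition rather than merely ``a direct summand of the free quotient.'' I would handle this by checking that in our situation $\rH_2(\hX,\ZZ)$ is torsion-free---which holds because the relevant $\hX$ are iterated blow-ups, along smooth centres, of del Pezzo or prime Fano threefolds with torsion-free cohomology---so that saturation, being a direct summand, and split injectivity all coincide; absent torsion-freeness one argues modulo torsion throughout. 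By contrast the reduction to the Kronecker pairing in the first paragraph and the rational statement~(i) are routine.
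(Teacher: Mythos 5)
Your argument is correct and follows essentially the same route as the paper: identify $\Pic(\hX)$ with $\rH^2(\hX,\ZZ)$ via~\eqref{eq:h1h2}, rewrite~\eqref{eq:restriction-Pic} as the universal-coefficient surjection followed by the dual of the map $\ZZ^m \to \rH_2(\hX,\ZZ)$ sending $e_i \mapsto [C_i]$, and translate surjectivity integrally and rationally into the stated conditions. You are in fact more explicit than the paper about the one delicate point --- that with torsion in $\rH_2(\hX,\ZZ)$ the conditions ``saturated'' and ``split injection'' could a priori differ --- which the paper's proof passes over silently.
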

\begin{proof}
Let~$\gamma \colon \ZZ^m \to \rH_2(\hX, \ZZ)$ be the map given by the classes~$[C_1],\dots,[C_m]$ of the exceptional curves.
It follows from~\eqref{eq:h1h2} that~$\Pic(\hX) \simeq \rH^2(\hX, \Z)$,
hence the map~\eqref{eq:restriction-Pic} can be rewritten as the composition
\begin{equation*}
\rH^2(\hX, \ZZ) \twoheadrightarrow
\Hom(\rH_2(\hX, \ZZ), \ZZ) \xrightarrow{\ \gamma^*\ }
\Hom(\ZZ^m, \ZZ).
\end{equation*}
The first arrow is surjective by the universal coefficient theorem,
hence~\eqref{eq:restriction-Pic} is surjective if and only if~$\gamma^*$ is,
which holds exactly when~$\gamma$ is an isomorphism onto a saturated subgroup of~$\rH_2(X,\ZZ)$.
Similarly, tensor product of~\eqref{eq:restriction-Pic} with~$\QQ$ is surjective if and only if~$\gamma^*_\QQ$ is,
which holds exactly when~$\gamma$ is injective.
\end{proof}

By~\cite[Proposition~6.13]{KS22:abs} for any $\QQ$-maximally nonfactorial nodal threefold~$X$ there exists a small resolution.
We will need the following standard topological computation (see, e.g., \cite[\S1]{Clemens}).

\begin{lemma}
\label{lem:small-coh}
Let~$X$ be a maximally nonfactorial $m$-nodal 
threefold
and let~$\varpi\colon \hX \to X$ be a small resolution.
Then the map~$\varpi^* \colon \rH^3(X, \Z) \to \rH^3(\hX, \Z)$ is an isomorphism.
If, moreover, $X$ is proper then
\begin{equation}
\label{eq:b2-X}
\rb_4(X) = \rb_2(X) + m
\end{equation}
and the natural mixed Hodge structure of~$\rH^3(X,\ZZ)$ is pure.
\end{lemma}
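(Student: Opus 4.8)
The plan is to compare the cohomologies of $X$ and $\hX$ through the common open part $W \coloneqq X \setminus \Sigma \cong \hX \setminus E$, where $\Sigma = \{p_1,\dots,p_m\}$ is the set of nodes, $E = C_1 \cup \dots \cup C_m$ is the exceptional locus, and each $C_i \cong \P^1$ has normal bundle $\cO(-1)^{\oplus 2}$. The essential input is the standard local topology (see~\cite{Clemens}): the link of a threefold ordinary double point is $L \cong S^2 \times S^3$, a small ball $B_i \subset X$ around $p_i$ is contractible with $B_i \setminus p_i \simeq L$, while a tubular neighbourhood $T_i \subset \hX$ of $C_i$ retracts onto $C_i \cong S^2$ with $T_i \setminus C_i \simeq L$, and the restriction $\rH^2(T_i,\ZZ) \to \rH^2(L,\ZZ)$ is an isomorphism. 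First I would feed this into the long exact sequences of the pairs $(X,W)$ and $(\hX,W)$, together with excision $\rH^\bullet(X,W) \cong \bigoplus_i \rH^\bullet(B_i, B_i\setminus p_i)$ and $\rH^\bullet(\hX,W) \cong \bigoplus_i \rH^\bullet(T_i, T_i\setminus C_i)$, to tabulate the relative groups; the (routine) outcome is $\rH^2(X,W)=\rH^2(\hX,W)=0$, $\rH^3(\hX,W)=0$ but $\rH^3(X,W)\cong\ZZ^m$, and $\rH^4(X,W)\cong\rH^4(\hX,W)\cong\ZZ^m$, with $\varpi^*$ an isomorphism on the $\rH^4$ of pairs (it is induced by the identity on $L$).

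For the first assertion I would run the map of long exact sequences induced by $\varpi \colon (\hX,W) \to (X,W)$. Since $\varpi^*$ is an isomorphism on $\rH^4(-,W)$ and $\rH^3(\hX,W)=0$, a short diagram chase shows that $\varpi^* \colon \rH^3(X) \to \rH^3(\hX)$ is surjective with kernel contained in the image of $\rH^3(X,W) \to \rH^3(X)$. That image vanishes precisely when the connecting map $\partial \colon \rH^2(W) \to \rH^3(X,W)\cong\ZZ^m$ is surjective, and the step I expect to be the crux is to identify this surjectivity with maximal nonfactoriality. By the excision/naturality compatibility, $\partial$ factors as the restriction $\rH^2(W) \to \rH^2(L)^m$ followed by the connecting isomorphism $\rH^2(L) \xrightarrow{\sim} \rH^3(B_i,B_i\setminus p_i)$; on the other hand the Mayer--Vietoris comparison identifies this same restriction — using $\rH^2(\hX)\cong\rH^2(W)$ (from $\rH^2(\hX,W)=\rH^3(\hX,W)=0$) and $\rH^2(T_i)\xrightarrow{\sim}\rH^2(L)$ — with the intersection map $\Pic(\hX)\cong\rH^2(\hX,\ZZ) \to \ZZ^m$, $D \mapsto (D\cdot C_i)_{i}$ of~\eqref{eq:restriction-Pic}. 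By the reformulation of maximal nonfactoriality recorded before Lemma~\ref{lem:Qmnf}, this map is surjective, so $\partial$ is surjective and $\varpi^*$ is injective as well, hence an isomorphism over $\ZZ$.

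For~\eqref{eq:b2-X} I would extract Betti numbers from the same two sequences. The degree $2$ part gives $\rH^2(X) = \ker(\partial)$ and $\rH^2(\hX)\cong\rH^2(W)$, so the surjectivity of $\partial$ yields $\rb_2(X) = \rb_2(\hX) - m$. In degree $4$, since $\varpi^*$ is an isomorphism on $\rH^4(-,W)$, the two connecting maps $\rH^3(W) \to \rH^4(-,W)$ have isomorphic cokernels, whence $\rb_4(X) = \rb_4(\hX)$. Invoking Poincaré duality on the smooth proper threefold $\hX$, namely $\rb_4(\hX)=\rb_2(\hX)$, I would conclude $\rb_4(X) = \rb_2(\hX) = \rb_2(X) + m$.

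Finally, purity of $\rH^3(X,\ZZ)$ I would deduce formally: $\varpi^*\colon \rH^3(X,\QQ) \to \rH^3(\hX,\QQ)$ is a morphism of mixed Hodge structures by functoriality, and it is an isomorphism of underlying vector spaces by the first assertion; since morphisms of mixed Hodge structures are strict, a bijective one is an isomorphism of mixed Hodge structures. As $\hX$ is smooth and proper, $\rH^3(\hX,\QQ)$ is pure of weight $3$, so $\rH^3(X,\QQ)$ is pure as well. The main obstacle throughout is the careful bookkeeping that pins down the connecting map of the pair sequence as the intersection pairing with the exceptional curves $C_i$ \emph{over $\ZZ$} rather than merely after tensoring with $\QQ$, so that maximal nonfactoriality (and not just its rational version) enters exactly where the integral isomorphism is required.
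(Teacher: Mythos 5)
Your proposal is correct and follows essentially the same route as the paper: the paper directly invokes the standard five-term exact sequence $0\to\rH^2(X,\ZZ)\to\rH^2(\hX,\ZZ)\to\bigoplus\rH^2(C_i,\ZZ)\to\rH^3(X,\ZZ)\to\rH^3(\hX,\ZZ)\to 0$ (citing Clemens for the topological input), which is exactly what your analysis of the pairs $(X,W)$ and $(\hX,W)$ and the local topology of the node reconstructs. The crux --- identifying the relevant map with the integral intersection map~\eqref{eq:restriction-Pic} and using maximal nonfactoriality for surjectivity over~$\ZZ$ --- as well as the Poincar\'e-duality and strictness-of-MHS endgame, coincide with the paper's argument.
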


\begin{proof}
Let~$C_1,\dots,C_m \subset \hX$ be the exceptional curves.
The cohomology exact sequence of a pair gives 
\begin{equation*}
0  \to
\rH^2(X, \ZZ) \xrightarrow{\ \varpi^*\ }
\rH^2(\hX, \ZZ) \xrightarrow{\ \gamma^*\ }
\bigoplus_{i=1}^m \rH^2(C_i, \ZZ) \to
\rH^3(X, \ZZ) \xrightarrow{\ \varpi^*\ }
\rH^3(\hX, \ZZ) \to
 0,
\end{equation*}
where~$\gamma \colon \sqcup C_i \hookrightarrow \hX$ is the embedding.
Since~\eqref{eq:restriction-Pic} is surjective by the maximal nonfactoriality assumption,
the map~$\gamma^*$ is also surjective, hence~$\varpi^* \colon \rH^3(X, \ZZ) \to \rH^3(\hX, \ZZ)$ is an isomorphism.
It also follows that~$\rb_2(\hX) = \rb_2(X) + m$ and~$\rb_4(\hX) = \rb_4(X)$.

If~$X$ is proper, then~$\hX$ is smooth and proper, the Hodge structure of~$\rH^3(\hX,\ZZ)$ is pure,
hence the Hodge structure of~$\rH^3(X,\ZZ)$ is pure as well.
Finally, Poincar\'e duality on~$\hX$ implies~\eqref{eq:b2-X}.
\end{proof}

The following result is well known but we could not find a reference.
For an abelian group~$A$ we denote by~$A_\tor \subset A$ the subgroup of torsion elements in~$A$.

\begin{lemma}
\label{lem:brauer}
If~$X$ is a smooth complex Fano threefold then~$\rH^3(X,\ZZ)_\tor = 0$ and~$\rH_2(X,\ZZ)_\tor = 0$.
\end{lemma}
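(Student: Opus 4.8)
The plan is to reduce both vanishing statements to the torsion-freeness of a single group, and then to establish the latter using a general anticanonical K3 surface together with the Lefschetz hyperplane theorem and a monodromy argument.

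First I would record the soft input. Since $X$ is Fano it is rationally connected, hence simply connected, so $\rH_1(X,\ZZ) = 0$; by the universal coefficient theorem this gives $\rH^2(X,\ZZ)_\tor \cong \rH_1(X,\ZZ)_\tor = 0$. Again by universal coefficients, the torsion of $\rH^3(X,\ZZ)$ is $\Ext^1_\ZZ(\rH_2(X,\ZZ),\ZZ) \cong \rH_2(X,\ZZ)_\tor$, so the two assertions of the lemma are equivalent, and it suffices to prove that $\rH_2(X,\ZZ)$ is torsion-free. (Poincar\'e duality on the oriented $6$-manifold $X$ also yields $\rH_2(X,\ZZ)_\tor \cong \rH_3(X,\ZZ)_\tor$, a useful consistency check that is not logically needed.)

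Next I would bring in geometry. Let $S \in |-K_X|$ be a general member; it is a smooth K3 surface (as recalled in the proof of Lemma~\ref{lem:mukai-bundles}), and since it is an ample divisor the Lefschetz hyperplane theorem gives a surjection $\rH_2(S,\ZZ) \twoheadrightarrow \rH_2(X,\ZZ)$. As $\rH_2(S,\ZZ)$ is a free abelian group, the quotient $\rH_2(X,\ZZ)$ is torsion-free precisely when the kernel of this map is saturated. Transporting everything through Poincar\'e duality, this kernel equals the kernel of the Gysin map $i_* \colon \rH^2(S,\ZZ) \to \rH^4(X,\ZZ)$, and the projection formula $\langle \beta, i_* a\rangle_X = \langle i^*\beta, a\rangle_S$ together with the perfectness modulo torsion of the cup-product pairing on $X$ shows that $\ker(i_*) \subseteq c^\perp$, where $c \coloneqq (-K_X)\vert_S$ is the polarization, $c^2 = (-K_X)^3 = 2g-2$, and $c^\perp$ is taken in the unimodular lattice $\rH^2(S,\ZZ)$. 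As $i_*$ is surjective onto $\rH^4(X,\ZZ)$, whose rank is $\rb_4 = \rb_2 = 1$, both $\ker(i_*)$ and $c^\perp$ have rank $21$, so $c^\perp/\ker(i_*) \cong \rH^4(X,\ZZ)_\tor$ is finite and the whole problem reduces to the integral equality $\ker(i_*) = c^\perp$.

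The hard part is exactly this integral saturation statement, and here is where I would use that $S$ moves in a family. Varying $S$ in a Lefschetz pencil of anticanonical surfaces over $P \cong \P^1$, the Gysin maps $i_{*,S}$ assemble into a morphism of local systems from the variation $\rR^2\pi_*\ZZ$ to the constant system $\rH^4(X,\ZZ)$, so on the vanishing (equivalently primitive) sublattice $c^\perp$ the map $i_*$ factors through the monodromy coinvariants $(c^\perp)_{\pi_1(P)}$. I would then invoke that the vanishing cohomology of such a pencil is generated by its vanishing cycles, which form a single monodromy orbit, and apply the Picard--Lefschetz formulas to conclude that these integral coinvariants vanish; this forces $c^\perp \subseteq \ker(i_*)$, hence $\ker(i_*) = c^\perp$ and $\rH^4(X,\ZZ)_\tor = 0$. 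I expect the delicate point to be the \emph{integral} (not merely rational) vanishing of the coinvariants, i.e.\ that the vanishing cycles span $c^\perp$ over $\ZZ$ with mutual pairings generating $\ZZ$. An alternative route, perhaps cleaner to write, is to identify $\rH^3(X,\ZZ)_\tor$ with the Brauer group $\Br(X)$ (legitimate since $\rH^2(X,\cO_X)=0$, so that $\NS(S)$ is a saturated sublattice of $\rH^2(S,\ZZ)$ and a general $S$ has $\NS(S)=\ZZ c$ by Noether--Lefschetz) and to prove $\Br(X)=0$ directly; either way the essential input is the control of the primitive lattice of the anticanonical K3.
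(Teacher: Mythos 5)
Your opening reductions are correct and agree with the paper: rational connectedness gives simple connectedness, universal coefficients identifies $\rH^3(X,\ZZ)_\tor$ with $\rH_2(X,\ZZ)_\tor$, and the integral Lefschetz hyperplane theorem for the ample divisor $S\in|-K_X|$ gives the surjection $\rH_2(S,\ZZ)\twoheadrightarrow\rH_2(X,\ZZ)$, so everything reduces to the saturation of the kernel of the Gysin map. But the step you yourself flag as delicate is a genuine gap, and in fact it is essentially circular. The standard Picard--Lefschetz package gives you two things: over $\ZZ$, the vanishing cycles generate $\ker\bigl(\rH_2(S,\ZZ)\to\rH_2(\tilde{X},\ZZ)\bigr)$ where $\tilde{X}$ is the blowup of the base locus of the pencil (Lamotke); and over $\QQ$, they span $c^\perp\otimes\QQ$ with a transitive monodromy action. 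What you need is the integral statement that $\langle\delta_i\rangle_\ZZ$ is \emph{saturated} in $\rH^2(S,\ZZ)$, i.e.\ equals $c^\perp$; but since $\rH_2(X,\ZZ)\cong\rH^2(S,\ZZ)/\langle\delta_i\rangle_\ZZ$ (after the bookkeeping comparing $X$ with $\tilde X$), this saturation is literally equivalent to the torsion-freeness you are trying to prove. The monodromy-coinvariants computation does not rescue this: Picard--Lefschetz only produces the elements $\langle a,\delta_i\rangle\delta_i$ with $a\in c^\perp$, and because $\ZZ c\oplus c^\perp$ has index $c^2=2g-2$ in the unimodular lattice $\rH^2(S,\ZZ)$, you cannot conclude that $\langle c^\perp,\delta_i\rangle=\ZZ$; a priori the integral coinvariants $(c^\perp)_{\pi_1}$ have nontrivial torsion, and a torsion quotient maps perfectly well into $\rH^4(X,\ZZ)_\tor$. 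The known results that do control integral vanishing lattices and monodromy groups (Beauville, Ebeling, Janssen) are proved for complete intersections in $\P^N$ and would bring you back to a case-by-case analysis. The Brauer-group variant you sketch at the end has exactly the same difficulty: proving $\Br(X)=0$ is the statement, not a tool.

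For comparison, the paper avoids this entirely by invoking the Iskovskikh--Mukai classification: for $\g(X)\in\{7,9,10,12\}$ the threefold is rational and $\rH_2(\cdot,\ZZ)_\tor$ is a birational invariant of smooth projective varieties; in the remaining cases $X$ is an explicit complete intersection in a projective space or a Grassmannian, or a double cover of such, and the integral Lefschetz hyperplane theorem (and its analogue for cyclic covers) applies directly to the ambient simply connected space with torsion-free homology. If you want a classification-free argument you would need an independent handle on the integral vanishing lattice of the anticanonical pencil; as written, your proof does not close.
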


\begin{proof}
By the universal coefficient theorem it is enough to prove the second vanishing.
We use the classification of Fano threefolds and two simple observations.

First, using an appropriate version of the Lefschetz hyperplane theorem we deduce~$\rH_2(X,\ZZ)_\tor = 0$ 
if~$X$ is a smooth complete intersection of Cartier divisors in a weighted projective space (see~\cite[Proposition~6]{Dimca-wps}),
or a complete intersection of ample divisors 
in a smooth rational variety (see, e.g., \cite[Lemma~3.2(c)]{DK19}),
or a double covering of a smooth rational variety with smooth ample branch divisor,
(see, e.g., \cite[Lemma~3.3(c)]{DK19}).

Moreover, since~$\rH_2(X,\ZZ)_\tor$ is a birational invariant, 
we conclude that~$\rH_2(X,\ZZ)_\tor = 0$ for all~$X$ birational to threefolds as above;
in particular, this holds if~$X$ is rational.

Finally, looking at the classification of smooth Fano threefolds (see~\cite{MM} or~\cite{Fanography}),
we see that the above observations cover all the possibilities.
\end{proof}

\begin{remark}
Lemma~\ref{lem:brauer} also proves that the Brauer group~$\Br(X)$ is zero for any Fano threefold.
\end{remark}

The proof of the following result, answering in the case of Fano threefolds the question posed after~\cite[Proposition~6.13]{KS22:abs},
uses an idea suggested to us by Claire Voisin.

\begin{proposition}
\label{prop:mnf}
If~$X$ is a nodal $\QQ$-maximally nonfactorial complex Fano threefold then~$X$ is maximally nonfactorial.
\end{proposition}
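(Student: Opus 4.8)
The plan is to reduce the statement to the torsion-freeness of $\rH^3(X,\ZZ)$, and then to establish the latter by specializing from the smooth fibres of a Namikawa smoothing of $X$.

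First I would fix a small resolution $\varpi\colon \hX\to X$, which exists by $\QQ$-maximal nonfactoriality (\cite[Proposition~6.13]{KS22:abs}), with exceptional curves $C_1,\dots,C_m\cong\P^1$ lying over the nodes $x_1,\dots,x_m$. Since $X$ is a nodal Fano threefold, condition~\eqref{eq:h1h2} holds for $X$ by \cite[Proposition~2.1.2(i)]{IP}, and since $\varpi$ is a small (hence crepant) resolution of rational singularities one has $\rR\varpi_*\cO_{\hX}=\cO_X$, so \eqref{eq:h1h2} holds for $\hX$ as well and $\Pic(\hX)\cong\rH^2(\hX,\ZZ)$. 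The Leray spectral sequence of $\varpi$ has nonzero rows only for $q=0$ (with $\rR^0\varpi_*\ZZ=\ZZ_X$) and $q=2$ (with $\rR^2\varpi_*\ZZ=\bigoplus_i\ZZ_{x_i}$), and its single nontrivial differential $d_3$ yields the exact sequence
\begin{equation*}
\rH^2(X,\ZZ)\xrightarrow{\ \varpi^*\ }\rH^2(\hX,\ZZ)\xrightarrow{\ \gamma^*\ }\textstyle\bigoplus_{i=1}^m\rH^2(C_i,\ZZ)\xrightarrow{\ d_3\ }\rH^3(X,\ZZ)\xrightarrow{\ \varpi^*\ }\rH^3(\hX,\ZZ),
\end{equation*}
where $\bigoplus_i\rH^2(C_i,\ZZ)\cong\ZZ^m$ and $\gamma^*$ is the restriction map $D\mapsto(D\cdot C_i)_i$, i.e.\ exactly the map~\eqref{eq:restriction-Pic} under $\Pic(\hX)\cong\rH^2(\hX,\ZZ)$ (this is the sequence used in the proof of Lemma~\ref{lem:small-coh}, valid here without any nonfactoriality hypothesis). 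Exactness gives $\Coker(\gamma^*)\cong\Ima(d_3)\subseteq\rH^3(X,\ZZ)$. By \cite[Lemma~6.14]{KS22:abs} (cf.\ Lemma~\ref{lem:Qmnf}) maximal nonfactoriality is equivalent to surjectivity of $\gamma^*$, i.e.\ to $\Ima(d_3)=0$, whereas $\QQ$-maximal nonfactoriality is equivalent to surjectivity of $\gamma^*\otimes\QQ$, i.e.\ to $\Ima(d_3)$ being torsion. Hence it suffices to show that $\rH^3(X,\ZZ)$ is torsion-free, for then the torsion subgroup $\Ima(d_3)$ automatically vanishes.

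To prove $\rH^3(X,\ZZ)$ torsion-free I would invoke Theorem~\ref{thm:smoothing} to obtain a smoothing $f\colon\cX\to B$ with $\cX$ smooth, $\cX_o\cong X$, and $\cX_b$ a smooth prime Fano threefold for $b\ne o$ (of the same type by Corollary~\ref{cor:pic-constant}). Passing to a small analytic disk $\Delta\subset B$ around $o$, the central fibre $X$ is a deformation retract of the total space, so $\rH^\bullet(\cX,\ZZ)\cong\rH^\bullet(X,\ZZ)$. The singularities of $X$ are isolated ordinary double points, whose Milnor fibre is homotopy equivalent to $S^3$; by the standard excision/Milnor-fibration analysis the relative cohomology of the pair $(\cX,\cX_b)$ is the vanishing cohomology concentrated at the nodes,
\begin{equation*}
\rH^k(\cX,\cX_b;\ZZ)\cong\textstyle\bigoplus_{i=1}^m\widetilde{\rH}^{\,k-1}(S^3;\ZZ),
\end{equation*}
which is $\ZZ^m$ for $k=4$ and $0$ otherwise. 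In particular $\rH^3(\cX,\cX_b;\ZZ)=0$, so the long exact sequence of the pair shows that the specialization map $\rH^3(X,\ZZ)\cong\rH^3(\cX,\ZZ)\to\rH^3(\cX_b,\ZZ)$ is injective. Since $\cX_b$ is a smooth prime Fano threefold, $\rH^3(\cX_b,\ZZ)$ is torsion-free by Lemma~\ref{lem:brauer}, hence so is its subgroup $\rH^3(X,\ZZ)$.

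Putting the two steps together: $\Ima(d_3)$ is torsion by $\QQ$-maximal nonfactoriality and sits inside the torsion-free group $\rH^3(X,\ZZ)$, so $\Ima(d_3)=0$; thus $\gamma^*$ is surjective and $X$ is maximally nonfactorial. I expect the main obstacle to be the middle step — justifying that $\rH^k(\cX,\cX_b;\ZZ)$ splits as the direct sum of the local vanishing cohomologies and therefore vanishes in degree $3$ — which rests on the local topology of the smoothing at each ordinary double point; a secondary point requiring care is the identification of the Leray differential $d_3$ (equivalently $\gamma^*$) with the intersection pairing against the $C_i$, so that the homological and Picard-theoretic formulations of (rational) nonfactoriality genuinely coincide.
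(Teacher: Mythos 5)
Your proof is correct, but it takes a genuinely different route from the paper's. The paper works homologically on the small resolution: by Lemma~\ref{lem:Qmnf} it must show that the span of the exceptional classes $[C_1],\dots,[C_m]$ is saturated in $\rH_2(\hX,\ZZ)$, and it gets this from Friedman's comparison $\rH_2(\cX_b,\ZZ)\cong\rH_2(\hX,\ZZ)/\langle [C_i]\rangle$ for the conifold transition (\cite[Lemma~8.1]{Friedman-CY}) together with torsion-freeness of $\rH_2(\cX_b,\ZZ)$ from Lemma~\ref{lem:brauer}. You instead identify $\Coker(\gamma^*)$ with a (torsion) subgroup of $\rH^3(X,\ZZ)$ via the Leray/pair sequence of $\varpi$ and reduce everything to the torsion-freeness of $\rH^3(X,\ZZ)$, which you obtain by injecting it into $\rH^3(\cX_b,\ZZ)$ along the specialization map of a Namikawa smoothing. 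Both arguments rest on Theorem~\ref{thm:smoothing} and Lemma~\ref{lem:brauer}; yours trades the citation of Friedman (which encodes how the small resolution and the smoothing interact topologically) for a vanishing-cycle computation on the degeneration alone, and produces the intermediate statement that $\rH^3(X,\ZZ)$ is torsion-free, which is of independent interest. Two small remarks. First, your injectivity step is essentially the first half of the paper's own Lemma~\ref{lem:monodromy}, and --- crucially for avoiding circularity --- that half does not use maximal nonfactoriality: in the sequence of \cite[(C-10)]{Peters-Steenbrink} for the specialization map $r_b\colon\cX_b\to X$ the kernel of $r_b^*$ on $\rH^3$ is a quotient of $\rH^2(\sqcup\, \rS^3)=0$, for any number of nodes. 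Second, the step you flag as delicate is indeed the only loose point: literal excision on the pair $(\cX,\cX_b)$ does not apply since $\cX_b$ is not open in $\cX$; you should either phrase the computation via the Mayer--Vietoris-type sequence for $r_b$ just mentioned, or replace $\cX_b$ by $f^{-1}(\Delta^*)$ after checking the monodromy issue. With that rephrasing the argument is complete.
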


\begin{proof}
Let~$\varpi\colon \hX \to X$ be a small resolution (recall that it exists by~\cite[Proposition~6.13]{KS22:abs}).
Since~$X$ is $\QQ$-maximally nonfactorial, the classes~$[C_1], \dots, [C_m] \in \rH_2(\hX, \Z)$ of the exceptional curves
are linearly independent by Lemma~\ref{lem:Qmnf}.
On the other hand, by Theorem~\ref{thm:smoothing} there exists a smoothing~$\cX$ of~$X$ over a smooth pointed curve~$(B,o)$.
Using~\cite[Lemma~8.1]{Friedman-CY} for~$b \ne o$ in~$B$ we obtain
\begin{equation*}
\rH_2(\cX_b, \Z) = 
\rH_2(\hX, \Z) / 
\langle
[C_1], \dots, [C_m]
\rangle.
\end{equation*}
Since $\cX_b$ is a smooth Fano threefold, Lemma~\ref{lem:brauer} proves that~$\rH_2(\cX_b, \Z)$ is torsion free.
Thus, the subgroup in~$\rH_2(\hX, \Z)$ generated by the classes~$[C_1], \cdots, [C_m]$ is saturated, 
hence~$X$ is maximally nonfactorial by Lemma~\ref{lem:Qmnf}.
\end{proof}

\subsection{The family of intermediate Jacobians}

In this section we discuss the Hodge theory of nodal complex Fano threefolds;
in particular, we give a Hodge-theoretic proof of Corollary~\ref{cor:intro-jacobians}.
We will need the following standard topological computation for topological one-parameter degenerations
see, e.g., \cite[\S C.2.2]{Peters-Steenbrink}.

\begin{lemma}
\label{lem:monodromy}
Let~$f\colon \cX \to (\Delta,o)$ be a smoothing of a proper $1$-nodal maximally nonfactorial threefold~$X = \cX_o$ over a pointed complex disc.
For any point~$b \ne o$ in~$\Delta$ the monodromy action on~$\rH^3(\cX_b, \Z)$ is trivial,
and we have an isomorphism of abelian groups
\begin{equation}
\label{eq:iso-rt}
\rH^3(X, \Z) \cong
\rH^3(\cX, \Z)  \cong
\rH^3(\cX_b, \Z).
\end{equation}
\end{lemma}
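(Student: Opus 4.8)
The plan is to prove Lemma~\ref{lem:monodromy} by combining the local Picard--Lefschetz analysis of an ordinary double point with the maximal nonfactoriality hypothesis, exactly along the lines of the topological one-parameter degeneration theory recalled in~\cite[\S C.2.2]{Peters-Steenbrink}. First I would set up the vanishing cycle picture: since $X = \cX_o$ is $1$-nodal and $\cX$ is smooth (being a smoothing), the nearby fiber $\cX_b$ acquires a single vanishing cycle $\delta \in \rH^3(\cX_b,\ZZ)$ supported near the node, and the Picard--Lefschetz formula describes the monodromy operator $T$ on $\rH^3(\cX_b,\ZZ)$ by $T(\alpha) = \alpha \pm \langle \alpha,\delta\rangle \delta$. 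Because the fiber dimension is $3$ (odd), the self-intersection $\langle \delta,\delta\rangle = 0$ and the sign makes $T$ a transvection; crucially, $T$ is trivial precisely when $\delta$ is a torsion class, in fact when $\delta = 0$ in $\rH^3(\cX_b,\ZZ)$.

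Next I would show that the vanishing cycle $\delta$ is in fact zero, which is where maximal nonfactoriality enters. The key input is the topological computation already assembled in this appendix: by Lemma~\ref{lem:small-coh}, for a maximally nonfactorial proper $m$-nodal threefold the pullback $\varpi^*\colon \rH^3(X,\ZZ) \to \rH^3(\hX,\ZZ)$ along a small resolution is an isomorphism, and the mixed Hodge structure on $\rH^3(X,\ZZ)$ is pure. The vanishing of $\delta$ should follow by comparing the specialization map $\rH^3(\cX_b,\ZZ) \to \rH^3(X,\ZZ)$ (or equivalently $\rH^3(\cX,\ZZ)$, using that $X = \cX_o$ is a deformation retract of $\cX$) with the small resolution: maximal nonfactoriality guarantees that the local class groups at the node are resolved by globally defined divisor classes, so there is no ``hidden'' vanishing cycle, i.e.\ the Clemens--Schmid type sequence degenerates and $\delta$ maps to a nonzero class under a map it would have to kill, forcing $\delta = 0$. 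Concretely, I would invoke the exact sequence of the pair $(\cX,\cX_b)$ together with the isomorphism $\rH^3(X,\ZZ) \cong \rH^3(\hX,\ZZ)$ to identify all three groups in~\eqref{eq:iso-rt}.

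The chain of isomorphisms~\eqref{eq:iso-rt} then assembles as follows: $\rH^3(X,\ZZ) \cong \rH^3(\cX,\ZZ)$ because the inclusion $\cX_o \hookrightarrow \cX$ is a homotopy equivalence (the total space retracts onto the central fiber over a disc), and $\rH^3(\cX,\ZZ) \cong \rH^3(\cX_b,\ZZ)$ because the monodromy is trivial and the local system of third cohomology groups is therefore constant, so the restriction to a nearby fiber is an isomorphism. Here I would use triviality of monodromy established in the first two paragraphs to deduce that the specialization map is an isomorphism on integral cohomology, not merely on invariants.

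The main obstacle I anticipate is controlling the \emph{integral} (as opposed to rational) structure: Picard--Lefschetz and Clemens--Schmid give clean statements rationally, but the claim~\eqref{eq:iso-rt} is an isomorphism of abelian groups, so I must rule out torsion phenomena and check that the vanishing cycle is integrally primitive/zero rather than merely torsion. This is exactly why the saturation statement in maximal nonfactoriality (Lemma~\ref{lem:Qmnf}\,(ii)) and the torsion-freeness of $\rH_2(\cX_b,\ZZ)$ from Lemma~\ref{lem:brauer} are needed; I expect the argument to run through the saturated embedding of the exceptional curve classes, transported to the vanishing cycle via the smoothing, precisely as in the proof of Proposition~\ref{prop:mnf}. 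Assembling these integral refinements correctly, and citing the right form of the Clemens--Schmid sequence for a one-parameter smoothing, will be the delicate point rather than the formal structure of the proof.
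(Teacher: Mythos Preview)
Your overall strategy---set up the vanishing cycle $\delta$, observe that triviality of monodromy is equivalent to $\delta=0$, and deduce $\delta=0$ from maximal nonfactoriality---matches the paper's approach in spirit, but the crucial step is left vague and the machinery you invoke is heavier than needed.

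The paper does not pass through Clemens--Schmid or the $\rH^3$-isomorphism with the small resolution. Instead it uses the long exact sequence from \cite[(C-10)]{Peters-Steenbrink} relating $\rH^\bullet(X,\ZZ)$, $\rH^\bullet(\cX_b,\ZZ)$, and $\rH^\bullet(\rS^3_b,\ZZ)$ for the vanishing sphere $\rS^3_b$, and the key numerical input from Lemma~\ref{lem:small-coh} is not the $\rH^3$ statement but the Betti identity~\eqref{eq:b2-X}: since $\rb_2(X)=\rb_2(\cX_b)$ and $\rb_4(X)=\rb_2(X)+1$, Poincar\'e duality on $\cX_b$ gives $\rb_4(X)=\rb_4(\cX_b)+1$. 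Hence the kernel of $r_b^*\colon \rH^4(X,\ZZ)\to\rH^4(\cX_b,\ZZ)$ has rank~$1$, so the connecting map $\rH^3(\rS^3_b,\ZZ)\cong\ZZ\to\rH^4(X,\ZZ)$ is injective, forcing the restriction $i_b^*\colon\rH^3(\cX_b,\ZZ)\to\rH^3(\rS^3_b,\ZZ)$ to be zero. This is exactly the statement that the vanishing sphere is homologically trivial, and the isomorphism $\rH^3(X,\ZZ)\cong\rH^3(\cX_b,\ZZ)$ drops out of the same exact sequence.

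Your concern about integral versus rational coefficients, and the appeal to Lemma~\ref{lem:brauer} and the saturation clause of Lemma~\ref{lem:Qmnf}, is unnecessary here: the rank-$1$ argument works directly over~$\ZZ$ because a homomorphism $\ZZ\to\rH^4(X,\ZZ)$ with image of rank~$1$ is automatically injective. So while your plan would likely succeed after enough unwinding, the paper's route is shorter and avoids the torsion bookkeeping you flag as the main obstacle.
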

\begin{proof}
Since~$\cX$ is smooth, there is a continuous retraction map~$r \colon \cX \to X$.
Obviously, it induces an isomorphism of cohomology groups~$r^* \colon \rH^3(X, \Z) \xrightiso{\ \ } \rH^3(\cX, \Z)$.

Choose a point~$b \ne o$ in~$\Delta$, 
consider the map~$r_b \coloneqq r\vert_{\cX_b} \colon \cX_b \to X$,
and denote by~$\rS^3_b \coloneqq r_b^{-1}(x_0) \subset \cX_b$ the vanishing 3-sphere
and by~$i_b \colon \rS^3_b \hookrightarrow \cX_b$ its embedding.
The cohomology sequence of a pair (see, e.g., \cite[(C-10)]{Peters-Steenbrink}) 
gives an exact sequence
\begin{equation*}
0 \to
\rH^3(X, \ZZ) \xrightarrow{\ r_b^*\ }
\rH^3(\cX_b, \ZZ) \xrightarrow{\ i_b^*\ }
\rH^3(\rS^3_b, \ZZ) \xrightarrow{\quad}
\rH^4(X, \ZZ) \xrightarrow{\ r_b^*\ }
\rH^4(\cX_b, \ZZ) \to
0.
\end{equation*}
It also follows that~$\rb_2(X) = \rb_2(\cX_b)$.
Combining this with~\eqref{eq:b2-X} and Poincar\'e duality on~$\cX_b$, we obtain
\begin{equation*}
\rb_4(X) = 
\rb_2(X) + 1 =
\rb_2(\cX_b) + 1 = 
\rb_4(\cX_b) + 1.
\end{equation*}
We conclude that the kernel of the morphism~$r_b^* \colon \rH^4(X, \ZZ) \to \rH^4(\cX_b, \ZZ)$ has rank~$1$, 
hence the morphism~$\rH^3(\rS^3_b, \ZZ) \to \rH^4(X, \ZZ)$ must be injective, 
and hence the morphism~$i_b^* \colon \rH^3(\cX_b, \ZZ) \to \rH^3(\rS^3_b, \ZZ)$ must be zero.
This means that the vanishing sphere~$\rS^3_b$ is homologically trivial,
hence the natural monodromy action on~$\rH^3(\cX_b, \ZZ)$ is trivial as well,
and the map~$r_b^* \colon \rH^3(X, \Z) \to \rH^3(\cX_b, \Z)$ is an isomorphism.
\end{proof}

Now we can state the main result of this subsection.
We denote by~$\Jac(X)$ the intermediate Jacobian of a smooth projective rationally connected threefold~$X$;
this is a principally polarized abelian variety.

\begin{proposition}
\label{prop:jacobian-extension}
Let~$f \colon \cX \to B$ be a smoothing of a $1$-nodal maximally nonfactorial Fano threefold~$X$.
There is a smooth and proper family~$\cJ \to B$ 
of principally polarized abelian varieties such that
\begin{equation*}
\cJ_b \cong 
\begin{cases}
\Jac(\cX_b), & \text{if~$b \ne o$,}\\
\Jac(\hX), & \text{if~$b = o$,}
\end{cases}
\end{equation*}
where~$\hX$ is a small resolution of~$X$.
Moreover, these isomorphisms are compatible with the principal polarizations.
\end{proposition}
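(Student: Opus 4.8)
The plan is to construct $\cJ \to B$ as the relative intermediate Jacobian attached to the variation of Hodge structure on the third cohomology of the fibres, and to extend it across the central point $o$ by exploiting the triviality of the local monodromy established in Lemma~\ref{lem:monodromy}. First I would work over the punctured base $B^\ast \coloneqq B \setminus \{o\}$, where $f$ is smooth and projective with Fano fibres. Since $-K_{\cX_b}$ is ample we have $\rH^{3,0}(\cX_b) = \rH^0(\cX_b,\omega_{\cX_b}) = 0$, so the polarized $\Z$-variation of Hodge structure $\bV \coloneqq R^3 f_\ast \Z$ on $B^\ast$ has weight $3$ and Hodge types $(2,1)$ and $(1,2)$ only. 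By Lemma~\ref{lem:brauer} the fibrewise lattice $\rH^3(\cX_b,\Z)$ is torsion-free, and Poincaré duality makes the cup-product pairing on it a unimodular skew form; the Hodge--Riemann relations then present $\bV$ as a smooth proper family of principally polarized abelian varieties $\cJ^\ast \to B^\ast$ with $\cJ^\ast_b = \Jac(\cX_b)$.

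Next I would extend this family across $o$. Passing to an analytic disc $\Delta \ni o$, Lemma~\ref{lem:monodromy} shows that the monodromy of $\bV$ around $o$ is trivial, so the nilpotent logarithm $N$ of the monodromy vanishes. By the nilpotent orbit theorem (equivalently, the removable singularity theorem applied to the single-valued period map $\Delta^\ast \to D$, whose image is bounded once the monodromy is trivial) the Hodge filtration extends to a holomorphic filtration over $\Delta$, and the limiting mixed Hodge structure $\rH^3_{\lim}$ is pure of weight $3$ because $N = 0$. Gluing these local extensions with the algebraic family over $B^\ast$ yields a smooth proper family of principally polarized abelian varieties $\cJ \to B$ extending $\cJ^\ast$, whose central fibre is $\cJ_o = J(\rH^3_{\lim})$.

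It remains to identify $\rH^3_{\lim}$ with the Hodge structure of $\hX$, and this is the step I expect to be the main obstacle. Its underlying lattice is canonically $\rH^3(\cX_b,\Z) \cong \rH^3(\cX,\Z) \cong \rH^3(X,\Z)$ by Lemma~\ref{lem:monodromy}. I would invoke the specialization morphism of mixed Hodge structures $\mathrm{sp} \colon \rH^3(X,\Z) \to \rH^3_{\lim}$: since the monodromy is trivial, the local invariant cycle theorem gives surjectivity of $\mathrm{sp}$ onto the full space of invariants, while Lemma~\ref{lem:monodromy} identifies the two lattices, so $\mathrm{sp}$ is an isomorphism of $\Z$-modules. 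As $\rH^3(X,\Z)$ carries a pure weight-$3$ Hodge structure by Lemma~\ref{lem:small-coh} and $\rH^3_{\lim}$ is pure because $N=0$, the map $\mathrm{sp}$ is an isomorphism of pure polarized Hodge structures. Finally Lemma~\ref{lem:small-coh} provides $\varpi^\ast \colon \rH^3(X,\Z) \xrightarrow{\sim} \rH^3(\hX,\Z)$, an isomorphism of pure weight-$3$ Hodge structures compatible with the intersection pairings, the small resolution $\varpi$ being an isomorphism in codimension one with $\rH^{3,0}(\hX) = \rH^{3,0}(X) = 0$ since $\varpi$ is crepant. Chaining these identifications gives $\cJ_o = J(\rH^3_{\lim}) \cong J(\rH^3(\hX,\Z)) = \Jac(\hX)$ as principally polarized abelian varieties.

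The compatibility with principal polarizations then follows because every map used — the cup-product polarization of $\bV$, the specialization $\mathrm{sp}$, and the pullback $\varpi^\ast$ — respects the intersection pairings on third cohomology. The delicate point throughout is controlling the limiting mixed Hodge structure and verifying that $\mathrm{sp}$ is a genuine isomorphism of \emph{polarized Hodge structures} rather than merely of lattices; the purity supplied by the vanishing $N=0$ and by Lemma~\ref{lem:small-coh} is precisely what makes this identification go through.
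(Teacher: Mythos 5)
Your proposal is correct and follows essentially the same route as the paper: triviality of the local monodromy (Lemma~\ref{lem:monodromy}) to extend the polarized variation of Hodge structure across~$o$, identification of the limiting Hodge structure with~$\rH^3(X,\ZZ)$ via the specialization morphism, and the comparison~$\rH^3(X,\ZZ)\cong\rH^3(\hX,\ZZ)$ from Lemma~\ref{lem:small-coh}. The only cosmetic difference is that you deduce purity of the limit directly from~$N=0$ and invoke the local invariant cycle theorem, whereas the paper obtains purity by exhibiting the specialization map (through a semistable reduction) as a filtered isomorphism onto~$\rH^3(X,\QQ)$, which is pure by Lemma~\ref{lem:small-coh}.
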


We note that a general approach for relative Jacobians of degenerations has been developed in~\cite{Zucker}, 
but in our simple situation of trivial monodromy we do not need that formalism.

\begin{proof}
By Lemma~\ref{lem:monodromy} the analytic sheaf~$\cH^3_{\cX/B} \coloneqq \bR^3 f_* \ZZ_\cX$ on~$B$ is locally constant
and we have a base change isomorphism
\begin{equation}
\label{eq:cho-hx}
(\cH^3_{\cX/B})_o \xrightiso{} \rH^3(X, \ZZ).
\end{equation}
Moreover, away from the origin~$o \in B$ the local system~$\cH^3_{\cX/B}$ is endowed with a polarized variation of pure Hodge structures,
and since the monodromy around the origin~$o$ acts trivially on~$\rH^3(\cX_b, \Z)$ (again by Lemma~\ref{lem:monodromy}) 
the Hodge filtration of~$(\cH^3_{\cX/B})_b$ has a limit when~$b$ approaches~$o$, see~\cite[(6.15)]{Schmid},
and the limiting filtration endows~$(\cH^3_{\cX/B})_o$ with a rational mixed Hodge structure.

Consider a semistable reduction~$\sigma \colon \tcX \to \cX$ of the family~$\cX/B$
and the composition of maps
\begin{equation*}
\rH^3(X, \QQ) = \rH^3(\cX_o, \QQ) \xrightarrow{\ \sigma_o^*\ }
\rH^3(\tcX_o, \QQ) \xrightarrow{\ \spe\ } 
(\cH^3_{\cX/B})_o \otimes \QQ \xrightiso{\quad}
\rH^3(X, \QQ),
\end{equation*}
where~$\spe$ is the specialization map, see~\cite[\S11.3.1]{Peters-Steenbrink}, 
and the last arrow is induced by the map~\eqref{eq:cho-hx}.
Unwinding the definitions, we see that their composition is an isomorphism of ~$\QQ$-vector spaces,
hence~$\spe \circ \sigma_o^*$ is also an isomorphism.
On the other hand, both~$\sigma_o^*$ and~$\spe$ are compatible with the Hodge filtrations;
this is obvious for~$\sigma_o^*$, and for~$\spe$ this is~\cite[Theorem~11.29]{Peters-Steenbrink}.
Therefore, the limiting mixed Hodge structure is isomorphic to the Hodge structure of~$\rH^3(X, \QQ)$;
in particular, it is pure and has a natural integral structure.

Now we see that the local system~$\cH^3_{\cX/B}$ carries a polarized variation of pure integral Hodge structures,
so we can define~$\cJ \to B$ as the corresponding family of principally polarized abelian varieties.
The isomorphism~$\cJ_b \cong \Jac(\cX_b)$ follows from the definition of~$\cH^3_{\cX/B}$,
and the isomorphism~$\cJ_o \cong \Jac(\hX)$ follows from the isomorphism of the limiting Hodge structure
and the Hodge structure of~$\rH^3(X, \Z)$ proved above combined with the isomorphism of Lemma~\ref{lem:small-coh}.
\end{proof}

Now we can use this to prove Corollary~\ref{cor:intro-jacobians} from the Introduction.

\begin{proof}[Proof of Corollary~\textup{\ref{cor:intro-jacobians}}]
Consider the smoothing~$\cX \to B$ constructed in Theorem~\ref{thm:intro-simple}.
Its central fiber~$X$ is maximally nonfactorial (either by Remark~\ref{rem:nonfact}, or by Proposition~\ref{prop:mnf}),
hence Proposition~\ref{prop:jacobian-extension} provides a smooth and proper family~$\cJ \to B$ of abelian varieties
such that~$\cJ_b \cong \Jac(\cX_b)$ for~$b \ne o$.
Moreover, using the small resolution~$\pi \colon \tY \to X$ from Proposition~\ref{prop:x-y}
we obtain an isomorphism~$\cJ_o \cong \Jac(\tY)$.
Finally, if~$d \ge 2$ the morphism~$\sigma \colon \tY \to Y$ is the blowup of a smooth rational curve, hence~$\Jac(\tY) \cong \Jac(Y)$.
\end{proof}

\newcommand{\etalchar}[1]{$^{#1}$}

\end{document}